\theoremstyle{plain}
\newtheorem{thm}{Theorem}[section]
\newtheorem{claim}{Claim}[thm]
\newtheorem{lem}[thm]{Lemma}
\newtheorem{rem}[thm]{Remark}
\newtheorem{prop}[thm]{Proposition}
\newtheorem{cor}[thm]{Corollary}
\theoremstyle{definition}
\newtheorem{defn}[thm]{Definition}
\newtheorem{exmp}[thm]{Example}
\newtheorem{ques}[thm]{Question}
\theoremstyle{definition}
\theoremstyle{remark}
\numberwithin{equation}{section}
\newcommand{\SL}{\operatorname{SL}}
\newcommand{\dist}{\operatorname{dist}}
\newcommand\smvee{\raise0.9ex\hbox{$\scriptscriptstyle\vee$}}
\newcommand\ca[1]{\mathcal{#1}}
\newcommand{\sm}{\smallsetminus}
\newcommand{\R}{{\mathbb{R}}}
\newcommand{\T}{{\mathbb{T}}}
\newcommand{\Co}{{\mathbb{C}}}
\newcommand{\Z}{{\mathbb{Z}}}
\newcommand{\N}{{\mathbb{N}}}
\newcommand{\ve}{{\bf e}}
\newcommand{\ignore}[1]{{}}
\newcommand {\new}[1]   {\textcolor{orange}{#1}}
\newcommand{\al}{\alpha}
\newcommand{\ga}{\gamma}
\newcommand{\Ga}{\Gamma}
\newcommand{\del}{\delta}
\newcommand{\Del}{\Delta}
\newcommand{\lam}{\lambda}
\newcommand{\eps}{\epsilon}
\newcommand{\cA}{\mathcal{A}}
\newcommand{\cC}{\mathcal{C}}
\newcommand{\cI}{\mathcal{I}}
\newcommand{\cM}{\mathcal{M}}
\newcommand{\cP}{\mathcal{P}}
\newcommand{\bC}{\mathbb{C}}
\newcommand{\bR}{\mathbb{R}}
\newcommand{\bZ}{\mathbb{Z}}
\newcommand{\bQ}{\mathbb{Q}}
\newcommand{\bN}{\mathbb{N}}
\newcommand{\bT}{\mathbb{T}}
\newcommand\norm[1]{\|#1\|}
\newcommand\wh[1]{\widehat{#1}}
\newcommand\seta[1]{\left\{#1\right\}}
\newcommand\pa[1]{\left(#1\right)}
\newcommand\idist[1]{\langle#1\rangle}
\newcommand\av[1]{|#1|}
\newcommand\on[1]{\operatorname{#1}}
\newcommand\mb[1]{\mathbf{#1}}
\newcommand\mat[1]{\pa{\begin{matrix}#1\end{matrix}}}
\newcommand\br[1]{\left[#1\right]}
\newcommand\smallmat[1]{\pa{\begin{smallmatrix}#1\end{smallmatrix}}}
\newcounter{mycomment}
\newcommand{\mycomment}[2][]{
\refstepcounter{mycomment}
{
\setstretch{0.7}
\todo[color={red!100!green!33},size=\small]{
\textbf{Comment [\uppercase{#1}\themycomment]:}~#2}
}}
\title[grids and divergence]{Badly approximable grids and $k$-divergent lattices}
\author{Nikolay Moshchevitin, Anurag Rao, Uri Shapira}
\address{Moscow Center for Pure and Applied Mathematics 
\newline
Institute for Information Transmission Problems,  Russia {\tt moshchevitin@gmail.com}}
\address{Peking University, China
{\tt anrg@bicmr.pku.edu.cn}}
\address{Technion -- Israel Institute for Technology {\tt ushapira@techniona.ac.il}}
\begin{document}

\begin{abstract}
    
    Let $A\in\on{Mat}_{m\times n}(\bR)$ be a matrix. In this paper we investigate the set 
    $\on{Bad}_A\subset \bT^m$ of badly approximable targets for $A$, where $\bT^m$ is the $m$-torus. It is well known that $\on{Bad}_A$ is a winning set for Schmidt's game and hence is a dense subset of full Hausdorff dimension. We investigate the relationship between the measure of $\on{Bad}_A$ and Diophantine properties of $A$.
    
    On the one hand, 
    we give the first examples of a non-singular $A$ such that $\on{Bad}_A$ has full measure with respect to some non-trivial algebraic measure on the torus. For this, we use 
transference theorems due to Jarnik and Khintchine,
and the parametric geometry of numbers in the sense of Roy.
    
    On the other hand, we give a novel Diophantine condition on $A$ that slightly strengthens non-singularity, and show that under the assumption that $A$
    satisfies this condition, $\on{Bad}_A$ is a null-set with respect to any non-trivial algebraic measure on the torus. For this we use naive homogeneous dynamics, harmonic analysis, and a novel concept we refer to as
    mixing convergence of measures.

\end{abstract}

\maketitle

\section{Introduction}

\subsection{The origins of this work}\label{sec: origins}
This paper stems from an effort to understand the state of affairs regarding the validity of several statements claimed to be true in~\cite[\S 2.2, \S 3.2]{GDV}, whose proof relied on a careless observation made by the third named author which is false, as was pointed out to us by David Simmons.

In this paper we revisit this discussion and prove some of the statements claimed in \cite{GDV} under slightly stronger assumptions. We also provide constructions of examples showing that some of the statements made in \cite{GDV} are false and maybe more importantly, we present some open problems.

In order to present the discussion in an organic manner and keep the reader 
motivated, we open with a self contained introduction to the subject. 
In \S\ref{sec: the mistake} we will elaborate further regarding what is exactly the mistake in \cite{GDV} and the current state of affairs regarding the questionable statements there.

\subsection{Inhomogeneous Diophantine approximation}
One of the main themes in the theory of inhomogeneous Diophantine approximations is to analyze, for a matrix 
$$A = \mat{\theta^1&\cdots &\theta^n}\in \on{Mat}_{m\times n}(\bR)$$
the rate at 
which the group generated by the columns of $A$ in $\bR^m$ approximates a given target vector $\eta\in \bR^m$ modulo the integers. 
More precisely, let $\norm{\cdot}$ denote choices of norms on $\bR^n$ and $\bR^m$ and let $\idist{\cdot}$ denote
the induced distance on the $m$-torus $\bT^m:=\bR^m/\bZ^m$. 
This theory tries to understand, for a given $\eta\in \bT^m$ the rate at which the sequence 
$$\min\seta{\idist{Aq -\eta}:q\in \bZ^n, \norm{q} \le Q}$$ approaches zero (if at all) as $Q\to\infty$. 
One way to do this is to fix a monotonely increasing function $\psi:\bR_{>0}\to\bR_{>0}$  and investigate
\begin{equation}\label{eq: liminf}
\liminf_{q\in \bZ^n, \norm{q}\to\infty} \psi(\norm{q})\idist{Aq -\eta}.
\end{equation}
 The most natural and widely investigated function $\psi$ is $\psi(t) = t^{n/m}$. A heuristic reason for why this is a natural choice for $\psi$ is that under the constraint $\norm{q}\le Q$ on the coefficient vector $q\in \bZ^n$, we have roughly $Q^n$ points in $\seta{Aq \mod \bZ:\norm{q}\le Q}\subset \bT^m$ and since the $m$-torus is $m$-dimensional, if we split it to $Q^n$ boxes of the same size, their side length is $Q^{-n/m}$. This is why the rescaling by $Q^{n/m}$ leads to an interesting discussion.

The most basic question one might ask about \eqref{eq: liminf} is whether it is positive or not. Indeed a classical subset of the torus which is widely investigated is the set of \textit{badly approximable targets for $A$}:  
\begin{equation}
    \label{eq: BadA}
    \on{Bad}_A := \seta{\eta\in \bT^m: \liminf_{q\in \bZ^n, \norm{q}\to\infty} \norm{q}^{n/m}\idist{Aq -\eta} >0}.
\end{equation}
In this paper we investigate the structure of $\on{Bad}_A$ and its relation to Diophantine conditions on $A$. The following classical result says that $\on{Bad}_A$ is never empty. 
\begin{thm}[Theorem X, Chapter IV \cite{CasselsDA}]
\label{thm: Bad non empty}
    For any $A \in \on{Mat}_{m\times n}(\bR)$, there exists an $\eta \in \R^m$ for which
    \begin{equation}\label{eq: liminf >0}
        \inf_{q \in \Z^n \smallsetminus \{0\}} \norm{q}^{n/m} \langle Aq - \eta \rangle >0.
    \end{equation}
\end{thm}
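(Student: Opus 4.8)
The plan is to construct a single target $\eta$ as the unique point of a nested intersection of closed cubes $\Delta_0\supseteq\Delta_1\supseteq\cdots$ in $\bT^m$, arranging that each $\Delta_k$ is disjoint from the forbidden ball $B\pa{Aq\bmod\bZ^m,\ c\norm{q}^{-n/m}}$ for every $q\in\bZ^n$ with $0<\norm{q}<Q_k$, where $c=c(m,n)>0$ is a small fixed constant and $Q_0<Q_1<\cdots$ is a rapidly increasing sequence of cut-offs. If this can be done with side lengths $\ell_k\to 0$, then $\{\eta\}:=\bigcap_k\Delta_k$ satisfies $\norm{q}^{n/m}\idist{Aq-\eta}\ge c$ for all $q\neq 0$, which is exactly \eqref{eq: liminf >0}.

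The inductive step is: given $\Delta_k$ of side $\ell_k$, produce $\Delta_{k+1}\subseteq\Delta_k$ of side $\ell_{k+1}$ avoiding the finitely many \emph{new} forbidden balls, namely those attached to $q$ with $Q_k\le\norm{q}<Q_{k+1}$ (the old ones are automatically avoided since $\Delta_{k+1}\subseteq\Delta_k$). Each new ball has radius at most $cQ_k^{-n/m}$, and only those meeting $\Delta_k$ — equivalently, the $q$ for which $Aq\bmod\bZ^m$ lies within $\sqrt m\,\ell_k+cQ_k^{-n/m}$ of the centre of $\Delta_k$ — are relevant; call their number $T_k$. A routine packing argument then produces a subcube of $\Delta_k$ of side $\ell_{k+1}\asymp_m \ell_k/T_k^{1/m}$ at positive distance from all of them, \emph{provided} $\ell_k$ dominates $T_k^{1/m}$ times the largest relevant radius. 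Thus everything reduces to choosing the cut-offs $Q_k$ and controlling the counts $T_k$.

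This control is the crux, and the only genuinely delicate point. A priori there are $\asymp Q_{k+1}^n$ new $q$'s, far more than one can afford, so one must prevent the dilated orbit $\seta{Aq\bmod\bZ^m:\norm{q}<Q_{k+1}}$ from concentrating near $\Delta_k$. The mechanism is a global counting bound: the orbit has only $\asymp Q^n$ points of norm $<Q$, so at any given scale only a small proportion of the cubes of that scale can be ``crowded'' (carry more than their average share of orbit points). At the inductive step one therefore selects $\Delta_{k+1}$ among the candidate subcubes of $\Delta_k$ so as to avoid simultaneously the new forbidden balls \emph{and} the crowded cubes, which keeps the subsequent $T_j$ bounded; one must still separately handle the degenerate situation in which the orbit is trapped in a small neighbourhood of a coset of a proper subtorus of $\bT^m$, where the problem reduces to lower dimension (the case $A=0$, with $\on{Bad}_A=\bT^m\sm\{0\}$, being trivial). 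The remaining work is the bookkeeping: balancing the growth of $Q_k$ against the decay of $\ell_k$ so that the packing inequality $\ell_k\gg_m T_k^{1/m}Q_k^{-n/m}$ persists at every step.

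Finally I remark that this argument can be upgraded to show that $\on{Bad}_A$ is a winning set for Schmidt's game, and that it has a transparent dynamical reformulation: $\eta\in\on{Bad}_A$ exactly when the forward orbit of the affine unimodular lattice $\smallmat{I_n&0\\A&I_m}\bZ^{n+m}-(0,\eta)$ under the diagonal flow $\diag{e^{t/n}I_n,\,e^{-t/m}I_m}$ stays in a fixed compact subset of the space of affine lattices, so that \eqref{eq: liminf >0} is an instance of the existence of bounded orbits for such flows.
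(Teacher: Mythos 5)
The paper does not actually prove this statement -- it is quoted from Cassels -- so there is no internal proof to compare against; but the known proofs (Cassels' original argument, and the winning-set arguments behind Theorem~\ref{thm: Bad is winning}) are indeed nested-compact-set constructions of the type you outline, and your reduction of everything to controlling the counts $T_k$ is the correct framing. The gap is in the mechanism you propose for that control, which is the crux you yourself identify. The statement ``only a small proportion of the cubes of scale $\ell_{k+1}$ can be crowded'' is an average over all $\asymp\ell_{k+1}^{-m}$ cubes tiling $\bT^m$, of which $\Delta_k$ contains only $(\ell_k/\ell_{k+1})^m$, a vanishing proportion; nothing prevents \emph{every} crowded cube, indeed essentially the entire orbit $\seta{Aq\bmod\bZ^m:\norm{q}<Q_{k+2}}$, from sitting inside $\Delta_k$. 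To localize the average bound you would need to know that $\Delta_k$ itself carries few orbit points of norm up to $Q_{k+2}$ (these determine crowdedness at the next scale), but when $\Delta_k$ was selected you only controlled points of norm up to $Q_{k+1}$; the induction does not close. The correct input is a \emph{pointwise} counting bound coming from the arithmetic of $A$: if two points $Aq,Aq'\bmod\bZ^m$ with $0<\norm{q},\norm{q'}\le Q$ lie within $2r$ of each other, then $\idist{A(q-q')}\le 2r$ with $0<\norm{q-q'}\le 2Q$, so for $r<\tfrac12\Psi_A(2Q)$ (notation of \S\ref{sec: Kolya}) every $r$-ball meets the orbit in at most one point. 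When $\Psi_A(Q)\gg Q^{-n/m}$ this finishes the job at scale $r\asymp Q^{-n/m}$; but in general, and in particular for singular $A$, one has $\Psi_A(Q)=o(Q^{-n/m})$, the clustering is genuine, the cluster lies near a coset of a lower-rank subgroup, and one must run the construction with scales $Q_k,\ell_k$ adapted to the best-approximation data $(M_l,\zeta_l)$ and count via successive minima of the boxes $\seta{\norm{q}\le Q,\ \idist{Aq}\le r}$, in the spirit of Lemma~\ref{lem: Kolya}. This is the actual content of the proof; your ``degenerate subtorus'' aside is not a separable easy case but precisely where the work lies, and as written the argument is missing it.

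Your closing dynamical remark is also incorrect as stated: since the projection from grids to lattices is proper, a bounded forward orbit of the grid in $Y$ forces $\seta{h_tx_A:t\ge0}$ to be bounded in $X$, i.e.\ forces $A$ to be badly approximable in the homogeneous sense; yet $\on{Bad}_A\ne\varnothing$ for every $A$, including singular $A$ whose orbit diverges (compare Theorem~\ref{thm: counterexample intro}). The correct reformulation is that the forward orbit of the grid stays uniformly away from grids containing short vectors with nonzero last block of coordinates, which is strictly weaker than precompactness, and (as the direction of your rescaling shows) it is the orbit that contracts the $q$-coordinates that is relevant.
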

This result was amplified significantly as follows:
\begin{thm}[Theorem 1.4 in \cite{ETcrelle} or Theorem 1 in \cite{Moshchevitin-badforms}]\label{thm: Bad is winning}
    For any $A\in \on{Mat}_{m\times n}(\bR)$, the set $\on{Bad}_A\subset \bT^m$ is a winning set for Schmidt's game. As a consequence, it is a dense subset of full Hausdorff dimension.
\end{thm}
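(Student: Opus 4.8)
The plan is to verify Schmidt's $(\alpha,\beta)$-game property of $\on{Bad}_A$ directly for one suitable $\alpha$ and all $\beta$, and then to quote Schmidt's theorems that an $\alpha$-winning set in a complete metric space without isolated points is dense, and that an $\alpha$-winning subset of $\bR^d$ --- hence of $\bT^m$ --- has full Hausdorff dimension; these give the ``consequence'' clause for free. For $c>0$ write
\[
\on{Bad}_A^{(c)}:=\seta{\eta\in\bT^m:\ \idist{Aq-\eta}\ge c\norm{q}^{-n/m}\text{ for all }q\in\bZ^n\sm\{0\}}=\bT^m\sm\bigcup_{q\ne0}D(q,c),
\]
where $D(q,c)\subset\bT^m$ is the ball of radius $c\norm{q}^{-n/m}$ about $Aq\bmod\bZ^m$. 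Clearly $\on{Bad}_A^{(c)}\subseteq\on{Bad}_A$, and since a superset of a winning set is winning it suffices to fix $\alpha$ once and for all (it will be small) and to show that for each $\beta\in(0,1)$ there is $c=c(\alpha,\beta,A)>0$ for which $\on{Bad}_A^{(c)}$ is $(\alpha,\beta)$-winning.

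The next step is to describe Alice's strategy in the game, where Bob produces nested balls $B_1\supseteq A_1\supseteq B_2\supseteq\cdots$ with $\on{rad}(B_i)=(\alpha\beta)^{i-1}\on{rad}(B_1)=:\rho_i$ and Alice picks the $A_i$. Partition $\bZ^n\sm\{0\}$ into geometric shells $S_i=\seta{q:\ Q_{i-1}<\norm{q}\le Q_i}$ with $Q_i\asymp(\alpha\beta)^{-im/n}$, chosen exactly so that for $q\in S_i$ the ball $D(q,c)$ has radius at most a small fixed multiple of $\rho_i$; the finitely many $q$ with $\norm{q}\le Q_0$ are disposed of in the first few moves by steering away from their dangerous balls (few and small, once $c$ is small). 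At stage $i$ Alice places $A_i\subseteq B_i$ disjoint from every $D(q,c)$ with $q\in S_i$ meeting $B_i$; as all later balls are nested in $A_i$, once this is done at every stage the outcome point lies in $\on{Bad}_A^{(c)}$. So the entire game reduces to a one-step \emph{dodging assertion}: if $c$ is small enough then, for every $i$, the set $\bigcup\seta{D(q,c):\ q\in S_i,\ D(q,c)\cap B_i\ne\emptyset}$ can be covered by at most $N_0=N_0(m,n,\norm{\cdot})$ balls of radius $\le\tfrac12\alpha\rho_i$; a routine volume/packing estimate (this is where $\alpha$ small is used) then leaves room for $A_i$ in the complement.

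The hard part is the dodging assertion, and specifically the fact that the number of $D(q,c)$, $q\in S_i$, meeting $B_i$ need \emph{not} be bounded: if $A$ is very well (e.g.\ Liouville-type) approximable there is a short $q'\in\bZ^n$ with $\idist{Aq'}$ abnormally small, and the progression $q'\bZ$ inside the relevant range produces many $q$ whose images $Aq$ pile up near one point. I would handle this by a geometry-of-numbers analysis of the unimodular lattice $u_A\bZ^{n+m}$, $u_A=\mat{I_n&0\\A&I_m}$, against the symmetric box $C_i=\seta{x:\ \norm{x^{(1)}}\le2Q_i,\ \norm{x^{(2)}}\le2\rho_i}$, whose volume --- by the choice of $Q_i$ --- is bounded independently of $i$. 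Writing $\lambda_1\le\cdots\le\lambda_{n+m}$ for the successive minima of $C_i$ with respect to $u_A\bZ^{n+m}$, Minkowski's second theorem controls $\prod_j\lambda_j$, and one shows that the relevant $q$'s fall into boundedly many cosets of the sublattice spanned by the directions with $\lambda_j<1$; along each coset the centres $Aq$ either collapse into a ball of radius $\ll\rho_i$ (the Liouville case) or lie on a grid of spacing $\gtrsim\rho_i$, so in either case a gap of size $\gtrsim\rho_i$ survives inside $B_i$. Making this dichotomy quantitative --- i.e.\ extracting the uniform covering bound $N_0$ --- is the technical heart; the remaining ingredients (the packing estimate, the first-few-moves bookkeeping, and the passage from ``$\alpha$-winning'' to ``dense of full Hausdorff dimension'') are standard. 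An alternative transfers the problem, via the inhomogeneous Dani correspondence, to showing that the $g_t$-orbit of the grid $u_A\bZ^{n+m}+(0,-\eta)$ eventually avoids a fixed cusp neighbourhood in the space of unimodular grids; the twist is that this neighbourhood is a ``tube'' with non-compact complement, so the usual bounded-orbit winning machinery must be adapted.
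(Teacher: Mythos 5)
First, note that the paper does not prove this statement at all: it is quoted from \cite{ETcrelle} and \cite{Moshchevitin-badforms}, so there is no internal argument to compare yours with, and I am judging your sketch against what a complete proof requires. Your skeleton is the standard one and the soft ingredients are fine: passing to $\on{Bad}_A^{(c)}$, the fact that supersets of winning sets are winning, the shell decomposition with $Q_i^{-n/m}\asymp\rho_i$, the disposal of small $q$, the packing estimate, and the citation of Schmidt's theorems for density and full dimension.

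The gap is exactly at the point you call the technical heart, and the dichotomy you propose there is false, so the one-step dodging assertion does not hold. Along a coset $q_0+\Lambda$ of the resonant sublattice the centres $Aq\bmod\bZ^m$ form a piece of an arithmetic progression whose step is $\delta=\min\{\idist{A\lambda}:0\ne\lambda\in\Lambda,\ \norm{\lambda}\le 2Q_i\}$, and $\delta$ can lie in a third regime you do not account for: $c\,Q_i^{-n/m}\ll\delta\ll\alpha\rho_i$ while the total sweep of the progression is comparable to $\rho_i$. Concretely, for $m=n=1$ take $q'$ with $|q'|\asymp Q_{i-1}$ and $\idist{q'\theta}\asymp\alpha\beta\rho_i\cdot T$ for a constant $2\le T\le 2/\beta$ (admissible, since the only constraint is $\idist{q'\theta}\le 1/|q'|$). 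The multiples $kq'\in S_i$ then give $\asymp 1/(T\alpha\beta)$ dangerous centres spaced $T\alpha\beta\rho_i<2\alpha\rho_i$ apart and spanning an interval of length $\ge 2\rho_i$; Bob can legally centre $B_i$ inside that span (the only previously dodged obstacle from this family, $D(q',c)$, sits to its left), and then no ball of radius $\alpha\rho_i$ fits in any gap, nor does any covering by $N_0(m,n)$ balls of radius $\tfrac12\alpha\rho_i$ leave room for $A_i$. These centres are neither collapsed into a ball of radius $\ll\rho_i$ nor separated by $\gtrsim\rho_i$, so the successive-minima dichotomy does not rescue the step. The theorem is nevertheless true, but the proofs in \cite{ETcrelle} and \cite{Moshchevitin-badforms} need an additional structural input precisely here: one shows that such pile-ups are confined to a thin neighbourhood of a proper affine subspace determined by the short resonant vectors (in effect, of a region controlled by obstacles that were, or could have been, dodged earlier), and one either dodges that subspace and runs an induction on its dimension, or spreads the dodging of a single shell over several rounds of the game with bookkeeping that is not local to one stage. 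That input is not a volume or packing count, and it is missing from your sketch; as written, the strategy can be defeated at a single stage.
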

For the notion of winning and Schmidt games see the seminal paper \cite{Schmidt-games}.
See also \cite{BHKV} where the fact that $\on{Bad}_A$ has full Hausdorff dimension was first proved.
Theorem~\ref{thm: Bad is winning} says that $\on{Bad}_A$ is large from the dimension point of view. In this paper we are interested in investigating its size from the measure theoretical point of view. The basic question that 
we investigate is the following:
\begin{ques}\label{ques: main}
    Given a probability measure $\mu$ on $\bT^m$, what can be said about 
    $\mu(\on{Bad}_A)$ and how does this relate to the Diophantine properties of $A.$
\end{ques}
To examplify a situation where Question~\ref{ques: main} has an easy answer we note the following. Consider the case $n=1$ in which case $A=\theta$ is a single column vector (or rather a point in $\bT^m$). Assume its coordinates satisfy that $1,\theta_1,\dots,\theta_m$ are linearly dependent over $\bQ$. This is equivalent to the fact that the cyclic subgroup generated by $\theta$ in $\bT^m$ is not dense. In fact, in such a case, $\overline{\seta{q\theta:q\in \bZ}}$ is a union of finitely many 
cosets of a lower dimensional subtorus. In particular, any $\eta$ outside this lower dimensional submanifold belongs to $\on{Bad}_\theta$. In particular, we get that 
$$\mu(\on{Bad}_\theta)=1$$
for various measures $\mu$. For example, for $\mu$ being the Lebesgue measure
on $\bT^m$ as well as many \textit{algebraic measures} according to the following definition.
\begin{defn}
    A probability measure $\mu$ on $\bT^m$ is said to be algebraic if there is
    a subspace $U<\bR^m$ whose image in $\bT^m$ is closed (i.e. $U\cap \bZ^m$ is a lattice in $U$), and $\mu$ is the $U$-invariant probability measure supported on a single $U$-orbit in $\bT^m$. We exclude the possibility of 
    $U=\seta{0}$ by saying that $\mu$ is non-trivial.
\end{defn}
The Diophantine condition saying that $1,\theta_1,\dots, \theta_m$ are linearly dependent over $\bQ$ is very strong. A relaxation of it is the famous notion of \textit{singularity}:

\begin{defn}[Singularity]
  We say that $A \in \on{Mat}_{m\times n}(\bR)$ is  singular if, for every $\varepsilon >0$, for all large enough $Q$ 
  \begin{equation}\label{eq: singular defn}
    Q^{n/m}\min\seta{\idist{Aq}: q\in \bZ^n\sm\seta{0}, \norm{q}\le Q}<\varepsilon.
\end{equation}
\end{defn}
Assuming non-singularity of $A$ we have the following result from 
\cite{GDV} that answers Question~\ref{ques: main} for the Lebesgue measure on
$\bT^m$.
\begin{thm}[\cite{GDV}]\label{thm: Uri lebesgue}
    Let $A\in \on{Mat}_{m\times n}(\bR)$ be non-singular and let $\lam_{\bT^m}$ denote the Lebesgue probability measure on $\bT^m$. Then $\lam_{\bT^m}(\on{Bad}_A) = 0$.
\end{thm}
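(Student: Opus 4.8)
The plan is to prove the theorem in the sharper, quantitative form of a divergent Borel--Cantelli statement: for $\lam_{\bT^m}$-a.e.\ $\eta$ we will produce $q_k \in \bZ^n \sm \{0\}$ with $\norm{q_k} \to \infty$ and $\norm{q_k}^{n/m}\idist{Aq_k-\eta} \to 0$, which forces $\eta \notin \on{Bad}_A$. Non-singularity enters in exactly one way. By definition, $A$ fails to be singular precisely when there are $\e_0 > 0$ and $Q_1 < Q_2 < \dots \to \infty$ with $\idist{Aq} \ge \e_0 Q_k^{-n/m}$ for all $q \in \bZ^n \sm \{0\}$ with $\norm{q} \le Q_k$. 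Applying this bound to differences $q - q'$, the finite set $P_k := \{Aq \bmod \bZ^m : \norm{q} \le Q_k/2\}$ consists of $\asymp Q_k^n$ points that are pairwise $\e_0 Q_k^{-n/m}$-separated in $\bT^m$: along a sequence of scales, the orbit of $A$ is as spread out as a volume packing argument permits.

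After replacing $(Q_k)$ by a subsequence growing fast enough that $(Q_j/Q_k)^n \le 1/j$ whenever $j < k$ (possible since $Q_k \to \infty$), set $r_k := Q_k^{-n/m}k^{-1/m}$ and let $N_k := \bigcup_{x \in P_k} B(x, r_k) \subset \bT^m$ be the union of the $r_k$-balls about the points of $P_k$. For $k$ large, $r_k$ is below half the separation of $P_k$, so this union is disjoint and $\lam_{\bT^m}(N_k) \asymp \#P_k \cdot r_k^m \asymp 1/k$; hence $\sum_k \lam_{\bT^m}(N_k) = \infty$. If $\eta \in N_k$ then $\idist{Aq - \eta} < r_k$ for some $q$ with $\norm{q} \le Q_k/2$, so $\norm{q}^{n/m}\idist{Aq-\eta} < 2^{-n/m}k^{-1/m}$. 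Since $\sum_k \lam_{\bT^m}(B(0,r_k)) < \infty$ and $\bigcup_q (Aq+\bZ^m)$ is countable, for a.e.\ $\eta \in \limsup_k N_k$ the witnessing $q = q_k$ is eventually nonzero with $\norm{q_k}$ unbounded, and then $\eta \notin \on{Bad}_A$. Thus everything reduces to showing $\lam_{\bT^m}(\limsup_k N_k) > 0$.

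The crux --- and the step I expect to be the main difficulty --- is the quasi-independence estimate $\lam_{\bT^m}(N_j \cap N_k) \lesssim \lam_{\bT^m}(N_j)\lam_{\bT^m}(N_k)$ for $j < k$, with an absolute implied constant. The growth condition $(Q_j/Q_k)^n \le 1/j$ forces $r_j \ge Q_k^{-n/m}$, so a single ball $B(x, r_j)$ of $N_j$ can meet at most $\lesssim (r_j Q_k^{n/m})^m = r_j^m Q_k^n$ of the $\e_0 Q_k^{-n/m}$-separated points of $P_k$ --- this is precisely where the separation, hence non-singularity, is used. Summing the resulting contributions $\lesssim r_j^m Q_k^n \cdot r_k^m$ over the $\asymp Q_j^n$ balls of $N_j$ gives $\lam_{\bT^m}(N_j \cap N_k) \lesssim (\#P_j\, r_j^m)(Q_k^n r_k^m) \asymp \tfrac{1}{jk}$, which is the desired bound. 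With $\sum_k \lam_{\bT^m}(N_k) = \infty$ and this estimate, the Kochen--Stone lemma (the quasi-independent divergence Borel--Cantelli lemma) yields $\lam_{\bT^m}(\limsup_k N_k) > 0$, and hence $\lam_{\bT^m}(\on{Bad}_A) < 1$.

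It remains to improve ``$< 1$'' to ``$= 0$'' via a zero--one law. The set $\on{Bad}_A$ is Borel and invariant under translation by every $Aq \bmod \bZ^m$ --- substituting $q' \mapsto q'-q$ in the defining $\liminf$ only multiplies $\norm{q'}^{n/m}$ by a factor tending to $1$ --- hence under the closed subgroup $\overline{\{Aq \bmod \bZ^m : q \in \bZ^n\}}$; and the same packing estimate, now applied at every large scale rather than along a sequence, shows that a proper such subgroup would force $A$ to be singular. So by non-singularity this subgroup is all of $\bT^m$, a measurable translation-invariant subset of $\bT^m$ has Lebesgue measure $0$ or $1$, and we conclude $\lam_{\bT^m}(\on{Bad}_A) = 0$. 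The same bookkeeping pinpoints why the hypothesis cannot be dropped: for singular $A$ the orbit $\{Aq \bmod \bZ^m\}$ may cluster, say on a proper subtorus, the sets $N_k$ then overlap far beyond what their measures predict, quasi-independence and the second Borel--Cantelli lemma fail, and $\on{Bad}_A$ can carry full measure.
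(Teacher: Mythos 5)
Your proof is correct, but it takes a genuinely different route from the paper's. The paper proves this theorem dynamically: via the Dani correspondence, non-singularity of $A$ becomes non-divergence of the orbit $\seta{h_tx_A}$ in the space of lattices, one then shows (by a character computation, Proposition~\ref{prop: non div implies mixing}) that the maps $\varepsilon_l h_{t_l}$ mix the Haar measure of the torus $\bR^d/x_A$ to that of a limit torus $\bR^d/x_1$, and mixing plus the Inheritance Lemma~\ref{lem: inheritance} gives that almost every grid is $DV_F$, which unwinds to $\eta\notin\on{Bad}_A$. You instead work directly on $\bT^m$: non-singularity along a sequence of scales $Q_k$ gives $\e_0Q_k^{-n/m}$-separation of the $\asymp Q_k^n$ orbit points, hence disjointness of the balls in $N_k$ and, after a lacunarity choice of the $Q_k$, the quasi-independence bound $\lam(N_j\cap N_k)\lesssim\lam(N_j)\lam(N_k)$; Kochen--Stone then gives $\lam(\limsup N_k)>0$, and the zero--one law (using that non-singularity forces $\overline{\seta{Aq\bmod\bZ^m}}=\bT^m$, since a proper closed subgroup could not contain $Q^n$ points that are $\gg Q^{-n/m}$-separated) upgrades this to full measure. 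All steps check out, including the separate Borel--Cantelli argument ensuring the witnessing $q_k$ have $\norm{q_k}\to\infty$. Your approach is elementary and self-contained and is in fact very close in spirit to the paper's own \S\ref{sec: Kolya} (Theorem~\ref{thm: Kolya}), which strengthens the result to a larger Diophantine class by exactly such a well-distribution plus divergence Borel--Cantelli argument (the paper invokes the same quasi-independence lemma from \cite{Gut}); it is also the flavor of the alternative proofs the paper cites in \cite{Kim-Kurzweil} and \cite{Mosh-welldistributed}. What the dynamical proof buys, and what your argument does not obviously give, is the ability to replace Lebesgue measure by lower-dimensional algebraic measures, which is the purpose of the rest of the paper: your packing and volume counts are intrinsically full-dimensional, whereas the mixing framework is the one that generalizes (with substantial extra work) to Theorem~\ref{thm: main theorem F formulation}.
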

As pointed out in \S\ref{sec: origins}, there is a mistake in the paper \cite{GDV}. Theorem~\ref{thm: Uri lebesgue} is a special case of \cite[Theorem 2.3]{GDV}, which in the generality stated there turns out to be false as we will see below. 
But, the argument presented there, for the case of the measure $\lam_{\bT^m}$, is robust enough to carry through. We will provide a full proof of Theorem~\ref{thm: Uri lebesgue} reproducing the argument of \cite{GDV} in \S\ref{sec: proof for Lebesuge}. 
This proof will also serve as 
an introduction to the proof of one of our main results Theorem~\ref{thm: main theorem F formulation} in \S\ref{sec: long sequences}. 
We note that since \cite{GDV}, several alternative proofs for Theorem~\ref{thm: Uri lebesgue} have appeared. See \cite[Corollary 1.4]{Kim-Kurzweil}, \cite[Theorem 1]{Mosh-welldistributed} and \cite[Theorem 1.3]{BDGW} and also \cite[Theorem 1.2]{DHKim-Nonlinearity} for the one-dimensional case.
See also Remark~\ref{rem: Kolya} and Theorem~\ref{thm: Kolya} for a slightly stronger result than Theorem~\ref{thm: Uri lebesgue} which we 
prove in \S\ref{sec: Kolya}.

Since David Simmons spotted the gap in \cite[\S2.2,\S3.2]{GDV} the third named author tried to rule regarding the validity/falsity of the results there, most notably regarding \cite[Theorem 2.3]{GDV}, which implies that if 
$A$ is non-singular, then $\mu(\on{Bad}_A)=0$ for every non-trivial 
algebraic 
measure $\mu$ on $\bT^m$. One of the main results we present in this paper is the following construction which shows that this statement may fail drastically. In it we choose $n=1$ and take $A=\theta$ to be a vector.
\begin{thm}\label{thm: counterexample intro}
    Let $m \in \N$ with $m > 2$. There exists $\theta \in \bR^m$ which is non-singular and an $\eta \in \bR^m$ such that, for every $(t_3,
    \dots, t_m) \in \R^{m-2}$, we have
    \begin{equation*}
        \inf_{q \in \Z \smallsetminus \{0\}} \ |q|^{1/m} \idist{q\theta  -( \eta + t_3\ve_3 + \dots + t_m \ve_m)} > 0.
    \end{equation*}
    In particular $\on{Bad}_\theta$ contains a coset of a subtorus of codimension $2$ and as a consequence, there are non-trivial algebraic measures $\mu$ satisfying $\mu(\on{Bad}_\theta) =1$.
\end{thm}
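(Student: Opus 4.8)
The plan is to reduce the assertion to a two–dimensional inhomogeneous approximation problem, solve that problem by combining a construction in the parametric geometry of numbers with the transference theorems of Jarník and Khintchine, and then control a non-singular extension. Put $\alpha:=(\theta_1,\theta_2)\in\bR^2$, $\xi:=(\eta_1,\eta_2)$, take $\eta=(\eta_1,\eta_2,0,\dots,0)$, and use the sup–norm on $\bR^m$ (which is harmless). Since the first two coordinates of $\eta+t_3\ve_3+\dots+t_m\ve_m$ are $\eta_1,\eta_2$, one has
\[
\idist{q\theta-(\eta+t_3\ve_3+\dots+t_m\ve_m)}\ \ge\ \idist{q\alpha-\xi}_{\bT^2}\qquad(q\in\bZ,\ (t_3,\dots,t_m)\in\bR^{m-2}),
\]
where $\idist{\cdot}_{\bT^2}$ denotes distance in $\bR^2/\bZ^2$; hence it would suffice to produce $\alpha,\xi\in\bR^2$ with $\inf_{q\ne0}|q|^{1/m}\idist{q\alpha-\xi}_{\bT^2}>0$ together with $\theta_3,\dots,\theta_m$ making $\theta:=(\theta_1,\dots,\theta_m)$ non-singular. (Equivalently, what one needs is that the $\bZ$–orbit of $\theta$ approaches the coset $\eta+\langle\ve_3,\dots,\ve_m\rangle$ only while sweeping along it, so that no single point of the coset is hit at the critical rate.) Because $1/m<1/2$, the displayed infimum asks $\xi$ to be badly approximable for $\alpha$ at a \emph{sub-critical} exponent; by the Dani correspondence for affine lattices this forces $\alpha$ to be singular, and in particular $1,\theta_1,\theta_2$ must be $\bQ$–linearly independent, since a rational relation among $\theta_1,\theta_2$ would force $\theta$ to be singular (via Khintchine transference). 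Thus, unlike the elementary constructions built on rational dependence, one is compelled to work with a genuinely singular, rationally generic pair $\alpha$.

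\emph{Constructing the configuration.} Using Roy's realisation theorem in the parametric geometry of numbers, I would construct $\alpha\in\bR^2$ with $1,\theta_1,\theta_2$ linearly independent over $\bQ$, realising a prescribed successive–minima template that makes $\alpha$ singular with uniform exponent $\hat\omega(\alpha)$ kept just below the critical threshold but with ordinary exponent $\omega(\alpha)$ large, arranged so that the nested orbits $\{q\alpha:|q|\le Q\}$ in $\bT^2$ fail to be $cQ^{-1/m}$–dense for all large $Q$. From such an $\alpha$, an inhomogeneous transference theorem in the spirit of Jarník — producing, from a singular homogeneous system, an inhomogeneous target whose approximation is correspondingly poor — yields $\xi\in\bR^2$ with $\idist{q\alpha-\xi}_{\bT^2}\gg|q|^{-1/m}$ for all $q\ne0$; concretely, $\xi$ is selected in the nested \emph{deep holes} of the orbits by a Cantor/Schmidt–game argument, the holes retaining diameter $\gg Q^{-1/m}$ precisely because of the prescribed singularity of $\alpha$.

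\emph{Non-singular extension and conclusion.} Finally one chooses $\theta_3,\dots,\theta_m$ (Lebesgue–generically) and verifies non-singularity of $\theta$ by passing, via Khintchine's transference principle, to the dual linear form $L_\theta(\vp):=p_1\theta_1+\dots+p_m\theta_m$: the vector $\theta$ is non-singular iff $L_\theta$ is, i.e. $\idist{L_\theta(\vp)}\gg\|\vp\|^{-m}$ for all $0\ne\vp\in\bZ^m$ with $\|\vp\|\le P$, along a sequence of scales $P\to\infty$. The extra variables decouple the singular two–variable subform $p_1\theta_1+p_2\theta_2$: the expected number of $\vp$ with $\|\vp\|\le P$ and $\idist{L_\theta(\vp)}\le P^{-m}$ is $O_m(1)$, so along a sparse sequence of scales a Borel–Cantelli argument produces infinitely many $P$ carrying no such $\vp$ — which is possible exactly because $\hat\omega(\alpha)$ (equivalently, the uniform exponent of $L_\alpha$) was kept below the critical threshold in the previous step. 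Granting this, $\eta+\langle\ve_3,\dots,\ve_m\rangle\subset\on{Bad}_\theta$; its image in $\bT^m$ is a coset of the closed codimension–$2$ subtorus $\langle\ve_3,\dots,\ve_m\rangle/\bZ^{m-2}$, which carries the $\langle\ve_3,\dots,\ve_m\rangle$–invariant algebraic probability measure $\mu$, non-trivial because $m>2$, and $\mu(\on{Bad}_\theta)=1$.

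\emph{Main obstacle.} The heart of the argument is the competition between the two requirements. A sub-critical bad target forces $\alpha$ to be singular, and the smaller $1/m$ is the more singular $\alpha$ must be; yet too much singularity of $\alpha$ propagates — via Khintchine transference and the restriction of $L_\theta$ to its first two variables — to singularity of $\theta$. Fitting Roy's template for $\alpha$ into the narrow window where the orbit has holes of size $\gg Q^{-1/m}$ while the dual form $L_\alpha$ still has uniform exponent below the threshold, and then checking that Jarník's inhomogeneous transference delivers a target at exactly the critical exponent $1/m$, is the delicate quantitative core; if the simple projection above turns out to be too lossy, the same ingredients are used to realise directly the ``sweeping along the coset'' behaviour, but the balance one has to strike is the same.
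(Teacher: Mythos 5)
Your proposal follows essentially the same route as the paper: project to the first two coordinates, use Roy's parametric geometry of numbers to build a pair $(\theta_1,\theta_2)$ whose dual linear form satisfies a two-sided bound ($\Psi(t)t^m\le C_1$ always, $\Psi(t)t^m\ge C_2$ unboundedly often), feed the upper bound into Jarn\'ik's transference theorem to produce the bad target $(\eta_1,\eta_2)$, and use the lower bound together with a divergence Borel--Cantelli argument to choose $(\theta_3,\dots,\theta_m)$ generically so that the augmented vector is non-singular. The tension you single out as the main obstacle is exactly what Proposition~\ref{prop: special singular form} resolves via these two-sided bounds, so the plan is sound and matches the paper's proof.
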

This theorem is proved as Theorem \ref{thm: counterexample} below.
    Here $\ve_3,\dots, \ve_m$ denote last $m-2$ standard basis vectors for $\R^m$.
\begin{rem}
    We note here that it is very likely that Theorem~\ref{thm: counterexample intro} could be generalized to $n>1$. This would probably involve using the new 
    theory of parametric geometry of numbers as in \cite{DFSU}. We were content with using this theory as presented in \cite{Roy-MathZ} which seems to give only the $n=1$ case.
\end{rem}
\begin{rem}
    We note here that we were unable to provide a construction of non-singular
    $\theta\in \bR^m$ such that $\mu(\on{Bad}_\theta)>0$ for an algebraic measure corresponding to a co-dimension 1 subtorus. In particular, the case $m=2$ remains open. That is, is there a non-singular vector $\theta\in \bR^2$ and an algebraic measure $\mu$ on $\bT^2$ of dimension 1 such that $\mu(\on{Bad}_\theta)>0$?
\end{rem}
We proceed to state versions of our main result which could be viewed as an amendment of \cite[Theorem 2.3]{GDV}. We introduce a novel Diophantine condition on $A$ that strengthens non-singularity and ensures that $\mu(\on{Bad}_A)=0$ for any non-trivial algebraic measure $\mu$ on $\bT^m$. 
This Diophantine condition is a bit elaborate to state and is dynamical in nature. 

Let $d:=n+m$. Let $X$ denote the space of unimodular (i.e.\ covolume 1) lattices 
in $\bR^d$. As usual $X$ is identified with the quotient $\SL_d(\bR)/\SL_d(\bZ)$ via $g\SL_d(\bZ)\leftrightarrow g\bZ^d$. This identification gives rise to a topology on $X$ (the quotient topology), as well as to a natural action of $\SL_d(\bR)$ on $X$. 
Given $A \in \on{Mat}_{m\times n}(\R)$ we consider the lattice
\begin{equation}\label{eq: xA}
    x_A := \left[ {\begin{array}{cc} I_m & A \\ 
    0 & I_n  \end{array}}\right]\Z^d.
\end{equation}
Of particular interest to our discussion is the following one-parameter subgroup
\begin{equation}\label{eq: ht definition}
    h_t := \left[ {\begin{array}{cc} e^{nt}I_m & 0 \\ 
    0 & e^{-mt}I_n  \end{array}}\right] \in \SL_d(\bR),\;\; t\in \bR.
\end{equation}
The so called \textit{Dani correspondence} (see \cite{Dadivergent}) says that many 
of the Diophantine properties of $A$ could be read from the topological and statistical properties of the orbit $\seta{h_tx_A:t\ge 0}$. 
In particular, we have the following famous characterization of singularity.
\begin{thm}[Theorem 2.14 in \cite{Dadivergent}]\label{thm: Dani singular}
    A matrix $A \in \on{Mat}_{m\times n}(\R)$ is singular if and only if the orbit 
    $\seta{h_tx_A:t\ge 0}$
    is divergent.
\end{thm}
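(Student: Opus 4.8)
The plan is to read the statement off the Dani correspondence dictionary: combine Mahler's compactness criterion with an explicit computation of the shortest vector in the lattices $h_tx_A$, and then match the resulting asymptotic condition with the definition of singularity via the substitution $Q=e^{mt}$.

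First I would recall that the orbit $\seta{h_tx_A:t\ge0}$ is divergent exactly when it eventually leaves every compact subset of $X$, and that by Mahler's criterion the compact subsets of $X$ are precisely the closed sets on which the length $\lambda_1(\Lambda)$ of a shortest nonzero vector of $\Lambda$ is bounded below. Hence divergence of the orbit is equivalent to
\[
 \lambda_1(h_tx_A)\longrightarrow 0\qquad\text{as }t\to\infty,
\]
where $\lambda_1$ may be measured with respect to the sup norm on $\bR^d=\bR^m\oplus\bR^n$ assembled from the fixed norms on $\bR^m$ and $\bR^n$; all norms being equivalent, the choice is immaterial for this qualitative claim.

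Next I would compute the relevant lengths. Writing a vector of $\bZ^d$ as $(p,q)$ with $p\in\bZ^m$, $q\in\bZ^n$, the lattice $h_tx_A$ consists of the vectors
\[
 h_t\mat{I_m & A\\ 0 & I_n}\mat{p\\ q}=\mat{e^{nt}(p+Aq)\\ e^{-mt}q},\qquad (p,q)\in\bZ^d.
\]
For a fixed $q$ the optimal $p\in\bZ^m$ brings the top block to length $\idist{Aq}=\dist(Aq,\bZ^m)$, so the shortest vector with that value of $q$ has length $\max\pa{e^{nt}\idist{Aq},\,e^{-mt}\norm q}$, while the shortest vector with $q=0$ has length $e^{nt}\lambda_1(\bZ^m)\to\infty$. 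Since this last quantity diverges, $\lambda_1(h_tx_A)\to0$ is equivalent to
\[
 \min_{q\in\bZ^n\sm\seta0}\max\pa{e^{nt}\idist{Aq},\,e^{-mt}\norm q}\longrightarrow0,
\]
i.e.\ to the statement that for every $\e>0$ there is $T$ such that for all $t\ge T$ there is $q\in\bZ^n\sm\seta0$ with $\norm q\le\e e^{mt}$ and $\idist{Aq}\le\e e^{-nt}$.

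Finally I would match this with singularity using $Q=e^{mt}$, so that $Q^{n/m}=e^{nt}$: in these terms, singularity of $A$ says precisely that for every $\e>0$ and all large $t$ there is $q\in\bZ^n\sm\seta0$ with $\norm q\le e^{mt}$ and $\idist{Aq}<\e e^{-nt}$. The implication ``$\lambda_1\to0\Rightarrow A$ singular'' is then immediate by taking $\e\le1$ in the dynamical condition. For the converse, fix $\e\in(0,1)$, put $s=\tfrac1m\log(1/\e)$, apply singularity with the smaller parameter $\e^{1+n/m}$ to obtain a threshold beyond which the corresponding $q$'s exist, and evaluate it at the shifted time $t-s$: the resulting $q$ satisfies $\norm q\le e^{-ms}e^{mt}=\e e^{mt}$ and $\idist{Aq}<\e^{1+n/m}e^{ns}e^{-nt}=\e e^{-nt}$, which is exactly the dynamical condition. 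I expect the only genuinely delicate point to be this last bookkeeping, where the parameter $\e$ multiplies $\norm q$ in the dynamical formulation but is absent from the classical definition of singularity; the logarithmic time-shift reconciles the two quantifier patterns, and beyond that the proof is the routine Dani correspondence computation.
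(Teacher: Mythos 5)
Your argument is correct: the paper states this result as a citation to Dani's 1985 paper and gives no proof of its own, and what you have written is precisely the standard Dani-correspondence computation (Mahler's criterion, the explicit form of the vectors $h_t x_A \ni \bigl(e^{nt}(p+Aq), e^{-mt}q\bigr)$, and the substitution $Q = e^{mt}$). The quantifier-matching in the converse direction, via the time shift $s = \tfrac{1}{m}\log(1/\varepsilon)$ and the auxiliary parameter $\varepsilon^{1+n/m}$, checks out exactly.
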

Here the orbit is said to be divergent if the map $t\mapsto h_tx_A$ is a \textit{proper} map from $\bR_{\ge0}$ to $X$ (i.e. preimages of compact sets are compact). 
Our novel Diophantine condition on $A$ is a relaxation of singularity
that is stated in terms of a topological property of the orbit $\seta{h_tx_A:t\ge 0}$. 
\begin{defn}[Asymptotic accumulation points]\label{defn: asymptotic limit points}
    Let $x \in X$. We define the set of asymptotic accumulation points of $x$ as
    \begin{equation*}
        \partial(x) := \left\lbrace y \in X: \text{there is an unbounded sequence } (t_k)_{k \in \N} \subset \R_{\geq 0} \text{ with } \lim_{k \to \infty} h_{t_k}x = y \right\rbrace.
    \end{equation*}
\end{defn}
\begin{defn}[Accumulation sequences and $k$-divergence]\label{defn: kdiv lattice}
    Let $x \in X$. A sequence
    \begin{equation*}
        I = (x_0=x, x_1, \dots , x_k) \subset X^{k+1}\ \text{ with }\ x_{i+1} \in \partial(x_i) \text{ for } i=0,\dots k-1
    \end{equation*}
    is called an accumulation sequence of length $k+1$ for $x$.
    If 
    \begin{equation*}
        k= \sup \left\lbrace \on{length}(I) - 1 : I \text{ is an accumulation sequence for } x\right\rbrace,
    \end{equation*}
    we say $x$ is $k$-divergent. A matrix $A$ is said to be $k$-divergent if the lattice $x_A$ is.
\end{defn}
\begin{rem}
    The set $\partial(x)$ and as a consequence the definition of $k$-divergent lattices depend on the semi-group $\seta{h_t:t\ge0}$. Versions 
    of these notions could be investigated for other groups and semigroups. We choose to work with this particular semi-group because of the relation to Diophantine approximation given by the Dani correspondence as reflected in Theorem~\ref{thm: Dani singular}.
\end{rem}
Note that by Theorem~\ref{thm: Dani singular}, saying that $A$ is singular is the same as saying it is $0$-divergent. In the following results we assume $A$ is not $k$-divergent for small values of $k$. This gives enough dynamical richness for the orbit $\seta{h_tx_A:t\ge 0}$ for our arguments to carry through.  
\begin{thm}\label{thm: 1 dim in Zm-2}
    Assume  $A\in \on{Mat}_{m\times n}(\bR)$ is not $k$-divergent for
    any $0\le k\leq d-2$ (in other words, the lattice $x_A$ has an accumulation sequence of length $d$), and that $\gcd(m,n)=1$. Then, for any non-trivial algebraic measure $\mu$ on $\bT^m$ we have that 
    $$\mu(\on{Bad}_A) = 0.$$
\end{thm}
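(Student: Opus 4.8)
\medskip
\noindent\textbf{Proof plan.}
The strategy is to transfer the problem to the dynamics of $(h_t)$ on the space of grids via the Dani correspondence and then to run a variant of the argument for Theorem~\ref{thm: Uri lebesgue}, with Lebesgue measure replaced by the given algebraic measure $\mu$; the extra ingredient that makes this possible is the dynamical richness guaranteed by the existence of a length-$d$ accumulation sequence for $x_A$. Concretely, fix a non-trivial algebraic measure $\mu$, i.e.\ the Haar probability measure on a coset $\eta_0+\bT_U$ where $U<\bR^m$ is a rational subspace with $r:=\dim U\ge 1$ and $\bT_U$ the corresponding subtorus. For $\eta\in\bT^m$ set $\Lambda_{A,\eta}:=x_A-\smallmat{\eta\\0}$, regarded as a point of the space of unimodular grids $Y:=\operatorname{ASL}_d(\bR)/\operatorname{ASL}_d(\bZ)$, which fibers over $X$, and let $\delta(\Lambda):=\inf_{v\in\Lambda}\norm{v}$. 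The computation underlying Theorem~\ref{thm: Dani singular}, carried out with $\Lambda_{A,\eta}$ in place of $x_A$, shows that for every $\eta$ other than the image of $0$,
$$\eta\in\on{Bad}_A\iff \liminf_{t\to\infty}\delta(h_t\Lambda_{A,\eta})>0,$$
so it suffices to prove that $\liminf_{t\to\infty}\delta(h_t\Lambda_{A,\eta})=0$ for $\mu$-a.e.\ $\eta$.

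The next step is to view $\mu$ as living on $Y$: the set $\ca P:=\seta{\Lambda_{A,\eta}:\eta\in\eta_0+\bT_U}$ is an $r$-dimensional subtorus of the fiber of $Y\to X$ over $x_A$, lying in the $\bR^m$-translation directions, which $h_t$ expands by the factor $e^{nt}$; equivalently the pushforward $\mu_0$ of $\mu$ under $\eta\mapsto\Lambda_{A,\eta}$ is a probability measure on $Y$ invariant under the unipotent subgroup $\seta{(I,(u,0)):u\in U}$, which sits inside the expanding horospherical subgroup of $h_t$. The heart of the proof is to show that the translates $(h_t)_*\mu_0$ converge as $t\to\infty$ — and in a strong, decorrelating sense — to a homogeneous measure $\nu$ on $Y$ with $\nu(\seta{\delta<\varepsilon})>0$ for every $\varepsilon>0$; this is the content that should be extracted from Theorem~\ref{thm: main theorem F formulation}. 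Both hypotheses enter here. The assumption that $x_A$ is not $k$-divergent for $0\le k\le d-2$, i.e.\ that it admits an accumulation sequence $(x_0=x_A,x_1,\dots,x_{d-1})$ of length $d$ with $x_{i+1}\in\partial(x_i)$, rules out escape of mass and, through an inductive renormalization that peels off one accumulation point $x_{i+1}\in\partial(x_i)$ at a time — which is what the notion of mixing convergence of measures is designed for — forces $\nu$ to be genuinely spread out rather than concentrated on a thin, near-cusp piece of $Y$. The coprimality $\gcd(m,n)=1$ is used to rule out an intermediate $h_t$-invariant set that could trap the expanding translates, so that $\seta{h_t:t\ge0}$ meets the coset $\eta_0+\bT_U$ non-degenerately (in particular it forces the group generated by $\bZ^m$ and the columns of $A$ to be dense in $\bR^m$). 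Granting all this, the crucial property $\nu(\seta{\delta<\varepsilon})>0$ is elementary: $\nu$ is saturated by the full $\bR^m$-translation directions over a positive-measure set of lattices $y\in X$, and for any $y$ whose projection to the last $n$ coordinates is dense in $\bR^n$ — a full-measure condition — a positive-measure set of points of the $\bR^m$-fiber-torus over $y$ lies within distance $\varepsilon$ of $y$. I expect this equidistribution step — producing the limit and controlling escape of mass having only the accumulation-sequence hypothesis available — to be the main obstacle.

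Finally I would pass from ``$(h_t)_*\mu_0$ eventually charges $\seta{\delta<\varepsilon}$'' to an almost-everywhere statement. For fixed $\varepsilon>0$, the decorrelation built into the mixing convergence shows that the events $\seta{\eta:\delta(h_t\Lambda_{A,\eta})<\varepsilon}$ at well-separated times are asymptotically independent and each of $\mu$-measure bounded below, so a Borel--Cantelli argument gives that for $\mu$-a.e.\ $\eta$ one has $\delta(h_t\Lambda_{A,\eta})<\varepsilon$ for arbitrarily large $t$. Intersecting over $\varepsilon=1/j$, $j\in\bN$, yields $\liminf_{t\to\infty}\delta(h_t\Lambda_{A,\eta})=0$ for $\mu$-a.e.\ $\eta$, and therefore $\mu(\on{Bad}_A)=0$.
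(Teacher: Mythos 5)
Your reduction to the dynamics of $h_t$ on grids, and your identification of the target (weak* limits of the pushed-forward measure along the accumulation sequence, with coprimality forcing the limit to be large), match the paper's strategy. But the load-bearing step of your plan --- that the translates converge \emph{in a decorrelating (mixing) sense}, so that the events $\{\eta:\delta(h_t\Lambda_{A,\eta})<\varepsilon\}$ at separated times are asymptotically independent and Borel--Cantelli applies --- is precisely the step that can fail, and its unjustified use is the error in \cite{GDV} that this paper exists to repair. For an algebraic measure of positive codimension, the pushforwards $(h_{t_l})_*\mu_0$ can converge weak* to the Haar measure of the full torus $\bR^d/x_{d-1}$ \emph{without} mixing: a correlation $\int\chi_b(\varepsilon_lh_{t_l}z)\chi_a(z)\,d\mu_0(z)$ can equal $1$ along a subsequence for some nonzero character $\chi_b$ (this is the case $\gamma_l^T(b)=-a$ isolated in the proof of Lemma~\ref{lem: coset}), and then your asymptotic-independence claim is simply false. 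Weak* convergence alone gives only $\liminf_l \mu\big(\{\eta:\delta(h_{t_l}\Lambda_{A,\eta})<\varepsilon\}\big)>0$, which does not yield an almost-everywhere statement.

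The paper's fix is different: it never upgrades to mixing. Instead, the Coset Lemma~\ref{lem: coset} shows that when mixing fails one still gets, for $\mu$-a.e.\ $\eta$, that the closure of $\seta{h_{t_l}\Lambda_{A,\eta}}$ contains a full coset of a $(d-1)$-dimensional subtorus of $\bR^d/x_{d-1}$; and Lemma~\ref{lem: non degeneracy degree} (the non-degeneracy degree of $F$ is $d-1$) shows that $F$ already attains every value in $[0,\infty)$ on such a coset, so the grid is $DV_F$ and the desired $\liminf=0$ follows with no independence or Borel--Cantelli input. Two smaller corrections: the limit measure lives on the single fiber $\pi^{-1}(x_{d-1})$, not on a positive-measure set of lattices; and $\gcd(m,n)=1$ is not about density of $\bZ^m+A\bZ^n$ --- it is used (Lemma~\ref{lemma: zero is not a weight}) to guarantee that $\wedge^k h_t$ has no trivial weight for $1\le k<d$, so that no proper rational subspace can stabilize under $\varepsilon_lh_{t_l}$ and the dimension of the limiting algebraic measure must strictly increase at each step of the accumulation sequence.
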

\begin{rem}
    The assumption $\gcd(n,m)=1$ appearing in some of our results is curious.  We are not sure how much of it is an artifact of the method of proof.
    See Lemma~\ref{lemma: zero is not a weight} and Corollary~\ref{cor: subspaces do not stabilize} where this assumption enters the discussion. This assumption is not entirely redundant though, as 
    the case $n=m$ is generally false. See Example~\ref{exmp: counter n=m}.
\end{rem}
Theorem \ref{thm: 1 dim in Zm-2} could be restated more explicitly as follows. Since any $\ell$-dimensional subtorus of $\bT^m$ can be presented as a product of a 1-dimensional subtorus and a $\ell-1$-dimensional one, by Fubini's theorem we see that the general statement of Theorem~\ref{thm: 1 dim in Zm-2} 
follows from the corresponding statement for 1-dimensional subtori. Since 1-dimensional subtori of $\bT^m$ are in 1-2 correspondence to primitive integral vectors (up to sign), the following is a restatement of Theorem
\ref{thm: 1 dim in Zm-2}.
\begin{thm}\label{thm: 1 dim in Zm}
Assume $A\in \on{Mat}_{m\times n}(\bR)$ is not $k$-divergent for any $0\le k\le 
d-2$ (in other words, the lattice $x_A$ has an accumulation sequence of length $d$), and $\gcd(n,m)=1$. Then,    
for any $p_0 \in \Z^m$ and $\eta \in \R^m$, we have for Lebesgue almost every $t \in \R$
    \begin{equation*}
        \liminf_{q\in \bZ^n, \norm{q}\to\infty} \norm{q}^{n/m} \langle Aq - (tp_0 + \eta)\rangle = 0.
    \end{equation*}
\end{thm}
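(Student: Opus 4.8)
\emph{Proof idea.} The plan is to translate the problem into homogeneous dynamics via the inhomogeneous Dani correspondence and then run a dynamical Borel--Cantelli argument whose quantitative input is extracted from the richness of the orbit $\{h_tx_A:t\ge0\}$ guaranteed by the existence of an accumulation sequence of length $d$. Let $Y:=(\SL_d(\bR)\ltimes\bR^d)/(\SL_d(\bZ)\ltimes\bZ^d)$ be the space of unimodular grids, with projection $\pi\colon Y\to X$ to the underlying lattice, and for $z\in Y$ set $\delta(z):=\min\{\norm w:w\in z\}$. To a target $\xi\in\bR^m$ attach the grid $x_{A,\xi}:=x_A-(\xi,0)\in Y$. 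A computation identical to the homogeneous Dani correspondence shows that for every $\xi$ outside a Lebesgue-null set one has $\xi\in\on{Bad}_A$ if and only if $\liminf_{s\to\infty}\delta(h_sx_{A,\xi})>0$. Fixing $p_0\in\bZ^m$, which we may take nonzero and primitive since otherwise the statement is vacuous, and $\eta\in\bR^m$, and writing $\on{Bad}_A$ as the countable increasing union over $N\in\bN$ of the sets where the defining $\liminf$ exceeds $1/N$, it is enough to show that for each $N$ the set of $t$ for which $\delta(h_sx_{A,tp_0+\eta})<1/N$ for arbitrarily large $s$ has full Lebesgue measure; by translating $\eta$ we may restrict $t$ to a fixed bounded interval.

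Next I would set up the family of measures that governs the problem. Since $p_0$ is integral, $c\mapsto x_{A,cp_0+\eta}$ has period $1$ and sweeps out a circle $\cC\subset Y$ which is a single orbit of the one-parameter unipotent group $U:=\{u_{cp_0}:c\in\bR\}$, where $u_v$ denotes the affine translation $w\mapsto w+(v,0)$; let $\nu$ be the $U$-invariant probability measure on $\cC$. From $h_su_vh_{-s}=u_{e^{ns}v}$ it follows that the pushforward $\nu_s:=(h_s)_*\nu$ is again the uniform probability measure on a $U$-orbit segment, of length $e^{ns}$, based at the grid $z_s:=h_sx_{A,\eta}$. The problem thus reduces to two questions: first, understanding the weak-$*$ limits of the long unipotent orbit segments $\nu_s$ as their base point $z_s$ drifts along the $h_s$-orbit of $x_{A,\eta}$; and second, converting such a limit statement into an almost-everywhere statement along $\cC$.

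The heart of the matter is the first question. Here I would exploit the accumulation sequence $(x_A=y_0,\dots,y_{d-1})$ of length $d$: at the grid level the accumulation structure of $\{h_sx_A\}$ propagates, so that along suitable $s_k\to\infty$ the base points $z_{s_k}$ accumulate on grids lying over the lattices $y_i$, and --- because $h_s$ expands the translation coordinate --- each of the $d-1$ accumulation steps enlarges the closure of the grid orbit. The claim to be proved is that after all $d-1$ steps this closure is already large enough that the segments $\nu_{s_k}$ can neither escape to the cusp (by the non-divergence estimates for unipotent orbits) nor be trapped on a proper closed invariant subset; the hypothesis $\gcd(m,n)=1$ is exactly what prevents the intermediate subtori and subspaces arising here from being stabilized, which is the content of the forthcoming Lemma~\ref{lemma: zero is not a weight} and Corollary~\ref{cor: subspaces do not stabilize}. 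A direct analysis of the homogeneous dynamics (not appealing to Ratner's theorem) then yields $\nu_{s_k}\to m_Y$, the Haar measure on $Y$ --- or at least convergence to a measure assigning positive mass to $\{z:\delta(z)<1/N\}$ for every $N$ --- so that $\nu_{s_k}(\{z:\delta(z)<1/N\})\ge c_N>0$ for all large $k$.

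It remains to upgrade this into the desired full-measure conclusion, and this is where the notion of \emph{mixing convergence of measures} enters, just as in the proof of Theorem~\ref{thm: Uri lebesgue} reproduced in \S\ref{sec: proof for Lebesuge}: ordinary mixing of $h_s$ against the singular measure $\nu$ is unavailable, but the pushforwards $\nu_s$ converge to the mixing measure $m_Y$ in a manner compatible with the flow, and via harmonic analysis on $Y$ this supplies enough quasi-independence of the events $\{z:\delta(h_{s_k}z)<1/N\}$ along $\cC$ to invoke the Borel--Cantelli lemma. Consequently $\nu$-almost every point of $\cC$, equivalently Lebesgue-almost every $t$, belongs to $\{z:\delta(h_{s_k}z)<1/N\}$ for infinitely many $k$; intersecting over $N$ completes the proof. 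The main obstacle I anticipate is the third paragraph: converting the purely topological length-$d$ accumulation hypothesis (together with $\gcd(m,n)=1$) into genuine, base-point-uniform equidistribution of the long $U$-orbit segments $\nu_s$, since $k$-divergence constrains only the coarse topology of the orbit and the enlargement of the orbit closure at each of the $d-1$ steps must be made quantitative enough to drive the Borel--Cantelli mechanism.
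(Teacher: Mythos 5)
Your overall framework agrees with the paper's: both reduce the statement to a value-set/short-vector property of the grids $x_A-(tp_0+\eta,0)$, both push the one-dimensional measure $\nu$ on the circle $\cC$ forward by $h_s$, and both use the length-$d$ accumulation sequence together with $\gcd(m,n)=1$ to force the weak* limits to grow at each of the $d-1$ steps (this is exactly where Lemma~\ref{lemma: zero is not a weight} and Corollary~\ref{cor: subspaces do not stabilize} enter, as you anticipated). One structural discrepancy: along the subsequences produced by the accumulation sequence the limits of $(h_{s_k})_*\nu$ are not the Haar measure on $Y$, nor anything close to it; they are algebraic measures supported on a single fiber $\pi^{-1}(x_i)=\bR^d/x_i$, and the paper's Theorem~\ref{thm: ht limits must jump} and Corollary~\ref{cor: acc seq measure dimension} show (via exterior powers and the weights $\al_I(t)$) that the dimension of these fiber measures increases by at least $1$ per step, so that after $d-1$ steps one reaches the Haar measure of the $d$-torus $\bR^d/x_{d-1}$. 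Your hedge ("or at least a measure assigning positive mass to $\{\delta<1/N\}$") keeps this from being fatal, but you should not expect equidistribution in $Y$.

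The genuine gap is in your last paragraph. Weak* convergence of $(h_{s_k})_*\nu$ to the fiber Haar measure does \emph{not} imply mixing convergence in the sense of Definition~\ref{defn: mixing}, and without mixing there is no quasi-independence to feed into Borel--Cantelli: the events $\{z\in\cC:\delta(h_{s_k}z)<1/N\}$ could conspire to all miss a fixed positive-measure subset of $\cC$. The paper confronts this head-on with the Coset Lemma~\ref{lem: coset}: viewing the maps $\cC\to\bR^d/x_{d-1}$ as a sequence of homomorphisms $\bT^1\to\bT^d$, either they mix (and then Proposition~\ref{prop: mixing implies density} gives density of $\{h_{s_k}y\}$ in the fiber for a.e.\ $y$, with no Borel--Cantelli needed), or mixing fails via a character relation $\gamma_l^T b=-a$, in which case a.e.\ orbit closure still contains a coset of a codimension-$1$ subtorus; Lemma~\ref{lem: non degeneracy degree} (the non-degeneracy degree of $F$ equals $d-1$, not $1$) then converts this weaker conclusion into $DV_F$ via Proposition~\ref{prop: codimension 1 subtorus}. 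Omitting this dichotomy — i.e., assuming that convergence to Haar is automatically mixing — is essentially the oversight behind the error in \cite{GDV} that this paper is written to repair, so your argument as sketched would stall precisely at its load-bearing step.
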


Theorems~\ref{thm: 1 dim in Zm-2}, \ref{thm: 1 dim in Zm} give the answer to Question~\ref{ques: main} for the case when $\mu$ is algebraic. Our techniques give the following stronger result which deals with the one-dimensional Lebesgue measure sitting on an immersed line in the direction of $Aq_0+p_0$ for any given $q_0\in \bZ^n, p_0\in \bZ^m.$ 
These immersed lines are closed if $q_0 = 0$. 
\begin{thm}\label{thm: 1 dim in theta}
    Assume $A\in \on{Mat}_{m\times n}(\bR)$ is not $k$-divergent for any 
    $0\le k\le d-2$ (in other words, the lattice $x_A$ has an accumulation sequence of length $d$), and that $\gcd(n,m)=1$. Then, for any $p_0\in \bZ^m, q_0\in \bZ^n$ and $\eta \in \R^m$, we have for Lebesgue almost every $t \in \R$
    \begin{equation*}
        \liminf_{q\in \bZ^n, \norm{q}\to\infty} \norm{q}^{n/m} \langle Aq - (t(Aq_0+p_0) +  \eta)\rangle = 0.
    \end{equation*}
\end{thm}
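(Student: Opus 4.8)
The plan is to reduce Theorem~\ref{thm: 1 dim in theta} to Theorem~\ref{thm: 1 dim in Zm} (or rather to the argument proving it) by exploiting the $\SL_d(\bR)$-equivariance built into the lattice $x_A$. The key observation is that the inhomogeneous problem for the target $t(Aq_0+p_0)+\eta$ along the direction $Aq_0+p_0$ should be understood via the grid $x_A$ together with the marked point $\binom{\eta}{0}$, and that translating $\eta$ in the direction $Aq_0+p_0$ corresponds, under the Dani-type correspondence for inhomogeneous approximation, to acting by a unipotent element of $\SL_d(\bR)$ that commutes appropriately with the flow $h_t$. Concretely, let $u(q_0,p_0)$ denote the unipotent matrix in $\SL_d(\bR)$ sending the last $n$ coordinate directions into the first $m$ via $q_0\mapsto Aq_0+p_0$; then $x_A$ is fixed (or moved within the same $h_t$-orbit behaviour) in a way that reduces the variable direction $Aq_0+p_0$ to the ``rational'' direction $p_0'$ for a conjugated matrix $A'$. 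First I would make this reduction precise: find $g\in\SL_d(\bR)$, built from $q_0,p_0$, such that $g x_A = x_{A'}$ for a matrix $A'$ which is $\GL$-equivalent to $A$ in the appropriate sense (in particular $A'$ is not $k$-divergent for $0\le k\le d-2$ iff $A$ is not, since the accumulation-sequence structure of $\seta{h_tx}$ is conjugation-equivariant under the centralizer of $\seta{h_t}$), and such that the target $t(Aq_0+p_0)+\eta$ gets transported to $tp_0'+\eta'$ for some $p_0'\in\bZ^m$, $\eta'\in\bR^m$.

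Once that reduction is in place, I would invoke Theorem~\ref{thm: 1 dim in Zm} directly for the data $(A',p_0',\eta')$. The only subtlety is that the $\liminf$ in~\eqref{eq: BadA}-type expressions is not literally invariant under $\GL$-equivalence — changing $A$ to $A'$ rescales the quantities $\norm{q}^{n/m}\idist{Aq-\eta}$ by bounded multiplicative factors depending on the chosen norms and on $g$ — but this is harmless: the property ``the $\liminf$ equals $0$'' (equivalently ``$\eta\notin\on{Bad}_A$'') \emph{is} invariant under such bounded perturbations, and that is exactly the conclusion we need. So I would spell out a short lemma stating that $\on{Bad}_A$ depends only on the $\GL_m(\bZ)\times\GL_n(\bZ)$-orbit of $A$ together with a translation, matching the unipotent $g$ above; this is the cleanest way to transport the almost-everywhere statement in $t$ from one parametrization to the other, since the map $t\mapsto t(Aq_0+p_0)+\eta$ and the map $t\mapsto tp_0'+\eta'$ differ by an affine reparametrization of $\bR$ with nonzero linear part, hence preserve Lebesgue-null sets.

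The main obstacle I anticipate is getting the bookkeeping of the unipotent conjugation exactly right, in particular checking that the conjugating element genuinely lies in (or can be absorbed into) the centralizer of $\seta{h_t:t\ge 0}$ up to a bounded piece, so that the hypothesis ``not $k$-divergent for $0\le k\le d-2$'' transfers verbatim to $A'$; the directions $Aq_0+p_0$ with $q_0\ne 0$ are the delicate case, since then $A'$ is not simply $A$ with an integer column operation but involves the entries of $A$ themselves, and one must verify that the resulting $x_{A'}$ is still of the standard form~\eqref{eq: xA} for some matrix $A'$ (possibly after a further $\SL_d(\bZ)$ adjustment). A secondary, more routine point is to confirm that when $q_0=0$ the statement is literally Theorem~\ref{thm: 1 dim in Zm} with $p_0$ in place of $p_0'$, so the genuinely new content is exactly the $q_0\ne 0$ case, and the closedness assertion for the immersed line when $q_0=0$ is immediate. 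I expect the rest — the harmonic-analytic/dynamical input — to be quoted wholesale from the proof of Theorem~\ref{thm: 1 dim in Zm} rather than redone.
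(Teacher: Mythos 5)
Your overall strategy --- reduce to Theorem~\ref{thm: 1 dim in Zm} by conjugating $x_A$ with an element of $\SL_d(\bR)$ that straightens the direction $Aq_0+p_0$ --- is not the paper's route and, as described, it does not go through. The obstruction you yourself flag as ``bookkeeping'' is in fact fatal: to transfer the hypothesis that $x_A$ has an accumulation sequence of length $d$, the conjugating element $g$ must satisfy that $h_tgh_{-t}$ stays bounded as $t\to+\infty$, which forces $g$ to be block lower triangular for the splitting $\bR^m\oplus\bR^n$. The unipotent $u(q_0,p_0)=\smallmat{I&B\\0&I}$ you propose lies in the expanding horospherical subgroup ($h_t\smallmat{I&B\\0&I}h_{-t}=\smallmat{I&e^{dt}B\\0&I}$), so it destroys the forward dynamics, and in any case it cannot change the last $n$ coordinates of the direction vector. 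Conversely, an element sending the line $\bR\smallmat{Aq_0+p_0\\ q_0}$ into $\bR^m\times\seta{0}$ must have a nonzero lower-left block $C$ with $C(Aq_0+p_0)=-q_0$, and then $gx_A=\smallmat{I&A\\ C&CA+I}\bZ^d$ is not of the form $x_{A'}$ for any $A'$ (the block $C$ is not integral in general), so there is no matrix $A'$ to which Theorem~\ref{thm: 1 dim in Zm} could be applied. The elements that preserve both the form $x_{A'}$ and the forward $h_t$-dynamics are, up to $\SL_d(\bZ)$, block diagonal, and these preserve the splitting, hence cannot kill $q_0\ne 0$. Your secondary claim, that passing from $A$ to $A'$ only rescales $\norm{q}^{n/m}\idist{Aq-\eta}$ by bounded factors, also fails for any $g$ that mixes the two blocks.

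The reduction is in fact unnecessary, which is the point of the paper's (short) proof: the direction $Aq_0+p_0$ is already ``rational'' from the correct point of view. The vector $\smallmat{Aq_0+p_0\\ q_0}=\smallmat{I&A\\0&I}\smallmat{p_0\\q_0}$ is a lattice vector of $x_A$, so $U=\bR\smallmat{Aq_0+p_0\\ q_0}$ is an $x_A$-rational line, and the family of targets $t(Aq_0+p_0)+\eta$ corresponds to the one-dimensional algebraic measure on $\bR^d/x_A$ supported on the coset $x_A+U-\smallmat{\eta\\0}$. Theorem~\ref{thm: main theorem F formulation} applies to \emph{every} algebraic measure on $\bR^d/x_A$, not only those arising from subtori of $\bT^m$; applying it to this measure and unwinding ($F$ attains arbitrarily small positive values on almost every grid in the coset, which forces $\norm{q}\to\infty$) gives the theorem. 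If you want to keep your plan's spirit, replace the reduction to Theorem~\ref{thm: 1 dim in Zm} by a direct appeal to Theorem~\ref{thm: main theorem F formulation} with this choice of $U$; note also that the paper derives Theorem~\ref{thm: 1 dim in Zm} itself from Theorem~\ref{thm: main theorem F formulation} (it is the special case $q_0=0$), so there is no independent ``harmonic-analytic/dynamical input'' to quote from it.
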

Theorem \ref{thm: 1 dim in theta} reduces to  Theorem \ref{thm: 1 dim in Zm} if we choose $q_0=0$. Theorem~\ref{thm: 1 dim in theta} follows from the more general Theorem \ref{thm: main theorem F formulation} below as we show after its formulation in \S\ref{sec: VF and inheritance}.
\begin{rem}
    Note that when $m=1$ and $n\ge 2$, Theorems~\ref{thm: 1 dim in Zm}, \ref{thm: 1 dim in Zm-2}, \ref{thm: 1 dim in theta} are weaker than Theorem~\ref{thm: Uri lebesgue} because there is only one non-trivial 
    algebraic measure on $\bT^1=\bT^m$. So in this case there is no need to assume $x_A$ is not $k$-divergent for $0\le k\le d-2$. It is enough to assume 
    it is not $k$-divergent for $k=0$. 
\end{rem}

\begin{rem}
    It is non-trivial to construct $A$ which is $k$-divergent. Using 
    the emerging theory of \textit{parametric geometry of numbers}, $k$-divergent lattices are constructed in \cite{LRSY} and in fact, the Hausdorff dimension of the set of $k$-divergent lattices is calculated there. Curiously enough, the vector $\theta \in \R^m$ constructed in Theorem \ref{thm: counterexample intro} must be $k$-divergent for some $k=1,\dots, d-2$.
\end{rem}

\begin{rem}
This remark pertains to a possible variant on how to define $\on{Bad}_A$.
In equation~\eqref{eq: BadA} we took a liminf as $q\in \bZ^n, \norm{q}\to\infty$. It is sometimes desirable to restrict attention to coefficient vectors $q$ which belong to some subset. The most natural situation in which this arises is when $n=1$, i.e.\ $q\in \bZ$ but we are interested in $q\to \infty$ rather than $\av{q}\to \infty$. In many discussions in Diophantine approximations it is trivial to connect two such questions but in our inhomogeneous setting, 
it \textbf{does not} seem to be true that for a given vector $\theta\in \bR^m$ and a target $\eta\in \bR^m$  
$$\liminf_{q\to \infty}q^{1/m}\idist{q\theta-\eta}=0\Longleftrightarrow \liminf_{\av{q}\to\infty}\av{q}^{1/m}\idist{q\theta-\eta} = 0.$$
If one defines a variant $\on{Bad}_A^+$ of $\on{Bad}_A$ using the restriction that 
$\norm{q}\to\infty$ and all the coordinates of $q$ are positive, then we expect that the techniques of this paper should be strong enough to prove versions of Theorems \ref{thm: Uri lebesgue} and \ref{thm: 1 dim in Zm-2}. Such statements are stronger because $\on{Bad}_A\subset \on{Bad}_A^+$. 
Here is an outline of how one might do this: One defines a version of the value set $V_F(y)$ (see Definition \ref{def: value set}) that 
restricts attention to the values $F$ takes on the grid points in the cone
in $\bR^d$ corresponding to the last $n$-coordinates being positive. One then proves a version of Lemma~\ref{lem: non degeneracy degree}, where instead of the non-degeneracy degree defined in Definition~\ref{defn: non degeneracy degree}, one considers only grid points in the cone. The rest of the argument then follows the same path as in the present paper. We note that there are results in the literature regarding homogeneous Diophantine approximations
with sign constraints on the coefficients. See \cite{SchmidtPositiveCoef, Thurnheer1, Thurnheer2, MoshchevitinPositive, RoyPositive}.
\end{rem}
\begin{rem}\label{rem: Kolya}
As we saw in Theorem~\ref{thm: Uri lebesgue}, if $A$ is non-singular then $\lam(\on{Bad}_A)=0$. A natural question that comes to mind is if
the opposite statement is true, namely is it true that if $A$ is
singular, then $\lam(\on{Bad}_A)>0$. As we shall now explain, this is not the case. In fact, we can give an explicit Diophantine condition 
on $A$, which gives rise to a class of matrices strictly containing the non-singular matrices (and in particular, this class contains some singular matrices), such that for $A$ in this class, $\lam(\on{Bad}_A)=0$. In particular, this shows that there are singular matrices $A$ with $\lam(\on{Bad}_A)=0$. The definition of the Diophantine class of matrices is stated in terms of the sequence of best
approximations and is tightly related and motivated by the discussions in the papers \cite{Mosh-welldistributed} and \cite{Kim-Kurzweil}. We note that in \cite{Kim-Kurzweil}, another complementary 
Diophantine class is defined, 
which is a subclass of the singular matrices, and it is proved there that for such matrices $A$ one has $\lam(\on{Bad}_A)=1$. 

In order to define our new Diophantine condition properly, one needs a bit of notation and terminology and hence we postpone the exact formulation of the result to \S\ref{sec: Kolya}. See Definition~\ref{defn: Kolya} and Theorem~\ref{thm: Kolya}.
\end{rem}
\subsection{A few open problems}
\ignore{
\subsubsection*{Characterizing singularity}
It is not clear to us if one can characterize the singularity of a 
matrix $A$ by the measure of $\on{Bad}_A$ with respect to various measures. 
We know from Theorem~\ref{thm: Uri lebesgue} that if $A$ is non-singular, then
$\lam_{\bT^m}(\on{Bad}_A)=0$. In \cite[Theorem 1.7]{Kim-Kurzweil} the author gives a non-trivial explicit 
Diophantine subclass of singular matrices  that satisfy 
$\lam_{\bT^m}(\on{Bad}_A)=1$. It is interesting to ask whether or not there exists singular matrices $A$ with 
$\lam_{\bT^m}(\on{Bad}_A)=0$. It seems to us that one might be able to provide a positive answer to this question using the techniques in \cite{Mosh-welldistributed}.
}
\subsubsection*{Non trivial coset intersection} In Theorem \ref{thm: counterexample intro}, the co-dimension 2 coset is completely contained in 
$\on{Bad}_A$. Example \ref{exmp: counter n=m} also shows a similar phenomenon. The most basic question one might state here is the following: Are there examples of non-singular matrices $A$ and one dimensional algebraic measures $\mu$ on $\bT^m$ such that $\mu(\on{Bad}_A)>0$ and the support of 
$\mu$ not fully contained in $\on{Bad}_A$. 

\subsubsection*{Co-dimension 1 algebraic measures}
Can one construct a non-singular matrix $A$ 
such that $\mu(\on{Bad}_A)>0$ for an algebraic measure $\mu$ of dimension 
$d-1$?

\subsubsection*{The co-primality condition}
It is not clear to us at the moment if the condition $\gcd(m,n)=1$ in
Theorem~\ref{thm: 1 dim in Zm-2} can be relaxed. Example \ref{exmp: counter n=m} suggests that the case $n=m$ is false. 

\subsubsection*{$k$-divergence}
Our definition of $k$-divergence for a matrix $A$ is dynamical 
but we expect that similarly to 
singularity, there should be a purely Diophantine characterization of this property, although we were not able to pin down such a condition. 
Even in the case $n=m=1$, where $A$ is a number, and $0$-divergence is characterized by rationality, we do not have a clean characterization of the notion of $k$-divergence (for general $k$) using continued fractions.  

A question that emerges from Theorem~\ref{thm: counterexample intro} is the following: In Theorem~\ref{thm: counterexample intro} we construct a non-singular vector that violates the conclusion of Theorem~\ref{thm: 1 dim in Zm-2} and hence it must be $k$-divergent for some $1\le k\le d-2$. For which values of $k$ can this be achieved? 
A possible reformulation of this question might be: For a given $1\le k\le d-2$, is it possible to construct 
$k$-divergent matrices $A$ and algebraic measures $\mu$ on $\bT^m$ such that 
$\mu(\on{Bad}_A)>0$?

\subsubsection*{Beyond algebraic measures} 
In the context of Theorems~\ref{thm: Uri lebesgue}, \ref{thm: 1 dim in Zm-2}, 
it is natural to ask what happens for other natural classes of measures. Given a natural class $\cA$ of measures on $\bT^m$, can one pin down exact Diophantine conditions on $A$ which ensure that for all $\mu\in \cA$, 
$\mu(\on{Bad}_A)=0$? Examples of $\cA$ can be, smooth measures on submanifolds, Hausdorff measures on certain fractals, or measures whose Fourier coefficients satisfy certain conditions.

\subsubsection*{Zero-One Law} Is it always the case that $\mu(\on{Bad}_A)\in \seta{0,1}$ for algebraic measures on $\bT^m.$ For what classes of measures 
does such a zero-one law hold? The following simple observation gives 
an affirmative answer in the case that $\mu$ is an algebraic measure supported on a subgroup (rather than a coset):
\begin{prop}
    Let $\mu$ be an algebraic measure supported on a subgroup of 
    $\bT^m$. Then for any $A\in \on{Mat}_{m\times n}(\bR)$ we have 
    that $\mu(\on{Bad}_A)\in \seta{0,1}.$
\end{prop}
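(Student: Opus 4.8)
The plan is to reduce the statement to a zero--one law driven by two elementary features of $\on{Bad}_A$. Write $\mu$ for the Haar probability measure of the subtorus $\bT_U\le\bT^m$, put $\Lam:=(A\bZ^n+\bZ^m)/\bZ^m\le\bT^m$, and let $S:=\overline{\Lam}$, a closed subgroup. First I would record that $\on{Bad}_A$ is \emph{$\Lam$-periodic}: for $q_0\in\bZ^n$, $p_0\in\bZ^m$, the substitution $q\mapsto q+q_0$ together with $\norm{q+q_0}^{n/m}/\norm{q}^{n/m}\to1$ shows $\eta\in\on{Bad}_A\iff\eta+(Aq_0+p_0)\in\on{Bad}_A$, i.e.\ $\on{Bad}_A+v=\on{Bad}_A$ for every $v\in\Lam$. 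Second, $\bT^m\sm S\subseteq\on{Bad}_A$: if $\eta\notin S$ then $\idist{Aq-\eta}\ge\dist(\eta,S)>0$ for every $q$ (as $Aq\bmod\bZ^m\in\Lam\subseteq S$), so the defining $\liminf$ is $+\infty$. Third, $\on{Bad}_A\cap S$ — a $\Lam$-invariant Borel subset of the compact abelian group $S$ in which $\Lam$ is dense — has $m_S$-measure in $\{0,1\}$ ($m_S$ being Haar on $S$), as one sees by expanding its indicator in the Pontryagin dual of $S$ and using density of $\Lam$ to kill all nontrivial Fourier coefficients.

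These three facts dispose of all cases but one. If $\bT_U\not\subseteq S$, then $\bT_U\cap S$ is a proper closed subgroup of the connected group $\bT_U$, hence of infinite index, hence $\mu$-null; combined with $\bT_U\sm S\subseteq\on{Bad}_A$ this gives $\mu(\on{Bad}_A)=1$. If $\bT_U\subseteq S$, then $\bT_U\subseteq S^{\circ}$ (the identity component) by connectedness: when $\bT_U=S$ we have $\mu=m_S$ and conclude from the third fact; when $\bT_U=S^{\circ}\subsetneq S$ we decompose $m_S$ over the finitely many cosets of $S^{\circ}$ and note that the $[0,1]$-valued coset masses $m_{S^{\circ}+s}(\on{Bad}_A)$ average to $m_S(\on{Bad}_A\cap S)\in\{0,1\}$, forcing each of them — in particular $\mu(\on{Bad}_A)=m_{S^{\circ}}(\on{Bad}_A)$ — into $\{0,1\}$.

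The one remaining case is $\bT_U\subsetneq S^{\circ}$, and this is where I expect the real work. After applying a suitable $g\in\SL_m(\bZ)$ (which replaces $\on{Bad}_A$ by $g\,\on{Bad}_A=\on{Bad}_{gA}$ and $\mu$ by the Haar measure of $g\bT_U$, harmlessly since $\idist{g\,\cdot\,}\asymp\idist{\cdot}$) I may assume $U=\bR^{\ell}\times\{0\}$, so $\bT_U=\bT^{\ell}\times\{0\}$ and $A=\mat{A'\\A''}$ with $A'$ the top $\ell$ rows. For $\eta=(\eta',0)\in\bT_U$, membership in $\on{Bad}_A$ reads $\liminf_{\norm q\to\infty}\norm q^{n/m}\max\pa{\idist{A'q-\eta'},\,\idist{A''q}}>0$, so with $R_c:=\{q\in\bZ^n:\idist{A''q}<c\norm q^{-n/m}\}$ (depending only on $A''$) one gets
\[
\on{Bad}_A\cap\bT_U=\bigcup_{c>0}\seta{\eta'\in\bT^{\ell}:\idist{A'q-\eta'}\ge c\norm q^{-n/m}\ \text{for all large }q\in R_c},
\]
a countable increasing union. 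If some $R_c$ is finite the union is all of $\bT_U$ and $\mu(\on{Bad}_A)=1$; otherwise each set in the union is the complement in $\bT^{\ell}$ of $\limsup_{q\in R_c}B\pa{A'q\bmod\bZ^{\ell},\,c\norm q^{-n/m}}$, and the goal is to show this $\limsup$ is $\lam_{\bT^{\ell}}$-conull, yielding $\mu(\on{Bad}_A)=0$. The ingredients for that should be: $\sum_{q\in R_c}\norm q^{-\ell n/m}=\infty$ (a density count gives $\#\{q\in R_c:\norm q\le Q\}\asymp Q^{\ell n/m}$), and that $\{A'q\bmod\bZ^{\ell}\}$ is dense in $\bT^{\ell}$ — which holds because $\bT^{\ell}\times\{0\}=\bT_U\subseteq S$ forces the projection of $\Lam$ to the first $\ell$ coordinates to be dense — so that the divergence half of the inhomogeneous Khintchine--Groshev theorem applies along the sequence $R_c$.

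Thus everything except the last case is soft: periodicity, the distance bound, and Fourier analysis on the compact group $S$. The main obstacle is the last case, namely establishing conullity of a $\limsup$ of shrinking balls centred along $\{A'q:q\in R_c\}$ — a divergence statement in inhomogeneous metric Diophantine approximation restricted to the (positive-density-exponent) sequence $R_c$. This is the one place where a genuine Diophantine input, rather than a formal manipulation, is required.
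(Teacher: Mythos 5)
Your soft steps are all correct ($\Lambda$-periodicity of $\on{Bad}_A$, the inclusion $\bT^m\sm S\subset\on{Bad}_A$, the Fourier zero--one law for $\Lambda$-invariant subsets of $S$, and the two easy subcases of $\bT_U\subseteq S$), and your route is completely different from the paper's. But the case you yourself flag as ``the real work'', $\bT_U\subsetneq S^{\circ}$, is a genuine gap, and the tool you propose to close it with is not available. A first symptom: since $A''$ has only $m-\ell<m$ rows, Dirichlet's theorem gives infinitely many $q$ with $\idist{A''q}\le\norm q^{-n/(m-\ell)}<c\norm q^{-n/m}$ for $\norm q$ large, so \emph{every} $R_c$ is infinite, your ``some $R_c$ finite'' branch is vacuous, and your argument always outputs $\mu(\on{Bad}_A)=0$ in this case --- a far stronger claim than the zero--one law, of exactly the kind this paper shows requires real Diophantine hypotheses. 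The decisive problem is the last step: conullity of $\limsup_{q\in R_c}B(A'q,\,c\norm q^{-n/m})$ for an \emph{arbitrary} fixed $A'$ with dense column group is not the divergence half of any Khintchine--Groshev theorem. With $A'$ fixed and only the target varying, this is Kurzweil-type theory, and it is conditional on the Diophantine type of $A'$ (this is precisely the subject of \cite{Kim-Kurzweil}, \cite{DHKim-Nonlinearity} and of Theorem~\ref{thm: Kolya} in this paper). Divergence of $\sum_{q\in R_c}\norm q^{-\ell n/m}$ --- itself resting on an unproved counting estimate for $R_c$ --- together with density of the orbit gives no quasi-independence and hence no lower bound on the measure of the limsup set: for Liouville-type $A'$ the centres cluster and tail unions of the balls can have small measure. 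Nothing in the hypotheses of the proposition excludes such $A'$, so this step cannot be completed by citation, and as stated it is not even true that the answer in this case is always $0$.

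For contrast, the paper's proof needs none of this machinery: the doubling map $T\eta=2\eta$ preserves the Haar measure of any closed subgroup of $\bT^m$ and is ergodic for it, and replacing $q$ by $2q$ shows $\eta\notin\on{Bad}_A\implies 2\eta\notin\on{Bad}_A$, i.e.\ $T^{-1}(\on{Bad}_A)\subset\on{Bad}_A$; ergodicity then yields $\mu(\on{Bad}_A)\in\seta{0,1}$ in one stroke, with no case analysis and no Diophantine input. If you want to salvage your decomposition, the fix in the last case is not to compute the measure but to find more invariance of $\on{Bad}_A\cap\bT_U$ (for instance under doubling on $\bT_U$), which is exactly the invariance the paper exploits globally.
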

\begin{proof}
    Let $T:\bT^m\to \bT^m$ denote the homomorphism $\eta\mapsto T(\eta):=2\eta.$ Then, as $\mu$ is the Haar measure on a subgroup, $\mu$ is $T$-invariant and ergodic (see \cite[Corollary 2.20]{EW}).
    We will therefore finish the proof once we show that $T^{-1}(\on{Bad}_A)\subset \on{Bad}_A$. This is equivalent to showing that  
    $$\eta\notin \on{Bad}_A \implies 2\eta\notin\on{Bad}_A.$$ Indeed, if $\eta\notin\on{Bad}_A$, then by definition
    $$\liminf_{q\in \bZ^n,\norm{q}\to\infty}\norm{q}^{n/m}\idist{Aq-2\eta}=0$$
    and so necessarily 
    \begin{align*}
    \liminf_{q\in \bZ^n,\norm{q}\to\infty}\norm{q}^{n/m}
    \idist{Aq-2\eta}&\le
    \liminf_{q\in \bZ^n,\norm{q}\to\infty}\norm{2q}^{n/m}\idist{A2q-2\eta} \\
    &\le 2^{n/m+1}\liminf_{q\in \bZ^n,\norm{q}\to\infty}\norm{q}^{n/m}\idist{Aq-\eta}=0 
    \end{align*}
    and so $2\eta\notin \on{Bad}_A$.
\end{proof}

\section{From Inhomogeneous approximations to value sets of grids}
In this section we introduce most of the notation and terminology used in the paper, state the main Theorem~\ref{thm: main theorem F formulation}, and prove some basic results like the Inheritance Lemma~\ref{lem: inheritance}.

\subsection{Euclidean space}
Throughout the paper we fix integers $m,n\ge 1$ and set 
$$d:=m+n.$$ 
Vectors in $\bR^d$ will be denoted $u,v,w$ etc. The decomposition of a vector in $\bR^d$ to its first $m$ coordinates and last $n$ coordinates will be important from time to time. Hence, when we write a vector as $ \smallmat{v\\w}$ the reader should interpret $v\in \bR^m, w\in \bR^n$.
Given two subsets $A,B\subset \bR^d$, we denote $A+B = \seta{v+w:v\in A, w\in B}$.

\subsection{The space of lattices}
We let 
$X$ denote the space of unimodular lattices in $\bR^d$. Although it can be 
confusing, we denote lattices by small letters $x, x_1, x_2$ etc. It will be important to us to think of $X$ as a topological space, and thus be able to discuss, for example, a converging sequence $(x_l)_{l\in \bN}$ in $X$. At the 
same time, we wish to keep in mind that a lattice $x\in X$ is also viewed simply a subset of $\bR^d$. 

A simple way to understand the topology on $X$ is to note that the group
$G:=\SL_d(\bR)$ acts transitively on $X$, where the action $(g,x)\mapsto gx$ 
is induced by the linear action of $G$ on $\bR^d$. That is, given a lattice $x\in X$ and $g\in G$, the lattice $gx$ is simply the linear image of $x$ under the linear map induced by multiplication by $g$ on column vectors. 

It is easy to see that this action is transitive and that the stabilizer group of the standard lattice $\bZ^d$ is $\Ga:= \SL_d(\bZ)$. We therefore obtain bijection $g\Ga\leftrightarrow g\bZ^d$ between the formal coset space $G/\Ga$ and the space $X$. An immediate advantage of this is that the quotient
$G/\Ga$ is naturally equipped with a topology -- the quotient topology with respect to the natural projection map $G\to G/\Ga$. In fact, since $\Ga<G$ is discrete, this map is a covering map. In particular, we have the following simple result which allows us to explicitly understand convergence in $X$ and 
will be used without reference throughout.
\begin{lem}
    A sequence $(x_l)_{l\in \bN}$ in $X$ converges to a lattice $x\in X$ if and only if there exists a sequence $\varepsilon_l\in G, \varepsilon_l\to e$ ($e$ being the identity element), such that for all $l$, 
    $\varepsilon_l x_l=x.$
\end{lem}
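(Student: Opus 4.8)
The plan is to invoke the fact, already recorded in the discussion above, that since $\Ga=\SL_d(\bZ)$ is discrete in $G=\SL_d(\bR)$ the quotient map $\pi\colon G\to G/\Ga=X$ is a covering map, and in particular a continuous, open, local homeomorphism; everything else is a short topological-group argument. Throughout I identify a lattice with the corresponding coset, so that $x=g_0\Ga$ for a suitable $g_0\in G$ and $g\cdot(h\Ga)=gh\Ga$.

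For the implication ``there exists $\varepsilon_l\to e$ with $\varepsilon_l x_l=x$'' $\Rightarrow$ ``$x_l\to x$'', I would fix $g_0\in G$ with $\pi(g_0)=x$, rewrite $x_l=\varepsilon_l^{-1}x=\pi(\varepsilon_l^{-1}g_0)$, note $\varepsilon_l^{-1}g_0\to g_0$ by continuity of inversion and multiplication in $G$, and conclude $x_l=\pi(\varepsilon_l^{-1}g_0)\to\pi(g_0)=x$ by continuity of $\pi$.

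For the converse I would again fix $g_0\in G$ with $\pi(g_0)=x$ and, using that $\pi$ is a local homeomorphism, choose an open neighbourhood $\Omega\ni g_0$ such that $\pi|_\Omega\colon\Omega\to\pi(\Omega)$ is a homeomorphism onto an open neighbourhood of $x$. Since $x_l\to x$, for all $l$ past some $L$ we have $x_l\in\pi(\Omega)$, so I may set $g_l:=(\pi|_\Omega)^{-1}(x_l)\in\Omega$; then $g_l\to g_0$ because $(\pi|_\Omega)^{-1}$ is continuous, and $\varepsilon_l:=g_0g_l^{-1}$ satisfies $\varepsilon_l x_l=g_0g_l^{-1}(g_l\Ga)=g_0\Ga=x$ together with $\varepsilon_l\to g_0g_0^{-1}=e$. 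For the finitely many $l<L$ I would simply pick any $\varepsilon_l\in G$ with $\varepsilon_l x_l=x$, which is possible because $G$ acts transitively on $X$; altering finitely many terms does not affect the convergence $\varepsilon_l\to e$.

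The only step carrying any content is producing $\Omega$, i.e.\ the local homeomorphism property of $\pi$: one takes $\Omega=Vg_0$ with $V$ a symmetric open neighbourhood of $e$ small enough that $V^{-1}V$ meets $g_0\Ga g_0^{-1}$ only in $\{e\}$, which is possible since the conjugate $g_0\Ga g_0^{-1}$ of the discrete group $\Ga$ is discrete. Then $\pi(v_1g_0)=\pi(v_2g_0)$ forces $v_1^{-1}v_2\in g_0\Ga g_0^{-1}\cap V^{-1}V=\{e\}$, so $\pi|_{Vg_0}$ is injective, and being continuous and open it is a homeomorphism onto the open set $\pi(Vg_0)$. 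I do not anticipate any genuine obstacle here; it is routine point-set bookkeeping, and if one is content to cite outright that $\pi$ is a covering map, the argument is essentially immediate.
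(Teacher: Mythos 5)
Your proof is correct and follows exactly the route the paper intends: the lemma is stated there without proof as an immediate consequence of the fact that $G\to G/\Ga$ is a covering map (since $\Ga$ is discrete), and your argument simply fills in the standard details of that observation, including the construction of the evenly covered neighbourhood. No gaps.
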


\subsection{The space of grids}
Most of our discussion will take place in a topological space $X\subset Y$. 
Again, we first introduce $Y$ as an abstract set and then explain how to 
equip it with a topology. 
\begin{defn}
    \label{defn: grids}
    A unimodular grid in $\bR^d$ is a subset of the form 
    $$x+v: = \seta{u+v:u\in x}$$
    where $x\in X$ and $v\in \bR^d$.
\end{defn}
The space of all unimodular grids will be denoted by $Y$. Note that in the representation of a grid $y\in Y$ as $y=x+v$, the lattice $x$ is uniquely determined (it is obtained by translating $y$ so that it would contain $0$)
but the translation vector $v$ is only well defined modulo $x$. 
Note also that $X\subset Y$ -- any lattice is a grid and a grid $y$ is a lattice if and only if $0\in y$.

In order to induce a topology on $Y$ we follow the same line of thought as before. Let 
$$G':=\on{ASL}_d(\bR) := \seta{\mat{g&v\\ 0&1}\in \SL_{d+1}(\bR):g\in \SL_d(\bR), v\in \bR^d}$$
and consider the action of $G'$ on $\bR^d$ given by 
$$\mat{g&v\\ 0&1}\cdot u = gu+v.$$
Thus the matrix $\smallmat{g&v\\0&1}\in G'$ acts on $\bR^d$ as an affine map obtained by applying the linear map $g$ followed by the translation by $v$.

The action of $G'$ on $\bR^d$ induces an action on subsets of it and as a consequence $G'$ acts on $Y$.
It is easy to see that this action is transitive and that the stabilizer group
of the standard grid $\bZ^d$ is $\Ga'=\on{ASL}_d(\bZ)$ (i.e.\ the matrices in 
$G'$ whose entries are integral). We therefore identify $Y$ with $G'/\Ga'$ 
via $\smallmat{g&v\\0&1}\Ga'\leftrightarrow g\bZ^d+v$ and equip $Y$ with the 
quotient topology with respect to the natural quotient map $G'\to G'/\Ga'$.
The following Lemma is left to be verified by the reader.
\begin{lem}
    \label{lem: vectors converge}
    Let $y, y_l\in Y, l\in \bN$ be grids and assume that $y_l\to_{l\to\infty} y$. Then for any vector $v\in y$, there exist vectors $u_l\in y_l$ such that $u_l\to v$. 
\end{lem}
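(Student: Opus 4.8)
The plan is to reduce everything to the description of $Y$ as the covering quotient $G'/\Ga'$, exactly paralleling the (unlabelled) lemma stated just above for the space of lattices. Since $\Ga' = \on{ASL}_d(\bZ)$ is discrete in $G' = \on{ASL}_d(\bR)$, the natural projection $G' \to G'/\Ga' = Y$ is a covering map, so the same reasoning that yields the convergence criterion in $X$ gives its grid analogue: a sequence $(y_l)$ converges to $y$ in $Y$ if and only if there exist $\varepsilon_l' \in G'$ with $\varepsilon_l' \to e'$ (the identity of $G'$) such that $\varepsilon_l' y_l = y$ as subsets of $\bR^d$ for every $l$. Thus the first step is to record, or simply invoke, this criterion, being slightly careful that $\varepsilon_l' y_l = y$ is meant as an equality of subsets of $\bR^d$ under the affine action of $G'$.

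Given the criterion the argument is immediate. Fix $v \in y$. Since $y = \varepsilon_l' y_l$ as subsets of $\bR^d$, there is a unique $u_l \in y_l$ with $\varepsilon_l' \cdot u_l = v$, namely $u_l = (\varepsilon_l')^{-1} \cdot v$, where $\cdot$ denotes the affine action $\smallmat{g & w \\ 0 & 1} \cdot u = gu + w$. Because inversion in $G'$ is continuous we have $(\varepsilon_l')^{-1} \to e'$, and because the affine action $G' \times \bR^d \to \bR^d$ is continuous (it is given by the explicit formula above), $u_l = (\varepsilon_l')^{-1} \cdot v \to e' \cdot v = v$. These $u_l \in y_l$ are the desired vectors.

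The only point that deserves care — and the only place I expect any friction, though it is minor — is justifying the convergence criterion for $Y$, i.e.\ transporting the covering-map argument from the triple $(G, \Ga, X)$ to $(G', \Ga', Y)$; everything else is formal continuity of the affine action. If one prefers to avoid even this, one can argue directly with a continuous local section: choose a continuous section $s$ of $G' \to Y$ defined near $y$, so that $s(y_l) \to s(y)$ in $G'$ for all large $l$; then $v \in y$ equals $s(y) \cdot w_0$ for some $w_0 \in \bZ^d$, and setting $u_l := s(y_l) \cdot w_0 \in y_l$ gives $u_l \to s(y) \cdot w_0 = v$ again by continuity of the action. Either way the heart of the matter is this one elementary continuity statement.
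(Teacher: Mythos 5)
Your proof is correct; the paper in fact leaves this lemma as an exercise to the reader, and your argument (transporting the covering-map convergence criterion from $(G,\Ga,X)$ to $(G',\Ga',Y)$ and then applying continuity of the affine action to $u_l=(\varepsilon_l')^{-1}\cdot v$) is exactly the intended one, mirroring the unlabelled convergence lemma stated for $X$. The only caveat, which you already flag implicitly, is that the elements $\varepsilon_l'$ are guaranteed only for all sufficiently large $l$, which is all the limit statement requires.
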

We note that although the transitive action of $G'$ on $Y$ is used 
to define the topology, this action will not play any other role in this 
paper. But, the group $G$ acts on subsets of $\bR^d$ and thus acts on $Y$ in a way that extends its action on $X$. The $G$-action on $Y$ will be paramount to our discussion. 
\subsection{The projection from grids to lattices and its fibers}
Let $\pi:Y\to X$ denote the map that sends a grid $y$ to the unique lattice $x$ such that $y=x+v$ for some $v\in \bR^d$. 
We leave it to the enthusiastic reader to check that $\pi$ is a continuous
proper map, where proper means that preimages of compact sets are compact.

Given a lattice $x\in X$, the fiber $\pi^{-1}(x)=\seta{x+v:v\in \bR^d}$ is nothing but the collection of all cosets of the lattice $x$ in $\bR^d$. As such it is \textbf{equal} to the torus $\bR^d/x$. The space $Y$ can be thus thought of as the union of all possible tori (of volume 1) of $\bR^d$. 
We alternate between writing $\pi^{-1}(x)$ and $\bR^d/x$ throughout but wish to stress here that we can (and will) alternate between thinking of a tori
$\bR^d/x$ as
\begin{itemize}
    \item The compact abelian group $\bR^d/x$.
    \item A closed subset of $Y$ obtained as a fiber $\pi^{-1}(x)$.
    \item A collection of grids (subsets of $\bR^d$) $\seta{x+v:v\in \bR^d}$.
\end{itemize}
Finally note that $\pi$ intertwines 
the $G$-actions on $X, Y$. That is, the following diagram commutes: For any $g\in G$,
$$ \xymatrix{
Y\ar[r]^g\ar[d]_\pi & Y\ar[d]^\pi \\
X\ar[r]^g &X
}$$
This captures the fact that if we are given an element 
$g\in G$ and two lattices $x_1,x_2\in X$ such that $gx_1 = x_2$, then when we act with $g$ on $Y$, $g$ maps the fiber $\pi^{-1}(x_1)$ onto the fiber $\pi^{-1}(x_2)$. Indeed the map $x_1+v\mapsto g(x_1+v)$ is an isomorphism between the compact abelian groups $\bR^d/x_1, \bR^d/x_2$. 
\subsection{Subtori, Haar measures, and algebraic measures}
As noted above, for a lattice $x\in X$, the fiber $\pi^{-1}(x)=\bR^d/x$ is 
a $d$-dimensional compact abelian group we refer to as a torus.
We now discuss its closed connected subgroups, their Haar probability measures and their translates.

Let $x\in X$ be a lattice. A linear subspace $U<\bR^d$ is called 
\textit{$x$-rational} if $U\cap x$ contains a basis of $U$.
It is well known that there is a 1-1 correspondence between $x$-rational subspaces and closed connected subgroups of $\bR^d/x$. Let $U<\bR^d$ be a 
$x$-rational subspace. 
For any grid
$y\in \bR^d/x$, the set $y+U\subset \bR^d/x$ is a coset of the closed subgroup $x+U<\bR^d/x$
and it supports a unique $U$-invariant probability measure. We refer to such measures as \textit{algebraic measures} on $\bR^d/x$. When the lattice $x$ (the trivial element of the torus) belongs to the support of the measure, we refer to the measure as a \textit{Haar measure} on a subtorus. 

\subsection{Value sets and inheritance}
\label{sec: VF and inheritance}
Let $F:\bR^d=\bR^m\oplus \bR^n\to \bR$ be the map
$$F(\mat{v\\w}) = \norm{v}^n\cdot\norm{w}^m.$$ 
Recalling the one-parameter subgroup $\seta{h_t:t\in \bR}<G$ defined in
\eqref{eq: ht definition}, we note that $F$ is $h_t$-invariant. 
That is, for any $t\in \bR$ and any $u\in \bR^d$, $F(h_tu)=F(u)$. Geometrically, when $h_t$ acts linearly on $\bR^d$, it acts on the level sets of $F$. Note that although $h_t$ does not act transitively on the level sets, it does act cocompactly on each level set of the form $F^{-1}(s)$ for $s>0$. This will not be used explicitly in the paper but has conceptual importance 
towards the claim that understanding the values $F$ takes on a subset of $\bR^d$ can be attacked by analyzing how this set changes under the action of $h_t$. This is the idea behind our results.  
\begin{defn}\label{def: value set}
Given a grid $y\in Y$ we define the \textit{value set} of $y$ to be
$$V_F(y)=\seta{F(u):u\in y}.$$
\end{defn}
The fundamental question in \textit{geometry of numbers} which guides us
is: What can be said about the value sets $V_F(y)$. Is it dense or discrete? Is $0$ an accumulation point or not? Does its closure contain a ray? 
For the sake of the discussion in this paper we make the following definitions.  
\begin{defn}\label{defn: DVF}
    A grid $y\in Y$ is a \textit{dense values grid} or $y$ is $DV_F$ 
    if $$\overline{V_F(y)}= F(\bR^d) = [0,\infty).$$
\end{defn}
The following lemma is a fundamental tool in our discussion. It serves 
as the entry point of dynamics to our discussion.
\begin{lem}[Inheritance lemma]\label{lem: inheritance}
Let $y_1,y_2\in Y$ be grids and assume  the orbit-closure $\overline{\seta{h_ty_1:t\in \bR}}$ in $Y$ contains $y_2$. Then, 
$\overline{V_F(y_2)}\subset \overline{V_F(y_1)}.$  
\ignore{
Also, if $\overline{\seta{h_ty_1:t\in \bR}}$ contains a coset of a $d-1$-dimensional 
subtorus in a fiber, then $y_1$ is $DV_F$.
}
\end{lem}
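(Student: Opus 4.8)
The plan is to exploit the two structural facts already isolated in the text: first, that $F$ is $h_t$-invariant, i.e.\ $F(h_tu)=F(u)$ for all $t\in\bR$ and $u\in\bR^d$; and second, Lemma~\ref{lem: vectors converge}, which says that convergence $y_l\to y$ in $Y$ lifts to convergence of vectors — for every $v\in y$ there are $u_l\in y_l$ with $u_l\to v$. The strategy is to show that any value $F(v)$ with $v\in y_2$ lies in the closure of $V_F(y_1)$, and since $V_F(y_2)=\seta{F(v):v\in y_2}$ this gives $V_F(y_2)\subset \overline{V_F(y_1)}$, whence $\overline{V_F(y_2)}\subset\overline{V_F(y_1)}$ because the right-hand side is closed.

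First I would unwind the hypothesis that $y_2\in\overline{\seta{h_ty_1:t\in\bR}}$: there is a sequence $t_k\in\bR$ with $h_{t_k}y_1\to y_2$ in $Y$. Fix an arbitrary $v\in y_2$. By Lemma~\ref{lem: vectors converge} applied to the sequence $h_{t_k}y_1\to y_2$, there exist vectors $u_k\in h_{t_k}y_1$ with $u_k\to v$. Now write $u_k=h_{t_k}w_k$ for suitable $w_k\in y_1$; this is possible precisely because $u_k\in h_{t_k}y_1$ means $h_{-t_k}u_k\in y_1$, so set $w_k:=h_{-t_k}u_k$. Then, using continuity of $F$ together with its $h_t$-invariance,
\begin{equation*}
F(w_k)=F(h_{t_k}w_k)=F(u_k)\xrightarrow[k\to\infty]{}F(v),
\end{equation*}
and each $F(w_k)\in V_F(y_1)$ since $w_k\in y_1$. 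Hence $F(v)\in\overline{V_F(y_1)}$. As $v\in y_2$ was arbitrary, $V_F(y_2)\subset\overline{V_F(y_1)}$, and taking closures finishes the argument.

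The only genuine point requiring care — and the step I expect to be the mildest of obstacles rather than a real difficulty — is the justification that $u_k\in h_{t_k}y_1$ can be written as $h_{t_k}w_k$ with $w_k\in y_1$: this is immediate from the fact that $h_{t_k}y_1=\seta{h_{t_k}w:w\in y_1}$ as a subset of $\bR^d$, so one simply takes $w_k=h_{-t_k}u_k$, and no compactness or properness input is needed. One should also note that $F$ is continuous on $\bR^d$, which is clear from its explicit formula $F\smallmat{v\\w}=\norm{v}^n\norm{w}^m$, so that $u_k\to v$ indeed forces $F(u_k)\to F(v)$. Everything else is bookkeeping with the already-established lift-of-convergence lemma and the invariance of $F$.
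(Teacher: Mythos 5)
Your argument is correct and is essentially identical to the paper's own proof: both fix $v\in y_2$, extract $u_k\in h_{t_k}y_1$ converging to $v$ via Lemma~\ref{lem: vectors converge}, pull back by $w_k=h_{t_k}^{-1}u_k\in y_1$, and conclude using the $h_t$-invariance and continuity of $F$. No gaps.
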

\begin{proof}
    Let $v\in y_2$. We show that $F(v)$ can be approximated by elements from 
    $V_F(y_1)$. 
    Let $t_l, l\in \bN$ be a sequence of real numbers such that $h_{t_l}y_1\to y_2$. By Lemma~\ref{lem: vectors converge}, there are vectors $u_l\in h_{t_l}y_1 $ such that $u_l\to v$. The vector $w_l:=h_{t_l}^{-1}u_l$ belongs to the grid $y_1$ and by the $h_t$-invariance of $F$ we have 
    together with the continuity of $F$, we have
    $$F(w_l) = F(h_{t_l}(w_l)=F(u_l)\to_{l\to\infty} F(v).$$
    This shows that $F(v)\in \overline{V_F(y_1)}$ and finishes the proof. 
\end{proof}
The way in which the Inheritance Lemma~\ref{lem: inheritance} will enter our proofs and help us establish that certain grids are $DV_F$ is via the following:
\begin{prop}\label{prop: codimension 1 subtorus}
    Let $y\in Y$ be a grid. If there exists a lattice $x\in X$ such that 
    the orbit-closure $\overline{\seta{h_ty:t\in\bR}}$ in $Y$ contains
    a full coset of a $d-1$-dimensional subtorus in the fiber 
    $\pi^{-1}(x) = \bR^d/x$, then $y$ is $DV_F$.
\end{prop}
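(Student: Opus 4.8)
The plan is to combine the Inheritance Lemma~\ref{lem: inheritance} with the elementary fact that $F$ already attains a dense subset of $[0,\infty)$ on almost every single affine hyperplane of $\bR^d$, so that it suffices to find one ``good'' hyperplane inside the orbit closure.

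First I would unwind the hypothesis. By the correspondence between $x$-rational subspaces and closed connected subgroups of the torus $\pi^{-1}(x)=\bR^d/x$, the assumed coset $C$ of a $(d-1)$-dimensional subtorus lifts, under $\bR^d\to\bR^d/x$, to the set $v_0+x+U$, where $U<\bR^d$ is a codimension-$1$ $x$-rational subspace and $v_0\in\bR^d$. Choosing a $\bZ$-basis of $U\cap x$ and completing it to a $\bZ$-basis of $x$ yields a vector $e_d\in x\sm U$ with $x=(U\cap x)+\bZ e_d$, so that $v_0+x+U=\bigcup_{k\in\bZ}P_k$ where the $P_k:=v_0+ke_d+U$ are pairwise distinct parallel affine hyperplanes. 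Since $\overline{\seta{h_ty:t\in\bR}}\supseteq C$, the Inheritance Lemma gives $\overline{V_F(y')}\subseteq\overline{V_F(y)}$ for every grid $y'\in C$; taking the union over $y'\in C$ and using $\bigcup_{y'\in C}V_F(y')=F\pa{\bigcup_{y'\in C}y'}=F\pa{\bigcup_k P_k}$, I reduce to proving that $F\pa{\bigcup_k P_k}$ is dense in $[0,\infty)$.

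The key step is the \emph{claim}: for every affine hyperplane $P\subset\bR^d$ not contained in $\seta{v=0}\cup\seta{w=0}$ one has $\overline{F(P)}=[0,\infty)$. Granting this, I would conclude by a dimension count: an affine hyperplane $P_k$ contained in the linear subspace $\seta{v=0}$ (dimension $n\le d-1$, with equality only if $m=1$) would have to equal it, and similarly for $\seta{w=0}$; as the $P_k$ are pairwise distinct, at most two of them lie in $\seta{v=0}\cup\seta{w=0}$. Hence some $P_{k_0}$ satisfies the claim, and $[0,\infty)=\overline{F(P_{k_0})}\subseteq\overline{V_F(y)}\subseteq F(\bR^d)=[0,\infty)$, so $y$ is $DV_F$ by Definition~\ref{defn: DVF}. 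For the claim itself, $F(P)$ is a connected subset of $[0,\infty)$, hence an interval, so it suffices to show $\inf_P F=0$ and $\sup_P F=\infty$. If $P$ avoided $\seta{v=0}$ and $\seta{w=0}$ then $\pi_{\bR^m}(P)$ and $\pi_{\bR^n}(P)$ would be proper affine subspaces, forcing the direction space $U$ to satisfy $\dim U\le(m-1)+(n-1)=d-2$, contradicting $\dim U=d-1$; hence $P$ meets $F^{-1}(0)$ and $\inf_P F=0$. For the supremum, pick $0\ne u_0\in U$: if $u_0$ has nonzero components in both $\bR^m$ and $\bR^n$ then $F\to\infty$ along $p+\bR u_0$ for any $p\in P$; if $u_0\in\bR^m$, pick a base point $p'\in P$ with nonzero $\bR^n$-component (possible since $P\not\subset\seta{w=0}$) and note $F(p'+su_0)=\norm{v_{p'}+su_0}^n\norm{w_{p'}}^m\to\infty$; the case $u_0\in\bR^n$ is symmetric.

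The main obstacle is essentially bookkeeping rather than depth: one must make sure the lift of the subtorus coset is genuinely $v_0+x+U$ with $U$ truly $x$-rational, so that one obtains full affine hyperplanes downstairs (a dense set would make everything trivial, but is not what the hypothesis gives), and one must handle the mildly annoying degeneration — possible only when $\min(m,n)=1$ — in which one or two of the hyperplanes $P_k$ collapse onto $\seta{v=0}$ or $\seta{w=0}$; the dimension count above disposes of this cleanly.
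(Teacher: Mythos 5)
Your proof is correct, and its skeleton is the same as the paper's: both use the Inheritance Lemma~\ref{lem: inheritance} to reduce the statement to the fact that $F$ takes all (or a dense set of) values in $[0,\infty)$ on the lift $v_0+x+U$ of the coset, a countable union of parallel codimension-one affine subspaces. Where you differ is in how that geometric fact is proved. The paper invokes its Lemma~\ref{lem: non degeneracy degree}: pick a grid point $\smallmat{p\\q}$ with $F\pa{\smallmat{p\\q}}>0$, use a rank argument to find a point of the form $\smallmat{0\\w}$ in the single hyperplane $\smallmat{p\\q}+U$, and conclude via non-constancy of the polynomial $F^2|_{\smallmat{p\\q}+U}$ (Euclidean norms first, general norms left to the reader). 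You instead prove that $\overline{F(P)}=[0,\infty)$ for any affine hyperplane $P\not\subset\seta{v=0}\cup\seta{w=0}$ by connectivity together with separate verifications that $\inf_P F=0$ (your dimension count forcing $P$ to meet $F^{-1}(0)$) and $\sup_P F=\infty$ (travelling along a direction vector), and then you locate a non-degenerate hyperplane among the infinitely many $P_k$. Your route is insensitive to the choice of norms and makes explicit which hyperplanes could fail; the paper's route yields the slightly stronger exact equality $F(y'+U)=[0,\infty)$ for every grid $y'$ in the coset and packages the fact as a reusable lemma. One small point you should make explicit: to conclude that at most two of the $P_k$ lie in $\seta{v=0}\cup\seta{w=0}$, you need that an affine subspace contained in a union of two linear subspaces is contained in one of them (restrict to a segment joining a point off $\seta{v=0}$ to a point off $\seta{w=0}$); your dimension count treats containment in $\seta{v=0}$ and in $\seta{w=0}$ separately, so this remark is needed to rule out a ``mixed'' containment, after which the argument is complete.
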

For the proof, we will need the following Definition and Lemma.
\begin{defn}\label{defn: non degeneracy degree}
    The non-degeneracy degree of $F$ is the minimal dimension $\ell$ such that for any grid $y$ and any $\ell$-dimensional subspace $U<\bR^d$ one has $F(y+U)=[0,\infty).$
\end{defn}
\begin{lem}\label{lem: non degeneracy degree}
    The non-degeneracy degree of $F$ is $d-1.$ 
\end{lem}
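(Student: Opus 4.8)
The plan is to prove the lemma in two halves: first that the non-degeneracy degree is \emph{at most} $d-1$, i.e. that for any grid $y$ and any $(d-1)$-dimensional subspace $U < \bR^d$ we have $F(y+U)=[0,\infty)$; and second that it is \emph{at least} $d-1$, i.e. that there exists a grid $y$ and a $(d-2)$-dimensional subspace $U$ with $F(y+U)\ne[0,\infty)$. Recall $F(\smallmat{v\\w})=\norm{v}^n\norm{w}^m$, so $F(u)=0$ exactly when $u$ lies in the coordinate subspace $\bR^m\oplus\{0\}$ or in $\{0\}\oplus\bR^n$, and otherwise $F(u)>0$.

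For the upper bound, fix a grid $y=x+v_0$ and a hyperplane $U<\bR^d$. Pick any point $p\in y$; then $p+U$ is an affine hyperplane, and I claim $F(p+U)=[0,\infty)$. Since $\dim U = d-1 \ge m$ and $\dim U \ge n$ (as $m,n\ge 1$ forces $d-1\ge\max(m,n)$ only when... actually $d-1=m+n-1\ge m$ and $\ge n$ always), the affine hyperplane $p+U$ cannot be disjoint from the locus $\{F>0\}$; in fact $p+U$ meets $\{F>0\}$ in a dense open subset of $p+U$ unless $p+U$ is entirely contained in $\bR^m\oplus\{0\}$ or in $\{0\}\oplus\bR^n$, which is impossible since those have dimension $<d-1$. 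So choose $q\in p+U$ with $F(q)=:s_0>0$. Now I use a connectedness/scaling argument: the affine hyperplane $p+U$ is connected, $F$ restricted to it is continuous, it takes the value $s_0>0$, and I must show it takes all values in $[0,\infty)$. To get arbitrarily large values, note that $p+U$ is non-compact and one can find a ray inside $p+U$ along which at least one of $\norm{v},\norm{w}$ grows linearly while the other stays bounded below by a positive constant — unless the hyperplane's direction space $U$ projects to $0$ in one of the two factors, but $\dim U=d-1>\max(m,n)-1$ rules that out unless $m=1$ or $n=1$, a case I would handle separately by a direct check. To get value $0$: the hyperplane $p+U$ must intersect at least one of the two coordinate subspaces $\bR^m\oplus\{0\}$, $\{0\}\oplus\bR^n$ since $(d-1)+m\ge d$ and $(d-1)+n\ge d$, giving a point where $F=0$. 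Then by the intermediate value theorem on a path in $p+U$ joining a zero of $F$ to $q$ and on a path going off to infinity, $F(p+U)\supset[0,\infty)$, hence equals it.

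For the lower bound I must exhibit a grid $y$ and a subspace $U$ of dimension $d-2$ with $F(y+U)\subsetneq[0,\infty)$; the natural candidate is to take $U=\bR^m\oplus\bR^{n-2}$ (the first $m$ and some $n-2$ of the last $n$ coordinates, assuming $n\ge 2$; if $n=1$ take $U$ inside $\bR^m$ analogously, and if both $m=n=1$ then $d-2=0$ and one checks a single non-$DV_F$ lattice exists, e.g. a badly approximable number). With this $U$, $y+U$ is a union of affine copies of $U$ indexed by the projection of $y$ to the remaining $2$ coordinates; along each such copy the last-$n$-block has two free coordinates pinned to a value depending only on that index plus the $\bR^{n-2}$ directions. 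Concretely one arranges $x$ so that the relevant $2$-dimensional projection of the grid stays a bounded distance away from the bad locus, forcing $\norm{w}$ to be bounded below by a positive constant on all of $y+U$ while... hmm, this needs care: I actually want to \emph{bound $F$ away from $0$} on $y+U$, so I want the $2$-dimensional projection (corresponding to the two "missing" $w$-coordinates together with... no). Let me instead recall Proposition~\ref{prop: codimension 1 subtorus} says codimension-$1$ cosets force $DV_F$; the content of the present lemma is the sharp threshold, and the lower-bound example should be a grid $y$ that is \emph{not} $DV_F$ together with $U$ any subspace of dimension $d-2$ such that $y+U$ still avoids some values — e.g. take $y$ with $0\notin\overline{V_F(y)}$ (a "bounded" grid, which exists by Theorem~\ref{thm: Bad non empty}-type considerations) and intersect: for a suitable $(d-2)$-dimensional $U$, $y+U$ is a proper sub-union not filling all directions, so $0$ remains bounded away. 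The main obstacle is precisely this lower bound: making sure the $(d-2)$-dimensional fattening of a carefully chosen non-$DV_F$ grid still fails to be $DV_F$, which I expect requires choosing $y$ whose relevant low-dimensional projections are themselves badly approximable so that no zero (or no large value) is created by the fattening, and checking this is where the real work lies; the upper bound is essentially a dimension-count plus intermediate value theorem.
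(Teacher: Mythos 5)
Your upper bound (every $(d-1)$-dimensional $U$ and every grid $y$ give $F(y+U)=[0,\infty)$) is essentially sound, though it takes a different route from the paper: you locate a zero of $F$ by observing that an affine hyperplane $p+U$ must meet one of the two coordinate subspaces $\bR^m\oplus\{0\}$, $\{0\}\oplus\bR^n$ (otherwise both would lie inside $U$, forcing $U=\bR^d$), then get unboundedness along a ray and finish with the intermediate value theorem; the paper instead runs a rank count on a basis of $U$ to produce a point of the form $\smallmat{0\\w}$ in $p+U$, and then argues that $F$ restricted to $p+U$ is the root of a non-constant polynomial. Two small repairs to your version: the containment $p+U\subset\bR^m\oplus\{0\}$ is \emph{not} ruled out by dimension when $n=1$ (that subspace then has dimension $m=d-1$), so you should choose $p\in y$ with $F(p)>0$ at the outset, as the paper does; and your case split around $m=1$ or $n=1$ for the ray is unnecessary, since starting from a point where both blocks are nonzero and moving along any nonzero $u\in U$ already forces $F\to\infty$.

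The genuine gap is the lower bound. Your first candidate $U=\bR^m\oplus\bR^{n-2}$ fails outright: since the $\bR^m$-block of $y+U$ sweeps out all of $\bR^m$, the factor $\norm{v}^n$ alone takes every value in $[0,\infty)$ while the $w$-block norm is bounded below on each affine copy, so $F(y+U)=[0,\infty)$ for essentially any grid — the opposite of what you need. Your fallback (take a grid with $\inf V_F(y)>0$ and hope some $(d-2)$-dimensional fattening still avoids small values) is never carried out, and you concede this is ``where the real work lies.'' In fact almost no work is needed: the paper takes $y=\bZ^d+\tfrac12(1,\dots,1)^{\on{tr}}$ and $U=\seta{\smallmat{v\\w}:v_1=0,\ w_n=0}$, a codimension-$2$ coordinate subspace retaining one coordinate from \emph{each} block. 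Every point of $y+U$ then has first and last coordinates in $\tfrac12+\bZ$, hence $\norm{v}\ge\tfrac12$ and $\norm{w}\ge\tfrac12$, so $F\ge 2^{-d}$ on $y+U$. The idea you are missing is to choose $U$ so as to pin down one coordinate direction in each of the two blocks (rather than deleting two directions from the same block), which keeps both factors of $F$ bounded below simultaneously.
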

\begin{proof}
    Let 
    $$y=\bZ^d + 1/2\cdot\mat{1\\ \vdots \\ 1}.$$
    Let $U<\bR^d$ be the co-dimension 2 subspace given by $U=\seta{\smallmat{v\\w}:v_1=0, w_n=0}$. Note that the first and last coordinates of the vectors in $y+U$
    are at least $1/2$ in absolute value. Therefore their $F$-value is $\ge 2^{-n}\cdot 2^{-m} = 2^{-d}.$ This shows that the non-degeneracy degree is at least $d-1$. 

    On the other hand, let $U<\bR^d$ be a $d-1$-dimensional space  let $y$ be a grid. We will show that $F(y+U)=[0,\infty).$ 
    Let $\smallmat{v^{1}\\w^{1}},\dots, \smallmat{v^{d-1}\\w^{d-1}}$ be a basis for $U$. Let 
    $$r_1 = \on{rank}\smallmat{v^1&\cdots& v^{d-1}},\; r_2 = 
    \on{rank}\smallmat{w^1&\cdots& w^{d-1}}.$$
    Since the rank of the matrix whose columns are the basis of $U$ is $d-1=m+n-1$, it is impossible to have both $r_1< m$ and $r_2< n$. Assume for concreteness that $r_1=m$. Choose a vector $\smallmat{p\\q}\in y$ such that $F(\smallmat{p\\q})>0$ (this is always possible because grids always contain points in $\bR^d_{>0}$ for example). We can solve the system of equations
    $$-p = a_1v^1+\dots +a_{d-1}v^{d-1}$$
    because we assume $r_1=m$. Therefore, the affine subspace $\smallmat{p\\q}+U$ contains a vector of the form $\smallmat{0\\w}$. 

    Assume for the moment that the norms defining $F$ are the Euclidean norms. The restricted function $F|_{\smallmat{p\\q}+U}$ is the square root of a polynomial in $d-1$ variables that attains the value 0 because of the above argument and also attains the value $F(\smallmat{p\\q})>0$. Hence it is a non-constant polynomial and so we conclude that $F(\smallmat{p\\q}+U) = [0,\infty)$ which finishes the proof. 

    We leave it to the reader to deduce the general case, where the norms used to define $F$ are not assumed to be the Euclidean norms, from the Euclidean one (use the equivalence of norms and the mean-value theorem).

\end{proof}
\begin{proof}[Proof of Proposition~\ref{prop: codimension 1 subtorus}]
    A subtorus of dimension $d-1$ in $\pi^{-1}(x)=\bR^d/x$ 
    is a subgroup of the form $x+U$ where $U<\bR^d$ is a $d-1$-dimensional 
    subspace whose image in $\bR^d/x$ is closed. A coset of such a subtorus, is a set of the form $x+v +U$ for such a subspace $U$ and a vector $v\in \bR^d$. If the closure $\overline{\seta{h_ty:t\in \bR}}$ contains 
    all the grids in such a set, then by the Inheritance 
    Lemma~\ref{lem: inheritance} we have that 
    $$\overline{V_F(y)}\supset \bigcup_{u\in U}V_{F}(x+v+u)=[0,\infty)$$
    where the last equality follows from Lemma~\ref{lem: non degeneracy degree}.
\end{proof}

In our discussion we will fix a lattice $x$ and a probability measure 
$\mu$ on the torus $\bR^d/x$ and try to say something about 
$\mu\pa{\seta{y\in \pi^{-1}(x): y \textrm{ is }DV_F}}$. 
\begin{defn}
    \label{def: a.s.dv}
    Given $x\in X$ and a probability measure $\mu$ on $\bR^d/x$, we say that $x$ is $\mu$-almost surely $DV_F$ if $\mu$-almost any $y$ is $DV_F$. 
\end{defn}

The following is one of the main result of the paper. The readr should recall Definition~\ref{defn: kdiv lattice}.
\begin{thm}\label{thm: main theorem F formulation}
    Assume $\gcd(m,n)=1$. 
    If $x\in X$ has an accumulation sequence of length $d,$
    in other words $x$ is not $k$-divergent for any $0\le k\le d-2$, then 
    $x$ is $\mu$-almost surely grid $DV_F$
    with respect to any algebraic measure $\mu$ on $\pi^{-1}(x)$. In fact, $\mu$-almost any $y\in \pi^{-1}(x)$ satisfies that the orbit closure $\overline{\seta{h_ty:t\ge 0}}\subset Y$ contains a coset of a $d-1$-dimensional subtorus of $\bR^d/x_{d-1}$. 
\end{thm}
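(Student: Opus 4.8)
The plan is to prove the statement by induction on the length of the accumulation sequence, reducing the existence of a $(d-1)$-dimensional subtorus coset in the orbit closure $\overline{\seta{h_ty:t\ge 0}}$ to a statement purely about the lattice $x$ together with the algebraic measure $\mu$ on $\pi^{-1}(x)$. The key mechanism is this: if $x_1\in\partial(x)$, i.e. $h_{t_k}x\to x_1$ along some unbounded sequence, then (after passing to a subsequence and using compactness of the fibers) the images $h_{t_k}(y)$ of a generic grid $y\in\pi^{-1}(x)$ converge, and the pushforward of the algebraic measure $\mu$ under $h_{t_k}$, restricted to the fiber $\pi^{-1}(x)$, produces in the limit an algebraic-type measure on $\pi^{-1}(x_1)$ — whose support is a coset of a subtorus whose dimension has gone \emph{up} (or stayed the same) because $h_t$ expands the $\bR^m$-directions and contracts the $\bR^n$-directions, and $\gcd(m,n)=1$ forces the limiting subspace to be strictly larger unless it was already $\bR^d$-rational in a degenerate way. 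So first I would set up this "one-step" lemma carefully: given an algebraic measure $\mu$ supported on a $\ell$-dimensional subtorus coset in $\pi^{-1}(x)$ and given $x_1\in\partial(x)$, produce an algebraic measure $\mu_1$ supported on a subtorus coset of dimension $\ge \ell+1$ (or $=d-1$ if $\ell\ge d-2$) in $\pi^{-1}(x_1)$, such that $\mu_1$-almost every grid in $\pi^{-1}(x_1)$ lies in $\overline{\seta{h_ty:t\ge0}}$ for $\mu$-almost every $y$.

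Granting that lemma, the argument runs as follows. Start with $I=(x_0=x,x_1,\dots,x_{d-1})$ an accumulation sequence of length $d$, and start with $\mu_0=\mu$ on $\pi^{-1}(x_0)$; this is supported on a subtorus coset of dimension $\ell_0\ge 1$ (non-trivial algebraic). Apply the one-step lemma $d-1$ times along the sequence $x_0\to x_1\to\cdots\to x_{d-1}$, at each stage gaining at least one dimension, so after at most $d-2$ steps we reach an algebraic measure supported on a $(d-1)$-dimensional subtorus coset in some $\pi^{-1}(x_j)$. By transitivity of the relation "$z$ lies in the orbit closure of $y$" along the sequence — which follows by a diagonal argument from the definition of $\partial$ and continuity of the $h_t$-action — the support of that terminal measure, a full coset of a $(d-1)$-dimensional subtorus, lies inside $\overline{\seta{h_ty:t\ge 0}}$ for $\mu$-a.e. $y$. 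By Proposition~\ref{prop: codimension 1 subtorus}, every such $y$ is $DV_F$, which is the conclusion.

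The main obstacle will be the one-step lemma, specifically controlling the \emph{shape} of the limiting measure $\mu_1$ and verifying the dimension actually increases. The pushforward $(h_{t_k})_*\mu$ on $Y$, when restricted to a converging fiber, need not converge to an algebraic measure on $\pi^{-1}(x_1)$ in an obvious way — this is presumably where the paper's notion of "mixing convergence of measures" advertised in the abstract enters. One has to show: (a) the limit measure is supported on a single subtorus coset; (b) its dimension is at least $\ell+1$, using that under $h_t$ the subspace $U_k:=h_{t_k}U$ carrying $\mu$ degenerates toward a subspace containing the expanded directions, and the arithmetic condition $\gcd(m,n)=1$ prevents the subtorus from "stabilizing" at dimension $\ell$ (this is exactly what Lemma~\ref{lemma: zero is not a weight} and Corollary~\ref{cor: subspaces do not stabilize} are presumably used for); and (c) the limiting object is genuinely algebraic, not merely $U$-invariant for some degenerate $U$. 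Handling (b) requires tracking how $x$-rationality of $U$ interacts with the $h_t$-flow and taking a Hausdorff-limit of the subspaces $U_k$ inside the Grassmannian while keeping track of rationality with respect to the limiting lattice $x_1$; the case analysis on whether $U$ already contains all of $\bR^m$ or all of $\bR^n$ (cf. the dichotomy $r_1=m$ or $r_2=n$ in Lemma~\ref{lem: non degeneracy degree}) is likely where the coprimality hypothesis is essential.
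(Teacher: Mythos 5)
Your overall architecture (climb an accumulation sequence of length $d$, gain a dimension at each step using $\gcd(m,n)=1$, finish with Proposition~\ref{prop: codimension 1 subtorus}) matches the paper's, but your one-step lemma is asking for something that is false, and this is exactly the crux of the argument. You want each step to deliver a \emph{pointwise} statement: an algebraic measure $\mu_1$ of dimension $\ge \ell+1$ on $\pi^{-1}(x_1)$ such that for $\mu$-a.e.\ $y$ the orbit closure $\overline{\seta{h_ty:t\ge0}}$ contains ($\mu_1$-a.e.\ point of, hence all of) $\on{supp}\mu_1$. Weak* convergence $(h_{t_l})_*\mu\to\mu_1$ does not give this; it is equivalent to mixing convergence of the sequence of homomorphisms (Proposition~\ref{prop: mixing implies density}), and mixing can fail. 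The example in the paper just before the Coset Lemma is a counterexample to your mechanism: for $\gamma_l=\smallmat{l\\1}:\bT^1\to\bT^2$ the pushforwards of Haar on $\bT^1$ converge weakly to Haar on $\bT^2$ (measure dimension jumps from $1$ to $2$), yet for each fixed irrational $t$ the set $\seta{\gamma_l(t)}$ stays inside a single one-dimensional coset — the pointwise dimension does not increase at all. So iterating a pointwise gain of one dimension per step cannot be made to work.

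The paper's proof avoids this by keeping everything at the level of measures for all $d-1$ steps: Theorem~\ref{thm: ht limits must jump} and Corollary~\ref{cor: acc seq measure dimension} (composed via Lemma~\ref{lem: convergence transitive}) produce a single sequence $t_l$ with $(h_{t_l})_*\mu\to\nu$ where $\nu$ is the \emph{full} Haar measure on $\bR^d/x_{d-1}$ (dimension $1+(d-1)=d$), and only then is the conversion to a pointwise statement made, once, via the Coset Lemma~\ref{lem: coset} — which concedes exactly one dimension when mixing fails and yields a coset of a $(d-1)$-dimensional subtorus in $\overline{\seta{h_{t_l}y}}$ for $\mu$-a.e.\ $y$. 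Note this also corrects your dimension bookkeeping: you stop when the \emph{measure} reaches dimension $d-1$, but after the (unavoidable) loss in passing from measure convergence to orbit closures you would then only control a $(d-2)$-dimensional coset, which is below the non-degeneracy degree $d-1$ of $F$ from Lemma~\ref{lem: non degeneracy degree}. You need the measure to reach dimension $d$ before converting. Your items (a)--(c) about the shape of the limit measure are handled correctly in the paper by Theorem~\ref{thm: limits of subtori}, Corollary~\ref{cor: limit of algebraic measures is algebraic} and Corollary~\ref{cor: subspaces do not stabilize}, so that part of your plan is sound; the gap is solely in the pointwise formulation of the inductive step.
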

We now deduce Theorem~\ref{thm: 1 dim in theta} from Theorem~\ref{thm: main theorem F formulation} and explain how to link the discussion about $DV_F$ grids to the discussion about the set of badly approximable targets $\on{Bad}_A$.
\begin{proof}[Proof of Theorem \ref{thm: 1 dim in theta} assuming Theorem
\ref{thm: main theorem F formulation}]
    Let $A,p_0,q_0, \eta$ be as in the statement of Theorem~\ref{thm: 1 dim in theta}. 
    Consider the lattice $x_A$ defined in~\eqref{eq: xA} and the one dimensional algebraic measure
    $\mu$ on $\bR^d/x_A$ supported on the 1-dimensional subtorus 
    given by the $x_A$-rational subspace
    $$U:= \seta{\mat{t(Aq_0+p_0)\\tq_0} :t\in \bR}$$
    and translation $\mat{-\eta\\ 0}.$ 
    Theorem~\ref{thm: main theorem F formulation} says that $\mu$-almost any grid is $DV_F$. In other words, for Lebesgue almost any $t\in \bR,$
    $F(x_A- \mat{t(Aq_0+p_0)+\eta\\tq_0})$ is dense in $[0,\infty).$
    In particular, for Lebesgue almost any $t$, 
    $F$ attains arbitrarily small positive values on the grid 
    $$x_A- \mat{t(Aq_0+p_0)+\eta\\tq_0}=\seta{\mat{p+Aq-t(Aq_0+p_0)-\eta\\ q-tq_0}: q\in\bZ^n, p\in \bZ^m}.$$
    For a vector $\mat{p+Aq-t(Aq_0+p_0)-\eta\\ q-tq_0}$ in the above set we have 
    $$F(\mat{p+Aq-t(Aq_0+p_0)-\eta\\ q-tq_0}) = \norm{q-tq_0}^n
    \norm{p+Aq-t(Aq_0+p_0)-\eta}^m.$$
    This quantity cannot become positive and arbitrarily small using finitely many $q$'s and so we get that there exists a sequence $q_i\in \bZ^m$ and 
    $p_i\in \bZ^n$ such that $\norm{q_i}\to\infty$ and 
    $\norm{q_i}^n\norm{p_i+Aq_i-t(Aq_0+p_0)-\eta}^m $ is a sequence of positive numbers going to zero. In particular
    $$\liminf_{q\in \bZ^n,\norm{q}\to\infty}\norm{q}^{n/m}\idist{Aq-(-t(Aq_0+p_0)+\eta)} = 0$$ 
    as desired.
   
\end{proof}

\subsection{An intriguing example}
We end this section with the following example which shows that the assumption
$\gcd(n,m)=1$ is not entirely an artifact of the method of proof (see Lemma~\ref{lemma: zero is not a weight} and Corollary~\ref{cor: subspaces do not stabilize}, for the point where this assumption enters our discussion). 
It shows that the statement of Theorem~\ref{thm: main theorem F formulation} can fail drastically when $n=m\ge 2$.
\begin{exmp}
    \label{exmp: counter n=m}
    Take any 2-lattice in the plane $\smallmat{a&b\\c&d}\bZ^2$ and find a translate $\smallmat{s_1\\ s_2}$
    satisfying 
    \begin{equation}
        \label{eq: lower bound in 2d}
    \inf_{q_1,q_2\in \bZ}\av{aq_1+bq_2+s_1}\cdot\av{cq_1+dq_2+s_2}:=c>0. 
    \end{equation}
    The existence of such $s_1,s_2$ is tightly related to Theorems 
    \ref{thm: Bad non empty}, \ref{thm: Bad is winning}. 
    A proof for this existence can be found in
    \cite[Theorem 1]{D}. 
    
    Now choose $n=m$ for any positive integer $m$ and take a matrix $g\in \SL_d(\bR)$ whose $1$st and $m+1$'th rows are
    \begin{align*}
        &(a,0,\dots,0,b,0,\dots,0)\\
        &(c,0,\dots,0,d,0,\dots,0)
    \end{align*}
    where the $b$ and the $d$ are the $m+1$-coordinates. 

    Let us denote by $\mb{p}:\bR^d\to \bR^2$, the projection on the $1$'st and $m+1$'th coordinates. For such a choice of $g$, 
    for any grid of the form $y= g\bZ^d+ u$, where
    \begin{equation}
        \label{eq: the form of u}
        u=(s_1,*,\dots,*, s_2,*,\dots,*)^{\on{tr}}
    \end{equation}
    we have that 
    $$\mb{p}(y) = \mat{a&b\\c&d}\bZ^2+\mat{s_1\\s_2}.$$
    Since we have the lower bound, for $\smallmat{v\\w}\in \bR^d$,
    $$F(\smallmat{v\\w})=\norm{v}^m\norm{w}^m\ge \av{v_1}^m\av{w_1}^m$$
    we deduce from \eqref{eq: lower bound in 2d} that 
    $$\inf V_F(y)\ge c^m>0$$
    and in particular $y$ is not $DV_F$. Note that this is where it is important that $n=m$. By the general shape \eqref{eq: the form of u} of the translate $u$ giving the grid $y$, this shows that the set of grids of 
    $g\bZ^d$ which are not $DV_F$ contains a coset of a codimension $2$ subtorus.

    We note that it is not hard to make the choices in the above general construction in a way that the $h_t$-orbit of $g\bZ^d$ is \textbf{bounded} and in particular, this lattice is not $k$-divergent for any $k$. This shows that Theorem~\ref{thm: main theorem F formulation} fails drastically when $n=m$. It also serves as a counter example to 
    \cite[Theorem 2.3]{GDV} for the regime $n=m\ge 2$. 
    The counter example we present in Theorem~\ref{thm: counterexample intro} is more sophisticated and deals with the regime $n=1$ and $m>2$. 

    A possible choice of the parameters is given by choosing $n=m=2$, $d=4$,
    $$g = \al \mat{1&0&\sqrt{2}&0\\ 0&1&0&\sqrt{2}\\
    1&0&-\sqrt{2}&0\\ 0&1&0&-\sqrt{2}},$$
    where $\al$ is chosen so that $\det g =1$. We shall ignore the multiplicative factor $\al$ below.  
    To see that the orbit $h_t g\bZ^4$ is bounded, we use Mahler's criterion which says that a set of lattices in $X$ has compact closure if and only if there is a uniform lower bound on the lengths of non-zero vectors in lattices in the set. Here, the general form of a non-zero vector in a lattice in $\seta{h_t g\bZ^4:t\in \bR}$ is of the form 
    $$\mat{ 
    e^t (q_1 + q_3\sqrt{2})\\
    e^t (q_2 +q_4\sqrt{2})\\
    e^{-t}(q_1-q_3\sqrt{2})\\
    e^{-t}(q_2-q_4\sqrt{2})}$$
    where $(q_1,q_2,q_3,q_4)\ne (0,0,0,0)$. Assuming for concreteness that $(q_1,q_3)\ne (0,0)$ we get that the length of the above $4$-dimensional vector is bounded below by the length of the 2-dimensional vector
    $$\mat{e^t (q_1 + q_3\sqrt{2})\\ e^{-t}(q_1-q_3\sqrt{2})}.$$
    Now this is a vector whose product of coordinates equals $q_1^2-2q_3^2$, which is a non-zero integer and so it is impossible that both of the coordinates are smaller than $1$ in absolute value. This shows that the length of the 4-dimensional vector above is at least 1 as well.

\end{exmp}
\subsection{The mistake in \cite{GDV}}\label{sec: the mistake}
The main result in \cite{GDV} is Theorem $2.2$ there. This theorem is solid and has interesting applications beyond the discussion of the present paper, namely, for functions other than $F$. The mistake in that paper 
is in sections $2.2$ and $3.2$ where the main theorem was applied to the function $F$ (denoted there by $P_{n,m}$) under the false statement that 
$F$ has non-degeneracy degree 1 according to Definition~\ref{defn: non degeneracy degree} of the present paper. We now comment on the validity/falsity of each of the statements 
made in \cite[\S 2.2, \S 3.2]{GDV}. 
\begin{itemize}
    \item \cite[Theorem 2.3]{GDV} is not true in the generality stated. It is true for the Haar measure on $\bR^d/x$ as we prove in Corollary
    \ref{cor: Haar mixing} in this paper. On the other hand, we saw in Example \ref{exmp: counter n=m} and in Theorem~\ref{thm: counterexample intro} that  \cite[Theorem 2.3]{GDV} is not true for some choices of $n,m$ and algebraic measures of 
    co-dimension 2. We do not know at this point if the statement holds for co-dimension 1 algebraic measures. In particular, the simplest case that remains open is the following:
    \begin{ques}\label{ques: is it true}
        In the notation of the current paper, let $d=3, m=2, n=1$. Let $x\in X$ be a lattice with a non-divergent $h_t$-orbit. Is it true that for any non-trivial algebraic measure $\mu$ on $\bR^3/x$, we have that $x$ is $\mu$-almost surely $DV_F$? Is it true that for $\mu$-almost any grid $y$, $0\in \overline{V_F(y)}$? Specializing to vectors, given a non-singular column vector $\theta\in \bR^2$, is it true that 
        $\on{Bad}_\theta$ is a null set with respect to any algebraic measure 
        on $\bT^2$?
    \end{ques}
    \item \cite[Theorem 3.4]{GDV} is true as stated. This follows from Corollary~\ref{cor: Haar mixing} and Theorem~\ref{thm: Uri lebesgue} of 
    the present paper.
    \item \cite[Theorem 3.7]{GDV} is false in the generality stated. Theorem~\ref{thm: counterexample intro} shows that it is false for some algebraic measures.
    \item
    \cite[Corollary 3.8]{GDV} is under question. We do not yet have a counter example. The statement does hold if one replaces the non-singularity assumption by the stronger assumption that the vector is not 
    $k$-divergent for $0\le k\le d-1$. This is a special case of Theorem~\ref{thm: 1 dim in theta}.  
    \end{itemize}
\section{Generalities about measures}
\subsection{Borel measures and weak* convergence}\label{sec: pushforwards}
In this section we present the elementary theory we will use regarding regular Borel measures on topological spaces. Although the discussion is widely known, we include it here for the sake of completeness which some readers might appreciate.

Throughout this section we assume all topological spaces to be locally compact second countable Hausdorff spaces. This puts us in a setting in which every Borel measure which gives finite measure to compact sets is automatically regular. Such measures are called Radon measures. Furthermore, we will only discuss finite measures so the condition one finiteness on compacta will be automatic.

In later sections, when we apply the theory presented here, we will only be concerned with the spaces $Y,X$ and their subspaces. Nevertheless, we try to keep the discussion here abstract for clarity.

\begin{defn}[Topology on regular complex Borel measures]\label{defn: Borel measures}
    Let $Z$ be locally compact second countable Hausdorff space. 
    Consider Banach space $C_0(Z,\Co)$ of complex continuous functions vanishing at infinity with the supremum norm. Its dual, by \cite[Theorem 6.19]{Ru-RACA}, is isometric to $\mathcal{M}(Z)$, the vector space of complex Radon measures on $Z$ endowed with the norm induced by total variation (cf. \cite[Chapter 6]{Ru-RACA}). The bilinear duality pairing is given by integration:
    \begin{equation*}
        (\phi,\mu) \mapsto \int_Z \phi d\mu.
    \end{equation*}
We endow $\mathcal{M}(Z)$ with the weak* topology. That is, the smallest topology making the all the functionals
\begin{equation*}
    \mu \mapsto \int_Z \phi d\mu,\ (\text{where } \phi \in C_0(Z,\Co))
\end{equation*}
continuous.
We denote by $\mathcal{M}_1(Z)$ the set of measures with norm $\leq 1$, the set of real positive measures by $\ca{M}_+(Z)$, and the set of real positive measures of norm $1$ by $\ca{P}(Z)$. This last set is the set of Borel probabilities on $Z$.
We have a partial order on $\ca{M}(Z)$ given by 
\begin{equation*}
    \left(\mu \geq  \nu\right) \iff \left(\mu - \nu \in \ca{M}_+(Z)\right).
\end{equation*}
\end{defn}
Given $\mu \in \ca{P}(Z)$, we define the \textit{support} of $\mu$, $\on{supp} \mu$ to be the complement of the set
\begin{equation*}
    \bigcup\left\lbrace V \subset Z: V \text{ is open and } \mu(V)=0 \right\rbrace.
\end{equation*}
\begin{lem}\label{lem: measures are equal if they agree on dense set}
    Let $Z$ be second countable locally compact Hausdorff space and let $\mu, \nu \in \mathcal{M}(Z)$. If $D \subset C_0(Z, \Co)$ spans a dense subspace of functions, then 
    \begin{equation*}
        \left(\mu = \nu\right) \iff \left(\int_Z \phi d\mu = \int_Z \phi d\nu \text{ for all } \phi \in D \right).
    \end{equation*}
\end{lem}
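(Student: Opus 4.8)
The plan is to prove the standard fact that a complex Radon measure is determined by its action on any subspace of test functions that is dense in $C_0(Z,\Co)$. First I would observe that the implication from left to right is trivial: if $\mu=\nu$ then $\int_Z\phi\,d\mu=\int_Z\phi\,d\nu$ for every $\phi\in C_0(Z,\Co)$, in particular for every $\phi\in D$.

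For the nontrivial direction, assume $\int_Z\phi\,d\mu=\int_Z\phi\,d\nu$ for all $\phi\in D$. The idea is to upgrade this to all of $C_0(Z,\Co)$ by a density-plus-continuity argument, using the fact (invoked in Definition~\ref{defn: Borel measures} via \cite[Theorem 6.19]{Ru-RACA}) that $\ca{M}(Z)\cong C_0(Z,\Co)^*$, with the total variation norm of a measure equal to its operator norm as a functional. Concretely, set $\lambda:=\mu-\nu\in\ca{M}(Z)$; it is a bounded linear functional on $C_0(Z,\Co)$ vanishing on $D$. Since $D$ spans a dense subspace and $\lambda$ is continuous (being bounded), $\lambda$ vanishes on the closure of $\on{span}(D)$, which is all of $C_0(Z,\Co)$. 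Hence $\lambda=0$ as a functional, and by the isometric identification of $\ca{M}(Z)$ with $C_0(Z,\Co)^*$, this forces $\|\lambda\|=0$, i.e. $\lambda=0$ as a measure, so $\mu=\nu$.

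The only mild subtlety — and the single step worth spelling out — is the passage from ``$\lambda$ vanishes on $D$'' to ``$\lambda$ vanishes on $\overline{\on{span}(D)}$''. This is pure functional analysis: if $\phi\in\overline{\on{span}(D)}$, pick $\phi_k\in\on{span}(D)$ with $\|\phi_k-\phi\|_\infty\to 0$; by linearity $\lambda(\phi_k)=0$ for all $k$, and by $\|\lambda\|<\infty$ we get $|\lambda(\phi)|=|\lambda(\phi)-\lambda(\phi_k)|\le\|\lambda\|\,\|\phi-\phi_k\|_\infty\to 0$, so $\lambda(\phi)=0$. I do not expect any genuine obstacle here; the statement is essentially a restatement of the Hahn--Banach separation principle (a bounded functional vanishing on a dense subspace is zero) combined with the Riesz representation theorem already cited in the paper. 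The result will be used later to verify that two measures on $Y$ or $X$ coincide by testing against a convenient dense family of functions (e.g. products or exponentials).
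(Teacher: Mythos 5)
Your argument is correct and is exactly the paper's proof, only written out in full: the paper disposes of the nontrivial direction in one line by noting that $\mu,\nu$ are continuous linear functionals on $C_0(Z,\Co)$, which is precisely your density-plus-continuity argument for $\lambda=\mu-\nu$. No discrepancies.
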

\begin{proof}
    The implication $\Longrightarrow$ is trivial. The other implication follows from the fact that $\mu,\nu$ are continuous linear functionals on $C_0(Z,\bC)$.
    \ignore{
    Since integration is bilinear (over functions and measures), it suffices to prove the Lemma when $\nu=0$. The forward direction is clear. We prove the other direction. Let $\psi \in C_0(Z,\Co)$ be arbitrary. Let $\varepsilon>0$ be arbitrary.
    We compute, for $\phi\in D$,
    \begin{equation*}
        \begin{split}
            \left| \int_{Z} \psi d\mu \right| &= 
            \left| \int_Z (\psi - \phi) d\mu \right|
            \\
            &\leq \|\psi - \phi\| \cdot \|\mu\|
        \end{split}
    \end{equation*}
    where $\|\mu\|$ denotes norm in $\ca{M}(Z)$ and $\|\psi - \phi\|$ denotes the norm in $C_0(Z,\Co)$. 
    Thus, if we choose $\phi \in D$ which has
    \begin{equation*}
        \|\psi - \phi\| < \varepsilon \|\mu\|^{-1}
    \end{equation*}
    (assuming that $\mu$ is nonzero), we get
    \begin{equation*}
        \left|\int_Z \psi d\mu \right| < \varepsilon.
    \end{equation*}
    By the duality of $\mathcal{M}(Z)$ and $C_0(Z,\Co)$, we are done.
    }
\end{proof}
An example to keep in mind for the applicability of the above lemma is when 
$Z=\bT^m$ and $D$ is the set of characters $D=\seta{e^{2\pi i\idist{x,p}}:p\in \bZ^m}.$
\ignore{With respect to the preceeding Definition and Lemma, we remark there can exist nonzero, nonregular Borel measures on $Z$ which give rise to the zero functional on $C_0(Z,\Co)$. See \cite[Chapter 2, exercise 18]{Ru-RACA}.
}
\begin{lem}\label{lem: B-A}
    Let $Z$ be a locally compact second countable Hausdroff space. In the weak* topology, the unit ball $\ca{M}_1(Z)$ is compact. Furthermore, when $Z$ is compact, $\cP(Z)$ is compact.
\end{lem}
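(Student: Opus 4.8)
The plan is to obtain both statements from the Banach--Alaoglu theorem, with a short closedness argument for the part about $\cP(Z)$. Recall from Definition~\ref{defn: Borel measures} that \cite[Theorem 6.19]{Ru-RACA} provides an isometric isomorphism of $\ca{M}(Z)$ onto the dual Banach space $C_0(Z,\Co)^{*}$, given by sending $\mu$ to the functional $\phi\mapsto\int_Z\phi\,d\mu$, and that under this identification the weak* topology of Definition~\ref{defn: Borel measures} is precisely the weak* topology on $C_0(Z,\Co)^{*}$ while $\ca{M}_1(Z)$ is precisely its closed unit ball. So for the first assertion I would simply invoke the Banach--Alaoglu theorem for the Banach space $C_0(Z,\Co)$: the closed unit ball of its dual is weak* compact, hence so is $\ca{M}_1(Z)$. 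I would also record the observation (not needed for compactness, but convenient later) that since $Z$ is second countable the space $C_0(Z,\Co)$ is separable, so the weak* topology restricted to $\ca{M}_1(Z)$ is metrizable and sequential arguments on it are legitimate.

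For the second assertion I would show that, when $Z$ is compact, $\cP(Z)$ is a weak* closed subset of the compact set $\ca{M}_1(Z)$, which forces it to be compact. First, $\cP(Z)\subset\ca{M}_1(Z)$: since $Z$ is compact the constant function $\mathbf{1}$ lies in $C_0(Z,\Co)=C(Z,\Co)$, and a positive finite measure has total variation norm equal to its total mass, so $\|\mu\|=\int_Z\mathbf{1}\,d\mu=1$ for every $\mu\in\cP(Z)$. Next I would use the identity
\[ \cP(Z)=\Bigl\{\mu\in\ca{M}_1(Z):\ \textstyle\int_Z\mathbf{1}\,d\mu=1\ \text{ and }\ \int_Z\phi\,d\mu\ge0\ \text{ for every }\phi\in C(Z,\R)\text{ with }\phi\ge0\Bigr\}. \]
Each constraint on the right-hand side is the preimage of a closed subset of $\Co$ --- namely $\{1\}$, respectively $[0,\infty)$ --- under one of the weak*-continuous functionals $\mu\mapsto\int_Z\phi\,d\mu$, hence is weak* closed; an intersection of weak* closed sets is weak* closed, so $\cP(Z)$ is weak* closed, and a closed subset of a compact space is compact.

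The only step here that is not a one-line formality is the displayed identity, i.e.\ the fact that a complex Radon measure all of whose integrals against nonnegative continuous functions are nonnegative must in fact be a positive measure. I expect this to be the sole point requiring genuine care; I would deduce it from the uniqueness clause in the Riesz representation theorem (testing against such $\phi$ recovers $|\mu|$ and forces $\mu=|\mu|\ge0$), or alternatively via a Jordan-type decomposition of the real and imaginary parts of $\mu$. Everything else reduces to Banach--Alaoglu together with the trivial observation that the conditions defining $\cP(Z)$ are stable under weak* limits.
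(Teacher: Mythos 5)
Your proof is correct and follows essentially the same route as the paper: Banach--Alaoglu for the first claim, and for the second the observation that $\cP(Z)$ is cut out inside $\ca{M}_1(Z)$ by the weak*-closed conditions $\int\phi\,d\mu\ge 0$ for nonnegative $\phi$ together with $\int\mathbf{1}_Z\,d\mu=1$ (the latter using compactness of $Z$ to put $\mathbf{1}_Z$ in $C_0(Z,\Co)$). The only difference is that you explicitly flag and justify the identification of the positivity constraints with membership in $\ca{M}_+(Z)$, which the paper takes for granted.
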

\begin{proof}
    The first statement is the Banach-Alaoglu theorem. See \cite[Theorem 3.15]{Ru-FA}. For the second statement, note that 
    $$\cM_+(Z) = \cap_{\phi\ge 0, \phi\in C_0(Z,\bC)}\seta{\mu:\int_Z\phi d\mu\ge 0}$$ 
    is weak* closed and when $Z$ is compact, $\mb{1}_Z$ (the constant function $1$,) is an element of $C_0(Z,\bC)$ and so the subset $\cP(Z)= \seta{\mu\in \cM_+(Z):\int_Z\mb{1}_Z d\mu = 1}$ is also closed. Therefore, $\cP(Z)\subset \cM_1(Z)$ is a closed subset and hence compact (note that the weak* topology is Hausdorff). 
\end{proof}
\begin{lem}\label{lem: unit ball is metrizable}
    Let $Z$ be a locally compact second countable Hausdorff space. Then the unit ball $\mathcal{M}_1(Z)$ with the weak* topology is metrizable.
\end{lem}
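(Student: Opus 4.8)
The plan is to establish metrizability of $\mathcal{M}_1(Z)$ in the weak* topology by exhibiting a countable family of continuous functionals that separates points and induces the topology. First I would use second countability and local compactness of $Z$ to produce a countable set $\{\phi_j\}_{j\in\bN}\subset C_0(Z,\Co)$ whose linear span is dense in $C_0(Z,\Co)$: concretely, $C_0(Z,\Co)$ is a separable Banach space under these hypotheses (one can build a countable dense set using a countable basis of the topology and Urysohn functions, together with the Stone--Weierstrass theorem on a one-point compactification), so let $\{\phi_j\}$ be a countable dense subset, and by rescaling assume $\|\phi_j\|_\infty\le 1$ for all $j$.

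Next I would define, for $\mu,\nu\in\mathcal{M}_1(Z)$,
\begin{equation*}
    \rho(\mu,\nu) := \sum_{j=1}^\infty 2^{-j}\left|\int_Z \phi_j\, d\mu - \int_Z \phi_j\, d\nu\right|.
\end{equation*}
The series converges because each summand is bounded by $2^{-j}\cdot 2\|\mu\|\le 2^{1-j}$ on the unit ball, and $\rho$ is clearly symmetric and satisfies the triangle inequality. That $\rho(\mu,\nu)=0$ implies $\mu=\nu$ follows from Lemma~\ref{lem: measures are equal if they agree on dense set}: if all the integrals against the $\phi_j$ agree, then since $\{\phi_j\}$ spans a dense subspace of $C_0(Z,\Co)$, the measures agree. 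So $\rho$ is a genuine metric on $\mathcal{M}_1(Z)$.

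It remains to check that the topology induced by $\rho$ coincides with the subspace weak* topology on $\mathcal{M}_1(Z)$. One inclusion is immediate: each functional $\mu\mapsto\int\phi_j\,d\mu$ is weak* continuous, so $\rho$ is weak* continuous as a uniformly convergent series of such, hence every $\rho$-ball is weak* open, i.e.\ the weak* topology is finer than the $\rho$-topology. For the reverse, I would invoke the standard fact that a continuous bijection from a compact space to a Hausdorff space is a homeomorphism: by Lemma~\ref{lem: B-A} the unit ball $\mathcal{M}_1(Z)$ is weak* compact, the identity map from $(\mathcal{M}_1(Z),\text{weak*})$ to $(\mathcal{M}_1(Z),\rho)$ is a continuous bijection by the previous sentence, and the $\rho$-topology is Hausdorff since $\rho$ is a metric; therefore the identity is a homeomorphism and the two topologies agree.

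The only real subtlety — and the step I would be most careful about — is the separability of $C_0(Z,\Co)$ for a general locally compact second countable Hausdorff space $Z$; everything else is the routine "countable separating family of functionals yields a metric" argument. I would handle separability by passing to the one-point compactification $Z^+$, which is compact Hausdorff and (because $Z$ is second countable, hence metrizable and $\sigma$-compact) second countable, so $C(Z^+,\Co)$ is separable by Stone--Weierstrass applied to a countable point-separating subalgebra built from a countable basis; then $C_0(Z,\Co)$ sits isometrically inside $C(Z^+,\Co)$ as the functions vanishing at the point at infinity, and a subspace of a separable metric space is separable. This gives the desired countable dense $\{\phi_j\}$ and completes the argument.
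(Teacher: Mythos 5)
Your proof is correct and follows essentially the same route as the paper: the paper also takes a countable dense subset of $C_0(Z,\Co)$, forms the analogous weighted-sum metric, and leaves the verification that the metric topology agrees with the weak* topology to the reader (or to a citation of Rudin); your compact-to-Hausdorff bijection argument is exactly the standard way to fill in that verification, using the same Lemmas~\ref{lem: B-A} and~\ref{lem: measures are equal if they agree on dense set} the paper points to.
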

\begin{proof}
    Under our assumptions $C_0(Z,\Co)$ contains a countable dense set. Call it $(\phi_i)_{i \in \N} \subset C_0(Z,\Co)$. We leave it to the reader to check that the topology on $\mathcal{M}_1(Z)$ induced by the metric
    \begin{equation*}
        \dist(\mu, \nu) := \sum_{i=1}^\infty \frac{1}{2^i \|\phi_i\|} \left| \int \phi_i d(\mu - \nu) \right|
    \end{equation*}
coincides with  topology on $\mathcal{M}_1(Z)$ induced by the weak* topology on $\mathcal{M}(Z)$.
Alternatively, apply Lemmas \ref{lem: B-A} and \ref{lem: measures are equal if they agree on dense set} to use \cite[(c) of page 63]{Ru-FA}.
\end{proof}

\begin{defn}[Pushforwards of measures]\label{defn: pushforward}
    Let $Z$ and $Z'$ be locally compact second countable Hausdorff spaces. If $f: Z \to Z'$ is a Borel measurable map between them and $\mu \in \mathcal{M}(Z)$, we have the pushforward of $\mu$ by $f$ in $\mathcal{M}(Z')$ defined by the formula:
    \begin{equation*}
        (f_*\mu) (E) := \mu(f^{-1}(E))\ \text{ for every Borel } E \subset Z'.
    \end{equation*}
    It is straightforward to check that for any bounded measurable function $\phi: Z' \to \Co$ we have the equality
    \begin{equation*}
        \int_{Z'} \phi d(f_* \mu) = \int_Z \phi \circ f d\mu,
    \end{equation*}
    and that this formula characterizes
    $f_*\mu$.
\end{defn}
    \ignore{Note, in the above definition, complex Borel measures are automatically regular and Radon by \cite[Theorem 2.18]{Ru-RACA}.}

\begin{lem}\label{lem: functoriality of pushforwards}
    Let $Z, Z', Z''$ be locally compact second countable Hausdorff spaces. Let $\mu \in \mathcal{M}(Z)$, and say we have measurable maps $f:Z \to Z'$ and $g: Z' \to Z''$.
    Then,
    \begin{equation*}
        (g\circ f)_*(\mu) = g_* (f_* \mu) \ \text{ in } \mathcal{M}(Z'').
    \end{equation*}
\end{lem}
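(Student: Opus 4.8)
The final statement in the excerpt is Lemma (functoriality of pushforwards): for locally compact second countable Hausdorff spaces $Z, Z', Z''$, a measure $\mu \in \mathcal{M}(Z)$, and measurable maps $f: Z \to Z'$, $g: Z' \to Z''$, we have $(g \circ f)_*(\mu) = g_*(f_*\mu)$.

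**Plan of proof.** This is a routine verification, and the plan is to do it directly from the defining formula in Definition~\ref{defn: pushforward}. First I would recall that for any Borel set $E \subset Z''$, the preimage $(g \circ f)^{-1}(E) = f^{-1}(g^{-1}(E))$; this is a purely set-theoretic identity requiring only that $g^{-1}(E)$ is a Borel subset of $Z'$ (which holds since $g$ is measurable) so that $f^{-1}(g^{-1}(E))$ makes sense and is Borel in $Z$. Then I would compute, for any Borel $E \subset Z''$,
\[
  \bigl((g\circ f)_*\mu\bigr)(E) = \mu\bigl((g\circ f)^{-1}(E)\bigr) = \mu\bigl(f^{-1}(g^{-1}(E))\bigr) = (f_*\mu)\bigl(g^{-1}(E)\bigr) = \bigl(g_*(f_*\mu)\bigr)(E),
\]
where the middle equalities are just the definition of pushforward applied twice. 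Since the two measures agree on all Borel sets, they are equal as elements of $\mathcal{M}(Z'')$.

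**Alternative route.** If one prefers to stay on the functional-analytic side, one can instead use the integration characterization from Definition~\ref{defn: pushforward}: for every bounded measurable $\phi: Z'' \to \Co$,
\[
  \int_{Z''} \phi \, d\bigl((g\circ f)_*\mu\bigr) = \int_Z \phi \circ (g \circ f)\, d\mu = \int_Z (\phi \circ g)\circ f \, d\mu = \int_{Z'} (\phi\circ g)\, d(f_*\mu) = \int_{Z''}\phi\, d\bigl(g_*(f_*\mu)\bigr),
\]
and then invoke the stated uniqueness clause ("this formula characterizes $f_*\mu$") — or, if one wants to be economical, apply Lemma~\ref{lem: measures are equal if they agree on dense set} with $D$ a countable dense subset of $C_0(Z'',\Co)$, noting that elements of $C_0$ are in particular bounded measurable. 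I would present the first (set-theoretic) route as the main argument since it is the shortest and uses the least machinery.

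**Main obstacle.** There is essentially no obstacle here; the only thing to be slightly careful about is measurability bookkeeping — making sure $g^{-1}(E)$ is Borel before feeding it to $f^{-1}$ and to $f_*\mu$ — but this is immediate from the hypothesis that $f, g$ are Borel measurable. So the "hard part," such as it is, is merely to state the set-theoretic identity $(g\circ f)^{-1} = f^{-1}\circ g^{-1}$ cleanly and chain the definitions; the lemma is included only because it will be used silently later in the paper.
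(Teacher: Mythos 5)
Your proof is correct and is exactly the argument the paper intends: the paper's own proof is the one-line remark that the claim ``follows at once from Definition~\ref{defn: pushforward}'', and your set-theoretic chain $((g\circ f)_*\mu)(E)=\mu(f^{-1}(g^{-1}(E)))=(g_*(f_*\mu))(E)$ is precisely that verification spelled out. The alternative integration route you mention is also fine but unnecessary.
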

\begin{proof}
    This follows at once from Definition \ref{defn: pushforward}.
\end{proof}
\begin{lem}\label{lem: pushforward is continuous}
    Let $Z, Z'$ be locally compact second countable Hausdorff spaces and let $f : Z \to Z'$ be continuous and proper.
    Then, 
    \begin{equation*}
        f_* : \ca{M}(Z) \to \ca{M}(Z')
    \end{equation*}
    is continuous with respect to  the weak* topologies.
\end{lem}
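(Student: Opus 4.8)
The statement to prove is Lemma~\ref{lem: pushforward is continuous}: if $f:Z\to Z'$ is continuous and proper between locally compact second countable Hausdorff spaces, then $f_*:\mathcal{M}(Z)\to\mathcal{M}(Z')$ is weak* continuous.

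The plan is to reduce weak* continuity of $f_*$ to showing that pre-composition with $f$ maps $C_0(Z',\Co)$ into $C_0(Z,\Co)$; the properness of $f$ is exactly what makes this work. First I would recall that by Definition~\ref{defn: pushforward}, for any $\phi\in C_0(Z',\Co)$ and any $\mu\in\mathcal{M}(Z)$ we have $\int_{Z'}\phi\,d(f_*\mu)=\int_Z \phi\circ f\,d\mu$. Thus the functional $\mu\mapsto\int_{Z'}\phi\,d(f_*\mu)$ equals the functional $\mu\mapsto\int_Z(\phi\circ f)\,d\mu$. The weak* topology on $\mathcal{M}(Z')$ is by definition the coarsest topology making all the evaluation maps $\nu\mapsto\int_{Z'}\phi\,d\nu$ ($\phi\in C_0(Z',\Co)$) continuous, so to prove $f_*$ is continuous it suffices to check that each composite $\mu\mapsto\int_{Z'}\phi\,d(f_*\mu)$ is weak* continuous on $\mathcal{M}(Z)$.

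The key step is then the observation that $\phi\circ f\in C_0(Z,\Co)$ whenever $\phi\in C_0(Z',\Co)$ and $f$ is continuous and proper. Continuity of $\phi\circ f$ is immediate. For vanishing at infinity: given $\varepsilon>0$, the set $K':=\{z'\in Z':|\phi(z')|\ge\varepsilon\}$ is compact (it is closed since $\phi$ is continuous, and contained in a compact set by the definition of $C_0$), hence $K:=f^{-1}(K')$ is compact by properness of $f$, and outside $K$ we have $|\phi\circ f|<\varepsilon$. So $\phi\circ f$ vanishes at infinity. Once $\phi\circ f\in C_0(Z,\Co)$, the map $\mu\mapsto\int_Z(\phi\circ f)\,d\mu$ is one of the defining functionals of the weak* topology on $\mathcal{M}(Z)$, hence continuous; and since it coincides with $\mu\mapsto\int_{Z'}\phi\,d(f_*\mu)$, the latter is continuous as well. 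As this holds for every $\phi\in C_0(Z',\Co)$, $f_*$ is weak* continuous.

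I do not anticipate a serious obstacle here; the only point requiring care is the argument that $\phi\circ f$ vanishes at infinity, which is where properness is genuinely used (without it, one only gets boundedness of $\phi\circ f$, not membership in $C_0$). One should also note in passing that $f_*$ indeed lands in $\mathcal{M}(Z')$ — i.e.\ that $f_*\mu$ is a Radon measure of finite total variation — but this is clear since $\|f_*\mu\|\le\|\mu\|$ and finite Borel measures on our spaces are automatically Radon, as recalled at the start of the section.
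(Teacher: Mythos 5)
Your proof is correct and follows essentially the same route as the paper: both reduce weak* continuity of $f_*$ to the fact that $\phi\circ f\in C_0(Z,\Co)$ for $\phi\in C_0(Z',\Co)$, which is exactly where properness enters. The only difference is cosmetic (the paper uses linearity and preimages of basic neighbourhoods of $0$, while you invoke the universal property of the initial topology), and you supply the vanishing-at-infinity argument that the paper leaves to the reader.
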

\begin{proof}
    It is clear that $f_*$ is a linear map. 
    It is thus enough to show that preimages of basic open open neighbourhoods of $0$ are open. A basic weak* open neighbourhood 
    of $0\in \cM(Z')$ is given by, for $\eps>0, \phi \in C_0(Z',\bC)$,  $\seta{\nu:\av{\int \phi d\nu}<\eps}.$ 
    It follows from the definition of $f_*$ that for $\phi \in C_0(Z,\Co), \eps>0$,
    \begin{equation*}
        (f_*)^{-1} \left\lbrace \nu \in \ca{M}(Z') : \left| \int \phi d\nu \right| < \eps \right\rbrace = \left\lbrace \mu \in \ca{M}(Z): \left| \int \phi \circ f d\mu \right|<\eps\right\rbrace.
    \end{equation*}
    Since $f$ is continuous and proper, $\phi\circ f\in C_0(Z,\bC)$ and so the right hand side is a basic open neighbourhood of $0$ as well. 
\end{proof}
The following Lemma is used to justify a step in a particular argument. The reader should skip it and return to it when referred to. 
\begin{lem}\label{lem: convergence transitive}
    Let $Z, Z', Z''$ be locally compact second countable Hausdorff spaces. Let $(f_l)_{l \in \N}$ and $(g_l)_{l \in \N}$ be sequences of continuous proper functions from $Z \to Z'$ and $Z' \to Z''$ respectively.
    Let $\mu \in \mathcal{M}(Z)$, $\nu \in \ca{M}(Z')$ and $\eta \in \ca{M}(Z'')$ be measures and assume we have the weak* convergence
    \begin{equation*}
        \lim_{n \to \infty} (f_l)_* \mu = \nu\ \text{ and }\ \lim_{l \to \infty} (g_l)_* \nu = \eta.
    \end{equation*}
    Then, there are subsequences $(p_l)_{l \in \N}$ and $(q_l)_{l \in \N}$ of $\N$ such that we have the weak* convergence
    \begin{equation*}
        \lim_{l \to \infty} (g_{p_l} \circ f_{q_l})_* \mu = \eta.
    \end{equation*}
\end{lem}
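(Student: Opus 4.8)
The plan is to use metrizability of the relevant unit balls so that we may extract convergent subsequences by a diagonal argument, and then exploit the continuity and functoriality of pushforwards established in Lemmas~\ref{lem: pushforward is continuous} and~\ref{lem: functoriality of pushforwards}. First I would observe that since $Z''$ is not assumed compact, the target measure $\eta$ and the sequence $(g_l)_*\nu$ all live in $\ca M_1(Z'')$ (pushforwards by measurable maps do not increase total variation), which by Lemma~\ref{lem: unit ball is metrizable} is metrizable in the weak* topology; likewise $(f_l)_*\mu \to \nu$ takes place in the metrizable space $\ca M_1(Z')$. So all convergences in the hypothesis are sequential convergences with respect to honest metrics $\rho'$ on $\ca M_1(Z')$ and $\rho''$ on $\ca M_1(Z'')$.

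Next I would run the diagonal extraction. Fix $l$. Because $\lim_k (f_k)_*\mu = \nu$ in $\ca M_1(Z')$ and $(g_l)_*$ is continuous on $\ca M(Z')$ by Lemma~\ref{lem: pushforward is continuous} (as $g_l$ is continuous and proper), we get $\lim_k (g_l)_*(f_k)_*\mu = (g_l)_*\nu$ in $\ca M_1(Z'')$, and by Lemma~\ref{lem: functoriality of pushforwards} the left side equals $\lim_k (g_l\circ f_k)_*\mu$. Hence for each $l$ we may pick an index $q_l$, which we may take strictly increasing in $l$, such that
\[
\rho''\bigl((g_l\circ f_{q_l})_*\mu,\ (g_l)_*\nu\bigr) < \tfrac1l .
\]
Setting $p_l := l$, the triangle inequality gives
\[
\rho''\bigl((g_{p_l}\circ f_{q_l})_*\mu,\ \eta\bigr) \le \rho''\bigl((g_{p_l}\circ f_{q_l})_*\mu,\ (g_{p_l})_*\nu\bigr) + \rho''\bigl((g_{p_l})_*\nu,\ \eta\bigr) < \tfrac1l + \rho''\bigl((g_l)_*\nu,\ \eta\bigr),
\]
and the right-hand side tends to $0$ by the second hypothesis $\lim_l (g_l)_*\nu = \eta$. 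Thus $(g_{p_l}\circ f_{q_l})_*\mu \to \eta$ weak*, as claimed; note $(p_l)$ and $(q_l)$ are indeed subsequences of $\N$ (strictly increasing), which is the only formal constraint the statement imposes.

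The only genuine subtlety — the place I would be careful about — is making sure every measure in sight really sits inside a metrizable piece of $\ca M(Z')$ or $\ca M(Z'')$: this is why one checks at the outset that measurable pushforwards are norm-nonincreasing, so that the whole argument stays inside the unit balls where Lemma~\ref{lem: unit ball is metrizable} applies. Everything else is a routine $\varepsilon/2$ diagonalization; no compactness of $Z''$ and no further structure is needed. One could alternatively phrase the extraction without metrics, using that the weak* topology restricted to a countable-plus-limit-point set of measures is first countable, but invoking Lemma~\ref{lem: unit ball is metrizable} directly is cleanest.
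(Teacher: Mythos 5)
Your proof is correct and follows essentially the same route as the paper's: both reduce to the metrizable unit balls of Lemma~\ref{lem: unit ball is metrizable}, use the weak* continuity of $(g_l)_*$ from Lemma~\ref{lem: pushforward is continuous} together with functoriality, and extract the indices diagonally. The only cosmetic difference is that you phrase the selection via an $\varepsilon/2$ triangle inequality with $p_l=l$, whereas the paper selects $q_l$ through a $\delta$-ball preimage under $(g_{p_l})_*$; these are the same argument.
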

\begin{proof}
    Since the maps on measures $(f_l)_*$ and $(g_l)_*$ have operator norm less than or equal to $1$, we might as well assume $\mu, \nu$ and $\eta$ are in the unit balls of their respective measure spaces.
    
    By Lemma \ref{lem: unit ball is metrizable}, the weak* topologies on the unit balls $\ca{M}_1(Z)$,  $\ca{M}_1(Z')$ and $\ca{M}_1(Z'')$ are metrizable. 
    Let $l\in \N$ and consider the metric ball
    \begin{equation*}
        B_{\ca{M}_1(Z'')}(\eta, n^{-1}) \subset \ca{M}_1(Z'')
    \end{equation*}
    with center $\eta$ and radius $l^{-1}$ in this metric.
By convergence of $((g_l)_* \nu)_{l \in \N}$, there exists some $p_l \in \N$ with
\begin{equation*}
    (g_{p_l})_* \nu \in B_{\ca{M}_1(Z'')}(\eta, l^{-1}).
\end{equation*}
By Lemma \ref{lem: pushforward is continuous}, there is some $\delta>0$ with
\begin{equation}\label{eq: nu in}
    \nu \in B_{\ca{M}_1(Z')}(\nu, \delta) \subset (g_{p_l})_*^{-1} \left( B_{\ca{M}_1(Z'')}(\eta, l^{-1}) \right).
\end{equation}
By the convergence of $((f_l)_* \mu)_{l \in \N}$, we see that there exists $ q_l \in \N$ with
\begin{equation}\label{eq: mu in}
    (f_{q_l})_* \mu \in B_{\ca{M}_1(Z')}(\nu, \delta).
\end{equation}
Combining equations \eqref{eq: nu in} and \eqref{eq: mu in}, we see that 
\begin{equation*}
    (g_{p_l})_* \circ (f_{q_l})_* \mu \in B_{\ca{M}_1(Z'')}(\eta, l^{-1}).
\end{equation*}
Varying $l \in \N$ gives us our sequence.
\end{proof}
\ignore{
\begin{lem}\label{lem: continuous maps are continuous}
    Let $Z, Z', Z''$ be locally compact second countable Hausdorff spaces. Let $(f_n)_{l \in \N}$ be a sequence of measurable maps maps from $Z \to Z'$, let $\mu \in \mathcal{M}(Z)$ and let $g : Z' \to Z''$ be a continuous proper map.
    If we have the weak* convergence
    \begin{equation*}
        \lim_{n \to \infty} (f_n)_* \mu = \nu \ \text{ in } \mathcal{M}(Z'),
    \end{equation*}
    then we also have the weak* convergence
    \begin{equation*}
        \lim_{n \to \infty} (g\circ f_n)_* \mu = g_* \nu \ \text{ in } \mathcal{M}(Z'').
    \end{equation*}
\end{lem}
\begin{proof}
This follows from Lemmas \ref{lem: functoriality of pushforwards}, \ref{lem: pushforward is continuous}.
\end{proof}
}
The following Lemma is used to justify a step in a particular argument. The reader should skip it and return to it when referred to. 
\begin{lem}\label{lem: weak limit is the same under perturbations}
    Say $Z$ and $Z'$ are locally compact second countable metric spaces with distance functions $\dist_{Z}$ and $\dist_{Z'}$ respectively. Let $\mu \in \mathcal{M}(Z)$.
    Suppose we have sequences of continuous maps $(f_l)_{l \in \N}$,  $(g_l)_{l \in \N}$ from $Z \to Z'$ with the property:
    \begin{equation*}
        \lim_{l \to \infty}\ \sup_{z \in Z} \dist_{Z'}(f_l(z), g_l(z)) = 0.
    \end{equation*}
    If there exists a weak* limit
    \begin{equation*}
        \lim_{l \to \infty} (f_l)_* \mu = \nu\ \text{ in } \mathcal{M}(Z')
    \end{equation*}
    then we also have the same weak* limit
    \begin{equation*}
        \lim_{l \to \infty} (g_l)_* \mu = \nu.
    \end{equation*}
\end{lem}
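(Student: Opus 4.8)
The plan is to verify the weak* convergence by pairing against test functions and to reduce everything to a single uniform estimate coming from the hypothesis $\sup_{z}\dist_{Z'}(f_l(z),g_l(z))\to0$. By definition of the weak* topology it suffices to show that for every $\phi\in C_0(Z',\Co)$ we have $\int_{Z'}\phi\,d\bigl((g_l)_*\mu\bigr)\to\int_{Z'}\phi\,d\nu$, i.e.\ (using Definition~\ref{defn: pushforward}) $\int_Z\phi\circ g_l\,d\mu\to\int_{Z'}\phi\,d\nu$. Since $\int_Z\phi\circ f_l\,d\mu\to\int_{Z'}\phi\,d\nu$ is precisely the standing hypothesis, it is enough to prove $\int_Z(\phi\circ g_l-\phi\circ f_l)\,d\mu\to0$, and because
$$\Bigl|\int_Z(\phi\circ g_l-\phi\circ f_l)\,d\mu\Bigr|\le\norm{\mu}\cdot\sup_{z\in Z}\bigl|\phi(f_l(z))-\phi(g_l(z))\bigr|,$$
the whole statement reduces to showing $\sup_{z\in Z}|\phi(f_l(z))-\phi(g_l(z))|\to0$ as $l\to\infty$.

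The one point that needs care is that $\phi$ need not be uniformly continuous on all of $Z'$, since the ambient spaces are only assumed locally compact; hence one cannot simply feed the uniform decay of $\dist_{Z'}(f_l(z),g_l(z))$ into a global modulus of continuity. Instead I would use that $\phi$ vanishes at infinity. Fix $\e>0$, put $\eta:=\e/(1+\norm{\mu})$, and let $C:=\{x\in Z':|\phi(x)|\ge\eta\}$, which is compact because $\phi\in C_0(Z',\Co)$. Then I would record the elementary fact that $\phi$ is uniformly continuous \emph{near} $C$: there is $\delta\in(0,1)$ such that $x\in C$ and $\dist_{Z'}(x,y)<\delta$ imply $|\phi(x)-\phi(y)|<\eta$ (cover $C$ by finitely many balls on each of which $\phi$ oscillates by at most $\eta/2$, and take $\delta$ to be the smallest of those radii).

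Finally I would pick $l$ large enough that $\sup_z\dist_{Z'}(f_l(z),g_l(z))<\delta$, which is possible by hypothesis, and estimate pointwise: for a fixed $z$, either $|\phi(f_l(z))|<\eta$ and $|\phi(g_l(z))|<\eta$, whence $|\phi(f_l(z))-\phi(g_l(z))|<2\eta$; or one of the two points, say $f_l(z)$, lies in $C$, and then the other point is within $\delta$ of it, so $|\phi(f_l(z))-\phi(g_l(z))|<\eta$ by uniform continuity near $C$. Hence $\sup_z|\phi(f_l(z))-\phi(g_l(z))|\le2\eta$ for all large $l$, giving $\limsup_l\bigl|\int_Z(\phi\circ g_l-\phi\circ f_l)\,d\mu\bigr|\le2\eta\norm{\mu}\le2\e$; letting $\e\to0$ finishes the proof. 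I expect the only genuine obstacle to be exactly this replacement of global uniform continuity by uniform continuity on a neighbourhood of a compact sublevel set of $\phi$ — everything else is routine manipulation of the weak* topology together with the triangle inequality.
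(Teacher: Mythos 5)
Your proof is correct and follows essentially the same route as the paper's: bound $\bigl|\int_Z(\phi\circ g_l-\phi\circ f_l)\,d\mu\bigr|$ by $\|\mu\|\cdot\sup_{z}|\phi(f_l(z))-\phi(g_l(z))|$ and combine this with the assumed convergence of $(f_l)_*\mu$ via the triangle inequality. The only difference is that the paper simply asserts that $\phi\in C_0(Z',\Co)$ is uniformly continuous (which is indeed true on a locally compact metric space, by essentially the compact-neighbourhood argument you sketch), whereas you supply the localized uniform-continuity estimate near the sublevel set $C=\{|\phi|\ge\eta\}$ explicitly --- a legitimate point of care, since the paper leaves that step unjustified.
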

\begin{proof}
We can assume $\mu \neq 0$. Let $\phi \in C_0(Z', \Co)$ 
\ignore{be  compactly-supported} 
and let $\varepsilon>0$. 
Since $\phi$ is uniformly continuous, there is a $\delta>0$ such that
\begin{equation*}
    \left(\dist_{Z'}(z'_1,z'_2) < \delta\right) \implies \left( \left|\phi(z'_1) - \phi(z_2')\right| < \frac{\varepsilon}{2} \left( \|\mu\|\right)^{-1} \right).
\end{equation*}
Choose $l_1\in \N$ large enough so that, for all $l > l_1$,
\begin{equation*}
    \sup_{z \in Z'} d_{Z'}(f_l(z), g_l(z)) < \delta.
\end{equation*}
Choose $l_2 \in \N$ large enough so that, for all $l > l_2$,
\begin{equation*}
    \left|\int_{Z'} \phi d (f_l)_* \mu  - \int_{Z'} \phi d\nu \right| < \frac{\varepsilon}{2}.
\end{equation*}
For $n> \max\{l_1, l_2\}$, we compute
\begin{equation*}
    \begin{split}
        \left|\int_{Z'} \phi d(g_l)_* \mu - \int_{Z'} \phi d\nu  \right| 
        &= \left| \int_{Z} \phi\circ g_l d\mu - \int_{Z'} \phi d\nu\right|
        \\
        &= \left|\int_{Z} \left(\phi \circ g_l - \phi \circ f_l)\right)d\mu + \int_{Z'} \phi d(f_l)_*\mu - \int_{Z'} \phi d\nu \right| \\
        &\leq \frac{\varepsilon}{2} + \frac{\varepsilon}{2}.
    \end{split}
\end{equation*}
\ignore{
Now, if $\psi \in C_0(Z',\Co)$ is arbitrary and $\varepsilon>0$, choose $\phi \in C_0(Z',\Co)$ which is compactly supported and satisfies 
\begin{equation}\label{eq: total variation estimate}
    \sup_{z' \in Z'} |\phi(z') - \psi(z')| < \frac{\varepsilon}{3} \max \left\lbrace \|\mu\|^{-1},  \|\nu\|^{-1}  \right\rbrace.
\end{equation}
If $\nu = 0$, we take the right hand side of \eqref{eq: total variation estimate} as $(\varepsilon \|\mu\|^{-1})/3$.
For $l\in \N$, we compute
\begin{multline*}
    \left| \int_{Z'} \psi d(g_l)_*\mu - \int_{Z'} \psi d\nu \right|
        \\
        =\left|\int_{Z} \left(\psi \circ g_l - \phi \circ g_l \right) d\mu\right| + \left|\int_{Z'} \phi d(g_l)_*\mu - \int_{Z'} \phi d\nu \right|  + \left|\int_{Z'}(\phi - \psi)d\nu \right|.
\end{multline*} 
The first part of the argument ensures that the middle term is less than $\varepsilon/3$ for all sufficiently large $n$. The terms on either are can be estimated using \eqref{eq: total variation estimate} and so we are done.
}
\end{proof}
\subsection{Mixing convergence}\label{sec: mixing abstract}
A key concept which plays a role in the proof of Theorem~\ref{thm: main theorem F formulation} is 
the notion of mixing convergence of measures which was introduced in \cite[Definition 4.1]{GDV}. We review this concept for completeness.
\begin{defn}[Mixing convergence]\label{defn: mixing}
    Let $Z$ and $Z'$ be locally compact second countable Hausdorff spaces. Let $\mu \in \ca{P}(Z)$ and let $\nu \in \ca{P}(Z')$. Let $(f_l)_{l \in \N}$ be a sequence of measurable maps from $Z \to Z'$.
     We say $(f_l)_{l \in \N}$ mixes $\mu$ to $\nu$ if, for every absolutely continuous probability measure $ \eta \ll \mu$ in $\ca{P}(Z)$ we have that
    \begin{equation}\label{eq: mixing conv}
        \lim_{l \to \infty} (f_l)_* \eta =\nu.
    \end{equation}
\end{defn}
Note that by taking $\eta=\mu$ we see that mixing convergence is stronger that
weak* convergence. 
We will use the following reformulation of mixing convergence.
\begin{prop}\label{prop: mixing functions}
In the notation of Definition \ref{defn: mixing}, $(f_l)_*$ mixes $\mu$ to $\nu$ if and only if
\begin{equation}\label{eq: mixing}
    \lim_{l \to \infty} \int_{Z} \phi(f_l(z)) \psi(z) d\mu(z) = \left(\int_{Z} \psi d\mu \right) \left(\int_{Z'}\phi d\nu \right) 
\end{equation}
for all $\phi \in C_0(Z',\Co), \psi \in L^1(Z,\mu)$.

Further, the condition of \eqref{eq: mixing} holds if it does on subsets of $L^1(Z,\mu)$ and $C(Z',\Co)$ spanning dense subspaces.
\end{prop}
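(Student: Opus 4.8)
The plan is to show that the mixing condition in Definition~\ref{defn: mixing} is equivalent to the bilinear limit~\eqref{eq: mixing}, and then to reduce~\eqref{eq: mixing} to dense subsets by a standard two-variable density argument. First I would prove the equivalence with~\eqref{eq: mixing}. For the direction ``mixing $\Rightarrow$~\eqref{eq: mixing}'', given $\psi\in L^1(Z,\mu)$, write $\psi = \psi_+ - \psi_-$ with $\psi_\pm \geq 0$ and (after normalizing and discarding the trivial case $\psi_\pm = 0$) set $\eta_\pm := \left(\int_Z \psi_\pm\, d\mu\right)^{-1}\psi_\pm\, \mu$, which are absolutely continuous probability measures $\eta_\pm \ll \mu$. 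Applying the definition of mixing to each $\eta_\pm$ and unwinding the pushforward via Definition~\ref{defn: pushforward} gives, for $\phi\in C_0(Z',\Co)$, that $\int_Z \phi(f_l(z))\psi_\pm(z)\, d\mu(z) \to \left(\int_Z \psi_\pm\, d\mu\right)\left(\int_{Z'}\phi\, d\nu\right)$; subtracting yields~\eqref{eq: mixing}. For the reverse direction, given an absolutely continuous $\eta \ll \mu$ in $\ca P(Z)$, let $\psi := d\eta/d\mu \in L^1(Z,\mu)$ be its Radon--Nikodym derivative; then $\int_Z \psi\, d\mu = \eta(Z) = 1$, so~\eqref{eq: mixing} reads $\int_{Z'}\phi\, d((f_l)_*\eta) = \int_Z \phi(f_l(z))\psi(z)\, d\mu(z) \to \int_{Z'}\phi\, d\nu$ for every $\phi\in C_0(Z',\Co)$, which is exactly the weak* convergence $(f_l)_*\eta \to \nu$ required by~\eqref{eq: mixing conv}.

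Next I would prove the ``Further'' clause: it suffices to verify~\eqref{eq: mixing} for $\phi$ in a set $D'$ spanning a dense subspace of $C_0(Z',\Co)$ and $\psi$ in a set $D$ spanning a dense subspace of $L^1(Z,\mu)$. By bilinearity of both sides of~\eqref{eq: mixing} in $(\phi,\psi)$, the limit holds for all $(\phi,\psi)$ in $\operatorname{span}D' \times \operatorname{span}D$. To pass to the closures, fix arbitrary $\phi\in C_0(Z',\Co)$, $\psi\in L^1(Z,\mu)$ and $\varepsilon>0$; choose $\phi_0\in\operatorname{span}D'$ with $\|\phi-\phi_0\|_\infty$ small and $\psi_0\in\operatorname{span}D$ with $\|\psi-\psi_0\|_{L^1(\mu)}$ small. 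Then split
\[
\int_Z \phi(f_l)\psi\, d\mu - \Big(\int_Z\psi\, d\mu\Big)\Big(\int_{Z'}\phi\, d\nu\Big)
\]
into the term with $\phi_0,\psi_0$ (which converges to the corresponding product by the previous paragraph) plus error terms of the form $\int_Z(\phi-\phi_0)(f_l)\psi\, d\mu$, $\int_Z \phi_0(f_l)(\psi-\psi_0)\, d\mu$, and the analogous differences on the right-hand side. Each error term is bounded uniformly in $l$: the first by $\|\phi-\phi_0\|_\infty\,\|\psi\|_{L^1(\mu)}$, the second by $\|\phi_0\|_\infty\,\|\psi-\psi_0\|_{L^1(\mu)}$, and the right-hand side differences by $\|\psi-\psi_0\|_{L^1(\mu)}\,\|\phi\|_\infty$ and $\|\phi-\phi_0\|_\infty\,\|\psi_0\|_{L^1(\mu)}$ (using that $\nu$, $\mu$ are probability measures). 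Choosing the approximations fine enough makes all of these $<\varepsilon/5$ say, and taking $l$ large handles the main term, giving~\eqref{eq: mixing} in full generality.

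I do not expect a genuine obstacle here; this is a soft functional-analytic argument. The one point requiring a little care is the reverse implication, where one must observe that $\phi\in C_0(Z',\Co)$ is bounded (since $Z'$ need only be locally compact, $\phi$ need not be compactly supported, but it is still bounded), so that $\int_Z \phi(f_l(z))\psi(z)\,d\mu(z)$ is well-defined for $\psi\in L^1(Z,\mu)$ and equals $\int_{Z'}\phi\,d((f_l)_*\eta)$ by the change-of-variables formula in Definition~\ref{defn: pushforward}; this is why the statement is phrased with $C_0$ rather than $C_c$. The density reduction is then entirely routine, relying only on the uniform bound $\|(f_l)_*\|\leq 1$ on the pushforward operators and the fact that $\mu,\nu$ are probability measures, so no quantitative input is needed.
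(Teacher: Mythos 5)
Your proposal is correct and follows essentially the same route as the paper: both directions of the equivalence rest on the Radon--Nikodym identification of absolutely continuous probabilities with nonnegative $L^1$ densities of integral one, the extension to all of $L^1$ uses bilinearity (your $\psi_+-\psi_-$ decomposition is just the explicit form of the paper's remark that such densities span $L^1$), and the density reduction uses the same uniform bound $\bigav{\int_Z \phi(f_l)\psi\, d\mu}\le\norm{\phi}_\infty\norm{\psi}_{L^1(Z,\mu)}$. The only cosmetic difference is that for complex-valued $\psi$ you would also split into real and imaginary parts, which is immediate.
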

\begin{proof}
Consider the bilinear map $\idist{\cdot, \cdot}_l:C_0(Z',\bC)\times L^1(Z,\mu)\to \bC$ given by
$$\idist{\phi,\psi}_l=\int_Z\phi(f_l(z))\psi(z)d\mu(z).$$
By the Radon-Nikodym theorem, the condition $\eta\ll \mu$, $\eta\in \cP(Z)$, in Definition~\ref{defn: mixing} is equivalent to the fact that there exists a density $\psi\in L^1(Z,\mu)$ such that $\eta = \psi d\mu$ and $\psi\ge0$, $\int_Z\psi d\mu=1.$
Interpreting the definition of weak* convergence we see that Definition~\ref{defn: mixing} is equivalent to requiring that equation
\eqref{eq: mixing} holds for any $\phi\in C_0(Z',\bC)$ and any 
$\psi\in L^1(Z,\mu)$ with $\psi\ge0$ and $\int_Z \psi d\mu =1$.
Since both sides of \eqref{eq: mixing} are linear in $\psi$ and $L^1(Z,\mu)$ is spanned by positive functions of integral 1, this equation must hold for all
$\psi\in L^1(Z,\mu)$ as claimed in the proposition. 

For the last sentence in the proposition we note the inequality
\begin{equation}\label{eq: cont of bili}
\av{\idist{\phi,\psi}_l}\le \norm{\phi}_\infty \cdot \norm{\psi}_{L^1(Z,\mu)}.
\end{equation}
If we know~\eqref{eq: mixing} for collections $D_1\subset C_0(Z',\bC), D_2\subset L^1(Z,\mu)$, then the bilinearity of $\idist{\cdot,\cdot}_l$ and the inequality \eqref{eq: cont of bili} allows to propagate \eqref{eq: mixing} to the closures of the spans of $D_1,D_2$ in the corresponding Banach space.

\end{proof}
The following Proposition is the source of the \textit{almost any} part in
our main Theorem~\ref{thm: main theorem F formulation} and its derivatives. 
\begin{prop}\label{prop: mixing implies density}
    Let $Z$ and $Z'$ be locally compact second countable Hausdorff spaces. Let $\mu \in \ca{M}(Z)$ and let $\nu \in \ca{M}(Z')$. Let $(f_l)_{l \in \N}$ be a sequence of continuous maps from $Z \to Z'$ which mixes $\mu$ to $\nu$.
    Then, for $\mu$ almost every $z \in Z$, we have
    \begin{equation*}
        \overline{\left\lbrace f_l(z): l \in \N \right\rbrace} \supset \on{supp} \nu.
    \end{equation*}
\end{prop}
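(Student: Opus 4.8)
The plan is to exploit the fact that mixing convergence is robust under restriction to absolutely continuous sub-probability measures, and to run a Borel--Cantelli-type argument against a countable basis for the topology of $\on{supp}\nu$. First I would fix a countable basis $\seta{U_j}_{j\in\bN}$ for the topology of $Z'$ consisting of relatively compact open sets, and for each $j$ with $U_j\cap\on{supp}\nu\ne\emptyset$ set $c_j:=\nu(U_j)>0$; by definition of support this exhaustively detects failure, since a point $z$ fails to have $\on{supp}\nu\subset\overline{\seta{f_l(z):l\in\bN}}$ precisely when some basic open $U_j$ meeting $\on{supp}\nu$ contains no $f_l(z)$ at all. Hence it suffices to show that for each such $j$ the set $B_j:=\seta{z\in Z: f_l(z)\notin U_j\text{ for all }l}$ is $\mu$-null; then $\mu$-almost every $z$ avoids $\bigcup_j B_j$, which is exactly the assertion.

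The key step is to show each $B_j$ is $\mu$-null. Suppose instead $\mu(B_j)>0$ for some $j$ with $c_j=\nu(U_j)>0$. Consider the conditional measure $\eta:=\tfrac{1}{\mu(B_j)}\mu|_{B_j}\in\ca{P}(Z)$; then $\eta\ll\mu$, so by the mixing hypothesis $(f_l)_*\eta\to\nu$ in the weak* topology. Now pick a continuous function $\phi\in C_0(Z',\Co)$ with $0\le\phi\le 1$, supported inside $U_j$, and with $\int_{Z'}\phi\,d\nu>0$ — such $\phi$ exists because $\nu(U_j)>0$ and $U_j$ is open, so by inner regularity $\nu$ assigns positive mass to a compact subset of $U_j$ around which one builds $\phi$ using Urysohn's lemma (valid here since $Z'$ is locally compact Hausdorff). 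On the one hand the portmanteau/definition of weak* convergence gives $\int_{Z'}\phi\,d(f_l)_*\eta\to\int_{Z'}\phi\,d\nu>0$. On the other hand, $\int_{Z'}\phi\,d(f_l)_*\eta=\int_Z \phi(f_l(z))\,d\eta(z)$, and for $z\in B_j$ we have $f_l(z)\notin U_j\supset\Supp\phi$, so $\phi(f_l(z))=0$ for every $l$; since $\eta$ is carried by $B_j$, this integral is identically $0$. This contradiction forces $\mu(B_j)=0$.

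Finally I would assemble the pieces: the exceptional set is $E:=\bigcup\seta{B_j: U_j\cap\on{supp}\nu\ne\emptyset}$, a countable union of $\mu$-null sets, hence $\mu(E)=0$; and for $z\notin E$, every basic open $U_j$ meeting $\on{supp}\nu$ contains some $f_l(z)$, so $\overline{\seta{f_l(z):l\in\bN}}\supset\on{supp}\nu$, as claimed. I expect the only genuinely delicate point to be the construction of the cutoff function $\phi$ (ensuring it is in $C_0(Z',\Co)$, is supported in the relatively compact $U_j$, and integrates positively against $\nu$) together with the invocation of inner regularity of $\nu$ on the open set $U_j$ — everything else is bookkeeping with the definition of mixing and of $\on{supp}\nu$. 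One should also take a little care that $Z'$ being merely locally compact Hausdorff (not necessarily $\sigma$-compact beyond second countability, which we do have) still supplies the Urysohn function with compact support; second countability plus local compactness is exactly what makes this work.
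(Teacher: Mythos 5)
Your proposal is correct and follows essentially the same route as the paper: reduce via a countable base to showing that the set of $z$ whose orbit $\seta{f_l(z)}$ never enters a fixed open set $U$ of positive $\nu$-measure is $\mu$-null, then derive a contradiction by applying the mixing hypothesis to the normalized restriction of $\mu$ to that set and testing against a nonnegative bump function supported in $U$ with positive $\nu$-integral. The only cosmetic difference is how the test function is produced (you use inner regularity of $\nu$ on $U_j$ plus Urysohn, the paper picks $\phi$ positive at a point of $\on{supp}\nu\cap U$); both are valid.
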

\begin{proof}
As $\on{supp} \nu\subset Z'$, it has a countable dense set $D$.
It is enough to show that for any $z'\in D$, $\mu$-almost any $z$ satisfies that $\seta{f_l(z)}$ visits any open neighbourhood of $z$. Since there is a countable base for the topology of $Z'$, it is enough to fix a neighbourhood 
$U$ of $z'$ and show that $\mu$-almost any $z$ satisfies that for some $l$, $f_l(z)\in U$. By way of contradiction, there exists $z'\in D$ and 
an open neighbourhood $U$ of $z'$ such that the set 
$$S = \seta{z\in Z:\forall l, f_l(z)\notin U}$$
has positive $\mu$-measure.
\ignore{

Thus, to prove the proposition it suffices to show that, for every $z'  \in \on{supp}\nu$, $i \in \N$ and $\varepsilon>0$, the $\mu$-measure of
\begin{equation*}
     S= \bigcap_{n > i} \left\lbrace z \in Z: \dist_{Z'}(f_n(z), z') > \varepsilon\right\rbrace
\end{equation*}
is zero.
}
Consider the nonzero indicator function $\mathds{1}_S \in L^1(\mu)$ the probability measure $\eta:= \mu(S)^{-1}\mathds{1}_S d\mu$ which is absolutely continuous with respect to $\mu$. By definition of mixing convergence we have that $(f_l)_*\eta\to \nu$. In particular, if we choose a non-zero function 
$\phi\in C_c(Z',\bC)$ with compact support contained in $U$ and such that $\phi\ge 0$ and $\phi(z')>0$, then the above weak* convergence implies
\begin{equation*}
    \lim_{l \to \infty} \mu(S)^{-1}\int_Z \phi(f_l(z)) \mathds{1}_S(z) d\mu (z) = \int_{Z'} \phi d\nu. 
\end{equation*}
However, the left hand side is zero all $l$ by the definition of $S, U,$ and $\phi$, while the integral on the right hand side is positive since $z' \in \on{supp} \nu$ and $\phi$ was chosen non-negative and positive at $z'$. We arrive at a contradiction and conclude the proof.
\end{proof}
\section{Proof of Theorem~\ref{thm: Uri lebesgue}}\label{sec: proof for Lebesuge}
In this section we reproduce (as promised in the introduction) the proof of 
Theorem~\ref{thm: Uri lebesgue} which appeared implicitly in \cite{GDV}. The proof relies on the notion of mixing convergence of measures and elementary theory of characters.
Although Theorem~\ref{thm: Uri lebesgue} is not essential for the proof our main result Theorem~\ref{thm: main theorem F formulation}, its proof is a simplified version of the one for Theorem~\ref{thm: main theorem F formulation} and prepares the reader for the discussion in 
\S\ref{sec: long sequences}.
\begin{defn}[Tori and characters]\label{defn: characters on tori}
    Let $d \in \N$. If $x \subset \R^d$ is a lattice, a character on $\R^d/x$ is a continuous group homomorphism
    \begin{equation*}
        \chi : \R^d/x \to \Co\smallsetminus \{0\}.
    \end{equation*}
    Linear combinations of characters are dense in $C(\R^d/x,\Co)$. See for example \cite[4.24]{Ru-RACA}.
If $\mu \in \mathcal{M}(\R^d/x)$, Lemma \ref{lem: measures are equal if they agree on dense set} and the linearity of integrals shows that $\mu$ is uniquely determined in $\mathcal{M}(\R^d/x)$ by the values 
\begin{equation}\label{eq: hats}
    \left\lbrace \int_{\R^d/x} \chi d\mu \in \Co: \chi \text{ is a character}\right\rbrace.
\end{equation} 
When $\mu$ is the Haar probability on $\R^d/x$, the set of values in \eqref{eq: hats} is $1$ if $\chi$ is the identity character and $0$ otherwise.
On $\R^d$, we use the standard inner product $(\cdot| \cdot)$ and, for the lattice $x \subset \R^d$, we define the dual lattice
\begin{equation*}
    x^* := \left\lbrace v \in \R^d: \text{ for all } a \in x \text{ we have } (v|a) \in \Z \right\rbrace.
\end{equation*}
The set of characters of $\R^d/x$ can then be identified with $x^*$ where, for $v \in x^*$ and $w\in \R^d/x$, we have
\begin{equation*}
    \chi_v(w+x) := \exp\left(2i\pi (v|w) \right).
\end{equation*} 
    The lattice $\Z^d$ is self dual.
\end{defn}
Recall the notation $h_t$ in \eqref{eq: ht definition} and Definition \ref{defn: asymptotic limit points}.
The following proposition shows how non-divergence relates to mixing convergence. Its corollary together with the the Inheritance Lemma~\ref{lem: inheritance} relate the discussion to the notion of almost sure $DV_F$ (see Definition~\ref{defn: DVF}).
\begin{prop}\label{prop: non div implies mixing}
    Let $x_0, x_1 \in X$ be two lattices with $x_1 \in \partial(x_0)$. Accordingly, let $(t_l)_{l \in \N}$ be a divergent sequence of positive reals and let $(\varepsilon_l)_{l \in \N} \subset G$ be a sequence converging to the identity such that
    \begin{equation}\label{eq: prop Haar mixing}
        \varepsilon_l h_{t_l} x_0 = x_1 \text{ for all } l\in \N.
    \end{equation}
    Then, $\varepsilon_l h_{t_l}$ mixes the Haar measure $\lam_{\bR^d/x_0}$ to the Haar measure $\lam_{\bR^d/x_1}$.
\end{prop}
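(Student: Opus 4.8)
The plan is to verify the reformulation of mixing convergence in Proposition~\ref{prop: mixing functions} on the natural spanning sets consisting of characters. Write $g_l:=\varepsilon_l h_{t_l}\in G$; by \eqref{eq: prop Haar mixing} we have $g_lx_0=x_1$, so the linear map $g_l$ descends to a group isomorphism $\bar g_l\colon\bR^d/x_0\to\bR^d/x_1$, $w+x_0\mapsto g_lw+x_1$, and this is precisely the map that $\varepsilon_l h_{t_l}$ induces on the fibre $\pi^{-1}(x_0)\to\pi^{-1}(x_1)$. Finite linear combinations of characters are dense in $C(\bR^d/x_1,\bC)$ by Stone--Weierstrass, and they are dense in $L^1(\bR^d/x_0,\lam_{\bR^d/x_0})$ as well, since the characters of $\bR^d/x_0$ span a dense subspace of $L^2$ and $L^2\hookrightarrow L^1$ densely for this finite measure. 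Hence by the last assertion of Proposition~\ref{prop: mixing functions} it suffices to check \eqref{eq: mixing} for $\phi=\chi_v$ with $v\in x_1^*$ and $\psi=\chi_u$ with $u\in x_0^*$.

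First I would record the bookkeeping fact that $\tran{g_l}v\in x_0^*$ whenever $v\in x_1^*$: indeed $(\tran{g_l}v\mid a)=(v\mid g_la)\in\bZ$ for every $a\in x_0$, because $g_la\in g_lx_0=x_1$. Consequently $\chi_v\circ\bar g_l=\chi_{\tran{g_l}v}$ as characters of $\bR^d/x_0$, so the integrand in \eqref{eq: mixing} is the single character $z\mapsto\chi_v(\bar g_l(z))\,\chi_u(z)=\chi_{\tran{g_l}v+u}(z)$. Since the integral of a character of $\bR^d/x_0$ against $\lam_{\bR^d/x_0}$ is $1$ for the trivial character and $0$ for every nontrivial one, the left side of \eqref{eq: mixing} equals $1$ if $\tran{g_l}v+u=0$ and $0$ otherwise, while the right side equals $1$ if both $v=0$ and $u=0$, and $0$ otherwise. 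It therefore remains to prove: if $u$ and $v$ are not both zero, then $\tran{g_l}v+u\neq0$ for all sufficiently large $l$.

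For this, use $\tran{g_l}=\tran{h_{t_l}}\,\tran{\varepsilon_l}=h_{t_l}\tran{\varepsilon_l}$ (as $h_{t_l}$ is diagonal) together with $\varepsilon_l\to e$ and $t_l\to+\infty$. If $v=0$ the claim is immediate, since then $u\neq0$. If $v\neq0$, put $w_l:=\tran{\varepsilon_l}v$, so $w_l\to v$; writing $w_l=\smallmat{w_l'\\w_l''}$ and $v=\smallmat{v'\\v''}$ with $w_l'\to v'$ in $\bR^m$ and $w_l''\to v''$ in $\bR^n$, one gets $\tran{g_l}v=\smallmat{e^{nt_l}w_l'\\e^{-mt_l}w_l''}$. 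If $v'\neq0$ then $\norm{e^{nt_l}w_l'}\to\infty$, so $\norm{\tran{g_l}v}\to\infty$ and hence $\tran{g_l}v+u\neq0$ eventually. If $v'=0$, then $v''\neq0$; assuming $\tran{g_l}v=-u$ for infinitely many $l$ and comparing the last $n$ coordinates gives $e^{-mt_l}w_l''=-u''$ along that subsequence, and letting $l\to\infty$ (using $e^{-mt_l}\to0$ and $w_l''$ bounded) forces $u''=0$, whence $w_l''=0$ along the subsequence, contradicting $w_l''\to v''\neq0$. This establishes the claim and finishes the argument.

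This proposition is fairly soft, and I expect no serious obstacle; the only step that needs attention is the case $v'=0$, where $\tran{g_l}v$ neither escapes to infinity nor converges to a nonzero limit. There one must use that its surviving (last $n$) coordinates $e^{-mt_l}w_l''$ tend to $0$ and are nonzero for large $l$ (because $w_l''\to v''\neq0$), so they cannot take any single value, in particular $-u''$, infinitely often. As a sanity check, running the same computation with $u=0$ recovers $(\bar g_l)_*\lam_{\bR^d/x_0}=\lam_{\bR^d/x_1}$, i.e.\ the $\eta=\mu$ instance of the mixing conclusion, which must of course hold because $\bar g_l$ is a group isomorphism.
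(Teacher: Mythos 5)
Your proposal is correct and follows essentially the same route as the paper: reduce to checking the mixing identity on characters via Proposition~\ref{prop: mixing functions}, observe that the integral of a character is $0$ or $1$ according to triviality, and then show that for $v\neq 0$ the vector $\tran{(\varepsilon_l h_{t_l})}v+u$ is nonzero for all large $l$ by splitting into the expanding and contracting blocks of $h_{t_l}$. The only (immaterial) difference is that the paper rewrites the condition as $-h_{t_l}^{-1}u=\tran{\varepsilon_l}v$ and compares with the limit $v\neq0$, whereas you analyze $h_{t_l}\tran{\varepsilon_l}v$ directly; your handling of the contracting case (the coordinates tend to $0$ but are eventually nonzero, so cannot equal $-u''$ infinitely often) is the same point the paper makes.
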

\begin{proof}
    Note, the condition \eqref{eq: prop Haar mixing} ensures that the sequence of matrices $(\varepsilon_l h_{t_l})_{l \in \N}$ induce group morphisms
    \begin{equation*}
        \R^d/x_0 \to \R^d/x_1.
    \end{equation*}  
    We use the same notation for these induced maps.
    We let $\mu$ be the Haar probability on $\R^d/x_0$, $\nu$ be the Haar probability on $\R^d/x_1$ and claim that the sequence $(\varepsilon_l h_{t_l})_{l \in \N}$ mixes $\mu$ to $\nu$.
    In order to show this we must check, in light of Proposition \ref{prop: mixing functions} and the comments in Definition \ref{defn: characters on tori}, that for every $b \in x_1^*$ and $a \in x_0^*$ and corresponding characters $\chi_a$ and $\chi_b$ on $\R^d/x_0$ and $\R^d/x_1$ respectively, we have
    \begin{equation}\label{eq: full measure mixing-1}
        \lim_{n \to \infty } \int_{\bR^d/x_0}  \chi_b(\varepsilon_l h_{t_l} (z)) \chi_a(z) d\mu(z) \overset{\mathrm{?}}{=} \left(\int_{\bR^d/x_0} \chi_a d\mu \right) \left(\int_{\bR^d/x_1} \chi_b d\nu\right).
    \end{equation}
    For the right side, we have
    \begin{equation}\label{eq: left side lebesgue mixing}
        \left(\int_{\bR^d/x_0} \chi_a d\mu \right) \left(\int_{\bR^d/x_1} \chi_b d\nu\right) =
        \begin{cases}
			1 & \text{if } a =0 \text{ and } b =0 \\
            0 & \text{if } a\neq 0\text{ or } b \neq 0
		  \end{cases}.
    \end{equation}
    For the left side, we rewrite it as
    \begin{equation}\label{eq: right side lebesgue mixing}
        \lim_{n \to \infty } \int_{\bR^d/x_0}  \exp( 2i\pi ((\varepsilon_l h_{t_l})^T b + a)| z) d\mu(z)
    \end{equation}
    where we have taken the transpose of the matrix $(\varepsilon_l h_{t_l})$.
    When $b=0$, Equations \eqref{eq: left side lebesgue mixing} and \eqref{eq: right side lebesgue mixing} clearly coincide.
    We now check equality when $b\neq 0$. 

    Fix $0\ne b\in x_1^*$ and $a\in x_0^*$. The right hand side of 
    \eqref{eq: full measure mixing-1} equals $0$. The sequence on left hand side of this equation, 
    $\int_{\bR^d/x_0} \chi_{(\varepsilon_l h_{t_l})^Tb+a}(z) d\mu(z)$,
    is a sequence of integrals of characters on $\bR^d/x_0$. As such, it
    is a sequence of $1$'s and $0$'s according to whether the character is trivial or not. We will therefore finish once we establish that the vectors $(\varepsilon_l h_{t_l})^Tb+a\in x_0^*$ are non-zero for all large enough $l$.  We have that
    $$(\varepsilon_lh_{t_l})^Tb+a = 0 \iff -h_{t_l}^{-1}a = \varepsilon_l^T b.$$
    Write $a=\smallmat{v\\w}$ with $v\in \bR^m,w\in \bR^n$. By \eqref{eq: ht definition} we see 
    that $h_{t_l}^{-1}a =\smallmat{e^{-n t_l}v\\ e^{mt_l}w}$ and since $t_l\to\infty$ this sequence of vectors either diverges or converges to zero. In any case, it is impossible that it will coincide with the sequence $\varepsilon_l^Tb\to b\ne 0$ for infinitely many $l$'s (where here we have used that $\varepsilon_l$ converges to the identity and the assumption that $b\ne 0$). 
    \ignore{
    \begin{claim}\label{claim: ht b infinity}
        When $b \neq 0$, the sequence of nonzero vectors $(\varepsilon_lh_{t_l}) ^T (b)_{l \in \N} \in \R^d$ has norm which either diverges or converges to zero.
    \end{claim}
    \begin{proof}[Proof of Claim \ref{claim: ht b infinity}]
        The sequence of vectors $(\varepsilon_l^T b)_{l \in \N}$ converges to $b$. Looking at the definition of $(h_t)_{t\in \R_{\geq 0}}$ in \eqref{eq: ht definition} and noting that the sequence $(t_l)_{l \in \N}$ is divergent, we see that the two cases of the present Claim follow according to whether $b$ has a nonzero component in its first $m$ entries or not.
    \end{proof}
    Claim \ref{claim: ht b infinity} implies that, when $b\neq 0$
    \begin{equation}\label{eq: full measure mixing}
        (\varepsilon_l h_{t_l})^T b + a \neq 0 \text{ for a cofinite set of } n \in \N.
    \end{equation}
    Indeed, if $\norm{(\varepsilon_l h_{t_l})^T b}\to\infty$, this is clear. If $(\varepsilon_l h_{t_l})^T b\to 0$, then since these vectors are non-zero, no matter what $a$ is we must have that $(\varepsilon_l h_{t_l})^T b+a\ne 0$ for all but finitely many $n$'s (in fact, since $(\varepsilon_l h_{t_l})^T b$ belongs to the dual lattice $x_0^*$, the possibility that 
    these non-zero vectors converge to $0$ is impossible). Equation
    \eqref{eq: full measure mixing} implies that for all but finitely many $n$'s the character $\exp( 2i\pi ((\varepsilon_l h_{t_l})^T b + a)| z)$
    on $\bR^d/x_0$ is non-trivial and so its integral is 0. In particular, the limit
    in \eqref{eq: right side lebesgue mixing} exists and equals 0.
    In all cases have shown that equality in \eqref{eq: full measure mixing-1} holds.
    }
\end{proof}
Applying Proposition~\ref{prop: mixing implies density} we obtain the following corollary.
\begin{cor}\label{cor: Haar mixing}
   Let $x_0\in X$ be a lattice in $\bR^d$ and assume $\seta{h_tx_0:t\ge 0}$ is non-divergent. Then, $x_0$ is $\lam_{\bR^d/x_0}$-almost surely $DV_F$. 
\end{cor}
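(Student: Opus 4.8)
The plan is to extract an asymptotic accumulation point of $x_0$ from the non-divergence hypothesis, apply Proposition~\ref{prop: non div implies mixing} to obtain mixing convergence of Haar measures, feed this into Proposition~\ref{prop: mixing implies density} to get an almost sure density statement for orbits of grids, and conclude with Proposition~\ref{prop: codimension 1 subtorus}. First I would observe that non-divergence of $\{h_tx_0:t\ge0\}$ means the orbit map $t\mapsto h_tx_0$ from $\bR_{\ge0}$ to $X$ is not proper, so some sequence $t_k\to\infty$ keeps $h_{t_k}x_0$ inside a fixed compact subset of $X$; passing to a subsequence (compact subsets of $X$ are sequentially compact) we obtain $h_{t_k}x_0\to x_1$ for some $x_1\in X$, i.e.\ $x_1\in\partial(x_0)$. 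Writing this convergence as $\varepsilon_kh_{t_k}x_0=x_1$ with $\varepsilon_k\to e$ in $G$, Proposition~\ref{prop: non div implies mixing} applies and shows that the sequence of induced torus morphisms $\varepsilon_kh_{t_k}\colon\bR^d/x_0\to\bR^d/x_1$ mixes $\lam_{\bR^d/x_0}$ to $\lam_{\bR^d/x_1}$.

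Next I would invoke Proposition~\ref{prop: mixing implies density} with $Z=\pi^{-1}(x_0)$, $Z'=\pi^{-1}(x_1)$, $\mu=\lam_{\bR^d/x_0}$, $\nu=\lam_{\bR^d/x_1}$ and $f_k=\varepsilon_kh_{t_k}$. Since $\on{supp}\nu=\pi^{-1}(x_1)$, the conclusion is that for $\lam_{\bR^d/x_0}$-almost every grid $y\in\pi^{-1}(x_0)$ we have $\overline{\{\varepsilon_kh_{t_k}y:k\in\bN\}}\supset\pi^{-1}(x_1)$. Fix such a $y$. For any $y_1\in\pi^{-1}(x_1)$ choose a subsequence with $\varepsilon_{k_j}h_{t_{k_j}}y\to y_1$; since $\varepsilon_{k_j}^{-1}\to e$ and the $G$-action on $Y$ is jointly continuous, $h_{t_{k_j}}y=\varepsilon_{k_j}^{-1}\bigl(\varepsilon_{k_j}h_{t_{k_j}}y\bigr)\to y_1$, so $y_1\in\overline{\{h_ty:t\ge0\}}$. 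As $y_1$ was arbitrary, $\overline{\{h_ty:t\ge0\}}\supset\pi^{-1}(x_1)$.

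Finally, the fiber $\pi^{-1}(x_1)=\bR^d/x_1$ trivially contains a coset of a $(d-1)$-dimensional subtorus: take for $U$ the span of any $d-1$ vectors in an integral basis of $x_1$, which is $x_1$-rational of dimension $d-1$. Hence $\overline{\{h_ty:t\in\bR\}}\supset\overline{\{h_ty:t\ge0\}}$ contains such a coset, and Proposition~\ref{prop: codimension 1 subtorus} gives that $y$ is $DV_F$. Since this holds for $\lam_{\bR^d/x_0}$-almost every $y$, the lattice $x_0$ is $\lam_{\bR^d/x_0}$-almost surely $DV_F$, as claimed.

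I expect the only point requiring care is the ``removal of perturbations'' step in the second paragraph, where one uses $\varepsilon_k\to e$ together with joint continuity of the action to pass from density of the perturbed orbit $\{\varepsilon_kh_{t_k}y\}$ in $\pi^{-1}(x_1)$ to the same density statement for the genuine forward orbit $\{h_ty:t\ge0\}$; everything else is a routine assembly of the quoted results. As an alternative to the last paragraph one could argue directly from the Inheritance Lemma~\ref{lem: inheritance} and Lemma~\ref{lem: non degeneracy degree}, noting that each grid $x_1+u$ with $u\in U$ lies in $\overline{\{h_ty:t\in\bR\}}$ and that $\overline{V_F(y)}\supset\bigcup_{u\in U}V_F(x_1+u)=F(x_1+U)=[0,\infty)$.
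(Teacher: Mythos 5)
Your proposal is correct and follows essentially the same route as the paper: extract $x_1\in\partial(x_0)$ from non-divergence, apply Proposition~\ref{prop: non div implies mixing} and then Proposition~\ref{prop: mixing implies density}, strip off the perturbations $\varepsilon_k$ using $\varepsilon_k\to e$, and conclude via the Inheritance Lemma. The only cosmetic difference is that the paper applies Lemma~\ref{lem: inheritance} directly to the whole fiber $\pi^{-1}(x_1)$ (getting $\overline{V_F(y)}\supset\bigcup_{v}V_F(x_1+v)=[0,\infty)$) rather than routing through Proposition~\ref{prop: codimension 1 subtorus} --- exactly the alternative you note in your last sentence.
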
 
\begin{proof}
    By the non-divergence assumption we deduce the existence of a lattice
    $x_1\in \partial(x_0)$. As a consequence, we get the existence of a divergent sequence of reals $t_l$ and a sequence $\varepsilon_l\to e$ in $G$ satisfying $\varepsilon_lh_{t_l}x_0=x_1.$
    By Proposition~\ref{prop: non div implies mixing} we conclude that 
    $\varepsilon_lh_{t_l}$ mixes $\lam_{\bR^d/x_0}$ to $\lam_{\bR^d/x_1}$.
    An application of Proposition \ref{prop: mixing implies density} shows that, for $\lam_{\bR^d/x_0}$-almost every grid $y \in \pi^{-1}(x_0) = \R^d/x_0$, we have 
    $
        \overline{\left\lbrace \varepsilon_lh_{t_l}y : l \in \N \right\rbrace} \supset \pi^{-1}(x_1).
    $
    Since $\varepsilon_l\to e$ we deduce that
    $\overline{\left\lbrace h_{t_l}y : l \in \N \right\rbrace} \supset \pi^{-1}(x_1).$
    An application of the Inheritance Lemma~\ref{lem: inheritance} shows that 
    for $\lam_{\bR^d/x_0}$-almost any grid $y$,
    $$\overline{V_F(y)}\supset \bigcup_{v\in \bR^d} V_F(x_1+v) = F(\bR^d) = [0,\infty).$$
    This means exactly that $x_0$ is $\lam_{\bR^d/x_0}$-almost surely grid DV.
\end{proof}
We end this section with the following:
\begin{proof}[Proof of Theorem~\ref{thm: Uri lebesgue}]
    Let $A$ be as in the statement and let $x_A$ be as in~\eqref{eq: xA}. By
    Theorem~\ref{thm: Dani singular}, the non-singularity of $A$ is equivalent to the fact that $\partial(x_A)\ne \varnothing$ and so Corollary~\ref{cor: Haar mixing} applies and gives us that $x_A$ is $\lam_{\bR^d/x_A}$-almost surely $DV_F$. 
    This can be rephrased as saying that for Lebesgue almost any $\smallmat{\eta\\w}\in \bR^d$
    the grid $x_A-\smallmat{\eta\\ w}$ is $DV_F$. 
    For each such $\smallmat{\eta\\ w}$ we therefore have that 
    \begin{align*}
        F(x_A-\mat{\eta\\ w}) &= F(\seta{\mat{p+Aq-\eta \\ q-w}: p\in \bZ^m, q\in \bZ^n}\\
        &= \seta{\norm{q-w}^n \norm{p+Aq-\eta}^m: p\in \bZ^m, q\in \bZ^n}
        \end{align*}
    contains arbitrarily small positive values. Because using finitely many
    $q$'s cannot produce infinitely many arbitrarily small positive values, we deduce the existence of a sequence $q_i\in \bZ^n$ with $\norm{q_i}\to\infty$ and $p_i\in \bZ^m$ such that 
    $\norm{q_i-w}^n \norm{p_i+Aq_i-\eta}^m$ is a sequence of positive numbers approaching zero. In particular
    $$\liminf_{q\in \bZ^n \norm{q}\to\infty}\norm{q}^{n/m} \idist{Aq-\eta} = 0$$
     and so $\eta\notin \on{Bad}_A.$ This shows that $\lam_{\bT^m}(\on{Bad}_A) = 0$ as desired. 
\end{proof}
\section{Convergence of probabilities on tori and the coset lemma}\label{sec: coset}

In the previous section we proved Corollary~\ref{cor: Haar mixing} which is reminiscent of Theorem~\ref{thm: main theorem F formulation}. In it one assumes $x_0\notin \on{Div}(0)$ and deduces that $x_0$ is almost surely grid $DV_F$ with respect to the algebraic probability measure $\lam_{\bR^d/x_0}$. A key input in the proof was that push-forwards of 
$\lam_{\bR^d/x_0}$ converge mixingly (Proposition~\ref{prop: non div implies mixing}). 
Since Theorem~\ref{thm: main theorem F formulation} deals with any non-trivial algebraic probability measure on $\bR^d/x_0$, we will need to further analyze how such probabilities converge under push-forwards. 
\begin{thm}\label{thm: limits of subtori}
Let $x\in X$ be a lattice and $\bR^d/x$ be the corresponding torus. Let $\mu_l$ be a sequence of Haar probability measures on it corresponding
corresponding to the $x$-rational subspaces $V_l$. 
Assume, $\mu_l\to \nu$ in the weak* topology. Then:
\begin{enumerate} 
\item\label{i:los1} The subspace 
$V_\infty:= \bigcap_j\on{span}\pa{\bigcup_{l>j}V_l}$
is $x$-rational. 
\item\label{i:los2} $V_\infty$ is the smallest subspace containing all but finitely many of the $V_l$'s.
\item\label{i:los3} 
$\nu$ is the Haar probability measure corresponding to the $x$-rational subspace $V_\infty$.
\item\label{i:los4} $\dim \nu \ge\limsup_l\dim \mu_l$.
\item\label{i:los5} $\dim\nu  =\limsup_l\dim \mu_l$ if and only if there are infinitely many $l$'s with $V_l=V_\infty$.
\end{enumerate}
\end{thm}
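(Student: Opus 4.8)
The plan is to carry out the whole argument on the Fourier side. Recall from Definition~\ref{defn: characters on tori} that a finite measure on $\bR^d/x$ is determined by its integrals against the characters $\chi_v$, $v\in x^*$, and that for the Haar probability $\mu_V$ attached to an $x$-rational subspace $V$ one has $\int\chi_v\,d\mu_V=1$ when $(v|u)=0$ for all $u\in V$ (equivalently $v\in V^\perp$, since $V$ is a subspace) and $\int\chi_v\,d\mu_V=0$ otherwise. Thus $\widehat{\mu_l}(v):=\int\chi_v\,d\mu_l$ is the $\{0,1\}$-valued indicator of the event ``$v\perp V_l$''. Since each $\chi_v$ lies in $C(\bR^d/x,\Co)$, the weak* convergence $\mu_l\to\nu$ forces $\widehat{\mu_l}(v)\to\int\chi_v\,d\nu$ for every $v\in x^*$; an integer-valued convergent sequence is eventually constant, so $\int\chi_v\,d\nu\in\{0,1\}$ and $\int\chi_v\,d\nu=1$ precisely when $v\perp V_l$ for all large $l$. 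This dichotomy is the engine of the proof.

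I would first dispatch \eqref{i:los1} and \eqref{i:los2} by linear algebra over the lattice $x$. Put $S_j:=\on{span}\pa{\bigcup_{l>j}V_l}$. Each $S_j$ is spanned by the lattice vectors $\bigcup_{l>j}(V_l\cap x)$ (using $x$-rationality of the $V_l$), so $S_j$ is itself $x$-rational. The decreasing chain $S_1\supseteq S_2\supseteq\cdots$ of subspaces of $\bR^d$ stabilizes, say $S_j=S_{j_0}$ for all $j\ge j_0$; hence $V_\infty=\bigcap_j S_j=S_{j_0}$ is $x$-rational, which is \eqref{i:los1}. Since $S_{j_0}=\on{span}\pa{\bigcup_{l>j_0}V_l}$, this gives $V_l\subset V_\infty$ for all $l>j_0$, while any subspace $V'$ containing all but finitely many $V_l$ contains some $S_j$, hence contains $V_\infty$; that is \eqref{i:los2}.

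For \eqref{i:los3} I would just translate the dichotomy of the first paragraph. For $v\in x^*$, ``$v\perp V_l$ for all large $l$'' says $v\in\bigcap_{l>j}V_l^\perp=(S_j)^\perp$ for some $j$, i.e.\ $v\in\bigcup_j(S_j)^\perp$; and since the subspaces $(S_j)^\perp$ increase and stabilize at $(S_{j_0})^\perp=(V_\infty)^\perp$, this condition is exactly $v\in(V_\infty)^\perp$. Therefore $\int\chi_v\,d\nu=\mathds{1}[\,v\in(V_\infty)^\perp\,]=\int\chi_v\,d\mu_{V_\infty}$ for every $v\in x^*$, and Lemma~\ref{lem: measures are equal if they agree on dense set} (characters span a dense subspace of $C(\bR^d/x,\Co)$) gives $\nu=\mu_{V_\infty}$.

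Finally \eqref{i:los4} and \eqref{i:los5} follow from \eqref{i:los2} by counting dimensions: for $l>j_0$ we have $V_l\subset V_\infty$, so $\dim\mu_l=\dim V_l\le\dim V_\infty=\dim\nu$, and taking $\limsup$ proves \eqref{i:los4}; equality in \eqref{i:los4} forces $\dim V_l=\dim V_\infty$ for infinitely many $l$, and for such $l>j_0$ the inclusion $V_l\subset V_\infty$ together with equality of dimensions yields $V_l=V_\infty$, the converse implication being immediate. The only point that needs any care is the $x$-rationality bookkeeping in the second paragraph — that each $S_j$ (a finite sum of $x$-rational subspaces) is again $x$-rational — but this is routine; there is no analytic difficulty, the substance of the theorem being the elementary fact that $\{0,1\}$-valued Fourier coefficients can converge only by stabilizing.
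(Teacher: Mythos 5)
Your proposal is correct and follows essentially the same route as the paper's proof: the same stabilizing chain $S_j=\on{span}\pa{\bigcup_{l>j}V_l}$ for parts \eqref{i:los1}--\eqref{i:los2}, the same Fourier characterization of Haar measures on subtori (with $\{0,1\}$-valued coefficients forced to stabilize by weak* convergence, and the identity $V_\infty^\perp=\bigcup_j\bigcap_{l>j}V_l^\perp$) for part \eqref{i:los3}, and the same dimension count for parts \eqref{i:los4}--\eqref{i:los5}. No gaps.
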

\begin{proof}
\eqref{i:los1}: Each space $U_j:=\on{span}\pa{\bigcup_{l>j}V_l}$ is $x$-rational as it is spanned by $x$-rational spaces. In turn, $V_\infty$ is $x$-rational as the intersection of such spaces. 

\eqref{i:los2}: We have that $U_j$ is a descending sequence of subspaces and hence it stabilizes. Let $j_0$ be such that $U_{j_0} = V_\infty$. It follows from the definition of $U_{j_0}$ that $V_\infty$ contains all but finitely many of the $V_j$'s. Moreover, if $V$ is a subspace containing all but finitely many of the $V_j$'s, then by definition $U_j<V$ for some $j$ and so 
$V_\infty<V$. 

\eqref{i:los3}:
Because the characters $\seta{\chi_a:a\in x^*}$ span a dense subspace of 
$C_0(\bR^d/x,\bC)$, Lemma \ref{lem: measures are equal if they agree on dense set} implies that any probability measure $\mu$ on $\bR^d/x$ is characterized by its Furrier coefficients $$\seta{\hat{\mu}(a):=\int \chi_a d\mu: a\in x^*}.$$ Since characters on compact abelian groups integrate to 0 or 1 with respect to the Haar measure according to the triviality of the character, the 
subset of $\cP(\bR^d/x)$ consisting of Haar measures on subtori is characterized as follows: $\mu$ is the Haar measure corresponding to the $x$-rational 
subspace $V<\bR^d$ if and only if
\begin{equation}\label{eq: char of Haar}
\begin{array}{l}
\textrm{for any } a\in x^*,\;\hat{\mu}(a) \in\seta{0,1},\\
\textrm{and }\hat{\mu}(a)=1 \Longleftrightarrow a\in V^\perp\cap x^*.
\end{array}
\end{equation}
From the definition of $V_\infty$ and elementary properties of $\perp$, we have that 
\begin{equation}
    \label{eq: vinfty}
    V_\infty^\perp =  \bigcup_j\bigcap_{l>j}V_l^\perp.
\end{equation}
We show that $\nu$ is the Haar measure corresponding to the $x$-rational subspace $V_\infty$ using the characterization \eqref{eq: char of Haar} of Haar measures. Since characters are continuous and $\nu$ is a weak* limit of Haar measures, we have that for any $a\in x^*$, $\hat{\nu}(a)\in \seta{0,1}$. For each $a\in x^*$ we are left to check that $\hat{\nu}(a)=1$ if and only if $a\in V_\infty^\perp.$ 

We know by the weak* convergence assumption that 
$\hat{\mu}_l(a)$ converges and thus stabilizes. Thus $\hat{\nu}(a) =1$ if and only if for all large enough $l$ we have that $\hat{\mu}_l(a)=1$. Because $\mu_l$ is the Haar measure corresponding to $V_l$, this is equivalent to saying that for all large enough $l$, $a\in V_l^\perp$ which by \eqref{eq: vinfty} means that $a\in V_\infty^\perp$.

\eqref{i:los4}: As noted before, $V_\infty = \on{span}\pa{\cup_{l>j_0} V_l}$ for some $j_0$ and so the inequality $\dim \nu = \dim V_\infty \ge \limsup_l \dim V_l = \limsup \dim \mu_l$ is immediate. 

\eqref{i:los5}: From the equality 
$V_\infty=  \on{span}\pa{\cup_{l>j_0} V_l}$.
it is clear that $\dim V_\infty = \limsup_l\dim V_l$ if and only if $V_\infty = V_l$ for infinitely many $l$'s. 
\end{proof}
The following Corollary deals with limits of algebraic probability measures on a torus.
\begin{cor}
    \label{cor: limit of algebraic measures is algebraic}
    Let $x\in X$ be a lattice and let 
    $\mu_l\in \cP(\bR^d/x)$ be a sequence of algebraic measures corresponding to the $x$ rational subspaces $V_l$ and translations $w_l$. 
    If $\mu_l$ converges to $\nu$, then $\nu$ is algebraic, $\dim \nu\ge\limsup_l\dim \mu_l$, and if there is equality, for infinitely many $l$'s $V_l$ is a fixed subspace of dimension $\dim\nu$.
    \ignore{
    then $\lim_l \mu_l^0 =\nu^0$ exists and  there exists $w\in \bR^d/x$ such that $\nu  = (l_w)_*\nu^0$. Moreover $\nu$ is algebraic.
    }
\end{cor}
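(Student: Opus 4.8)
The plan is to deduce Corollary~\ref{cor: limit of algebraic measures is algebraic} from Theorem~\ref{thm: limits of subtori} by reducing the statement about algebraic measures (Haar measures on cosets) to the statement about Haar measures on subtori (through the origin). The key observation is that an algebraic measure $\mu_l$ corresponding to the $x$-rational subspace $V_l$ and translation $w_l$ is simply $(\ell_{w_l})_* \mu_l^0$, where $\mu_l^0$ is the Haar measure on the subtorus $x+V_l$ and $\ell_{w_l}: \bR^d/x \to \bR^d/x$ denotes translation by $w_l$. So the first step is to understand the translation vectors.

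First I would extract a convergent subsequence of the translations: since $\bR^d/x$ is compact, after passing to a subsequence we may assume $w_l \to w$ for some $w \in \bR^d/x$. (One must check this passing-to-a-subsequence is harmless: if every subsequence of $(\mu_l)$ has a further subsequence converging to the same algebraic $\nu$ with the stated dimension bounds, then the whole sequence does; this is immediate since $\mu_l$ is assumed to converge to $\nu$.) Along this subsequence, the underlying Haar measures $\mu_l^0$ lie in the weak* compact set $\cP(\bR^d/x)$ (Lemma~\ref{lem: B-A}), so after a further subsequence $\mu_l^0 \to \nu^0$. By Theorem~\ref{thm: limits of subtori}\eqref{i:los3}, $\nu^0$ is the Haar measure corresponding to the $x$-rational subspace $V_\infty = \bigcap_j \on{span}(\bigcup_{l>j} V_l)$, and by \eqref{i:los4}, $\dim \nu^0 = \dim V_\infty \ge \limsup_l \dim V_l = \limsup_l \dim \mu_l^0 = \limsup_l \dim\mu_l$.

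Next I would use continuity of the translation action to pass to the limit: translation is a continuous proper (indeed homeomorphic) self-map of $\bR^d/x$, and moreover the maps $\ell_{w_l}$ converge uniformly to $\ell_w$ since $\sup_{z} \dist(\ell_{w_l}(z), \ell_w(z)) = \dist(w_l, w) \to 0$. Hence by Lemma~\ref{lem: weak limit is the same under perturbations} (applied with $f_l = \ell_{w_l}$, $g_l = \ell_w$, and noting $(\ell_{w_l})_*\mu_l^0 \to \nu^0$ would need the maps on the \emph{fixed} measure --- so one instead argues directly: $(\ell_{w_l})_* \mu_l^0 - (\ell_w)_* \mu_l^0 \to 0$ in weak* because $\|(\ell_{w_l})_*\mu_l^0 - (\ell_w)_*\mu_l^0\|$ is controlled by the modulus of continuity of test functions, and $(\ell_w)_*\mu_l^0 \to (\ell_w)_*\nu^0$ by Lemma~\ref{lem: pushforward is continuous}). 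Therefore $\mu_l = (\ell_{w_l})_*\mu_l^0 \to (\ell_w)_* \nu^0$, i.e. $\nu = (\ell_w)_*\nu^0$, which is by definition the algebraic measure corresponding to the $x$-rational subspace $V_\infty$ and translation $w$. In particular $\nu$ is algebraic with $\dim\nu = \dim\nu^0 \ge \limsup_l \dim\mu_l$, and if equality holds then by Theorem~\ref{thm: limits of subtori}\eqref{i:los5} there are infinitely many $l$ with $V_l = V_\infty$, a subspace of dimension $\dim\nu$. Finally, since the limit $\nu$ is the same regardless of which subsequence of translations we chose (it equals the given weak* limit of $\mu_l$), the subsequence reductions above cause no loss.

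The main obstacle I anticipate is purely bookkeeping: making the two layers of subsequence extraction (first for the translations $w_l$, then for the Haar parts $\mu_l^0$) interact cleanly with the fact that $\mu_l$ itself already converges, so that the conclusion applies to the full sequence rather than a subsequence. The resolution is the standard subsequence principle: it suffices to show every subsequence has a further subsequence along which the conclusion holds with the \emph{same} limit $\nu$; uniqueness of the limit $\nu$ is guaranteed by hypothesis, and the dimension statements are conclusions about $\nu$ and about the index set $\{l : V_l = V_\infty\}$ which one must phrase carefully (the ``infinitely many $l$'' in \eqref{i:los5} must be read along the relevant subsequence, but since $V_\infty$ is forced by the limit measure this is consistent). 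No genuinely hard analysis is involved beyond Theorem~\ref{thm: limits of subtori} and the elementary continuity lemmas of \S\ref{sec: pushforwards}.
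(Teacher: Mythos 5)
Your proof is correct and follows essentially the same route as the paper's: write $\mu_l=(\ell_{w_l})_*\mu_l^0$, pass to subsequences so that $w_l\to w$ and $\mu_l^0\to\nu^0$, invoke Theorem~\ref{thm: limits of subtori} for $\nu^0$ and its dimension, and conclude $\nu=(\ell_w)_*\nu^0$. Your observation that Lemma~\ref{lem: weak limit is the same under perturbations} as stated applies to a \emph{fixed} measure while here the measures $\mu_l^0$ vary --- and your direct fix via uniform continuity of test functions together with Lemma~\ref{lem: pushforward is continuous} --- is a legitimate refinement of a step the paper applies somewhat loosely.
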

\begin{proof}
    It is enough to prove the statement for a subsequence.
    Denote by $\mu_l^0$ the Haar probability measures corresponding to $V_l$
    so that $\mu_l = (\ell_{w_l})_*\mu_l^0$. Here, $\ell_\cdot$ denotes translation.
    By taking a subsequence we may assume that the $\mu_l^0\to\nu^0$ for some 
    $\nu^0\in \cP(\bR^d/x)$ and furthermore that $w_l\to w$ in $\bR^d/x$.
    By Theorem~\ref{thm: limits of subtori}, $\nu^0$ is a Haar probability measure. 
    It follows from Lemma~\ref{lem: weak limit is the same under perturbations} that 
    $$\nu = \lim_l\mu_l = \lim_l(\ell_{w_l})_*\mu_l^0 = (\ell_w)_*\nu^0 $$
    which means that $\nu$ is algebraic. Moreover, Theorem~\ref{thm: limits of subtori} also gives that 
    $$\dim \nu = \dim \nu^0 \ge \limsup_l\dim V_l$$
    with equality possible only if infinitely many of the $V_l$'s are equal to the $x$-rational subspace corresponding to $\nu^0$.
\end{proof}
The next Lemma captures the following example as a general phenomenon.
\begin{exmp}
    Consider the sequence of maps $(\gamma_l)_{l \in \N} : \T^1 \to \T^2$ induced by the sequence of matrix maps
    \begin{equation*}
        \left[ {\begin{array}{c} l  \\ 
    1  \end{array}}\right] : \R \to \R^2.
    \end{equation*}
    Let $S \subset \T^2$ be the one-dimensional closed connected subgroup corresponding to $\R\ve_1$, the span of the first basis vector in $\R^2$.
    Then, for $t \in [0,1]$ which is irrational, the the set
    \begin{equation*}
        \left\lbrace \left[ {\begin{array}{c} tl  \\ 
    t  \end{array}}\right] + \Z^2 \in \T^2 : l \in \N\right\rbrace
    \end{equation*}
    contains a coset of $S$.
\end{exmp}
\begin{lem}[The Coset Lemma]\label{lem: coset}
    Let $q \in \N$ and let $(\gamma_l)_{l \in \N}$ be a sequence of continuous group morphisms from $\T^1 \to \T^q$. Let $\mu \in \ca{P}(\T^1)$ and $\nu \in \ca{P}(\T^q)$ denote the full Haar probabilities. Assume we have the weak* convergence
    \begin{equation}\label{eq: equidistribution}
        \lim_{l \to \infty} (\gamma_l)_* \mu = \nu.
    \end{equation}
    Then, there is a codimension $1$ closed connected subgroup $S\subset \T^q$ such that, for $\mu$ almost every $z \in \T^1$, 
    \begin{equation*}
        \overline{\left\lbrace \gamma_l(z) \in \T^q : l \in \N\right\rbrace} \supset S + z'\ \text{ for some } z' \in \T^q \text{ depending on $z$}.
    \end{equation*}
\end{lem}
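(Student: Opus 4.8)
The plan is to reduce everything to the mixing criterion of Proposition~\ref{prop: mixing implies density}, after separating out a single ``rigid'' direction. First I would use $\on{Hom}(\T^1,\T^q)\cong\Z^q$ to write $\ga_l(z)=v_lz+\Z^q$ for a unique $v_l\in\Z^q$; then $(\ga_l)_*\mu$ is the Haar measure on the subtorus attached to the line $\R v_l$, and its character integrals are $\int\chi_a\,d(\ga_l)_*\mu=\mathds 1[(a\mid v_l)=0]$ for $a\in\Z^q$. Comparing with the character integrals of $\nu$ (namely $\mathds 1[a=0]$) and noting that a $\seta{0,1}$-valued sequence tending to $0$ is eventually $0$, hypothesis~\eqref{eq: equidistribution} becomes: \emph{for every $a\in\Z^q\smallsetminus\seta{0}$ one has $(a\mid v_l)\neq0$ for all large $l$}; in particular $\norm{v_l}\to\infty$. (Alternatively this is Theorem~\ref{thm: limits of subtori} applied to the subspaces $\R v_l$.)

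Next I would pass to a subsequence --- harmless, since $\overline{\seta{\ga_{l_k}(z):k\in\N}}\subset\overline{\seta{\ga_l(z):l\in\N}}$ and the conclusion is a ``$\supseteq$'' statement --- and, by a diagonal argument over the countable set $\Z^q$, arrange that for every $a\in\Z^q$ the integer sequence $((a\mid v_{l_k}))_k$ is either eventually constant or tends to $\pm\infty$. Set $\Lambda:=\seta{a\in\Z^q:(a\mid v_{l_k})\text{ is eventually constant}}$; this is a saturated subgroup of $\Z^q$, and for $0\neq a\in\Lambda$ its eventual value $c_a$ is a \emph{nonzero} integer by the previous paragraph. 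The key algebraic point is that this forces $\on{rank}\Lambda\le1$: if $a,a'\in\Lambda$ were $\R$-independent, then $b:=c_{a'}a-c_aa'\in\Lambda$ would be nonzero yet satisfy $c_b=c_{a'}c_a-c_ac_{a'}=0$, a contradiction. I expect this rank bound --- short as it is --- to be the conceptual crux: it is what guarantees that after removing at most one direction the residual motion genuinely mixes.

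Finally I would treat the two cases. If $\on{rank}\Lambda=0$, then $(a\mid v_{l_k})\to\pm\infty$ for all $a\neq0$, and a character computation via Proposition~\ref{prop: mixing functions} shows $(\ga_{l_k})$ mixes $\mu$ to $\nu=$ Haar on $\T^q$; Proposition~\ref{prop: mixing implies density} then gives, for $\mu$-a.e.\ $z$, $\overline{\seta{\ga_l(z):l\in\N}}=\T^q$, so any codimension-$1$ subtorus $S$ works. If $\on{rank}\Lambda=1$, write $\Lambda=\Z a_0$ with $a_0\in\Z^q$ primitive, and let $S:=\ker\chi_{a_0}$, which (because $a_0$ is primitive) is a connected codimension-$1$ subtorus, namely the one attached to $a_0^\perp$. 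Fix $k_0$ beyond the index where $(a_0\mid v_{l_k})$ has stabilised, and put $\tilde\ga_k(z):=\ga_{l_k}(z)-\ga_{l_{k_0}}(z)=(v_{l_k}-v_{l_{k_0}})z+\Z^q$; since $(a_0\mid v_{l_k}-v_{l_{k_0}})=0$ for $k\ge k_0$, each $\tilde\ga_k$ is a continuous homomorphism $\T^1\to S$. Running the same character computation --- now the characters of $S$ that fail to ``escape to infinity'' under $\tilde\ga_k$ are precisely those trivial on $S$ --- shows $(\tilde\ga_k)$ mixes $\mu$ to the Haar measure of $S$. Proposition~\ref{prop: mixing implies density} then yields, for $\mu$-a.e.\ $z$, $\overline{\seta{\tilde\ga_k(z):k\in\N}}\supseteq S$, and translating back by the element $\ga_{l_{k_0}}(z)$ (fixed in $k$) gives $\overline{\seta{\ga_l(z):l\in\N}}\supseteq S+z'$ with $z'=\ga_{l_{k_0}}(z)$ depending on $z$. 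This completes the plan; the remaining work is the two routine character computations and the verification that $\ker\chi_{a_0}$ is a connected codimension-$1$ subtorus.
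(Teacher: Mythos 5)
Your proof is correct and follows essentially the same strategy as the paper's: translate the equidistribution hypothesis into the statement that no nonzero dual vector $b$ can have $(b\mid v_l)$ frozen at $0$, show that at most one primitive dual direction can be frozen at all (two independent frozen directions would produce a nonzero direction frozen at $0$, which is exactly the contradiction the paper reaches via its $\SL_2(\Z)$ manipulation), and then invoke mixing-implies-density in the complementary codimension-one subtorus. The only differences are presentational: you keep the bookkeeping intrinsic (the saturated subgroup $\Lambda$, the additive map $a\mapsto c_a$, and the subtraction $\gamma_{l_k}-\gamma_{l_{k_0}}$ to supply the translate $z'$), whereas the paper performs explicit $\SL_q(\Z)$ coordinate changes and projects away the last coordinate — a cleaner packaging of the same ideas.
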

\begin{proof}
    The Lemma is trivially true if $q=1$ since we may take the identity subgroup. We assume $q>1$.
    Without loss of generality, assume that each $\gamma_l \in \on{Mat}_{q\times 1}(\Z)$ and acts in $\T^1$ by multiplication on column vectors (a $1\times 1$ column vector). We use $\gamma_l^T$ to denote the transpose matrix in $\on{Mat}_{1 \times q}(\Z)$.
    If $(\gamma_l)_{l \in \N}$ mixes $\mu$ to $\nu$, we are done by Proposition \ref{prop: mixing implies density}. In particular, we get something much stronger than the desired conclusion. That is, for $\mu$ almost every $z$, the set $\{\gamma_l(z) \in \T^q: l \in \N\}$ is dense.
    We proceed, assuming that $(\gamma_l)_{l \in \N}$ fails to mix $\mu$ to $\nu$. 

    Note that it is enough to prove the statement for a subsequence of $\ga_l$. Moreover, if we replace $\ga_l$ by $\ga \ga_l$ for a fixed $\ga\in \SL_q(\bZ)$, then it is enough to prove the statement for the new sequence 
$\ga\ga_l$. Thus, along the proof we will take subsequences and replace $\ga_l$ by appropriate linear images of $\ga_l$ and by abuse of notation, allow ourselves to continue and denote the new sequence by $\ga_l$.
    
    By the bilinearity of formula \eqref{eq: mixing}, the failure of mixing means that there are integers $a \in \Z$ and $b \in \Z^q$ such that, if we denote by $\chi_a$ and $\chi_b$ the respective characters on $\T^1$ and $\T^q$, we have
    \begin{equation*}
        \lim_{l \to \infty} \int_{\T^1} \chi_b(\gamma_l(z)) \chi_a(z)d \mu(z) \neq \left(\int_{\T^1} \chi_a d\mu\right) \left(\int_{\T^q} \chi_b d\nu\right).
    \end{equation*}
    In other words, we have
    \begin{equation*}
        \lim_{l \to \infty} \int_{\T^1} \exp(2i\pi(a + \gamma_l^T(b)| z)) d\mu(z) \neq \left(\int_{\T^1} \chi_a d\mu\right) \left(\int_{\T^q} \chi_b d\nu\right).
    \end{equation*}
    Thus, $b$ is nonzero.
    As the right hand side is $0$ in this case, the left hand side must equal 1 for infinitely many $l$'s, so we may assume after passing to a subsequence that 
    \begin{equation}\label{eq: gammanT -a = b}
        \gamma_l^T(b) = -a \text{ for all } l \in \N.
    \end{equation}
    Without loss of generality, we can assume $b$ is primitive. 
    Let $\gamma \in \SL_{q}(\Z)$ be such that 
    \begin{equation}\label{eq: gammaY eq = b}
        \gamma^T(\ve_q) = b,
    \end{equation}
    where $\ve_q\in \bZ^q$ is the $q$'th vector in the standard basis. 
    On replacing $\ga_l$ by $\ga\ga_l$ we see that we may assume without loss of generality that $b=\ve_q$.  By \eqref{eq: gammanT -a = b} we deduce that
    \begin{equation}\label{eq: gan coor}
    \ga_l = \br{
    \begin{array}{c}
    \ga_l'\\ -a 
    \end{array}
    }
    \end{equation}
    for all $l$, where $\ga_l'\in \on{Mat}_{(q-1)\times 1}$. 

    Now we view $\ga_l':\bT^1\to \bT^{q-1}$ as a sequence  of homomorphisms
    and let $\nu'$ denote the Haar probability measure on $\bT^{q-1}$.
    Note that the weak* convergence $(\ga_l')_*\mu\to \nu'$ follows from the assumed convergence 
    $(\ga_l)_*\mu\to \nu$ by composition with the projection $\bT^q\to \bT^{q-1}$ that forgets the last coordinate. This relies on Lemma~\ref{lem: pushforward is continuous}. 
    \begin{claim}\label{claim: ga' mix}
    The sequence $\ga_l'$ must mix $\mu$ to $\nu'$. 
    \end{claim}
    \begin{proof}
        If not, by the same logic as above, there must be some $\ga'\in \SL_{q-1}(\bZ)$ and $a'\in \bZ$ such that on replacing $\ga'_l$ by $\ga'\ga_l'$ we have that 
    $$\ga_l' = \br{
    \begin{array}{c}
    \ga_l''\\ -a' 
    \end{array}
    }$$ This means that after an appropriate replacement of $\ga_l$ we have
    \begin{equation}\label{eq: two last coordinates}
    \ga_l =\br{
    \begin{array}{c}
    \ga''_l\\ -a'\\-a
    \end{array}
    }.
    \end{equation}
    Let $\del\in\SL_2(\bZ)$ be such that $\del\smallmat{a'\\a} = \smallmat{*\\0}$ and let $\ga\in \SL_q(\bZ)$ be equal to the identity matrix with the bottom right $2\times 2$ block replaced by $\del$. Then
    by the choice of $\del$ and ~\eqref{eq: two last coordinates} we have that the last coordinate of $\ga \ga_l$ is $0$. Replacing $\ga_l$ by $\ga\ga_l$ we see that the image of the homomorphism $\ga_l:\bT^1\to \bT^q$ is contained in the subtorus defined by requiring the last coordinate to vanish. This contradicts the weak* convergence $(\ga_l)_*\mu\to \nu$ and so finishes the proof of the claim.
    \end{proof}
    We apply Proposition~\ref{prop: mixing implies density} to the sequence of homomorphisms $\ga_l':\bT^1\to\bT^{q-1}$ and conclude that for $\mu$-almost any $y\in \bT^1$, the sequence of images $\seta{\ga_l'y \in \bT^{q-1}}$ is dense. For each such $y$ we have by \eqref{eq: gan coor} that 
   $$\seta{\ga_ly:l\in \bN} = \seta{\mat{\ga_l'y\\-ay}\in \bT^q:l\in \bN}$$
    and so $\overline{\seta{\ga_ly:l\in \bN}}$ contains a coset (depending on $y$) of the subtorus obtained by requiring the last coordinate to vanish.
\end{proof} 

\section{Proof of Theorem \ref{thm: main theorem F formulation}}\label{sec: long sequences}
\ignore{
We can now give proof of Theorem \ref{thm: main theorem F formulation}. We first need a little more notation.
Specifically, it will now be very convenient to consider our algebraic measures on tori as lying in fibers in $Y$ over points in $X$:
Let $x \in X$. We consider the fiber $\pi^{-1}(x) \subset Y$ coming from the map in \eqref{diag: equivariant-projection}.
We have
\begin{equation*}
    \pi^{-1}(x) = \left\lbrace (g,v) \Gamma\ltimes \Z^{m+1}: g\Z^{m+1} = x,\ v \in \R^{m+1}\right\rbrace.
\end{equation*}
For $g \in G$ with $x = g\Z^{m+1}$, the map
\begin{equation*}
    \R^{m+1} \to \pi^{-1}(x);\ v \mapsto (g,v)\Gamma\ltimes \Z^{m+1}
\end{equation*}
induces a diffeomorphism 
\begin{equation}\label{eq: canonical diffeomorphism}
    \R^{m+1}/x \to \pi^{-1}(x).
\end{equation} 
Moreover, this diffeomorphism is canonical in the sense that it does not depend upon the choice of representative $g$.
\mycomment{The terminology in this definition is not good. Algebraic measures on homogeneous spaces is a taken term. You can call these algebraic measures on fibers.}
\begin{defn}[Algebraic measures on {fibers of $Y$}]\label{defn: algebraic measures on fibers}
    A measure $\mu \in \mathcal{P}(Y)$ is said to be an \new{algebraic on a fiber of $Y$} if there is some $x \in X$ such that $\mu$ arises as the pushforward of an algebraic measure on $\R^{m+1}/x$ by the canonical diffeomorphism of \eqref{eq: canonical diffeomorphism} followed by the inclusion $\pi^{-1}(x) \to Y$. That is, an algebraic measure on $Y$ comes from an algebraic measure on some $\R^{m+1}/x$ via the maps
    \begin{equation*}
        \R^{m+1}/x \to \pi^{-1}(x) \to Y.
    \end{equation*}
\end{defn}
\begin{proof}[Proof of Theorem \ref{thm: Uri F formulation}]
Let $x_0 \in X$ and let $x_1 \in \partial(x_0)$. Let $\mu \in \ca{P}(\R^{m+1}/x_0)$ be the full Haar probability measure. 
Since $x_1\ \in \partial(x_0)$, we have a divergent sequence of positive times $(t_l)_{l \in \N}$ and a sequence $(\varepsilon_l)_{l \in \N} \subset G$ converging to the identity such that
\begin{equation*}
    \varepsilon_l h_{t_l} x_0 = x_1 \text{ for all } n \in \N.
\end{equation*}
This equation gives us commutative diagrams:
\begin{equation}\label{diag: connecting fibers Uri}
        \begin{tikzcd}
  \R^{m+1}/x_0   \arrow[r, "\varepsilon_l h_{t_l}"] \arrow[d]  & \R^{m+1}/ x_1   \arrow[d]  \\
  \pi^{-1}(x_0) \arrow[r, "(\varepsilon_l h_{t_l}{,}0)"]  \arrow[d] & \pi^{-1}(x_1)\arrow[d] \\
  Y \arrow[r, "(\varepsilon_l h_{t_l}{,}0)"] & Y
\end{tikzcd}
\end{equation}
where the top vertical maps are the canonical diffeomorphisms described in \eqref{eq: canonical diffeomorphism}, the lower vertical maps are inclusions, the top horizontal map is given by $v + x_0 \mapsto \varepsilon_l h_{t_l}v + x_1$, and the lower  two horizontal  maps are given by the group action on $Y$.
We also consider $\mu$ to be a measure on $Y$ by pushforward via the left vertical arrows in diagram \eqref{diag: connecting fibers Uri}, with context making clear which set we are measuring in. 
An application of Proposition \ref{prop: Haar mixing} with the commutative diagram \ref{diag: connecting fibers Uri} shows that for $\mu$ almost every $y \in Y$, we have 
\begin{equation*}
    \overline{\left\lbrace (\varepsilon_l h_{t_l},0) y \in \pi^{-1}(x_1) : n \in \N\right\rbrace} \supset \pi^{-1}(x_1).
\end{equation*}
But the sequence $(\varepsilon_l,0)_{l \in \N} \subset G\ltimes \R^{m+1}$ converges to the identity and thus we even have
\begin{equation*}
    \overline{\left\lbrace ( h_{t_l},0) y \in \pi^{-1}(x_1) : n \in \N\right\rbrace} \supset \pi^{-1}(x_1).
\end{equation*}
An application of Lemma \ref{lem: contains values of limit} shows that and use of the diagram \ref{diag: connecting fibers Uri} shows that, for $\mu$ almost every  $v + x_0 \in \R^{m+1}/x_0$, we have
\begin{equation*}
\begin{split}
    \overline{F((x_0 + v) \cap \R^{m+1}_+)} &\supset \bigcup_{w \in \R^{m+1}} F((x_1 + w) \cap \R^{m+1}_+) \\
    &= F(\R^{m+1}_+) = [0,\infty).
\end{split}
\end{equation*}
Since $\mu$ comes from the Lebesgue measure on a fundamental domain for $x_0$, we obtain the result.
\end{proof}
}
Here is a sketch of the proof of Theorem~\ref{thm: main theorem F formulation} that should motivate the technicalities to come. 
The proof uses as an input
an accumulation sequence $(x_0,x_1,\dots, x_{d-1})$. We start with a one-dimensional algebraic measure $\mu$ on the torus $\bR^d/x_0$ and track what happens to the measure $\mu$ when we push it using sequences $h_{t_l}$ witnessing 
that $x_1\in \partial(x_0),x_2\in \partial(x_1)$ etc. At each step, we may assume the measures converge and we move from an algebraic measure on $\bR^d/x_i$ to an algebraic measure on $\bR^d/x_{i+1}$. The requirement 
regarding the length of the accumulation sequence is there to ensure that at the last step we must see the Haar probability measure on $\bR^d/x_{d-1}.$
The proof ends by invoking the Coset Lemma~\ref{lem: coset} and using the Inheritance Lemma~\ref{lem: inheritance} together with the fact that the non-degeneracy degree of $F$ is $d-1$, which is Lemma~\ref{lem: non degeneracy degree}. 

In order for this strategy to work we must know that the dimension of the algebraic measure we obtain in the limit at each stage jumps by at least 1. This is  the goal of of the next section and the reason for our assumption 
$\gcd(m,n)=1$.
\subsection{Jump in dimension}
    Consider the Euclidean space $\R^d$ with the standard inner product. Fix a dimension $1\le k\le d$ and consider the $k$'th exterior power
    $
        \wedge^k \R^d. 
    $
    On this vector space, we induce an inner product by declaring the standard basis vectors $\ve_I$ orthogonal. Here $I$ is a multi-index of length $k$, that is,
    a sequence $1\le i_1 < \dots < i_k \le d$ and 
    $\ve_I:=\ve_{i_1}\wedge \dots \wedge \ve_{i_k}.$ We denote by $\cI_k$ the set of all multi-indices of length $k$.
    
    The following Lemma is left as an exercise. It explains how the exterior power relates to our discussion: We deal with algebraic measures supported to a subtorus of $\bR^d/x$ where $x\in X$ is a lattice. Such a subtorus corresponds to an $x$-rational subspace $V$ and to a vector in 
    the appropriate exterior power obtained by wedging a $\bZ$-basis of $V\cap x$.
    \begin{lem}
        \label{lem: norm of pure tensors}
        Let $\mb{u}=u_1\wedge\dots\wedge u_k\in \wedge^k\bR^d$. Then, $\norm{u}$ is the $k$-dimensional volume of the parallelepiped spanned 
        by $u_1,\dots, u_k$ in $\bR^d.$ In particular, if $V<\bR^d$ is a $k$-dimensional subspace and $v_1,\dots, v_k$,$w_1,\dots ,w_k$ are $\bZ$-bases of a lattice in $V$, then $v_1\wedge\dots \wedge v_k = \pm w_1\wedge\dots \wedge w_k$.  
    \end{lem}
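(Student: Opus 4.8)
The plan is to deduce everything from the Gram determinant. First I would record the elementary identity, valid for all decomposable $k$-vectors, with $\wedge^k\bR^d$ carrying the inner product fixed in the text (the one for which the $\ve_I$, $I\in\cI_k$, form an orthonormal basis):
\[
\langle u_1\wedge\dots\wedge u_k,\ v_1\wedge\dots\wedge v_k\rangle \;=\; \det\big(\langle u_i,v_j\rangle\big)_{1\le i,j\le k}.
\]
Regarded as a function of the pair of $k$-tuples $(u_1,\dots,u_k)$, $(v_1,\dots,v_k)$, each side is multilinear and alternating in each tuple separately --- for the right-hand side this is exactly the behaviour of $\det$ in its rows and in its columns --- so their difference is as well, and it therefore suffices to verify the identity when every $u_i$ and every $v_j$ is a standard basis vector, i.e.\ on pairs $\ve_I,\ve_J$ with $I,J\in\cI_k$. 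There the left side equals $\delta_{IJ}$ by the definition of the inner product, and the right side is $\det(\delta_{i_a,j_b})_{a,b}$, which is $1$ for $I=J$ and $0$ otherwise (when $I\neq J$, some entry of $I$ is absent from $J$, producing a zero row). Hence the two sides agree.

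Taking $v_i=u_i$ yields $\norm{\mb u}^2=\det G$, where $G=(\langle u_i,u_j\rangle)_{i,j}$ is the Gram matrix of $u_1,\dots,u_k$, so it remains to identify $\sqrt{\det G}$ with the $k$-dimensional volume of $P:=\seta{\sum_i t_iu_i:t_i\in[0,1]}$. If $u_1,\dots,u_k$ are linearly dependent, then $\mb u=0$, $\det G=0$, and $P$ is degenerate, so both quantities vanish. Otherwise put $W:=\on{span}(u_1,\dots,u_k)$, fix an orthonormal basis of $W$, and let $B\in\on{Mat}_{k\times k}(\bR)$ be the matrix whose $i$-th column is the coordinate vector of $u_i$ in that basis; then $G=B^{\on{tr}}B$, so $\det G=(\det B)^2$, while under the induced isometry $W\cong\bR^k$ the set $P$ becomes the parallelepiped on the columns of $B$, of volume $\av{\det B}$. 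This proves the first assertion.

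For the second assertion, let $v_1,\dots,v_k$ and $w_1,\dots,w_k$ be two $\bZ$-bases of a lattice $\Lambda<V$. There is $M\in\on{Mat}_{k\times k}(\bZ)$ with $w_j=\sum_i M_{ij}v_i$, and since the reverse change of basis is also integral, $M\in\GL_k(\bZ)$, so $\det M=\pm1$. Multilinearity and antisymmetry of $\wedge$ then give $w_1\wedge\dots\wedge w_k=(\det M)\,v_1\wedge\dots\wedge v_k=\pm\,v_1\wedge\dots\wedge v_k$; alternatively, both vectors lie in the line $\wedge^kV$ and, by the first part, have the same norm, namely the covolume of $\Lambda$ in $V$, so they differ only in sign. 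There is no genuine obstacle anywhere in this argument; the one step deserving a moment's care is the reduction of the Gram-determinant identity to standard basis vectors, which is routine multilinear algebra.
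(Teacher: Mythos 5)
Your proof is correct and complete. Note that the paper does not actually prove this lemma --- it is explicitly ``left as an exercise'' --- so there is nothing to compare against; your argument via the Gram-determinant identity $\langle u_1\wedge\dots\wedge u_k,\ v_1\wedge\dots\wedge v_k\rangle=\det(\langle u_i,v_j\rangle)$ is the standard and intended one, and both halves (reduction to standard basis vectors by multilinearity and the alternating property, and the $\det M=\pm1$ change-of-basis argument for the lattice statement) are carried out correctly. Your parenthetical reading of the paper's phrase ``declaring the standard basis vectors $\ve_I$ orthogonal'' as meaning orthonormal is also the right one, since the volume interpretation forces $\norm{\ve_{i_1}\wedge\dots\wedge\ve_{i_k}}=1$.
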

    Each linear map $g: \R^d \to \R^d$ induces uniquely a linear map
    \begin{equation*}
        \wedge g : \wedge^k \R^d \to \wedge^k \R^d
    \end{equation*}
    characterized by the way it acts on pure tensors: For all $v_1,\dots,v_k\in \bR^d$, $\wedge g(v_1\wedge\dots \wedge v_k) = (gv_1)\wedge\dots\wedge(gv_k)$. We consider the semigroup of matrices $(h_t)_{t \in \R_{\geq 0}} \in G$ as in \eqref{eq: ht definition} and consider their linear action on $\R^d$ by multiplication on column vectors.
    These induce the family of linear maps 
    \begin{equation*}
        (\wedge h_t)_{t \in \R_{\geq 0}} : \wedge^k \R^d \to \wedge^k \R^d
    \end{equation*}
    which are simultaneously diagonable. A basis of orthogonal eigenvectors is given by the standard basis vectors $\seta{\ve_I:I\in \cI_k}$ as we will show below.
    We split the indices $\br{d}:=\seta{1,\dots,d}$ into $J_+:=\seta{1,\dots, m}$ and $J_-:=\br{d}\sm J_+.$ With this notation it is easy to see that for any multi-index $I\in \cI_k$,
    \begin{equation}\label{eq: the weights}
    (\wedge h_t)\ve_I = \exp\pa{(\av{I\cap J_+}n - \av{I\cap J_-}m)t}\cdot \ve_I.
    \end{equation}
    We denote 
    \begin{equation}
        \label{eq: the weights2}
        \al_I(t):=\exp\pa{(\av{I\cap J_+}n - \av{I\cap J_-}m)t}
    \end{equation}
    and note that if the exponent $(\av{I\cap J_+}n - \av{I\cap J_-}m)$ is not zero, then $\al_I(t)\to \infty$ or $\al_I(t)\to 0$ as $t\to\infty$. 
\begin{lem}\label{lemma: zero is not a weight}
    Assume $\gcd(n,m)=1$. Then, for any $1\le k<d$, there is no $I\in \cI_k$ 
    such that $\al_I(t)\equiv 1$. Moreover, 
    for any $u\in \wedge^k \bR^d$ we have that $\norm{(\wedge h_t)u}$ either diverges or converges to $0$ as $t\to\infty.$
\end{lem}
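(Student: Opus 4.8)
The plan is to reduce both assertions to elementary arithmetic about the exponents appearing in \eqref{eq: the weights}. For a multi-index $I\in\cI_k$ put $a:=\av{I\cap J_+}$ and $b:=\av{I\cap J_-}$, so that $0\le a\le m$, $0\le b\le n$ and $a+b=k$; writing $w_I:=an-bm\in\bZ$ we have $\al_I(t)=e^{w_I t}$ and $(\wedge h_t)\ve_I=e^{w_I t}\ve_I$. Hence $\al_I(t)\equiv 1$ exactly when $w_I=0$, i.e.\ when $an=bm$. (If \eqref{eq: the weights} itself needs to be justified here, it follows at once by applying $\wedge h_t$ to $\ve_I=\ve_{i_1}\wedge\cdots\wedge\ve_{i_k}$ and using that $h_t\ve_j=e^{nt}\ve_j$ for $j\in J_+$ and $h_t\ve_j=e^{-mt}\ve_j$ for $j\in J_-$.)

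First I would show that $w_I\neq 0$ for every $I\in\cI_k$ when $1\le k<d$. From $an=bm$ and $\gcd(m,n)=1$ one gets $m\mid a$ and $n\mid b$; together with the constraints $0\le a\le m$ and $0\le b\le n$ this forces $a\in\{0,m\}$ and $b\in\{0,n\}$, and then the equation $an=bm$ leaves only the possibilities $(a,b)=(0,0)$ and $(a,b)=(m,n)$, that is $k=0$ or $k=d$. Both are excluded, so no $I\in\cI_k$ satisfies $\al_I\equiv 1$. This divisibility argument is the only place the hypothesis $\gcd(m,n)=1$ is used, and it is the (rather mild) crux of the statement; the rest is bookkeeping with the eigen-decomposition.

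For the second assertion, expand an arbitrary $u\in\wedge^k\bR^d$ in the orthogonal eigenbasis as $u=\sum_{I\in\cI_k}c_I\ve_I$, so that $\norm{(\wedge h_t)u}^2=\sum_{I\in\cI_k}\av{c_I}^2 e^{2w_I t}$, a finite sum of exponentials whose exponents $2w_I$ are all nonzero on $S:=\seta{I\in\cI_k:c_I\neq 0}$ by the first part. If $S=\varnothing$ then $u=0$ and the norm is identically $0$; if $w_I<0$ for every $I\in S$ the sum tends to $0$; and if $w_{I_0}>0$ for some $I_0\in S$ then $\norm{(\wedge h_t)u}^2\ge\av{c_{I_0}}^2 e^{2w_{I_0}t}\to\infty$. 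In every case $\norm{(\wedge h_t)u}$ either diverges to $\infty$ or converges to $0$ as $t\to\infty$, which completes the argument.
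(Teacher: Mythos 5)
Your proof is correct and follows essentially the same route as the paper: the same divisibility argument from $an=bm$ and $\gcd(m,n)=1$ for the first claim (you use the bounds $0\le a\le m$, $0\le b\le n$ to force $(a,b)\in\{(0,0),(m,n)\}$, while the paper notes $a,b\ge 1$ and concludes $a+b\ge m+n=d$, but these are the same idea), and the same eigenbasis expansion for the second. Nothing further to add.
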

\begin{proof}
    Fix $k$ as in the statement and let $I\in \cI_k$. By equation~\eqref{eq: the weights2}, if 
     $\al_I(t)\equiv 1$, then there are $1\le a \le b\le k$ such that $a+b=k$ and $an-mb=0$. By the $\gcd$ assumption we deduce that $m|a$ and $n|b$. But this gives that  $d=  m+n\le a+b=k<d $ which is absurd.

    Finally, if $u\in \wedge^k \bR^d$ we may write $u=\sum_{I\in \cI_k} a_I\ve_I$. By \eqref{eq: the weights} we have that 
    $$(\wedge h_t)u = \sum_{I\in \cI_k} a_I (\wedge h_t)\ve_I = 
    \sum_{I\in\cI_k} \al_I(t)a_I \ve_I.$$
    By the first part, for each $I$, $\lim_{t\to\infty} \al_I(t)\in \seta{0,\infty}$ and hence we deduce that $\norm{(\wedge h_t)u}\to \infty$  unless $a_I=0$ for every $I$ for which $\al_I(t)\to \infty$, in which case $\norm{(\wedge h_t)u}\to 0$.
    
\end{proof}
\begin{cor}\label{cor: subspaces do not stabilize}
    Assume $\gcd(m,n)=1$. 
    Suppose $x_0,x_1\in X$ are lattices such that $x_1\in \partial(x_0)$. Let
    $t_l\to\infty$ and $G\ni \varepsilon_l\to e$ be such that $\varepsilon_l h_{t_l}x_0=x_1.$ Then, if
    $U<\bR^d$ is a $k$-dimensional
    $x_0$-rational subspace with $1\le k <d$, it is not possible that the sequence of subspaces $\varepsilon_l h_{t_l} U$ stabilizes. 
\end{cor}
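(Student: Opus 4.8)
The plan is to reduce the statement to a fact about norms of pure tensors in $\wedge^k\bR^d$, where it becomes an immediate consequence of Lemma~\ref{lemma: zero is not a weight}.

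First I would encode the $k$-dimensional $x_0$-rational subspace $U$ by a pure tensor: since $U\cap x_0$ is a lattice in $U$, fix a $\bZ$-basis $v_1,\dots,v_k$ of it and set $\mb{u}:=v_1\wedge\dots\wedge v_k\in\wedge^k\bR^d$, which is nonzero as the $v_i$ are linearly independent. Because $x_1=\varepsilon_l h_{t_l}x_0$, each image $\varepsilon_l h_{t_l}U$ is $x_1$-rational and $\varepsilon_l h_{t_l}v_1,\dots,\varepsilon_l h_{t_l}v_k$ is a $\bZ$-basis of $(\varepsilon_l h_{t_l}U)\cap x_1$; its wedge is $(\wedge\varepsilon_l h_{t_l})\mb{u}=(\wedge\varepsilon_l)\bigl((\wedge h_{t_l})\mb{u}\bigr)$ by multiplicativity of $g\mapsto\wedge g$ on pure tensors.

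Then I would argue by contradiction. It suffices to rule out that $\varepsilon_l h_{t_l}U$ equals a fixed subspace $W$ along an infinite set of $l$'s; passing to that subsequence, assume this holds for all $l$. Such a $W$ is $x_1$-rational with the fixed lattice $W\cap x_1$, so Lemma~\ref{lem: norm of pure tensors} says the wedge of any $\bZ$-basis of $W\cap x_1$ is a fixed vector $\mb{w}\ne 0$ up to sign; hence $(\wedge\varepsilon_l h_{t_l})\mb{u}=\pm\mb{w}$ for all $l$. Applying $\wedge(\varepsilon_l^{-1})$ and using $\varepsilon_l\to e$ together with the continuity of $g\mapsto\wedge g$ yields $\norm{(\wedge h_{t_l})\mb{u}}\to\norm{\mb{w}}\in(0,\infty)$. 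On the other hand $t_l\to\infty$, $1\le k<d$ and $\gcd(m,n)=1$, so Lemma~\ref{lemma: zero is not a weight} forces $\norm{(\wedge h_t)\mb{u}}$ to diverge to $\infty$ or converge to $0$ as $t\to\infty$ --- in contradiction with convergence to the finite positive number $\norm{\mb{w}}$.

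I do not expect a genuine obstacle here: all the analytic content is already in Lemma~\ref{lemma: zero is not a weight}, and the rest is bookkeeping about rational subspaces and their Pl\"ucker vectors. The one point to be careful about is the meaning of ``stabilizes''; the argument above in fact establishes the stronger statement that $\varepsilon_l h_{t_l}U$ cannot coincide with a fixed subspace along any infinite subsequence.
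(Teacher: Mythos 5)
Your proof is correct and follows essentially the same route as the paper's: encode $U$ by the Pl\"ucker vector $\mb{u}$ of a $\bZ$-basis of $U\cap x_0$, use Lemma~\ref{lem: norm of pure tensors} to see that stabilization forces $\norm{(\wedge \varepsilon_l h_{t_l})\mb{u}}$ to be constant (hence bounded away from $0$ and $\infty$), and contradict Lemma~\ref{lemma: zero is not a weight} via $\varepsilon_l\to e$. The only cosmetic difference is that you peel off $\wedge(\varepsilon_l^{-1})$ explicitly where the paper simply notes $\wedge\varepsilon_l\to\on{id}$; both are the same bookkeeping.
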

    \begin{proof}
        Suppose on the contrary that the $x_1$-rational subspace 
        $\varepsilon_l h_{t_l} U$ is independent of $l$ and denote it by $V$. 
        Note that $\varepsilon_lh_{t_l}(U\cap x_0) = V\cap x_1$. 
        Choose a basis
        $u_1,\dots, u_k$ of $U\cap x_0$ and denote $\mb{u}:=u_1\wedge\dots \wedge u_k\in \wedge^k\bR^d$. We have that for any $l$ 
        $$(\wedge \varepsilon_l)(\wedge h_{t_l}) (\mb{u}) = \wedge(\varepsilon_l h_{t_l})(\mb{u}) =(\varepsilon_l h_{t_l} u_1)\wedge\dots \wedge(\varepsilon_l h_{t_l}u_k). $$
        The right hand side is a pure tensor obtained by wedging a basis of the lattice $x_1\cap V$ in $V$ and hence, by Lemma~\ref{lem: norm of pure tensors} does not depend on $l$ (up to a sign maybe). In particular, the norm of this vector is bounded
        above and below (away from $0$). On the other hand, the left hand side goes to infinity or to zero by Lemma~\ref{lemma: zero is not a weight} because the operators $\wedge \varepsilon_l$ converge to the identity map on $\wedge^k\bR^d$. This gives the desired contradiction.
    \end{proof}

\begin{thm}[Jump in dimension]\label{thm: ht limits must jump}
    Assume $\gcd(m,n) =1$. Let $x_0, x_1 \in X$ with $x_1 \in \partial(x_0)$. Let $\mu$ be an algebraic measure on $\bR^d/x_0$ of dimension $1\le k<d$.
    Then, there is a divergent sequence $(t_l)_{l \in \N} \subset \R_{\geq 0}$ with
    \begin{equation*}
        \lim_{l \to \infty} (h_{t_l})_* \mu = \nu \ \text{ in the weak* topology of } \mathcal{M}(Y),
    \end{equation*}
    where $\nu$ is an algebraic measure on $\bR^d/x_1$ and moreover, 
    \begin{equation}\label{eq: dimension jump by 1}
        \dim \nu \geq k+1.
    \end{equation}
\end{thm}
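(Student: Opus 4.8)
The plan is to push $\mu$ forward along a sequence witnessing $x_1\in\partial(x_0)$, extract a weak* limit by compactness, and then combine two facts already established in the excerpt: limits of algebraic measures on a torus are again algebraic of at least the same dimension (Corollary~\ref{cor: limit of algebraic measures is algebraic}), and the underlying $k$-dimensional subspace cannot stabilize along such a sequence (Corollary~\ref{cor: subspaces do not stabilize}). Together these force $\dim\nu\ge k+1$.

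First I would fix, using $x_1\in\partial(x_0)$, a divergent sequence $t_l\to\infty$ and a sequence $\varepsilon_l\to e$ in $G$ with $\varepsilon_l h_{t_l}x_0=x_1$ for all $l$. Writing $\mu$ as the algebraic measure on $\bR^d/x_0$ attached to a $k$-dimensional $x_0$-rational subspace $U$ and a translation $w_0$, the preliminary point is that $\varepsilon_l h_{t_l}$, being a linear map carrying $x_0$ to $x_1$, induces a group isomorphism $\bR^d/x_0\to\bR^d/x_1=\pi^{-1}(x_1)$ under which the pushforward $\mu_l:=(\varepsilon_l h_{t_l})_*\mu$ is again algebraic, attached to the $k$-dimensional $x_1$-rational subspace $V_l:=\varepsilon_l h_{t_l}U$ and the translation $\varepsilon_l h_{t_l}w_0$. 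Since each $\mu_l$ is supported on the compact fibre $\pi^{-1}(x_1)$, Lemma~\ref{lem: B-A} lets me pass to a subsequence along which $\mu_l\to\nu$ in $\cP(\pi^{-1}(x_1))$; as the inclusion $\pi^{-1}(x_1)\hookrightarrow Y$ is continuous and proper, this convergence also holds in $\cM(Y)$ by Lemma~\ref{lem: pushforward is continuous}.

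Next I would apply Corollary~\ref{cor: limit of algebraic measures is algebraic} to the sequence $(\mu_l)$ on $\bR^d/x_1$: it yields that $\nu$ is an algebraic measure on $\bR^d/x_1$ with $\dim\nu\ge\limsup_l\dim\mu_l=k$, and that equality $\dim\nu=k$ would force, after passing to a further subsequence, all the subspaces $V_l=\varepsilon_l h_{t_l}U$ to coincide with one fixed $k$-dimensional subspace. Since $1\le k<d$, such stabilization is exactly what Corollary~\ref{cor: subspaces do not stabilize} rules out, so $\dim\nu\ge k+1$, which is \eqref{eq: dimension jump by 1}. It then remains to remove the perturbation: viewing $h_{t_l}$ and $\varepsilon_l h_{t_l}$ as maps $\bR^d/x_0\to Y$, one has $\varepsilon_l h_{t_l}\cdot z=\varepsilon_l\cdot(h_{t_l}\cdot z)$ with $h_{t_l}\cdot z$ lying in the fibre $\pi^{-1}(h_{t_l}x_0)=\pi^{-1}(\varepsilon_l^{-1}x_1)$, and since these fibres converge to $\pi^{-1}(x_1)$ they eventually sit inside a fixed compact set $K\subset Y$; continuity of the $G$-action together with $\varepsilon_l\to e$ then gives $\sup_z\dist_Y(h_{t_l}\cdot z,\varepsilon_l h_{t_l}\cdot z)\le\sup_{y\in K}\dist_Y(\varepsilon_l\cdot y,y)\to0$, so Lemma~\ref{lem: weak limit is the same under perturbations} upgrades the established convergence to $(h_{t_l})_*\mu\to\nu$ in $\cM(Y)$.

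I expect the genuinely hard content to be upstream rather than in this assembly: it is the dimension rigidity encoded in Corollaries~\ref{cor: limit of algebraic measures is algebraic} and~\ref{cor: subspaces do not stabilize}, ultimately resting on Lemma~\ref{lemma: zero is not a weight} and the hypothesis $\gcd(m,n)=1$, without which the $k$-dimensional subspace could stabilize and no jump would occur. Granting those inputs, the only steps in the present proof that demand care are checking that each $\mu_l$ is still an algebraic measure of dimension $k$, so that Corollary~\ref{cor: limit of algebraic measures is algebraic} is applicable, and the routine $\varepsilon_l$-perturbation at the end.
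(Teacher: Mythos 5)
Your proposal is correct and follows essentially the same route as the paper: extract a weak* limit of $(\varepsilon_l h_{t_l})_*\mu$ via Lemma~\ref{lem: B-A}, apply Corollary~\ref{cor: limit of algebraic measures is algebraic} to identify the limit as algebraic with $\dim\nu\ge k$ and to reduce equality to stabilization of the subspaces $V_l=\varepsilon_l h_{t_l}U$, rule that out with Corollary~\ref{cor: subspaces do not stabilize}, and discard the $\varepsilon_l$ via Lemma~\ref{lem: weak limit is the same under perturbations}. Your justification of the uniform-perturbation hypothesis in the last step is in fact slightly more detailed than the paper's, which simply cites the lemma.
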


\begin{proof}
    Choose a divergent sequence $(t_l)_{l \in \N} \subset \R_{\geq 0}$ witnessing $x_1 \in \partial(x_0)$. 
    Let $(\varepsilon_l)_{l \in \N} \subset G$ be a sequence converging to the identity with
    \begin{equation*}
        \varepsilon_l h_{t_l} x_0 = x_1 \ \text{ for all }\ l \in \N.
    \end{equation*}
Acting on $Y$ rather than on $X$ the above equation says that $\varepsilon_l h_{t_l}$ maps $\pi^{-1}(x_0) = \bR^d/x_0$ to $\pi^{-1}(x_1)=\bR^d/x_1$ and is in fact a group homomorphism between these two tori.
By Lemma \ref{lem: B-A}, we can assume that we have a weak* limit
\begin{equation}\label{eq: limit exists}
   \lim_{l \to \infty} (h_{t_l})_* \mu= \lim_{l \to \infty} (\varepsilon_lh_{t_l})_* \mu = \nu \in \cP(\pi^{-1}(x_1)),
\end{equation}
where the equality on the left follows by applying Lemma \ref{lem: weak limit is the same under perturbations} and noting that $(\varepsilon_l)_{l \in \N}$ converges to the identity in $G$. An application of  Corollary 
\ref{cor: limit of algebraic measures is algebraic} shows that $\nu$ must be algebraic.

It remains to establish \eqref{eq: dimension jump by 1}. Assume that the algebraic measure $\mu$ on $\R^d/x_0$ corresponds to the $x_0$-rational subspace $U \subset \R^d$.
Then, the measures $(\varepsilon_lh_{t_l})_*\mu\in \cP(\bR^d/x_1)$ are algebraic measures that correspond to the $x_1$-rational subspaces $V_l: =\varepsilon_lh_{t_l}U$ and are thus of a fixed dimension $k$. Applying 
Corollary \ref{cor: limit of algebraic measures is algebraic} we deduce that 
$\dim \nu\ge k$ and that if $\dim \nu = k$, then along a subsequence, $\varepsilon_lh_{t_l}U$ is a fixed $x_1$-rational subspace. 
This contradicts Corollary~\ref{cor: subspaces do not stabilize} because we assume $\gcd(m,n)=1$ and $1\le k<d$.
\end{proof}
The following Corollary bootstraps Theorem~\ref{thm: ht limits must jump}
for a long accumulation sequence. This is used in order to reach the dimension of the non-degeneracy degree of $F$ as appears in Lemma~\ref{lem: non degeneracy degree}.
\begin{cor}\label{cor: acc seq measure dimension}
   Assume $\gcd(m,n) =1$. Let $x_0 \in X$ be a lattice with an accumulation sequence 
$        (x_0, \dots, x_r) \in X^{r+1}.$
    Let $\mu$ be an algebraic measure on $\bR^d/x_0$ of dimension $1\le k<d$.
    Then, there is a divergent sequence $(t_l)_{l \in \N} \subset \R_{\geq 0}$ with
    \begin{equation*}
        \lim_{l \to \infty} (h_{t_l})_* \mu = \nu \ \text{ in the weak* topology of } \mathcal{M}(Y),
    \end{equation*}
    where $\nu$ is an algebraic measure on $\bR^d/x_r$ and moreover, 
    \begin{equation}\label{eq: dimension jump by more}
        \dim \nu \geq \min\seta{k+r, d}.
    \end{equation}
\end{cor}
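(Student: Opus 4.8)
The plan is to induct on $r$, using Theorem~\ref{thm: ht limits must jump} as the engine that raises the dimension by one at each step, and Lemma~\ref{lem: convergence transitive} to splice together the divergent time-sequences produced at consecutive steps. For $r=1$ the statement is precisely Theorem~\ref{thm: ht limits must jump}: it yields a divergent $(t_l)_{l\in\N}\subset\R_{\ge0}$ with $(h_{t_l})_*\mu\to\nu$ in $\cM(Y)$, $\nu$ algebraic on $\bR^d/x_1$, and $\dim\nu\ge k+1=\min\{k+1,d\}$, the last equality because $k<d$.

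For the inductive step, assume the result for accumulation sequences of length $r$ and let $(x_0,\dots,x_r)$ be one of length $r+1$. Applying the inductive hypothesis to $(x_0,\dots,x_{r-1})$ and to $\mu$, we obtain a divergent $(s_l)_{l\in\N}\subset\R_{\ge0}$ with $(h_{s_l})_*\mu\to\nu'$, where $\nu'$ is algebraic on $\bR^d/x_{r-1}$ and $k':=\dim\nu'\ge\min\{k+r-1,d\}\ge1$. We then make one further step from $x_{r-1}$ to $x_r$ (recall $x_r\in\partial(x_{r-1})$), distinguishing two cases. If $k'<d$, Theorem~\ref{thm: ht limits must jump} applied to $\nu'$ and the pair $x_{r-1},x_r$ gives a divergent $(\tau_l)_{l\in\N}\subset\R_{\ge0}$ with $(h_{\tau_l})_*\nu'\to\nu$, $\nu$ algebraic on $\bR^d/x_r$, and $\dim\nu\ge k'+1\ge\min\{k+r-1,d\}+1\ge\min\{k+r,d\}$. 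If instead $k'=d$, then $\nu'=\lam_{\bR^d/x_{r-1}}$; choosing $\tau_l\to\infty$ and $\delta_l\to e$ in $G$ with $\delta_lh_{\tau_l}x_{r-1}=x_r$, each $\delta_lh_{\tau_l}$ descends to an isomorphism of the compact groups $\bR^d/x_{r-1}\to\bR^d/x_r$, so $(\delta_lh_{\tau_l})_*\nu'=\lam_{\bR^d/x_r}$ for all $l$, and passing to $(h_{\tau_l})_*\nu'$ via Lemma~\ref{lem: weak limit is the same under perturbations} (since $\delta_l\to e$, exactly as in the proof of Theorem~\ref{thm: ht limits must jump}) gives $\nu:=\lam_{\bR^d/x_r}$ with $\dim\nu=d=\min\{k+r,d\}$.

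In either case we have divergent sequences of non-negative reals with $(h_{s_l})_*\mu\to\nu'$ and $(h_{\tau_l})_*\nu'\to\nu$. Since each $y\mapsto h_ty$ is a homeomorphism of $Y$, hence continuous and proper, Lemma~\ref{lem: convergence transitive} provides subsequences $(p_l),(q_l)$ of $\N$ (which we may take to tend to infinity) with $(h_{\tau_{p_l}}\circ h_{s_{q_l}})_*\mu\to\nu$. Using $h_{\tau_{p_l}}\circ h_{s_{q_l}}=h_{\tau_{p_l}+s_{q_l}}$, we set $t_l:=\tau_{p_l}+s_{q_l}$, which is divergent since both summands are non-negative and divergent, and this is the required sequence; the bound \eqref{eq: dimension jump by more} was verified above.

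The argument carries no serious obstacle, as the substance is entirely contained in Theorem~\ref{thm: ht limits must jump}. The two points requiring attention are the boundary case $\dim\nu'=d$, where Theorem~\ref{thm: ht limits must jump} is inapplicable but no further jump is needed, and one argues directly that the pushforward of a Haar measure under a torus isomorphism is again Haar; and the assembling of a single divergent time-sequence out of the two produced in consecutive steps, which is exactly what Lemma~\ref{lem: convergence transitive} together with the identity $h_ah_b=h_{a+b}$ accomplishes.
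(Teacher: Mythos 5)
Your proof is correct and follows essentially the same route as the paper's: induction on $r$ with Theorem~\ref{thm: ht limits must jump} supplying the dimension jump at each step and Lemma~\ref{lem: convergence transitive} together with $h_ah_b=h_{a+b}$ splicing the time-sequences. Your explicit treatment of the boundary case $\dim\nu'=d$ (where the theorem's hypothesis $k<d$ fails but the Haar measure simply pushes forward to the Haar measure) is a small point the paper's proof leaves implicit, and it is handled correctly.
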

\begin{proof}
    We prove this by induction on the length of the accumulation sequence $r$. When $r=1$ this is Theorem~\ref{thm: ht limits must jump}. Assume the statement holds for accumulation sequences of length $r-1\ge1$ and let $x_0,\dots, x_r$, $\mu$ be as in the statement. By the induction 
    hypothesis there exists a sequence $t_l\to\infty$ such that $(h_{t_l})_*\mu\to \eta$, where $\eta$ is an algebraic measure in $\cP(\bR^d/x_{r-1})$ of dimension $\ge \min\seta{k+r-1,d}$. Applying 
    Theorem~\ref{thm: ht limits must jump} to $x_{r-1},x_r$ we deduce the existence of a sequence $s_l\to \infty$ such that $(h_{s_l})_*\eta\to \nu$ where $\nu\in\cP(\bR^d/x_r)$ is an algebraic measure with $\dim \nu\ge \min\seta{k+r,d}$. An application of 
    Lemma~\ref{lem: convergence transitive} gives the existence of sequences $p_l,q_l$ of natural numbers such that
    $$h_{s_{p_l}+t_{q_l}}=(h_{s_{p_l}})_*(h_{t_{q_l}})*\mu\to \nu$$
    which finishes the proof.

\end{proof}

We now come to the
\begin{proof}[Proof of Theorem \ref{thm: main theorem F formulation}]
    Let $x \in X$ be as in the statement and
    \begin{equation*}
        (x = x_0, x_1, \dots, x_{d-1}) \in X^d
    \end{equation*}
    be an accumulation sequence of length $d$ for $x$. 
    Let $\mu$ on $\pi^{-1}(x)$ be an algebraic measure. Similarly to the argument showing the equivalence between the formulations of Theorems \ref{thm: 1 dim in Zm-2} and \ref{thm: 1 dim in Zm}, we may assume that $\mu$ is 1-dimensional.
    Applying Corollary \ref{cor: acc seq measure dimension}, we see that there is a sequence of times $(t_l)_{l \in \N} \subset \R_{\geq 0}$ and an algebraic measure $\nu$  on $\pi^{-1}(x_{d-1})$ such that 
    $\lim_{l \to \infty} (h_{t_l})_* \mu = \nu.$
    Moreover, because of the inequality \eqref{eq: dimension jump by more}, we must have $     \dim \nu= d$
    so that in fact, $\nu$ is the Haar probability measure on $\R^d/x_{d-1}$. Let $(\varepsilon_l)_{l \in\N} \subset G$ be a sequence converging to the identity with 
    \begin{equation*}
        \varepsilon_l h_{t_l} x = x_{d-1} \text{ for all } l \in \N.
    \end{equation*}
    Acting on $Y$ rather on $X$, the above equation means that acting with
    $\varepsilon_l h_{t_l}$ maps the fiber $\pi^{-1}(x_0)=\bR^d/x_0$ to 
    $\pi^{-1}(x_{d-1})=\bR^d/x_{d-1}$. 
Applying the Coset Lemma \ref{lem: coset}, we see that for $\mu$ almost every $y \in Y$, the closure
$\overline{\left\lbrace \varepsilon_l h_{t_l}y \in Y: l \in \N \right\rbrace}$
contains a coset of a subtorus of $\bR^d/x_{d-1}$ of dimension $d-1$. 
Since $(\varepsilon_l)_{l \in \N}$ conoverges to the identity, the same is true for the set
$   \overline{\left\lbrace h_{t_l}y \in Y: l \in \N \right\rbrace}$
where $y$ lies in a set of full $\mu$ measure.
Proposition \ref{prop: codimension 1 subtorus} implies that each such grid $y$
is $DV_F$
and we are done.
\end{proof}

\section{Dual approximation, transference theorems and a counterexample}\label{sec: transference}

In order to prove Theorem \ref{thm: counterexample intro}, we switch to the dual setting of approximation of linear forms to make use of some powerful transference theorems due to Jarnik and Khintchine (See Theorem \ref{thm: Jarnik transference}). We need some notation.

\begin{defn}[Irrationality measure function]
    Let $\theta = (\theta_1, \theta_2, \dots ,\theta_m) \in \R^m$. 
    We define $\Psi : [1,\infty) \to  [0,\infty)$ by
    \begin{equation*}
        \Psi(t) = \min\left\lbrace  \langle a_1 \theta_1 + \dots + a_m \theta_m\rangle : a_i \in \Z \text{ and } 0 < |a_i| \leq t \text{ for }  i=1,\dots m \right\rbrace.
    \end{equation*}
    This function is piecewise constant and nonincreasing. Its points of discontinuity occur at integer values. We denote these integers by the increasing sequence $(M_l)_{l \in \N} \subset \N$ and write the images as
    \begin{equation*}
        \zeta_l := \Psi(M_l) \text{ for all } l \in \N.
    \end{equation*}
\end{defn}

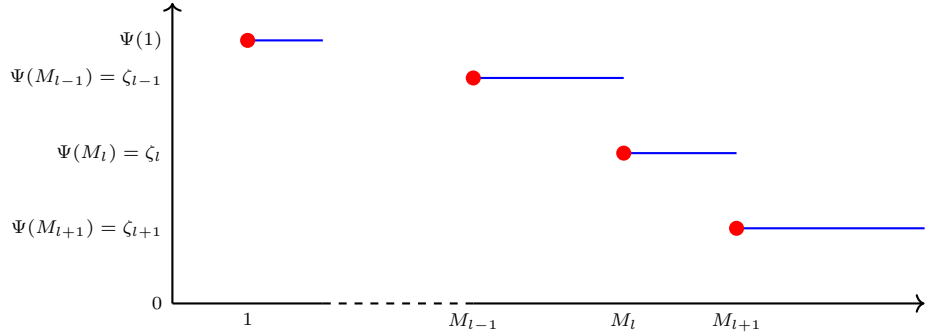
\begin{figure}[!h]
    \begin{tikzpicture}[scale=1, baseline=(current  bounding  box.center)]
    
    \draw [thick, ->] (-5,-2) -- (-5,2);
    \draw [thick] (-5,-2) -- (-3,-2);
    \draw [thick, ->] (-1,-2) -- (5,-2);

    \draw[thick, dashed] (-3,-2)--(-1,-2);

    \node [left] at (-5,-2) {\tiny $0$};
    
    \node [below] at (-4,-2) {\tiny $1$};

    \draw[thick, blue] (-4,1.5) -- (-3,1.5);
    \fill [red] (-4,1.5) circle[radius=0.1];
    \node [left] at (-5,1.5) {\tiny $\Psi(1)$};

    \node [below] at (-1,-2) {\tiny $M_{l-1}$};

    \draw[thick, blue] (-1,1) -- (1,1);
    \fill [red] (-1,1) circle[radius=0.1];
    \node [left] at (-5,1) {\tiny $\Psi(M_{l-1}) = \zeta_{l-1}$};

    \draw[thick, blue] (1,0) -- (2.5,0);
    \fill [red] (1,0) circle[radius=0.1];
    \node [left] at (-5,0) {\tiny $\Psi(M_l) = \zeta_{l}$};
    
    \draw[thick, blue] (2.5,-1) -- (5,-1);
    \fill [red] (2.5,-1) circle[radius=0.1];
    \node [left] at (-5,-1) {\tiny $\Psi(M_{l+1}) = \zeta_{l+1}$};
    
    \node [below] at (1,-2) {\tiny $M_l$};
    
    \node [below] at (2.5,-2) {\tiny $M_{l+1}$};

    \end{tikzpicture}
    \caption{Here is a graph of $\Psi$ to remember the notation.}
\end{figure}

We have the following basic estimate.
\begin{lem}
    For every $\theta \in \R^m$ and associated irrationality measure function $\Psi$, we have
    \begin{equation*}
        \Psi(t) \leq t^{-m}.
    \end{equation*}
\end{lem}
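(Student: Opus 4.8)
The estimate is nothing more than Dirichlet's theorem on approximation of the linear form $(a_1,\dots,a_m)\mapsto a_1\theta_1+\cdots+a_m\theta_m$, so the plan is to prove it by the pigeonhole principle (one could equally well quote \cite{CasselsDA} or deduce it from Minkowski's convex body theorem). First I would fix $t\in[1,\infty)$ and set $N:=\lfloor t\rfloor$, so that $1\le N\le t<N+1$. The crux is the claim that there exists $(a_1,\dots,a_m)\in\Z^m\sm\{0\}$ with $\max_i|a_i|\le N$ and
\[
  \idist{a_1\theta_1+\cdots+a_m\theta_m}\le (N+1)^{-m}.
\]
Granting this, such a vector is admissible in the definition of $\Psi(t)$ because $\max_i|a_i|\le N\le t$, and therefore $\Psi(t)\le (N+1)^{-m}\le t^{-m}$, the last inequality holding since $N+1>t$. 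This is exactly the assertion of the Lemma.

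To prove the claim I would run the standard pigeonhole argument. Consider the $(N+1)^m$ fractional parts $\{b_1\theta_1+\cdots+b_m\theta_m\}\in[0,1)$ as $(b_1,\dots,b_m)$ ranges over $\{0,1,\dots,N\}^m$, and partition $[0,1)$ into $(N+1)^m$ half-open subintervals of length $(N+1)^{-m}$. If two distinct tuples have their fractional parts in a common subinterval, their difference is a nonzero vector with coordinates bounded by $N$ whose pairing with $\theta$ is within $(N+1)^{-m}$ of an integer, and we are done. Otherwise each subinterval contains exactly one of the points; since $N\ge 1$ we have $(N+1)^m\ge 2$, so the point $0$ (coming from the zero tuple) lies in the first subinterval while some nonzero tuple contributes a point to the last subinterval $[1-(N+1)^{-m},1)$, and that tuple satisfies the required inequality. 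Either way the claim follows.

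As for difficulty: there really is no obstacle of substance here, this being a textbook fact. The only points that warrant a moment's attention are that $t$ need not be an integer — which is why one works with $N=\lfloor t\rfloor$ and uses the sharper pigeonhole bound $(N+1)^{-m}$ rather than $N^{-m}$, so that $(N+1)^{-m}\le t^{-m}$ is still available — and the boundary case in the pigeonhole in which no two of the fractional parts collide.
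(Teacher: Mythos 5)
Your argument is correct. The paper disposes of this lemma with a one-line citation: Minkowski's convex body theorem applied to the symmetric convex body $\{(a,b)\in\R^m\times\R : \max_i|a_i|\le t,\ |a_1\theta_1+\cdots+a_m\theta_m+b|\le t^{-m}\}$, which has volume $2^{m+1}$ and therefore contains a nonzero integer point. You instead run the Dirichlet pigeonhole argument from scratch; the two routes are the classical twin proofs of the same fact, and yours has the mild advantage of being self-contained (and of handling the non-integer $t$ cleanly via $N=\lfloor t\rfloor$ and the bound $(N+1)^{-m}\le t^{-m}$). The one point worth flagging is notational rather than mathematical: the paper's definition of $\Psi$ literally writes ``$0<|a_i|\le t$ for $i=1,\dots,m$,'' which read pedantically would require every coordinate to be nonzero, whereas both Minkowski's theorem and your pigeonhole only guarantee a nonzero \emph{vector} with sup-norm at most $t$. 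This is plainly a slip in the paper's definition (the matrix version in \S 8 uses $\|q\|\le t$), and your implicit reading of the condition as $0<\max_i|a_i|\le t$ is the intended one.
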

\begin{proof}
     This follows from Minkowski's convex body theorem \cite[II, Theorem 2B]{Schmidt-DA}.
\end{proof}
A key fact we use is the following.
 \begin{prop}\label{prop: special singular form}
     Let $m$ be an integer with $m > 2$. There exists $\theta \in \mathbb{R}^2$ with irrationality measure function $\Psi$ satisfying, for some positive constants $C_1$ and $C_2$, the inequalities
     \begin{equation*}
         \Psi(t) t^m \leq C_1 \text{ for all }  t \geq 1,
     \end{equation*}
     and
     \begin{equation*}
         C_2 \leq \Psi(t) t^m \text{ for an unbounded set of } t \geq 1. 
     \end{equation*}
 \end{prop}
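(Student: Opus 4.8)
The plan is to construct $\theta$ by way of the parametric geometry of numbers: prescribe the behaviour of the successive minima of a one-parameter diagonal deformation of the lattice attached to $\theta$, verify that the prescription is a legitimate Roy system, and then realise it by a point using \cite{Roy-MathZ}. First I would translate the dual approximation function $\Psi$ into dynamical language. To $\theta=(\theta_1,\theta_2)$ attach the unimodular lattice $\Lambda_\theta=\seta{(a_1,a_2,a_1\theta_1+a_2\theta_2-a_0):(a_0,a_1,a_2)\in\Z^3}\subset\R^3$ and the diagonal flow $g_q=\diag{e^{-q},e^{-q},e^{2q}}$, $q\ge 0$. A direct computation — the quantitative form of the Dani correspondence behind Theorem~\ref{thm: Dani singular}, cf.\ \cite{Dadivergent} — identifies the first minimum $L_1(q):=\log\lambda_1(g_q\Lambda_\theta)$, up to an additive $O(1)$, with $\min_{N\ge 1}\max\seta{\log N-q,\ \log\Psi(N)+2q}$. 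Optimising this when $\Psi(N)\asymp N^{-m}$ gives the crossover at $\log N=3q/(m+1)$ with value $-\tfrac{m-2}{m+1}q$; conversely, by the monotonicity of $\Psi$ this Legendre-type relation can be inverted. Hence the conclusion of the proposition — two positive constants pinching $\Psi(t)t^m$ from above for all $t$ and from below along an unbounded set — is implied by (indeed slightly weaker than) the statement that $L_1(q)$ stays within bounded distance of the line $q\mapsto -\tfrac{m-2}{m+1}q$, whose slope is negative exactly because $m>2$. The role of the transference theorems of Jarnik and Khintchine (Theorem~\ref{thm: Jarnik transference}) is to match up this dual picture with the simultaneous-approximation picture in which Roy's systems are usually phrased — i.e.\ it is the reflection duality of combined graphs.

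Next I would exhibit an explicit, eventually periodic candidate $3$-system whose first component has average slope $-\tfrac{m-2}{m+1}$: in each period the first minimum descends with slope $-1$ on a fraction $\tfrac{m}{m+1}$ of the period and ascends with slope $+2$ on the complementary fraction $\tfrac{1}{m+1}$ (note $2-3\cdot\tfrac{m}{m+1}=-\tfrac{m-2}{m+1}$), while the second and third components are chosen so that the three sum identically to zero, stay ordered, and respect the admissible slopes and the collision/switch rules at the break points. Granting that this is a bona fide $3$-system, Roy's realisation theorem from \cite{Roy-MathZ} yields a vector $\theta\in\R^2$ whose combined graph of successive minima agrees with the prescribed system up to a bounded additive error; in particular its $L_1$-function has the required behaviour.

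Finally I would unwind. The bounded error furnished by Roy's theorem, fed back through the dictionary of the first paragraph, produces the explicit constants $C_1$ and $C_2$; the trivial estimate $\Psi(t)\le t^{-2}$ from the Lemma disposes of the initial range of $t$. There remains the bookkeeping point that $\Psi$ only sees coefficient vectors $(a_1,a_2)$ with \emph{both} entries nonzero; this is harmless, since we may take $\theta$ with $1,\theta_1,\theta_2$ linearly independent over $\Q$ (a condition Roy's construction can be made to honour, or reached by an arbitrarily small further perturbation), and then a coefficient vector with a zero coordinate reduces $\Psi$ to one-dimensional approximation of a single irrational, which cannot achieve the rate $t^{-m}$ for $m>2$ — so the minimiser always has both coordinates nonzero.

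The part I expect to be the real obstacle is the middle step: writing down a candidate $3$-system with the prescribed average slope and checking that it actually obeys Roy's combinatorial axioms (getting the switch rules at the break points right so that the two larger minima stay correctly ordered and the mean-value constraint holds on the nose), together with making the dictionary of the first paragraph precise enough that the $O(1)$ in Roy's theorem becomes a clean two-sided multiplicative bound on $\Psi(t)t^m$. A more hands-on alternative, bypassing Roy, would build $\theta$ recursively as a limit of rationals — at stage $l$ fixing a linear form $a^{(l)}\!\cdot\theta$ to have size $\asymp M_l^{-m}$ with $\norm{a^{(l)}}_\infty=M_l$ and using the remaining one-dimensional freedom in $\theta$ to set up stage $l+1$ while preventing any spuriously good coefficient vector below scale $M_{l+1}$ — but this demands essentially the same careful bookkeeping.
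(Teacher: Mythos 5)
Your overall strategy is the same as the paper's: set up a dictionary between $\Psi$ and the first log-minimum $L_1$ (this is the content of Propositions \ref{prop: L1 > unbounded} and \ref{prop: L1 leq all}), prescribe the behaviour of $L_1$ by an explicit $3$-system, realise it with Roy's Theorem \ref{thm: Roy}, and translate back. Your dictionary and slope computation are correct, up to a change of normalization: your $L_1$ is related to the paper's by $L_1^{\mathrm{paper}}(s)=L_1^{\mathrm{yours}}(s/3)+s/3$, which turns your slopes $-1,+2$ into Roy's admissible slopes $0,1$ and your target slope $-\tfrac{m-2}{m+1}$ into $\tfrac{1}{m+1}$; this conversion must be done before invoking \cite{Roy-MathZ}. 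The genuine gap is the step you yourself flag as ``the real obstacle'': actually writing down a $3$-system with the prescribed first component and checking Roy's axioms (S1)--(S3). That is precisely the substance of the paper's proof, and it is absent from yours. The paper supplies it concretely: the three-piece template $P$ of \eqref{eq: I_0 defn} on $[1,Q-1]$, extended self-similarly by \eqref{eq: P tilde}, together with Claim \ref{claim: P inequalities} verifying that $\widetilde P$ is a $3$-system with $\widetilde P_1(q)\le q/(Q+1)$ everywhere and equality at the right endpoints $(Q-1)^{l+1}$. Without some such explicit construction the argument is a plan, not a proof.

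One structural difference is worth recording. The paper's system is self-similar rather than periodic: $\widetilde P_1$ is constant on most of each interval $I_l$, so it drifts linearly far below the line $q/(Q+1)$ and returns to it only along the sparse sequence $(Q-1)^{l+1}$ --- which is exactly why the lower bound in the proposition is asserted only on an unbounded set of $t$. Your proposed eventually periodic system, keeping $L_1$ within bounded distance of the line for all $q$, would prove something strictly stronger ($C_2\le\Psi(t)t^m$ for all large $t$); such systems do exist (in Roy's normalization take limit slopes $\tfrac{1}{m+1},\tfrac{m}{2(m+1)},\tfrac{m}{2(m+1)}$, correctly ordered precisely because $m>2$), but the verification of (S1)--(S3) is again the whole job. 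Finally, your closing worry about coefficient vectors with a vanishing coordinate equally affects the paper's own Proposition \ref{prop: L1 leq all}; it disappears if one reads the definition of $\Psi$ with the condition $0<\norm{a}\le t$ on the whole vector, as in \S\ref{sec: Kolya}, rather than requiring every coordinate to be nonzero.
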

 \begin{rem}
     Since the proof of this uses the parametric geometry of numbers as in the paper \cite{Roy-MathZ} and requires additional notation, its proof is deferred to the appendix.
 \end{rem}

The conditions above are related to the Diophantine properties of the (column) vector $\theta \in \R^2$ by the following transference theorem.
\begin{thm}[Theorem 7 in \cite{Jarniktransference}]\label{thm: Jarnik transference}
    Suppose $\theta \in \R^2$ with associated irrationality measure function $\Psi$. Suppose further, we have an integer $m >2$ and a constant $C_1 > 0$ with
    \begin{equation*}
        \Psi(t) t^m \leq C_1 \text{ for all } t \geq 1.
    \end{equation*}
    Then, there exists $\eta = (\eta_1, \eta_2) \in \R^2$ with 
    \begin{equation*}
        \inf_{q \in \Z \smallsetminus \{0\}}\ |q|^{1/m} \langle q\theta - \eta\rangle > 0.
    \end{equation*}
\end{thm}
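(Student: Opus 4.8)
This statement is Theorem~7 of \cite{Jarniktransference} and can simply be invoked; I describe below how I would organise a proof of it and where the difficulty lies.

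The first thing to do is to unwind the hypothesis. The bound $\Psi(t)t^m\le C_1$ says precisely that the row vector $(\theta_1,\theta_2)$, viewed as a $1\times 2$ matrix, is a \emph{singular} system of linear forms, with the quantitative rate $\Psi(t)t^2\le C_1 t^{2-m}\to 0$. Since singularity is invariant under transposition, $\theta\in\R^2$ regarded as a $2\times1$ matrix (the $m=2$, $n=1$ regime of the present paper) is also singular, and by the Dani correspondence (Theorem~\ref{thm: Dani singular}) the orbit $\seta{h_t x_\theta:t\ge 0}$ is divergent. What the argument really exploits is the following concrete consequence. Let $(\mathbf a_l)\subset\Z^2$ be the sequence of best-approximation vectors of the linear form $\mathbf a\mapsto a_1\theta_1+a_2\theta_2$, so $\idist{\theta\cdot\mathbf a_l}=\zeta_l$ is attained at $\|\mathbf a_l\|_\infty=M_l$ with $\zeta_l\le C_1 M_l^{-m}$. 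Given $Q$, take $\mathbf a_l$ to be the largest best-approximation vector with $M_l\le (C_1Q)^{1/m}$: since $\Psi$ is constant equal to $\zeta_l$ on $[M_l,M_{l+1})$ and $M_{l+1}>(C_1Q)^{1/m}$, the hypothesis applied at $t\approx(C_1Q)^{1/m}$ gives $\zeta_l\le C_1(C_1Q)^{-1}=Q^{-1}$, so $Q\zeta_l\le 1$ while $\|\mathbf a_l\|\lesssim Q^{1/m}$. Then $\idist{\mathbf a_l\cdot(q\theta)}=\idist{q\zeta_l}\le Q\zeta_l\le 1$ for all $|q|\le Q$, so the $\asymp Q$ points $\seta{q\theta\bmod\Z^2:0<|q|\le Q}$ all lie in a tubular neighbourhood of bounded $\mathbf a_l$-width about the rational geodesic with normal $\mathbf a_l$ — a geodesic whose denominator $\lesssim Q^{1/m}$ is far smaller than $Q$. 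It is this clustering — impossible when $\theta$ is non-singular, since then the orbit equidistributes — that leaves room in $\T^2$ for a badly approximable target. (If $1,\theta_1,\theta_2$ are $\Q$-dependent the orbit lies on finitely many rational lines and any $\eta$ off them has infinite liminf; so one may assume $\theta$ totally irrational, which is automatic here via Proposition~\ref{prop: special singular form}.)

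The plan is then to build $\eta$ by a compactness argument out of single-scale data. For $Q\ge 1$ I would set
\[
E_Q:=\seta{\eta\in\T^2:\idist{q\theta-\eta}\ge c\,|q|^{-1/m}\ \text{for all }q\in\Z,\ 0<|q|\le Q},
\]
a closed subset of the compact space $\T^2$; the family $(E_Q)_{Q\ge1}$ is nested decreasing, so it is enough to produce a \emph{single} constant $c=c(C_1,m)>0$ for which every $E_Q$ is non-empty — then $\bigcap_{Q}E_Q\neq\varnothing$ and any $\eta$ in the intersection proves the theorem. Hence the whole content is the single-scale assertion: there is $c=c(C_1,m)>0$ such that for every $Q$ the forbidden set $\bigcup_{0<|q|\le Q}\bigcup_{\mathbf p\in\Z^2}B\!\left(q\theta-\mathbf p,\,c|q|^{-1/m}\right)$ does not cover $\T^2$.

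This single-scale estimate is the hard part, and it is where the geometry of numbers and the assumption $m>2$ are used decisively. The approach is to split the range of $q$ into dyadic blocks $2^j\le|q|<2^{j+1}$, to note (as above) that on the $j$-th block the points $q\theta$ occupy a slab of bounded $\mathbf a_{l(j)}$-width about the rational geodesic of denominator $\lesssim 2^{j/m}$, and then to carve the target out of the complement of these slabs, the carving regions being boxes elongated along the current geodesic direction so that they can still be nested as $j$ grows and that direction rotates. The main obstacle is to run this with one constant $c$ independent of $Q$ and $j$: one must balance the ball radii $c\,2^{-j/m}$ against both the slab widths and the geodesic denominators, equivalently control the interaction — integer linear relations, or Fourier coefficients — of the rational geodesics arising at the geometrically spaced scales $2^{j/m}$. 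This is exactly where $m>2$ is indispensable: for $m\le 2$ no such sparse family of controlled geodesics exists (a generic $\theta$ is non-singular), and the conclusion then fails, consistently with $\lam_{\bT^m}(\on{Bad}_\theta)=0$ of Theorem~\ref{thm: Uri lebesgue}. The detailed bookkeeping is what \cite{Jarniktransference} carries out; an alternative packaging is to recast each single-scale statement as a lower bound for the inhomogeneous minimum of the lattice $x_\theta$ relative to a truncated hyperbolic body, and to deduce it from an inhomogeneous transference inequality together with the observation that the polar lattice $x_\theta^{*}$ contains the short vector $\mathbf a_l$, $\|\mathbf a_l\|\lesssim Q^{1/m}$, furnished by the hypothesis.
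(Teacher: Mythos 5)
The paper offers no proof of this statement at all: it is quoted verbatim as Theorem 7 of Jarn\'ik's transference paper and simply invoked, which is exactly what you do, so your proposal matches the paper's treatment. Your accompanying sketch correctly locates the real difficulty (the single-scale non-covering estimate, which you explicitly defer to Jarn\'ik), but since the paper supplies no argument either, there is nothing to compare beyond the citation.
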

Next, we see that the second property in \ref{prop: special singular form} is useful in extending the components of the (column) vector $\theta \in \R^2$ to obtain a nonsingular one in higher dimensions. 
\begin{thm}\label{thm: augment to get nonsingular}
    Assume $\theta = (\theta_1, \theta_2) \in \R^2$ with irrationality measure function $\Psi$. Suppose $m$ is an integer with $m > 2$, and we have a constant $C_2 >0$ with
    \begin{equation}\label{eq: lower bound Psi}
        C_2 \leq \Psi(t) t^m \ \text{ for an unbounded set of } t \geq 1.
    \end{equation}
    Then, for Lebesgue almost every $(\theta_3,\dots, \theta_m) \in \R^{m-2}$, the vector $(\theta_1, \theta_2, \theta_3,\dots,\theta_m) \in \R^m$ is nonsingular. 
\end{thm}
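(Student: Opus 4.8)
The plan is to pass, via transference, to the transpose linear form, reduce the claim to showing that a certain exceptional set on the unit cube is Lebesgue-null, and then close the argument with Fatou's lemma rather than a Borel--Cantelli estimate.

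By the transference principle (a matrix is singular if and only if its transpose is; see \cite{CasselsDA}), the column vector $(\theta_1,\dots,\theta_m)$ is singular if and only if the $1\times m$ matrix $(\theta_1\ \cdots\ \theta_m)$ is, which --- spelling out the definition of singularity for the latter --- means
\[ \Phi(T):=\min\seta{\idist{q_1\theta_1+\dots+q_m\theta_m}:\vq\in\Z^m,\ 0<\norm{\vq}_\infty\le T}=o\pa{T^{-m}}\quad\text{as }T\to\infty. \]
(Singularity is independent of the chosen norm, so we use the sup norm.) Writing $B\subset[0,1]^{m-2}$ for the set of $(\theta_3,\dots,\theta_m)$ making $(\theta_1,\dots,\theta_m)$ singular, a point lies in $B$ exactly when for every $j\in\N$ there is $T_j$ with: for all $T\ge T_j$ there is $\vq\in\Z^m$, $0<\norm{\vq}_\infty\le T$, with $\idist{q_1\theta_1+\dots+q_m\theta_m}<2^{-j}T^{-m}$. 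Since singularity depends only on $(\theta_1,\dots,\theta_m)\bmod\Z^m$, the exceptional set in $\R^{m-2}$ is $\Z^{m-2}$-periodic, so it suffices to prove $|B|=0$. I also record the harmless hypothesis --- which holds for a generic choice in the construction of Proposition~\ref{prop: special singular form}, and may be added to its statement --- that $\theta_1,\theta_2$ have irrationality exponent $<m$; concretely, fix $c_0>0$ with $\idist{q\theta_i}\ge c_0|q|^{-m}$ for all $q\in\Z\sm\{0\}$ and $i=1,2$.

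Now fix $j$ so large that $2^{-j}<\min\{C_2,c_0\}$, and let $Z\subset[1,\infty)$ be the unbounded set from the hypothesis, so $\Psi(T)\ge C_2T^{-m}$ for $T\in Z$. I claim that if $(\theta_3,\dots,\theta_m)\in B$, then for all sufficiently large $T\in Z$ the witnessing vector $\vq$ above necessarily has $(q_3,\dots,q_m)\ne 0$: for if $(q_3,\dots,q_m)=0$ then $\idist{q_1\theta_1+q_2\theta_2}<2^{-j}T^{-m}$ with $|q_1|,|q_2|\le T$, and if $q_1,q_2$ are both nonzero this contradicts $\idist{q_1\theta_1+q_2\theta_2}\ge\Psi(T)\ge C_2T^{-m}$, while if exactly one vanishes, say $q_1=0$, then $\idist{q_2\theta_2}<2^{-j}T^{-m}\le 2^{-j}|q_2|^{-m}<c_0|q_2|^{-m}$, contradicting the choice of $c_0$. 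Setting
\[ G_j(T):=\seta{(\theta_3,\dots,\theta_m)\in[0,1]^{m-2}:\exists\,\vq\in\Z^m,\ 0<\norm{\vq}_\infty\le T,\ (q_3,\dots,q_m)\ne 0,\ \idist{q_1\theta_1+\dots+q_m\theta_m}<2^{-j}T^{-m}}, \]
we obtain $B\subset\bigcup_{T_0}\bigcap_{T\in Z,\,T\ge T_0}G_j(T)$, hence $\mathbf{1}_B\le\liminf_{T\in Z}\mathbf{1}_{G_j(T)}$ pointwise on $[0,1]^{m-2}$. A crude first-moment bound controls $|G_j(T)|$: for a fixed $\vq$ with $(q_3,\dots,q_m)\ne 0$ pick $k\in\{3,\dots,m\}$ with $|q_k|=\max_{3\le i\le m}|q_i|\ge 1$; integrating in $\theta_k$ last (the map $\theta_k\mapsto q_k\theta_k$ scales Lebesgue measure by $|q_k|^{-1}$, and the $2^{-j}T^{-m}$-neighbourhood of $\Z$ meets an interval of length $|q_k|$ in measure $\le(|q_k|+1)\cdot 2\cdot 2^{-j}T^{-m}$) shows that $\seta{(\theta_3,\dots,\theta_m)\in[0,1]^{m-2}:\idist{q_1\theta_1+\dots+q_m\theta_m}<2^{-j}T^{-m}}$ has measure $\le 4\cdot 2^{-j}T^{-m}$; summing over the at most $(3T)^m$ vectors $\vq$ with $\norm{\vq}_\infty\le T$ gives $|G_j(T)|\le 4\cdot 3^m\cdot 2^{-j}$, uniformly in $T$. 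By Fatou's lemma, $|B|\le\liminf_{T\in Z}|G_j(T)|\le 4\cdot 3^m\cdot 2^{-j}$; since this holds for all large $j$, letting $j\to\infty$ gives $|B|=0$, as desired.

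The main obstacle, and the reason a direct Borel--Cantelli argument is inconclusive, is that the first-moment estimate only yields $|G_j(T)|=O(2^{-j})$ with no decay in $T$, so the sets $G_j(T)$ are not summable over $T\in Z$. What rescues the argument is that singularity is a ``$\liminf$ tends to $0$'', i.e.\ an \emph{eventually}-statement, so $B$ lies inside $\liminf_T G_j(T)$ (not merely $\limsup_T G_j(T)$), and Fatou's lemma applies to this $\liminf$. The hypothesis $C_2\le\Psi(t)t^m$ is used at exactly one point: to exclude the degenerate witnesses with $(q_3,\dots,q_m)=0$ along $Z$, which is what forces the witnessing $\vq$ into the set $G_j(T)$ whose measure we can bound. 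The transference step and the mild non-Liouville assumption on $\theta_1,\theta_2$ are routine.
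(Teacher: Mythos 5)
Your argument is correct in its essentials and reaches the conclusion by a genuinely different route from the paper's. Both proofs share the same core first-moment estimate: at each scale, the set of $(\theta_3,\dots,\theta_m)$ admitting a witness with $(q_3,\dots,q_m)\neq 0$ has measure $O(2^{-j})$ uniformly in the scale, and the hypothesis \eqref{eq: lower bound Psi} is used exactly once, to rule out degenerate witnesses with $(q_3,\dots,q_m)=0$ along the unbounded set of good scales. Where you diverge is in the concluding measure-theoretic step. The paper passes to the complements $W_l(\varepsilon)=[0,1]^{m-2}\smallsetminus B_{M_l,\varepsilon}$ and invokes a second-moment divergence-type Borel--Cantelli lemma \cite[Theorem 18.10]{Gut} to get $\lambda(\limsup_l W_l(\varepsilon))\ge(1-K\varepsilon)^2$; you observe instead that singularity is an \emph{eventually} statement, so the exceptional set sits inside $\liminf_{T\in Z}G_j(T)$ and Fatou's lemma finishes at once. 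Your route is more elementary and arguably cleaner --- it in fact exposes that the second-moment machinery in the paper is dispensable, since reverse Fatou applied to the $W_l(\varepsilon)$ (equivalently, Fatou applied to the $\liminf$ of the bad sets) already suffices. The paper's organization of scales via the discontinuity points $M_l$ and the derived inequality $C_2\le\Psi(M_l)M_{l+1}^m$, versus your direct use of the unbounded set $Z$, is a cosmetic difference, as is your per-vector Fubini bound versus the paper's Claim 7.1 counting argument.

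One point needs fixing. You add the hypothesis $\idist{q\theta_i}\ge c_0|q|^{-m}$ for $i=1,2$ and all $q\ne 0$ in order to exclude witnesses of the form $(q_1,0,\dots,0)$ or $(0,q_2,0,\dots,0)$. This is not among the theorem's hypotheses, so as written you prove a weaker statement; moreover your assertion that it holds for the $\theta$ of Proposition \ref{prop: special singular form} is itself unproven (the bound $\Psi(t)t^m\ge C_2$ on a possibly very sparse unbounded set does not yield a lower bound on $\idist{q\theta_i}$ for every $q$). Fortunately the extra hypothesis is redundant: $\Psi$ is meant to be the minimum over all $(a_1,a_2)\in\Z^2$ with $0<\max(|a_1|,|a_2|)\le t$ (the ``$0<|a_i|\le t$ for each $i$'' in the Definition is a slip --- the paper's own proof bounds $|a_0+a_1\theta_1+a_2\theta_2|\ge\Psi(M_{l-1})$ for arbitrary $(a_1,a_2)\ne(0,0)$, which requires this reading). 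Under that reading, a witness with $(q_3,\dots,q_m)=0$ and, say, $q_1=0\ne q_2$ already gives $\idist{q_2\theta_2}\ge\Psi(T)\ge C_2T^{-m}>2^{-j}T^{-m}$ for $T\in Z$, exactly as in your ``both nonzero'' case. Delete the added hypothesis, merge the two subcases, and your proof is complete.
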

\begin{proof}
    First, recall the sequence of integers $(M_l)_{l \in \N}$ associated to the irrationality measure function $\Psi$.
    The condition \eqref{eq: lower bound Psi} means there is an infinite subset $I \subset \N$ such that, for each $l \in I$, we have
    \begin{equation}\label{eq: nonsingular hypothesis Mn}
        C_2 \leq \Psi(M_{l}) M_{l+1}^m.
    \end{equation}
    Next, we need some auxiliary claims:
    \begin{claim}\label{claim: aux 1}
        Let $0 < \delta < 1/2$. Given a nonzero integer vector $(a_3,\dots, a_m) \in \Z^{m-2}$ and $M \in \N$, let $S(M, \delta,a_3,\dots, a_m)$ denote the set of integer vectors $(a_0, a_1, a_2) \in \Z^3$ for which
        \begin{equation*}
            \max_{i=1,2} |a_i| < M
        \end{equation*}
        and for which there exists $(\theta_3, \dots, \theta_m) \in [0,1]^{m-2}$ with 
        \begin{equation*}
            |a_0 +  a_1 \theta_1 + a_2 \theta_2 + a_3 \theta_3 + \dots + a_m \theta_m| \leq \delta.
        \end{equation*}
        Then 
        \begin{equation*}
            \# S(M, \delta,a_3, \dots, a_m) \leq 18 M^2 (|a_3| + \dots |a_m| + 1).
        \end{equation*}
    \end{claim}
    \begin{proof}[Proof of Claim \ref{claim: aux 1}] 
        Say $(a_0, a_1, a_2) \in S(M, \delta, a_3,\dots, a_m)$.
        We see that $a_0$ must certainly satisfy
        \begin{equation*}
           - (a_1 \theta_1 + a_2 \theta_2)  - (|a_3| + \dots +|a_m|) - \delta \leq a_0 \leq 
            - (a_1 \theta_1 + a_2 \theta_2) + (|a_3| + \dots +|a_m|) + \delta.
        \end{equation*}
        Whence, for fixed $(a_1, a_2)$ there are at most
        \begin{equation*}
            2(|a_3| + \dots |a_m|+1)
        \end{equation*} 
        options for $a_0$. Moreover, there are $2M+1 \leq 3M$ options for each of $a_1$ and $a_2$. This gives us the required bound.
    \end{proof}

For an integer vector $(a_0, a_1, \dots, a_m) \in \Z^{m+1}$ with $(a_3,\dots, a_m)$ nonzero, $M \in \N$ and $ 0<\varepsilon < 1/2$, we define the open subset $B_{M, \varepsilon}(a_0, \dots, a_m) \subset [0,1]^{m-2}$ as:
\begin{equation*}
    \left\lbrace (\theta_3, \dots , \theta_m) \in [0,1]^{m-2}: |a_0 + a_1 \theta_1 + a_2 \theta_2 + a_3\theta_3 + \dots + a_m \theta_m| < \frac{\varepsilon}{M^m}\right\rbrace.
\end{equation*}
Further, we define $B_{M,\varepsilon}$ as the union
\begin{equation*}
     \bigcup \left\lbrace B_{M, \varepsilon}(a_0, \dots, a_m) : (a_0, \dots, a_m) \in \Z^{m+1},\ (a_3, \dots, a_m) \neq 0,\ \max_{i=1,\dots,m} |a_i| < M\right\rbrace.
\end{equation*}
We use $\lambda$ to  denote the Lebesgue measure on $\R^{m-2}$. 
Viewing the inequality occurring in the definition of $B_{M, \varepsilon}(a_0, \dots, a_m)$ in terms of an inner product with a unit vector multiple of $(a_3, \dots, a_m)$, we see that 
\begin{equation}\label{eq: B measure}
\lambda(B_{M, \varepsilon}(a_0, \dots, a_m)) \leq (m-2)^{(m-3)/2}\frac{2 \varepsilon}{M^m \sqrt{a_3^2+...+a_m^2}}.
  \end{equation}
  \begin{claim}\label{claim: BMe measure}
      For any $M \in \N$ and $\varepsilon$ with $0 < \varepsilon < 1/2$, we have
      \begin{equation*}
          \lambda(B_{M, \varepsilon}) < K \varepsilon
      \end{equation*}
      where $K>0$ is a constant depending only on $m$.
  \end{claim}
  \begin{proof}[Proof of Claim \ref{claim: BMe measure}]
      We have the union
      \begin{equation*}
          B_{M, \varepsilon} = \bigcup_{\substack{(a_3,\dots, a_m) \in \Z^{m-2} \smallsetminus \{0\},\\ |a_3| ,\dots, |a_m| < M}} \bigcup \left\lbrace B_{M, \varepsilon}(a_0,\dots, a_m): (a_0,a_1,a_2) \in S(M, \varepsilon/M^m, a_3,\dots, a_m) \right\rbrace.
      \end{equation*}
      Applying the estimates of Claim \ref{claim: aux 1} and \eqref{eq: B measure}, we get
      \begin{equation*}
          \lambda(B_{M, \varepsilon}) \leq \sum_{\substack{(a_3,\dots, a_m) \in \Z^{m-2} \smallsetminus \{0\},\\ |a_3| ,\dots, |a_m| < M}} 18M^2 (|a_3| + \dots |a_m| + 1) \cdot \frac{2\varepsilon(m-2)^{(m-3)/2} }{M^m \sqrt{a_3^2 + \dots + a_m^2}}.
      \end{equation*}
This gives us the claim.
\end{proof}
Consider the sets, for a fixed $0 <\varepsilon< 1/2$ and each $l \in I$,
\begin{equation*}
    W_l(\varepsilon) := [0,1]^{m-2} \smallsetminus B_{M_l, \varepsilon}.
\end{equation*}
Let $W(\varepsilon)$ be the limsup set
\begin{equation*}
    W(\varepsilon) := \bigcap_{p \in \N}\ \bigcup_{\substack{l > p\\ l \in I}} W_l(\varepsilon)
\end{equation*}
and let 
\begin{equation*}
    W := \bigcup_{0 < \varepsilon< 1/2} W(\varepsilon).
\end{equation*}
For each $l \in I$ and $0 < \varepsilon < 1/2$, we have from Claim \ref{claim: BMe measure} that
\begin{equation*}
    \lambda (W_l(\varepsilon)) \geq 1 - K \varepsilon.
\end{equation*}
Further, for $l_1, l_2 \in I$, we have the trivial bound
\begin{equation*}
    \lambda(W_{l_1}(\varepsilon) \cap W_{l_2}(\varepsilon)) \leq 1.
\end{equation*}
This gives us the estimate
\begin{equation*}
    \limsup_{t\to \infty} \frac{\left(\sum_{l=1, l \in I}^t \lambda(W_l(\varepsilon))\right)^2}{ \sum_{l_1, l_2 =1, l \in I}^t \lambda(W_{l_1}(\varepsilon) \cap W_{l_2}(\varepsilon))} 
    \geq (1 - K \varepsilon)^2.
\end{equation*}
It then follows from \cite[Theorem 18.10]{Gut} that, for every $0 < \varepsilon < 1/2$,
\begin{equation*}
    \lambda(W(\varepsilon)) \geq (1- K\varepsilon)^2.
\end{equation*}
In particular, we have $\lambda(W) = 1$.
Now let $(\theta_3,\dots, \theta_m) \in W$ and consider the vector $(\theta_1, \dots, \theta_m) \in \R^m$.
There is some $0 < \varepsilon < 1/2$ for which $(\theta_3, \dots, \theta_m) \in W(\varepsilon)$.
In particular, we have an infinite number of $l \in I$ such that $(\theta_3, \dots, \theta_m) \notin B_{M_l, \varepsilon}$.

Fix such an $l > 1$. It follows from the definitions that, for each $(a_0, \dots, a_m) \in \Z^{m+1}$ with $(a_3, \dots, a_m) \neq 0$ and $\max_{i =1,\dots, m} |a_i| < M_l$, we have
\begin{equation*}
    |a_0 + a_1 \theta_1 + a_2 \theta_2 + a_3 \theta_3 + \dots + a_m \theta_m| \geq \frac{\varepsilon}{M_l^m}.
\end{equation*}
Further, if we have $(a_0, \dots, a_m) \in \Z^{m+1}$ with $\max_{i=1,\dots m} |a_i| < M_l$ and $(a_3, \dots, a_m) = 0$, then 
\begin{equation*}
\begin{split}
    |a_0 + a_1 \theta_1 + a_2 \theta_2 + a_3 \theta_3 + \dots + a_m \theta_m| &= |a_0 + a_1\theta_1 + a_2 \theta_2| \\
    &\geq \Psi(M_{l-1}) \\
    &\geq \frac{C_2}{M_l^m}
\end{split}
\end{equation*}
where we use the hypothesis \eqref{eq: nonsingular hypothesis Mn} in the last inequality.
If we put $t = M_l - \frac{1}{2}$ and if we let $\Phi$ denote the irrationality measure function of $(\theta_1, \dots, \theta_m) \in \R^m$, we get that
\begin{equation*}
    \Phi(t)t^m \geq \frac{1}{2^m} \min\{\varepsilon, C_3\} >0.
\end{equation*}

Allowing $l$ to vary, we see that $(\theta_1, \dots, \theta_m) \in \R^m$ is nonsingular.
\end{proof}
We finally come to our counterexample.
\begin{thm}\label{thm: counterexample}
    Let $m$ be an integer with $m>2$. Let $(\theta_1, \theta_2) \in \R^2$ be as in Proposition \ref{prop: special singular form}. 
    Then, there exists a $(\theta_3, \dots, \theta_m) \in \R^{m-2}$ and a $(\eta_1, \eta_2) \in \R^2$ such that
    \begin{enumerate}
        \item[(a)] The augmented vector 
        \begin{equation*}
        \theta = (\theta_1, \dots, \theta_m) \in \R^m
    \end{equation*}
    is nonsingular.
    \item[(b)] For any $(\eta_3, \dots, \eta_m) \in \R^{m-2}$, the augmented vector $\eta = (\eta_1, \dots, \eta_m) \in \R^m$ satisfies
    \begin{equation*}
        \inf_{q \in \Z \smallsetminus \{0\}} |q|^{1/m} \langle q\theta - \eta \rangle > 0.
    \end{equation*}
    \end{enumerate}
     In particular, we have a nonsingular $\theta \in \R^m$ with $\on{Bad}_\theta$ containing the translate of an $m-2$ dimensional subtorus in $\T^m$.
\end{thm}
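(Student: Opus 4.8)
The plan is to chain together Proposition~\ref{prop: special singular form}, Theorem~\ref{thm: Jarnik transference} and Theorem~\ref{thm: augment to get nonsingular}, and then add one elementary observation about norms on the torus. First I would invoke Proposition~\ref{prop: special singular form} to fix a vector $(\theta_1,\theta_2)\in\R^2$ whose irrationality measure function $\Psi$ satisfies $\Psi(t)t^m\le C_1$ for all $t\ge1$ and $C_2\le\Psi(t)t^m$ for an unbounded set of $t\ge1$, with $C_1,C_2>0$. The first of these bounds is exactly the hypothesis of Theorem~\ref{thm: Jarnik transference}, so it produces $(\eta_1,\eta_2)\in\R^2$ for which
\[
c_0\ :=\ \inf_{q\in\Z\sm\{0\}}|q|^{1/m}\,\idist{q(\theta_1,\theta_2)-(\eta_1,\eta_2)}\ >\ 0 .
\]
The second bound is exactly the hypothesis of Theorem~\ref{thm: augment to get nonsingular}, so it lets me pick $(\theta_3,\dots,\theta_m)$ from a set of full Lebesgue measure in $\R^{m-2}$ such that $\theta=(\theta_1,\dots,\theta_m)\in\R^m$ is nonsingular. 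At this point both $\theta$ and $(\eta_1,\eta_2)$ are fixed, and conclusion (a) is done.

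For conclusion (b), the point is that the obstruction to approximation already sits in the first two coordinates and is insensitive to the remaining ones. Since neither $\on{Bad}_\theta$ nor the positivity of $\inf_{q\ne0}|q|^{1/m}\idist{q\theta-\eta}$ changes (up to a fixed multiplicative constant) when the norm defining $\idist{\cdot}$ on $\R^m$ is replaced by an equivalent norm, I would work with the supremum norm. For that norm the coordinate projection $\R^m\to\R^2$ onto the first two coordinates does not increase torus distance, that is $\idist{v}_{\bT^m}\ge\idist{(v_1,v_2)}_{\bT^2}$ for every $v\in\R^m$. Consequently, for any choice of $(\eta_3,\dots,\eta_m)\in\R^{m-2}$, setting $\eta=(\eta_1,\dots,\eta_m)$ and using that $(\theta_3,\dots,\theta_m)$ is fixed, one gets for every $q\in\Z\sm\{0\}$
\[
|q|^{1/m}\,\idist{q\theta-\eta}_{\bT^m}\ \ge\ |q|^{1/m}\,\idist{q(\theta_1,\theta_2)-(\eta_1,\eta_2)}_{\bT^2}\ \ge\ c_0\ >\ 0,
\]
which is precisely (b).

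Finally, (b) says that the whole set $\{\,(\eta_1,\eta_2,t_3,\dots,t_m)+\Z^m : (t_3,\dots,t_m)\in\R^{m-2}\,\}\subset\bT^m$ lies in $\on{Bad}_\theta$ (note $\inf_{q\ne0}>0$ forces $\liminf_{|q|\to\infty}>0$). This set is a coset of the $(m-2)$-dimensional subtorus attached to the $\Z^m$-rational subspace $\on{span}(\ve_3,\dots,\ve_m)$, which is closed and of positive dimension because $m>2$; letting $\mu$ be its Haar probability measure gives a non-trivial algebraic measure with $\mu(\on{Bad}_\theta)=1$, so Theorem~\ref{thm: counterexample intro} follows as well.

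As for the main obstacle: inside this particular argument there really is none — all the analytic content is carried by Proposition~\ref{prop: special singular form} (via the parametric geometry of numbers, deferred to the appendix) and by the transference Theorems~\ref{thm: Jarnik transference} and~\ref{thm: augment to get nonsingular}. The only points requiring a moment of care are that the constant $c_0$ is genuinely uniform — it depends only on $(\theta_1,\theta_2,\eta_1,\eta_2)$, all chosen before the last $m-2$ coordinates — and that passing to the supremum norm is harmless; both are immediate.
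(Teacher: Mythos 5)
Your proposal is correct and follows essentially the same route as the paper: invoke Theorem~\ref{thm: augment to get nonsingular} for nonsingularity, Theorem~\ref{thm: Jarnik transference} for $(\eta_1,\eta_2)$, and then observe that (in the sup norm) $\idist{q\theta-\eta}$ dominates $\idist{q(\theta_1,\theta_2)-(\eta_1,\eta_2)}$ uniformly in the remaining coordinates. Your extra remarks on the uniformity of $c_0$ and the harmlessness of switching to an equivalent norm only make explicit what the paper leaves implicit.
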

\begin{proof}
    We apply Theorem \ref{thm: augment to get nonsingular} to obtain part (a). We apply Theorem \ref{thm: Jarnik transference} to obtain $(\eta_1, \eta_2) \in \R^2$. We then compute, for $q \in \Z$ nonzero,
    \begin{equation*}
    \begin{split}
        |q|^{1/m} \max_{i=1,\dots, m} \langle q\theta_i - \eta_i \rangle & \geq |q|^{1/m} \max_{i=1,2} \langle q\theta_i - \eta_i \rangle \\
        &> 0
    \end{split}
    \end{equation*}
    where we used the conclusion of Theorem \ref{thm: Jarnik transference} in the last inequality.
\end{proof}

\section{A Strengthening of Theorem~\ref{thm: Uri lebesgue}}
\label{sec: Kolya}

In this section we wish to define a Diophantine class of matrices 
in $\cC\subset \on{Mat}_{m\times n}(\bR)$ such that $\cC$ strictly contains the class of non-singular matrices and such that for any
$A\in \cC$, $\lam(\on{Bad}_A)=0$, where $\lam$ is the $m$-dimensional
Lebesgue measure on $\bT^m$. We begin by introducing the necessary terminology and notation for the definition of $\cC$. 

In this part of the paper we assume that $||\cdot || $ is the sup-norm,
that is for a vector
$z=(z_1,...,z_s)\in \mathbb{R}^s$ we have
$
||z|| = \max_{1\le i \le s} |z_i|$.
 \vskip+0.3cm

Recall that in our notation $ d = m+n$.
Suppose that the columns  $\theta^1,...,\theta^n$ of our 
   $m\times n $ matrix $$
   A = (\theta^1\cdots\theta^n)
\ignore{   
   =\left(
\begin{array}{ccc}
\theta_{1}^1&\cdots&\theta_{1}^n\cr
\theta_{2}^1&\cdots&\theta_{2}^n\cr
\cdots &\cdots &\cdots \cr
\theta_{m}^1&\cdots&\theta_{m}^n
\end{array}
\right)
}
   $$ are linearly independent over $\mathbb{Q}$ and consider the 
   irrationality measure function
 $$
 \Psi_A(t) = \min_{q\in \mathbb{Z}^n: 0\le ||q||\le t}  \langle Aq\rangle
$$
which is a piecewise  constant function decreasing to zero.
Let
$$
M_1<M_2<...<M_l<M_{l+1}<...
$$
be the infinite sequence of all the points where $\Psi_A(t) $ is not continuous and 
 $$
\zeta_l = \Psi_A(M_l), \,\,\,\,\,\, \zeta_l>\zeta_{l+1}
$$
be the corresponding sequence of values of the function.
In particular
$$
\zeta_l = \min_{q\in \mathbb{Z}^n: 0\le ||q||<M_{l+1}}  \langle Aq\rangle.
$$
The main properties of the function  
$\Psi_A(t)$ 
are discussed, for example in \cite{MoshchevitinSingular}.
From Minkowski convex body theorem it follows that 
\begin{equation}\label{mi1}
\Delta_l :=
M_{l+1}^n\zeta_l^m\le 1
\end{equation}
and
$A$ is singular if and only if $ \Delta_l \to 0$ as $ l \to \infty$.

 It is well known 
 (see for example
 \cite{BugeaudExponents})
 that the values of $M_l$ grow exponentially,
 that is
 \begin{equation}\label{bl}
M_{l+3^d+1} \ge 2M_l
\end{equation}
for all $l$. 

We now have enough notation and terminology in order to define the Diophantine class of matrices for which our result holds. 
\begin{defn}\label{defn: Kolya}
  Let $\cC\subset \on{Mat}_{m\times n}(\bR)$ be the collection of matrices $A$ such that 
  there exists an increasing sequence $l_k$ such that 
\begin{enumerate}
    \item\label{item: Kolya1} $\sum_{k=1}^\infty \Delta_{l_k}^{d-1} = \infty$;
    \item\label{item: Kolya2}   $
H_k:= \displaystyle{\sup_{k_1\ge k+1}}
\left(\frac{\zeta_{l_{k_1}}}{\Delta_{l_{k_1}}}\right)^m\cdot \left(  \frac{M_{l_{k}+1}}{\Delta_{l_{k}}}\right)^n \to 0, \,\,\ k \to \infty$.
\end{enumerate}
\end{defn}

\begin{prop}
    \label{prop: nonsingular implies C}
    If $A$ is a non-singular matrix then $A$ is in $\cC$.
\end{prop}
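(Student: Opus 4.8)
The plan is to read off non-singularity through the characterization recorded just after \eqref{mi1}: $A$ is singular if and only if $\Delta_l\to 0$. Since $A$ is non-singular and $0<\Delta_l\le 1$ for all $l$ by \eqref{mi1}, this forces $c:=\limsup_{l\to\infty}\Delta_l>0$. One sentence of care is warranted here, since the whole apparatus of Definition~\ref{defn: Kolya} presupposes that $\Psi_A$ is everywhere positive (equivalently, that the columns of $A$ are linearly independent over $\bQ$), so that $(M_l),(\zeta_l),(\Delta_l)$ are genuinely infinite sequences: if $Aq=0$ for some $q\in\bZ^n\sm\{0\}$ then $\Psi_A$ vanishes on $[\|q\|,\infty)$, which would make $A$ singular; so non-singularity rules this out. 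Now set $\delta:=c/2\in(0,1)$ and $L:=\{l\in\bN:\Delta_l>\delta\}$, an infinite set.

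Next I would build the witnessing sequence $(l_k)_{k\in\bN}\subseteq L$ recursively, taking it very \emph{sparse}. Put $l_1:=\min L$, and given $l_1<\dots<l_k$ in $L$, use that $M_l\to\infty$ as $l\to\infty$ (hence also through the infinite set $L$) to choose $l_{k+1}\in L$ with $l_{k+1}>l_k$ and
\begin{equation*}
    M_{l_{k+1}}^{\,n}>\frac{k}{\delta^{d}}\,M_{l_k+1}^{\,n}.
\end{equation*}

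With this choice both conditions of Definition~\ref{defn: Kolya} are short to check. Condition \eqref{item: Kolya1}: since every $l_k\in L$ we have $\Delta_{l_k}>\delta$, so $\sum_{k}\Delta_{l_k}^{\,d-1}\ge\sum_k\delta^{\,d-1}=\infty$. Condition \eqref{item: Kolya2}: fix $k$ and any $k_1\ge k+1$; because $(\zeta_l)$ is decreasing with $l_{k_1}\ge l_{k+1}$ and because $\Delta_{l_{k_1}},\Delta_{l_k}>\delta$,
\begin{equation*}
    \left(\frac{\zeta_{l_{k_1}}}{\Delta_{l_{k_1}}}\right)^{\!m}\left(\frac{M_{l_k+1}}{\Delta_{l_k}}\right)^{\!n}\le\frac{\zeta_{l_{k+1}}^{\,m}\,M_{l_k+1}^{\,n}}{\delta^{d}}.
\end{equation*}
Now \eqref{mi1} at index $l_{k+1}$ gives $\zeta_{l_{k+1}}^{\,m}\le M_{l_{k+1}+1}^{-n}<M_{l_{k+1}}^{-n}$, and combining this with the defining inequality for $l_{k+1}$ bounds the supremum by
\begin{equation*}
    H_k\le\frac{M_{l_k+1}^{\,n}}{\delta^{d}\,M_{l_{k+1}}^{\,n}}<\frac1k,
\end{equation*}
so $H_k\to0$ and $A\in\cC$.

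I do not anticipate a real obstacle: the two clauses of Definition~\ref{defn: Kolya} are not in competition. Clause \eqref{item: Kolya1} only needs $\Delta_{l_k}$ bounded away from $0$, which non-singularity supplies along a subsequence, while clause \eqref{item: Kolya2} only needs the ratios $M_{l_{k+1}}/M_{l_k+1}$ to grow, which is free because we may thin $L$ as aggressively as we like. The only point deserving a moment's thought is the aside above, that non-singularity makes $\Psi_A$ positive so that the sequences in Definition~\ref{defn: Kolya} are defined at all.
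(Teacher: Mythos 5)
Your proposal is correct and follows essentially the same route as the paper's proof: extract a subsequence along which $\Delta_{l_k}$ is bounded below (which gives item \eqref{item: Kolya1} for free), bound $H_k$ by $\zeta_{l_{k+1}}^m M_{l_k+1}^n$ up to a constant using monotonicity of $\zeta_l$, and then use \eqref{mi1} plus sparsification of the subsequence to drive this to zero. The only cosmetic difference is that you arrange the decay by an explicit recursive choice of $l_{k+1}$ using $M_l\to\infty$, whereas the paper invokes the exponential growth \eqref{bl}; both work.
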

\begin{proof}
For non-singular  $A$ there exists a sequence $ l_k$ such that  
\begin{equation}\label{eq: strictly pos}
\inf_k \Delta_{l_k} >0.
\end{equation} 
So item \eqref{item: Kolya1} of Definition~\ref{defn: Kolya} holds 
for any subsequence of $l_k$.
It also follows from \eqref{eq: strictly pos} and monotonicity of 
$\zeta_l$ that we can choose a subsequence of $ l_k$ 
 to satisfy 
 $$H_k\ll \zeta_{l_{k+1}}^m M_{l_k+1}^n= 
 \pa{\frac{M_{l_k+1}}{M_{l_{k+1}+1}}}^n 
 \zeta_{l_{k+1}}^m M_{l_{k+1}+1}^n \le \pa{\frac{M_{l_k+1}}{M_{l_{k+1}+1}}}^n $$ 
 where we have used \eqref{mi1} in the last inequality. We can now take a subsequence of $l_k$ and use the exponential growth of the sequence $M_l$ to arrange that item \eqref{item: Kolya2} will hold.
 \end{proof}

The following proposition shows that the class $\cC$ strictly contains 
the class of non-singular matrices.
\begin{prop}\label{abcde}
There exists a singular matrix $A\in \cC$.
\end{prop}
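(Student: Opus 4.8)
The plan is to exhibit such a matrix already in the case $n=1$, $m\ge 2$, where $A=\theta\in\bR^m$ is a column vector, by prescribing its sequence of best simultaneous approximations. Recall that $d=m+1$, so that condition \eqref{item: Kolya1} of Definition~\ref{defn: Kolya} asks for a subsequence with $\sum_k\Delta_{l_k}^{m}=\infty$. First I would fix the target data: for $l$ large, put
\[
\zeta_l:=2^{-2^{l}},\qquad M_{l+1}:=\big\lfloor l^{-1/m}\,\zeta_l^{-m}\big\rfloor=\big\lfloor l^{-1/m}\,2^{m2^{l}}\big\rfloor,\qquad \Delta_l:=M_{l+1}\zeta_l^{m},
\]
with the initial terms chosen arbitrarily subject to monotonicity. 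Then $(\zeta_l)$ decreases strictly to $0$; $(M_l)$ increases strictly and the ratios $M_{l+1}/M_l$ tend to infinity, so in particular the growth law \eqref{bl} holds; $\Delta_l\asymp l^{-1/m}$, hence $0<\Delta_l\le 1$ as in \eqref{mi1}; and $\zeta_l=2^{-2^{l}}$ lies comfortably below the Minkowski threshold $M_l^{-1/m}$, so that the consistency constraints linking consecutive pairs $(M_l,\zeta_l)$, $(M_{l+1},\zeta_{l+1})$ hold with room to spare.

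Next I would invoke the parametric geometry of numbers in the sense of Roy~\cite{Roy-MathZ}: the data $(M_l,\zeta_l)$ above determines an admissible template for the one-parameter family $h_t$ of \eqref{eq: ht definition} (with $n=1$), and hence there is a totally irrational $\theta\in\bR^m$ whose associated sequences $M_l(\theta)$, $\zeta_l(\theta)$, $\Delta_l(\theta)$ agree with $M_l$, $\zeta_l$, $\Delta_l$ up to multiplicative constants depending only on $m$. (Alternatively, one could build $\theta$ directly as a limit of rational points, running the best-approximation algorithm ``in reverse'' to force the prescribed denominators $M_l$ and qualities $\zeta_l$.) Since $\Delta_l\to 0$, the matrix $\theta$ is singular by the remark following \eqref{mi1} (equivalently, by Theorem~\ref{thm: Dani singular}).

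It then remains to verify $\theta\in\cC$, which I would do with the choice $l_k:=k$. Condition \eqref{item: Kolya1} holds because $\sum_k\Delta_k^{d-1}=\sum_k\Delta_k^{m}\asymp\sum_k k^{-1}=\infty$. For \eqref{item: Kolya2}, using $\Delta_l=M_{l+1}\zeta_l^{m}$ to rewrite, and then substituting the explicit values,
\[
H_k=\sup_{k_1\ge k+1}\Big(\frac{\zeta_{k_1}}{\Delta_{k_1}}\Big)^{m}\frac{M_{k+1}}{\Delta_{k}}
=\sup_{k_1\ge k+1}\frac{\zeta_{k_1}^{m}}{\Delta_{k_1}^{m}\,\zeta_k^{m}}
\asymp\sup_{k_1\ge k+1}k_1\,2^{-m\left(2^{k_1}-2^{k}\right)}.
\]
The last supremum is attained at $k_1=k+1$, where it equals $\asymp (k+1)\,2^{-m2^{k}}$, which tends to $0$; hence $H_k\to 0$. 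Since singularity, \eqref{item: Kolya1}, and \eqref{item: Kolya2} are all invariant under the bounded multiplicative perturbation of $M_l,\zeta_l,\Delta_l$ coming from Roy's theorem, this completes the construction.

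The main obstacle is the middle step. One must check that the prescribed best-approximation data genuinely corresponds to an admissible $(m+1)$-template in the Schmidt--Summerer--Roy formalism --- that the piecewise-linear successive-minima trajectory it dictates obeys the slope and rigidity axioms of that theory --- and that recovering an actual vector $\theta$ from the template, a process controlled only up to bounded additive error on the logarithmic scale, does not spoil the asymptotics. The robustness of conditions \eqref{item: Kolya1} and \eqref{item: Kolya2} under bounded perturbations is exactly what makes this harmless, but the bookkeeping translating template heights and slopes into the quantities $M_l,\zeta_l,\Delta_l$ is the technical core and, as with Proposition~\ref{prop: special singular form}, is probably cleanest to carry out in an appendix.
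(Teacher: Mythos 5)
Your argument follows the same overall strategy as the paper's proof: realize a best-approximation datum whose $\Delta_l$ decays to zero like a negative power of $l$ (so $A$ is singular but $\sum_k\Delta_{l_k}^{d-1}$ still diverges along a suitable subsequence), then verify item \eqref{item: Kolya2} by an explicit estimate of $H_k$. Your algebra is correct: with $n=1$, $l_k=k$, $\Delta_l\asymp l^{-1/m}$ and $\zeta_l=2^{-2^l}$ one indeed gets $H_k\asymp\sup_{k_1\ge k+1}k_1\,2^{-m(2^{k_1}-2^k)}\to 0$.

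The one substantive difference, and the place where your write-up is weaker than the paper's, is the realizability input. The paper only invokes the (asserted-as-standard) fact that the single sequence $\Delta_l=M_{l+1}^n\zeta_l^m$ can be prescribed up to $\asymp$; it never needs to control $M_l$ and $\zeta_l$ separately. Item \eqref{item: Kolya2} is then handled using only the universal constraints \eqref{mi1} and \eqref{bl}: one bounds $\zeta_{l_{k_1}}^mM_{l_k+1}^n\le M_{l_k+1}^n/M_{l_{k_1}+1}^n$ via \eqref{mi1}, and the exponential growth \eqref{bl} of $M_l$ together with the \emph{sparse} subsequence $l_k=k^2$ beats the polynomial factor $\Delta_{l_{k_1}}^{-n}\Delta_{l_k}^{-m}\ll k_1^{d/(d-1)}$. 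You instead prescribe both $M_l$ and $\zeta_l$ individually, which makes the $H_k$ computation transparent but requires the strictly stronger claim that this two-parameter datum is realized by an actual vector $\theta$ --- precisely the admissible-template verification you defer as ``the technical core.'' As written, that step is a gap (comparable to, but larger than, the paper's own unproved appeal to templates). Either carry out that verification in the style of the Appendix, or adopt the paper's device: prescribe only $\Delta_l$, take $l_k$ sparse, and let \eqref{mi1} and \eqref{bl} do the work in item \eqref{item: Kolya2}; this also removes the restriction to $n=1$.
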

\begin{proof}
It is well known, and may be easily deduced using the theory of templates,
that for any function $\rho(l)$ that satisfies $\lim_{l\to\infty}\rho(l)=0$, there exists a matrix $A$ with $\Del_l\asymp \rho(l)$. For our construction below, we choose $A$ such that 
$$\Del_l\asymp l^{-\frac{1}{2(d-1)}}.$$
The fact that $A$ is singular follows since $\Del_l\to 0$. We now show that the lower bound 
$\Delta_l>l^{-\frac{1}{2(d-1)}}$ implies that $A\in \cC$. 

Take $l_k = k^2$. Then, because of the lower bound we have that
$\sum_k \Delta_{l_k}^{d-1}$ diverges, so item \eqref{item: Kolya1} is satisfied. 
Moreover, the lower bound for $\Delta_l$ implies that for any
$k_1\ge k$ we have the inequality
$$
\frac{1}{
\Delta_{l_{k_1}}^n \Delta_{l_k}^m} \le k_1^{\frac{d}{d-1}},
$$
meanwhile
from (\ref{mi1}) and (\ref{bl}) it follows that 
$$
\zeta_{l_{k_1}}^m M_{l_k+1}^n \le
\frac{M_{k^2+1}^n}{M_{k_1^2+1}^n}\ll  2^{-\frac{n(k_1^2 - k^2)}{3^d+1}}.
$$
So 
\begin{align*}
H_k &\le \sup_{k_1\ge k+1} k_1^{\frac{d}{d-1}} \cdot 2^{-\frac{n(k_1^2 - k^2)}{3^d+1}}\\
&=
\sup_{j\ge 1} \, (k+j)^{\frac{d}{d-1}} \cdot 2^{-\frac{n(2kj+j^2)}{3^d+1}}\\
&\ll k^{\frac{d}{d-1}}\cdot 2^{-\frac{2nk}{3^d+1}} \to 0, \textrm{ as } k \to \infty
,
\end{align*}
and so item \eqref{item: Kolya2} is also valid.
\end{proof}
We now arrive to the main result of this section which strengthens
Theorem~\ref{thm: Uri lebesgue} and also shows that the fact that 
$\on{Bad}_A$ is a null-set with respect to the $m$-dimensional Lebesgue measure does not characterize non-singularity.

The proof of the following result is motivated by \cite[Theorem 1.7]{Kim-Kurzweil} and is based on an application of  Minkowski successive minima theory and modifies the proof from \cite{Mosh-welldistributed}.

\begin{thm}\label{thm: Kolya}
For any $A\in \cC$ one has
$\lambda_{\mathbb{T}^m} ({\rm Bad}_A) =0.
 $
\end{thm}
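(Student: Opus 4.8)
The plan is to show that for any $A \in \cC$ and Lebesgue almost every target $\eta \in \bT^m$, one has $\liminf_{q \in \bZ^n, \norm{q} \to \infty} \norm{q}^{n/m}\idist{Aq - \eta} = 0$, i.e. $\eta \notin \on{Bad}_A$. Following the strategy of \cite[Theorem 1.7]{Kim-Kurzweil} and \cite{Mosh-welldistributed}, I would fix the subsequence $(l_k)$ from Definition \ref{defn: Kolya} witnessing $A \in \cC$ and, for each $k$, construct a set $E_k \subset \bT^m$ (a union of small boxes centered at the points $Aq \bmod \bZ^m$ for suitable $q$) such that: (i) any $\eta \in E_k$ admits a good approximation $\norm{q}^{n/m}\idist{Aq - \eta}$ of size roughly $\Delta_{l_k}^{1/m}$ (up to constants), using a coefficient vector $q$ with $\norm{q}$ comparable to $M_{l_k+1}$; and (ii) $\lambda_{\bT^m}(E_k) \gtrsim \Delta_{l_k}^{d-1}$. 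The first condition forces $\limsup_k E_k \subset \{\eta : \liminf = 0\}$ once one checks that the relevant coefficient vectors $q$ tend to infinity in norm (which is where singularity, i.e. $\Delta_l \to 0$, is harmless but where one must be careful to extract genuinely new $q$'s). The key step is then a quasi-independence / second Borel--Cantelli argument: by condition (i) of Definition \ref{defn: Kolya}, $\sum_k \lambda(E_k) = \infty$, so if one also controls the pairwise overlaps $\lambda(E_{k_1} \cap E_{k_2})$, the divergence Borel--Cantelli lemma (\cite[Theorem 18.10]{Gut}, already invoked in the proof of Theorem \ref{thm: augment to get nonsingular}) gives $\lambda(\limsup_k E_k) > 0$, and then a zero-one law (ergodicity of $\eta \mapsto 2\eta$, as in the Proposition near the end of \S\ref{sec: origins}) upgrades this to full measure.

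For the measure estimate in (ii) and the counting of the boxes, I would use Minkowski's successive minima applied to the lattice $h_{t}x_A$ at the time $t = t_k$ corresponding to the scale $M_{l_k+1}$ under the Dani correspondence: at this time the first minimum is $\asymp \Delta_{l_k}^{1/d}$ and the product of all minima is $\asymp 1$, so the lattice $x_A$ has $\asymp \Delta_{l_k}^{-(d-1)/d} \cdot \Delta_{l_k}^{1/d} = \Delta_{l_k}^{-(d-2)/d}$... more precisely one counts lattice points of $x_A$ in an appropriate box and finds the number of distinct residues $Aq \bmod \bZ^m$ with $\norm{q} \le M_{l_k+1}$ contributing, which combined with the box sidelength $\asymp \Delta_{l_k}^{1/m}/M_{l_k+1}^{n/m}$ gives total measure $\asymp \Delta_{l_k}^{d-1}$ after using $M_{l_k+1}^n \zeta_{l_k}^m = \Delta_{l_k}$. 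The quantity $H_k$ in condition \eqref{item: Kolya2} of Definition \ref{defn: Kolya} is precisely engineered to control the overlap: when $k_1 < k_2$, the box at scale $l_{k_2}$ has sidelength much smaller than the separation between distinct box-centers at scale $l_{k_1}$ precisely when $H_{k_1} \to 0$, so $\lambda(E_{k_1} \cap E_{k_2}) \ll \lambda(E_{k_1})\lambda(E_{k_2})$ asymptotically; summing gives the required quasi-independence input for Borel--Cantelli.

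The main obstacle I anticipate is making step (i) genuinely rigorous: one must ensure that as $k$ ranges over the chosen subsequence, the coefficient vectors $q$ producing the approximations really do have $\norm{q} \to \infty$ and are not recycled, so that the $\liminf$ (rather than merely the $\inf$) vanishes — this is exactly the subtlety flagged in the remark distinguishing $\liminf_{\norm{q}\to\infty}$ from $\inf_{q\neq 0}$, and it is the place where, for singular $A$, the naive argument could collapse. I expect this is handled by choosing the boxes at scale $l_k$ to be centered only at $Aq \bmod \bZ^m$ for $q$ with $M_{l_{k-1}+1} < \norm{q} \le M_{l_k+1}$, say, forcing norms to grow along the subsequence, at the cost of a harmless adjustment of constants in the measure bound (ii). A secondary technical point is that the Borel--Cantelli lemma only yields positive measure, not full measure, so the zero-one law step is essential and must be stated for the $\liminf = 0$ event (which is manifestly invariant under $\eta \mapsto 2\eta$ by the computation in the Proposition already in the paper), completing the proof that $\lambda_{\bT^m}(\on{Bad}_A) = 0$.
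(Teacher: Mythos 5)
Your architecture matches the paper's proof almost exactly: boxes centered at points $Aq-p$, the Minkowski successive-minima lemma to place one such point with $\norm{q}\le dM_{l_k+1}/\Delta_{l_k}$ in every box of sidelength $\asymp \zeta_{l_k}/\Delta_{l_k}$, quasi-independence of the $E_k$ driven by condition \eqref{item: Kolya2} of Definition~\ref{defn: Kolya} (the centers at scale $k_1>k$ equidistribute inside each scale-$k$ box because $H_k\to 0$ forces the scale-$k$ box sidelength to dominate the scale-$k_1$ spacing), and the divergence Borel--Cantelli lemma \cite[Theorem 18.10]{Gut}. One cosmetic difference: you invoke the doubling-map zero--one law to upgrade positive measure to full measure, whereas the paper gets $\lambda(E)\ge (1+2^m\varepsilon)^{-1}$ for every $\varepsilon>0$ directly from the quasi-independence constant and lets $\varepsilon\to 0$; your route also works and is supported by the Proposition at the end of \S\ref{sec: origins}.

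There is, however, one genuine gap in your step (i) as written. You claim each $\eta\in E_k$ gets an approximation of quality ``roughly $\Delta_{l_k}^{1/m}$ up to constants.'' Since $\cC$ contains all non-singular matrices, $\Delta_{l_k}$ may be bounded away from $0$ along the subsequence, so this would only yield $\liminf_{\norm{q}\to\infty}\norm{q}^{n/m}\idist{Aq-\eta}\le C$ for some constant $C>0$, not $=0$; that does not exclude $\eta\in\on{Bad}_A$. The fix (which is the heart of the paper's parameter choice) is to shrink the boxes $I_i^{[k]}$ by an extra factor: one chooses $\phi_k\to 0$ with $\phi_k\ge d^d(3/\varepsilon)^mH_k$ and $\sum_k\phi_k\Delta_{l_k}^{d-1}=\infty$, takes half-sidelength $\delta_k=\phi_k^{1/m}R_{l_k}^{-n/m}$, and then the approximation quality becomes $\delta_kR_{l_k}^{n/m}=\phi_k^{1/m}\to 0$ while $\lambda(E_k)\asymp\phi_k\Delta_{l_k}^{d-1}$ still has divergent sum. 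This is precisely why Definition~\ref{defn: Kolya}\eqref{item: Kolya1} demands divergence of $\sum\Delta_{l_k}^{d-1}$ (leaving room to insert a slowly decaying $\phi_k$) rather than merely $\limsup\Delta_{l_k}>0$, and why the lower bound $\phi_k\gtrsim H_k$ is compatible with $\phi_k\to 0$ only because of condition \eqref{item: Kolya2}. Your worry about recycled coefficient vectors $q$ is handled by the standard observation (used elsewhere in the paper) that finitely many $q$'s cannot produce arbitrarily small positive values, so no restriction to dyadic ranges of $\norm{q}$ is needed.
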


\begin{proof}
Let $A$ be any matrix and let $\Del_l, \zeta_l$ be as above.   
 For ${\eta} = (\eta_1,...,\eta_m) \in \mathbb{R}^m$ let us denote
\begin{equation}\label{box}
B_{l}({\eta})=
\left[\eta_1, \eta_1 + \frac{2d\zeta_l}{\Delta_l}\right)\times \cdots\times   \left[\eta_m, \eta_m + \frac{2d\zeta_l}{\Delta_l}\right) 
\subset \mathbb{R}^m
.
\end{equation}
We will need the following:
\begin{lem}\label{lem: Kolya}  
Let $R_l = \frac{dM_{l+1}}{\Delta_l} $. Then  for any $\eta$, the box $B_{l}({\eta})$ contains a point of the form $ Aq-p$ with 
\begin{equation}\label{qp}
 ||q|| \le  R_l 
,\,\,\,\,\,\,\
q\in \mathbb{Z}^n
,\,\,\,\,\, p \in \mathbb{Z}^m.
\end{equation}
\end{lem}
\begin{proof} 
 Fix $l$ 
 and consider parallelepiped 
$$
 \Pi_l =
 \{ z = (q,p) \in \mathbb{R}^d: ||q|| < M_{l+1}, ||Aq-p||< \zeta_l\}
 $$
Let  $\lambda_1\le \lambda_2\le ...\le \lambda_d$ be the successive minima  for the parallelepiped $\Pi_l$ with respect to the lattice $
 \mathbb{Z}^{d}$. As the volume of $\Pi_l $ is equal to $2^{d}\Delta_l$, by Minkowski theorem we know that 
 $$
 \lambda_1 \cdots \lambda_{d} \le \Delta_l^{-1}.
 $$
 But $\Pi_l$ does not contain non-trivial integer points and so $ \lambda_1 \ge 1$. We see that 
 $\lambda_{d}\le  \Delta_l^{-1}$ and
 $\lambda_{d} \cdot \Pi_l$
 contains $d$ independent integer points. So, the closed parallelepiped
 $$
 \wh{\Pi}_l = \text{closure of } \,d\Delta_l^{-1} {\Pi_l} \supset  d \lambda_{d} \Pi_l
 $$
 contains a fundamental domain of the lattice $\mathbb{Z}^{d}$. This means that any shift 
 \begin{equation}\label{formQ}
  \wh{\Pi}_l+w, \,\,\, w\in \mathbb{R}^{d}
  \end{equation} contains an integer point. But any parallelepiped
 $$
 \left\{ z = (q,p)\in \mathbb{R}^{d}:\,\,\,
 ||q|| \le R_l,\,\,\, Aq-p \in B_{l}({\eta})
\right\}
 $$
 is of the form (\ref{formQ}) which proves the Lemma.
 \end{proof}
 
 We continue with the proof of Theorem~\ref{thm: Kolya} by following the argument from \cite{Mosh-welldistributed} and choosing the parameters there more carefully.

 Assume now that the matrix $A$ belongs to $\cC$ and let $H_k$ be as in
 Definition~\ref{defn: Kolya}.
 Fix a positive $\varepsilon <\frac{1}{2}$ and define 
  a sequence  of  positive reals $
 \phi_k< 1
 $
 such that 
 \begin{equation}\label{ff1}
 \lim_{k \to \infty} \phi_k  =0,
  \end{equation}
 
  \begin{equation}\label{ff2}
  \phi_k \ge  
  d^d \left(\frac{3}{\varepsilon}\right)^{m} H_k
  \end{equation}
and
 \begin{equation}\label{ff3}
 \sum_{k=1}^\infty \phi_k 
\Delta_{l_k}^{d-1}=\infty.
 \end{equation}
 This is possible because by item \eqref{item: Kolya2} of Definition~\ref{defn: Kolya}  the right hand side of \eqref{ff2} tends to zero, and as we have the divergence in item~\eqref{item: Kolya1}, $\phi_k$ can tend to zero slow enough to satisfy divergence condition \eqref{ff3}.
 Define
 $$
 W_k = \left\lceil \frac{\Delta_{l_k}}{2d\zeta_{l_k}}\right\rceil^m
.
$$
Note that 
as $\frac{M_{l_{k}+1}}{\Delta_{l_{k}}} \to \infty$ as  $k\to \infty$, it follows from 
item \eqref{item: Kolya2} of Definition~\ref{defn: Kolya} that
\begin{equation}
\label{iiq}
\frac{\zeta_{l_{k}}}{\Delta_{l_{k}}}   \to 0, \,\,\ k \to \infty,
\end{equation}
and so using (\ref{iiq}) we see that  $W_k \to +\infty,$ as 
$k \to \infty$.
We cover the cube $[0,1)^m$ by  $ W_k$    boxes 
$B_{l_k}({\eta}_{i_1,...,i_n})$
of the form (\ref{box})
with 
$${\eta}_{i_1,...,i_n} = \left(\frac{i_1d\zeta_{l_k}}{\Delta_{l_k}},..., \frac{i_nd\zeta_{l_k}}{\Delta_{l_k}}\right), \,\,\,\, 0\le i_1,...,i_n <  \left\lceil \frac{\Delta_{l_k}}{2d\zeta_{l_k}}\right\rceil,\,\,\, i_1,...,i_n \in \mathbb{Z}
$$
which have disjoint interiors. 
 
We note that 
at least $W_k' = \left(  \left\lceil \frac{\Delta_{l_k}}{2d\zeta_{l_k}}\right\rceil -1\right)^m\sim W_k$ of these boxes are 
contained in the cube $[0,1)^m$.
Moreover, Lemma~\ref{lem: Kolya} shows that 
  in each of these boxes there is a point of the form $Aq-p$  satisfying (\ref{qp}) with $l = l_k$.

Put
$$
\delta_k = \frac{\phi_k^{\frac{1}{m}} }{R_{l_k}^{\frac{n}{m}}}  =  \phi_k^{\frac{1}{m}}\cdot\left( \frac{ \Delta_{l_k}}{dM_{l_k+1}}\right)^{\frac{n}{m}}.
$$
It is clear that 
$
\delta_k \le \phi_k^{\frac{1}{m}}\cdot \frac{\zeta_{l_k}}{\Delta_{l_k}}
$
because from (\ref{mi1}) we deduce
$
\Delta_{l_k}^d \le \Delta_{l_k} = M_{l_k+1}^n \zeta_{l_k}^m,
$
and this gives
$
\left(\frac{\Delta_{l_k}}{dM_{l_k+1}}\right)^{\frac{n}{m}} < \frac{\zeta_{l_k}}{\Delta_{l_k}}.
$
 
As in the proof from \cite{Mosh-welldistributed}
 we take 
 $$
 W_k'' = \left[ \frac{\left\lceil \frac{\Delta_{l_k}}{2d\zeta_{l_k}}\right\rceil-1}{3}\right]^m \sim \frac{W_{k}'}{3^m} \sim\frac{W_k}{3^m}
 $$
  boxes of the form
 \begin{equation}\label{l}
I_i^{[k]}=  \left[\xi_{1,i}^{[k]}- \delta_k ,\xi_{1,i}^{[k]}+\delta_k \right]\times\cdots\times
 \left[\xi_{m,i}^{[k]}-\delta_k ,\xi_{m,i}^{[k]}+\delta_k \right],\,\,\,\,1\le i \le W_k''
 \end{equation}
 with the  centres at certain points
 $$
 \xi_i^{[k]} = (\xi_{1,i}^{[k]}, ...,\xi_{m,i}^{[k]}) = Aq_i^{[k]}-p_i^{[k]},\,\,\,
 q_i^{[k]}\in \mathbb{Z}^n,
\, p_i^{[k]}\in \mathbb{Z}^n 
  ,\,\,\,\,\,|q_i^{[k]}|\le R_{l_k},\,\,\, 1\le i \le W_k''
  $$
 which belong to boxes 
 \begin{equation}\label{Cq}
 B_{l_k}({\eta}_{i_1,...,i_n})\,\,\,\,\,\text{with} \,\,\,\,\,i_1\equiv ...\equiv i_n \equiv 1 \pmod{3}. 
 \end{equation}
 In each of such boxes $B_{l_k}({\eta}_{i_1,...,i_n})$ we take just one point  ${\xi}_{i} $.
Then
\begin{equation}\label{aint}
I_i^{[k]}\cap I_{i'}^{[k]}=\varnothing,\,\,\,i\neq i'.    
 \end{equation}
 Consider two values  $k_1>k$.
 Recall that $I_i^{[k]}$ is a $m$-dimensional box with edge $ 2\delta_k$.  
 The centres of  the boxes $I_i^{[k_1]}$ 
 by the construction are
 distributed in $I_i^{[k]}$ uniformly and so
 \begin{equation}\label{sharp0}
 \sharp \{ i_1:\, 1\le i_1\le W_{k_1}'',\,\,\,  I_{i_1}^{[k_1]} \cap I^{[k]}_i \neq \varnothing \}  \le
 \left(2\delta_k/
 \left(6\cdot \frac{\zeta_{l_{k_1}}}{\Delta_{l_{k_1}}}\right)+1\right)^m .
 \end{equation}
We note that by condition (\ref{ff2}) we have
\begin{equation}\label{dop}
1\le  \varepsilon \cdot
2\delta_k/
 \left(6\cdot \frac{d\zeta_{l_{k_1}}}{\Delta_{l_{k_1}}}\right)
 \end{equation}
 because as $k_1 \ge k+1$ we have
  $$
  \delta_k  = \phi_k^{\frac{1}{m}} \cdot \left(\frac{\Delta_{l_k}}{dM_{l_k+1}}\right)^{\frac{n}{m}} \ge d^{\frac{d}{m}} \, \frac{3}{\varepsilon}\, 
  \frac{\zeta_{l_{k_1}}}{\Delta_{l_{k_1}}}.
  $$

Recall that  $\lambda (I_i^{[k]}) = (2\delta_k)^m  $ and
$ W_{k_1}'' \sim \frac{1}{3^m}\cdot \left( \frac{\Delta_{l_{k_1}}}{2d\zeta_{l_{k_1}}}\right)^m$.
So for $k$ large enough
 we deduce from (\ref{sharp0})  and (\ref{dop}) an upper bound
 \begin{equation}\label{sharp}
  \sharp \{ i_1:\, 1\le i_1\le W_{k_1}'',\,\,\,  I_{i_1}^{[k_1]} \cap I^{[k]}_i \neq \varnothing \}   \le
  \lambda (I_i^{[k]}) W_{k_1}'' (1+2^m\varepsilon).
 \end{equation}
 Now we consider the union
 $$
 E_k = \bigcup_{i=1}^{W_k''} I_i^{[k]}.
 $$
 By (\ref{aint})  we get
 $$
 \lambda (E_k)=  \sum_{i} \lambda (   I_i^{[k]})\asymp
 \delta_k^m W_k\asymp
 \phi_k 
 \Delta_{l_k}^{d-1}
 $$
 and so 
by (\ref{ff3}) we see that 
  \begin{equation}\label{ccc1}
  \sum_{k=1}^\infty
 \lambda (E_k) = \infty.
 \end{equation}
 Then from (\ref{sharp}) for $ k_1> k$ we have
   \begin{equation}\label{ccc2}
 \lambda (E_k\cap E_{k_1} )\le (1+ 2^m\varepsilon)  \lambda (E_k)  \lambda (E_{k_1})
.
 \end{equation} 
 It follows from \cite[Theorem 18.10]{Gut}
 that for the set
 $$
  E = \{ {\eta}:\,\, \exists \,\,\text{infinitely many}\,\, k\,\,
  \text{such that }\,\, {\eta} \in E_k\}
  $$
  we have
  $$
  \lambda (E)
  \ge
  \limsup_{t\to \infty}
  \frac{\left(\sum_{k=1}^t   \lambda (E_k) \right)^2}{  \sum_{k,k_1 =1}^t \lambda (E_k \cap E_{k_1})} \ge \frac{1}{1+ 2^m\varepsilon},
  $$
  by (\ref{ccc1}) and (\ref{ccc2}).   As $\varepsilon $ is arbitrary, we see that  $
  \lambda (E) = 1$.
  
  Finally, if $\eta \in E$ then there exists infinitely many $k$ such that for each $k$  there exists
  $ q\in \mathbb{Z}^n$ with $ ||q||\le R_{l_k}$ and $p\in \mathbb{Z}^n$ such that 
  $ ||Aq-p-\eta|| \le \delta_k$. In particular, from the definition of $R_{l_k}$    for these $(p,q)$ we get
  $$
   ||Aq-p-\eta|| \cdot ||q||^{\frac{n}{m}}\le
   \delta_k R_{l_k}^{\frac{n}{m}}=
    \phi_k^{\frac{1}{m}}.
   $$
We see that  condition (\ref{ff1}) leads to the inclusion 
  ${\rm Bad}_A\cap [0,1)^m \subset [0,1)^m \sm E$ which finishes the proof.
   \end{proof}

\section{Appendix: Roy's parametric geometry of numbers}\label{sec: Roy}

As it was mentioned before, the existence results from Propositions \ref{prop: special singular form} and \ref{abcde} as well as the main result from the paper 
\cite{LRSY}  can be deduced by means of existence results from parametric geometry of numbers  obtained in \cite{Roy-MathZ} and \cite{DFSU}.
In particular, here in Appendix we  show how one can get Proposition \ref{prop: special singular form}.

 \begin{defn}[Log-minima functions]\label{defn: log minima}
     Let $\theta = (\theta_1, \theta_2) \in \R^2$. For $Q \in \R$ with $Q \geq 1$, we define the convex body
     \begin{equation*}
         \mathcal{C}(Q) := \left\lbrace v=(v_1, v_2, v_3) \in \R^3: \|v\| \leq 1,\ |v_1\theta_1 + v_2 \theta_2 + v_3| \leq Q^{-1}\right\rbrace
     \end{equation*} 
     For $j=1,2,3$, we define the successive minima functions $\lambda_j:[1,\infty) \to \R$:
     \begin{equation*}
         \lambda_j(Q) := \min\left\lbrace r\in \R: r\mathcal{C}(Q) \text{ contains $j$ independent vectors of $\Z^3$}\right\rbrace
     \end{equation*}
     and the log-minima functions $L_j: [0,\infty) \to \R$:
     \begin{equation*}
         L_j(q) = \log (\lambda_j(e^q)).
     \end{equation*}
 \end{defn}
The relation between the first log-minima function and the irrationality measure function for a vector $\theta \in \R^2$ is given by the following Proposition.
\begin{prop}\label{prop: L1 > unbounded}
    Let $\theta \in \R^2$. Let $L_1$ be the first associated log-minima function and let $\Psi$ be the associated irrationality measure function.
    If, for some constants $0 < A < 1$ and $0 < B$, the equation
    \begin{equation}\label{eq: Aq - B}
        L_1(q) > Aq - B
    \end{equation}
    is satisfied for an unbounded set of  $q \in \R_{\geq 0}$,
    then the equation
    \begin{equation*}
        \Psi(t) \geq e^{\frac{-B - \ln(1 + \|\theta\|)}{A}}t^{1 - \frac{1}{A}}
    \end{equation*}
    is satisfied for an unbounded set of  $t \in \R_{\geq 0}$.
\end{prop}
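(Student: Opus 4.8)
The plan is to unwind the definitions and translate the hypothesis on $L_1$ into a statement about the convex bodies $\mathcal{C}(Q)$, and then into a statement about $\Psi$. First I would recall that, by definition, $L_1(q) = \log \lambda_1(e^q)$, so the hypothesis \eqref{eq: Aq - B} says that for an unbounded set of $q$ we have $\lambda_1(e^q) > e^{Aq - B}$, i.e.\ setting $Q = e^q$, the quantity $\lambda_1(Q) > Q^A e^{-B}$ for an unbounded set of $Q \geq 1$. The meaning of $\lambda_1(Q) > r$ is that $r\,\mathcal{C}(Q)$ contains no nonzero integer vector; concretely, there is \emph{no} $v = (v_1,v_2,v_3) \in \Z^3 \setminus \{0\}$ with $\|v\| \leq r$ and $|v_1\theta_1 + v_2\theta_2 + v_3| \leq r Q^{-1}$.

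Next I would turn this into a lower bound for $\Psi$. Fix $Q$ in the unbounded set and put $t = Q^A e^{-B}$; I want to show $\Psi(t')$ is large for a suitable $t'$ comparable to $t$. Take any $(a_1,a_2) \in \Z^2$ with $0 < |a_i| \leq t$ (the competitors in the definition of $\Psi$), and let $a_0 \in \Z$ be the nearest integer to $-(a_1\theta_1 + a_2\theta_2)$, so that $\langle a_1\theta_1 + a_2\theta_2\rangle = |a_1\theta_1 + a_2\theta_2 + a_0|$. The vector $v = (a_1,a_2,a_0)$ is a nonzero integer vector. Its sup-norm satisfies $\|v\| \leq \max\{|a_1|,|a_2|, |a_1\theta_1+a_2\theta_2| + 1\} \leq t(1 + \|\theta\|)$ (using $|a_i|\le t$ and bounding $a_0$). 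Since $\lambda_1(Q) > t$ — wait, here I need to be a little careful: I have $\lambda_1(Q) > Q^A e^{-B} = t$, but $\|v\|$ may be as large as $t(1+\|\theta\|) > t$. So instead I should run the argument with $Q' $ chosen so that $t(1+\|\theta\|)$ plays the role of the scaling factor; equivalently, I rescale. Let me set the scale $r := t(1 + \|\theta\|)$, which is an upper bound for $\|v\|$. If we knew $\lambda_1(Q) > r$ we would conclude $|v_1\theta_1 + v_2\theta_2 + v_3| > r Q^{-1}$. Since we only know $\lambda_1(Q) > t = r/(1+\|\theta\|)$, the honest route is: the hypothesis gives $\lambda_1(e^q) > e^{Aq-B}$ for unbounded $q$; choose $q$ and set $r = \lambda_1(e^q) / (\text{something} \geq 1)$ so that $\|v\| \le r \le \lambda_1(e^q)$, hence $r\,\mathcal{C}(e^q)$ contains no nonzero integer point, forcing $|v_1\theta_1 + v_2\theta_2 + v_3| > r e^{-q}$. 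Concretely: given $q$ in the unbounded set, put $t := \lambda_1(e^q)/(1+\|\theta\|)$; then for $(a_1,a_2)$ with $0 < |a_i| \le t$ we have $\|v\| \le t(1+\|\theta\|) = \lambda_1(e^q)$, and — being slightly careful about whether the inequality in the definition of $\lambda_1$ is strict, which one handles by shrinking $t$ infinitesimally or by noting $\lambda_1 > e^{Aq-B} > t(1+\|\theta\|)$ after the initial slack — we get $\langle a_1\theta_1 + a_2\theta_2 \rangle = |v_1\theta_1 + v_2\theta_2 + v_3| \ge t(1+\|\theta\|) e^{-q}$. Since this holds for all such $(a_1,a_2)$, $\Psi(t) \ge t(1+\|\theta\|)e^{-q}$.

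Finally I would express the right-hand side in terms of $t$. From $t = \lambda_1(e^q)/(1+\|\theta\|) > e^{Aq-B}/(1+\|\theta\|)$ we get $q < \frac{1}{A}\big(B + \ln(1+\|\theta\|) + \ln t\big)$, hence $e^{-q} > e^{-(B + \ln(1+\|\theta\|))/A}\, t^{-1/A}$, and therefore
\begin{equation*}
\Psi(t) \ge t(1+\|\theta\|)e^{-q} > (1+\|\theta\|)\, e^{-(B+\ln(1+\|\theta\|))/A}\, t^{1 - 1/A} \ge e^{-(B+\ln(1+\|\theta\|))/A}\, t^{1-1/A},
\end{equation*}
the last step using $1 + \|\theta\| \ge 1$. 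As $q$ ranges over an unbounded set and $\lambda_1(e^q) \to \infty$ along it (note $\lambda_1(e^q) > e^{Aq-B}$), the associated $t$'s form an unbounded set, giving the claim. The main obstacle I anticipate is purely bookkeeping: getting the norm comparison $\|v\| \le (1+\|\theta\|)t$ exactly right with whatever norm convention is in force, and handling the strict-versus-nonstrict inequality in the definition of $\lambda_1$ (and the fact that $0 < |a_i|$ excludes $a_1 = a_2 = 0$, so $v \ne 0$ genuinely holds); none of this is deep, but it is where an error could creep in.
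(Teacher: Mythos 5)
Your proposal is correct and takes essentially the same route as the paper: translate the hypothesis into the statement that $e^{Aq-B}\mathcal{C}(e^q)$ contains no nonzero lattice point, bound the competitor vector $v=(a_1,a_2,a_0)$ by $\|v\|\le t(1+\|\theta\|)$, and change variables between $t$ and $q$. The paper runs the identical computation by contradiction with the slightly smaller, strictness-safe choice $t=e^{Aq-B-\ln(1+\|\theta\|)}$, which is the cleanest way to discharge the strict-versus-nonstrict bookkeeping you flag; note also that for $t>1/2$ your estimate $|a_0|\le \tfrac12+t\|\theta\|<t(1+\|\theta\|)$ is strict, so the boundary case never actually arises.
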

\begin{proof}
    If, for some $q >0$, \eqref{eq: Aq - B} holds, then from the definitions we have
    \begin{equation}\label{eq: Aq-B set}
        \left\lbrace v \in \R^3: \|v\| \leq e^{Aq-B},\ |v_1\theta_1 + v_2 \theta_2 + v_3| \leq e^{Aq - B }e^{-q}\right\rbrace \cap \Z^3 = \{0\}.
    \end{equation}
    Now let $C = \ln(1 + \|\theta\|)$. Suppose, by way of contradiction, that 
\begin{equation*}
    \Psi(e^{Aq- B - C})  < e^{Aq-B - C}e^{-q}.
\end{equation*}
This gives the existence of $a = (a_1, a_2) \in \Z^2$ with $0 < \|a\| \leq e^{Aq-B - C}$ and some $b \in \Z$ for which
\begin{equation}\label{eq: atheta + b}
    |a_1\theta_1 + a_2 \theta_2 + b| < e^{Aq- B - C} e^{-q}.
\end{equation}
Thus,
\begin{equation}\label{eq: |b|}
\begin{split}
    |b|  &= |b + (a_1\theta_1 + a_2 \theta_2) - (a_1 \theta_1 + a_2 \theta_2)| \\
    &< e^{Aq - B - C} e^{-q} + \|\theta\| e^{Aq - B - C} \\
    & \leq e^{Aq - B} e^{-C} (1 + \|\theta\|) = e^{Aq - B}.
\end{split}
\end{equation}
Since $C \geq 0$, we have
\begin{equation*}
    \max\{|a_1|, |a_1|, |b|\} \leq e^{Aq - B}\ \text{ and }\ |a_1 \theta_1 + a_2 \theta_2 + b| \leq e^{Aq - B} e^{-q}
\end{equation*}
which contradicts \eqref{eq: Aq-B set}.
Thus, we must have
\begin{equation*}
    \Psi(e^{Aq-B -C}) \geq e^{Aq - B - C} e^{-q}.
\end{equation*}
If we substitute $t = e^{Aq -B -C}$, we get
\begin{equation*}
    \Psi(t) \geq e^{\frac{-B -C}{A}}t^{1-\frac{1}{A}}
\end{equation*}
which is the required result.
\end{proof}
\begin{prop}\label{prop: L1 leq all}
    Let $\theta \in \R^2$, let $L_1$ be the associated first log-minima function and let $\Psi$ be the associated irrationality measure function.
    Suppose, for some constants $0 < A < 1$ and $0 < B$, we have
    \begin{equation}\label{eq: L1 linear}
        L_1(q) \leq Aq + B \text{ for all sufficiently large } q \in \R_{\geq 0}.
    \end{equation}
    Then the irrationality measure function satisfies
    \begin{equation*}
        \Psi(t) \leq e^{B/A} t^{1- \frac{1}{A}} \text{ for all  $t \in \R_{\geq 0}$ sufficiently large.}
    \end{equation*}
\end{prop}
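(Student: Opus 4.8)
The plan is to run the argument of Proposition~\ref{prop: L1 > unbounded} in reverse. By the very definitions of $\lambda_1$ and of the convex body $\mathcal{C}(e^q)$, the bound $L_1(q)\le Aq+B$ says precisely that the dilate $e^{Aq+B}\,\mathcal{C}(e^q)$ contains a nonzero integer point; such a point, provided its first two coordinates are not both zero, is exactly a witness giving an upper bound for $\Psi$ at an appropriate scale, and the exponents match after the substitution $q=(\ln t-B)/A$.

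First I would fix a large $t$ and set $q:=(\ln t-B)/A$, so that for $t$ large we have $q\ge 0$ and $q$ lies in the range where \eqref{eq: L1 linear} holds. By hypothesis $L_1(q)\le Aq+B=\ln t$, and exponentiating gives $\lambda_1(e^q)\le t$. Hence, by definition of the first successive minimum, there is a nonzero $(a_1,a_2,b)\in\Z^3$ lying in $t\,\mathcal{C}(e^q)$, which unwinds to
\begin{equation*}
    \|(a_1,a_2,b)\|\le t \qquad\text{and}\qquad |a_1\theta_1+a_2\theta_2+b|\le t\,e^{-q}.
\end{equation*}
A direct computation with $q=(\ln t-B)/A$ gives $t\,e^{-q}=e^{B/A}\,t^{1-1/A}$, so the second inequality becomes $|a_1\theta_1+a_2\theta_2+b|\le e^{B/A}\,t^{1-1/A}$.

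Next I would eliminate the degenerate case $a_1=a_2=0$: if it held, the last inequality would force $|b|\le e^{B/A}\,t^{1-1/A}$, and since $A<1$ the exponent $1-1/A$ is negative, so the right-hand side is $<1$ once $t$ is large, forcing $b=0$ and contradicting $(a_1,a_2,b)\neq 0$. Thus $(a_1,a_2)\neq 0$ with $\|(a_1,a_2)\|\le\|(a_1,a_2,b)\|\le t$, and then $\langle a_1\theta_1+a_2\theta_2\rangle\le|a_1\theta_1+a_2\theta_2+b|\le e^{B/A}\,t^{1-1/A}$ since $\langle\cdot\rangle$ is the distance to the nearest integer. By the definition of $\Psi$ (using, as in the proof of Proposition~\ref{prop: L1 > unbounded}, that the minimum runs over nonzero $a\in\Z^2$ with $\|a\|\le t$), this yields $\Psi(t)\le e^{B/A}\,t^{1-1/A}$ for all sufficiently large $t$.

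I do not anticipate a genuine obstacle here; the argument is essentially bookkeeping dual to Proposition~\ref{prop: L1 > unbounded}. The only two points demanding care are keeping the exponents straight under the substitution $q=(\ln t-B)/A$, and the elimination of the $(a_1,a_2)=0$ case — which is exactly where the hypothesis $A<1$ (equivalently, the negativity of $1-1/A$) enters.
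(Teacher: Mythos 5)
Your proposal is correct and follows essentially the same route as the paper: the paper's proof is exactly the change of variables $t=e^{Aq+B}$, reading $L_1(q)\le Aq+B$ as $e^{Aq+B}\mathcal{C}(e^q)\cap\Z^3\ne\{0\}$ and unwinding the definitions. Your additional care in ruling out the degenerate case $a_1=a_2=0$ (via $1-1/A<0$) is a detail the paper's two-line proof leaves implicit, and your handling of it is right.
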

\begin{proof}
    This is again a change of variables. The equation \eqref{eq: L1 linear} implies that, for all large $q$,
    \begin{equation*}
        e^{Aq + B} \mathcal{C}(e^q) \cap \Z^3 \neq \{0\}.
    \end{equation*}
    Unwinding the definitions, we obtain
    \begin{equation*}
        \Psi(e^{Aq+B}) \leq e^{Aq +B} e^{-q} \text{ for all sufficiently large } q.
    \end{equation*}
    Putting $t = e^{Aq+B}$, we get the required conclusion.
\end{proof}
The log-minima functions $(L_1,L_2,L_3)$ are modeled by the following functions.
\begin{defn}[\cite{Roy-MathZ}, Definition 4.1 of $n$-systems]\label{defn: 3-system}
    Let $I \subset [0,\infty)$ be a subinterval. A $3$-system on $I$ is a continuous piecewise linear map $P=(P_1,P_2,P_3):I \to \R^3$ such that 
    \begin{enumerate}
        \item[(S1)] For each $q \in I$, we have
        \begin{equation*}
            0 \leq P_1(q) \leq P_2(q) \leq P_3(q) \text{ and } P_1(q) + P_2(q) + P_3(q) = q.
        \end{equation*}
        \item[(S2)] If $H$ is a nonempty open subset on which $P$ is differentiable, then there is an integer $r$ with $1\leq r \leq 3$ such that $P_r$ has slope $1$ on $H$ while the other components of $P$ are constant.
        \item[(S3)] If $q$ is an interior point of $I$ where $P$ is not differentiable and if the integers $r$, $s$ for which $P'_r(q^-) = P'_r(q^+)=1$ also satisfy $r < s$, then we have
        \begin{equation*}
            P_r(q) = P_{r+1}(q) = \dots = P_s(q).
        \end{equation*}
    \end{enumerate}
\end{defn}
\begin{rem}
    Piecewise linear means the points of $I$ where $P$ is not differentiable has discrete closure in $\R$ and $P$ is linear on each connected component of the complement (in $I$) of the non-differentiable points. The notation $P_r'(q^-)$ and $P_s'(q^+)$ denote the left and right derivatives respectively (assuming $q$ is not in the boundary of $I$).
\end{rem}
We have the following fundamental theorem.
\begin{thm}[\cite{Roy-MathZ} Theorem 4.2]\label{thm: Roy}
    For each $3$-system $P$ on an interval $[q_0, \infty)$, there is a $\theta \in \R^2$ such that, considering the log-minima function $L=(L_1,L_2,L_3): [q_0, \infty) \to \R^3$ associated to $\theta$, we have
    \begin{equation*}
        L - P \text{ is bounded on } [q_0, \infty).
    \end{equation*}
\end{thm}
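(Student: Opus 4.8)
The plan is to reconstruct Roy's argument; since the statement is exactly \cite[Theorem 4.2]{Roy-MathZ}, the natural thing in the present paper is simply to invoke it, but here is the shape a proof would take. The first point is that one half of the $3$-system / log-minima correspondence is the relatively soft direction and the other is the hard one. Given any $\theta\in\R^2$, the triple $(L_1,L_2,L_3)$ automatically lies within a universal bounded distance of \emph{some} $3$-system: one has $\operatorname{vol}(\mathcal{C}(Q))\asymp Q^{-1}$, so Minkowski's second theorem gives $\lambda_1(Q)\lambda_2(Q)\lambda_3(Q)\asymp Q$ and hence $L_1(q)+L_2(q)+L_3(q)=q+O(1)$, while the monotonicity and piecewise near-linearity of the successive minima of the one-parameter family $\mathcal{C}(e^q)$ force the combinatorial constraints (S1)--(S3). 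The content of Theorem~\ref{thm: Roy} is the realization statement: every $3$-system is attained, up to $O(1)$, by an honest $\theta$.

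To prove realization I would first reduce to a sequence of finite problems. A $3$-system $P$ on $[q_0,\infty)$ is piecewise linear, and on each linear piece exactly one component has slope $1$ while the other two are constant; thus $P$ restricted to $[q_0,T]$ is encoded by a finite list of division points together with the record of which successive minimum is active on each piece. At each division point a new primitive integer vector $\mathbf{x}^{(j)}\in\Z^3$ must become a successive minimum for $\mathcal{C}(e^q)$, and $|\ell_\theta(\mathbf{x}^{(j)})|$, where $\ell_\theta(v)=\theta_1v_1+\theta_2v_2+v_3$, must have a prescribed order of magnitude at a prescribed value of $q$. I would build the vectors $\mathbf{x}^{(1)},\mathbf{x}^{(2)},\dots$ recursively, with $\mathbf{x}^{(j+1)}$ a controlled integer combination of the earlier ones, chosen so that the prescribed height can be realized; each height prescription confines $\theta$ to a nested box whose diameter tends to $0$. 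Equivalently, in the space of unimodular lattices one builds lattices $\Lambda_k\to\Lambda$ whose diagonal-flow orbits have the successive-minima profile of $P$ on $[q_0,T_k]$, while arranging $\Lambda$ to be the graph of a linear form so that $\Lambda=\Lambda_\theta$ for the desired $\theta$.

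I would then set $\theta:=\lim_T\theta_T$ (the point in the intersection of the nested boxes) and verify that for this $\theta$ the chosen vectors $\mathbf{x}^{(j)}$ genuinely realize the successive minima on the appropriate ranges of $q$, up to a constant independent of $q$. That constant has to absorb the Minkowski slack, the gap between ``the value realized by $\mathbf{x}^{(j)}$'' and ``the value of $P$'', and the fact that the division points of $L$ may be displaced by $O(1)$ from those of $P$. Putting these together yields $L-P$ bounded on $[q_0,\infty)$.

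The main obstacle is exactly this last step together with the combinatorial-to-arithmetic dictionary behind it: at every division point one must pick, consistently with all earlier choices, a new primitive integer vector with the correct height profile, and then rule out that some accidental short lattice vector at an intermediate scale spoils the match --- that is, one must show the discrepancy between $L$ and $P$ stays bounded \emph{uniformly in $q$} rather than drifting as $q\to\infty$. For $3$-systems the list of combinatorial transitions is short enough that this can be carried out by hand, which is why the realization result is available here; running the same program in higher dimensions is considerably more delicate and is the content of \cite{DFSU} (as noted in the remark following Theorem~\ref{thm: counterexample intro}).
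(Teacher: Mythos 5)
The paper gives no proof of this statement: it is imported verbatim as \cite[Theorem 4.2]{Roy-MathZ}, so your opening observation that the right move here is simply to invoke the citation is exactly what the paper does, and in that sense your approach coincides with the paper's. The sketch you append is a reasonable outline of how Roy's realization theorem is proved (reduction to finite pieces, recursive construction of primitive vectors, nested boxes, a limiting $\theta$, and a uniform-in-$q$ error bound), but be aware that it is only an outline --- the step you yourself flag as the main obstacle, namely choosing the vectors consistently and ruling out accidental short lattice vectors so that $L-P$ stays bounded uniformly, is precisely the content of Roy's argument (carried out there via rigid systems and explicit unimodular matrices) and is not established by what you wrote. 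For the purposes of this paper, the citation suffices and no further argument is needed.
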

We are finally ready for the
\begin{proof}[Proof of Proposition \ref{prop: special singular form}]
    Let $Q \in \R$ satisfy $Q >2$. Consider the following graphs of three piecewise linear functions on the interval $[1,Q-1]$.
    
    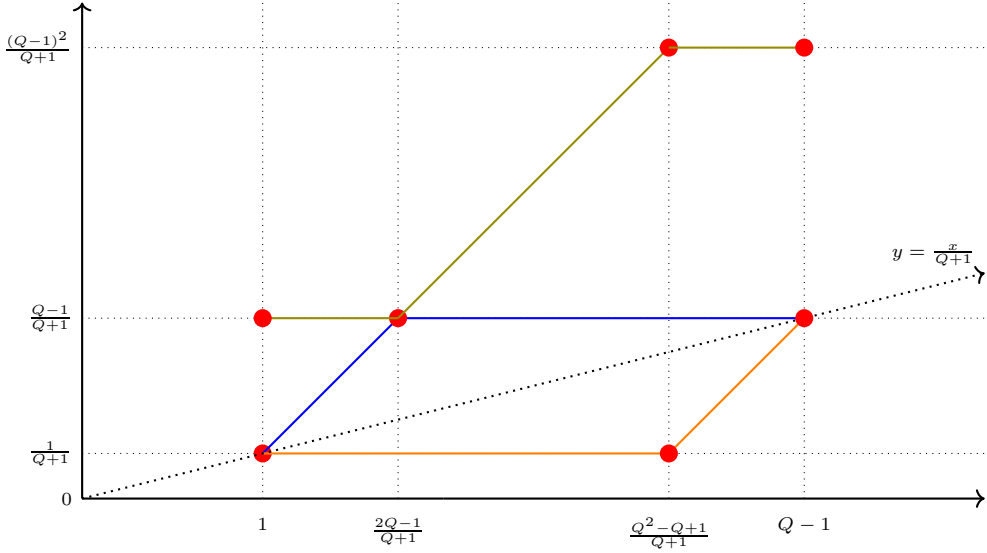
\begin{figure}[H]
    \begin{center}
    \begin{tikzpicture}[scale=1.2, baseline=(current  bounding  box.center)]
    
    \draw [thick, ->] (-5,-2) -- (-5,3.5);
    \node [left] at (5,0.7) {\tiny $y = \frac{x}{Q+1}$};
    \draw [thick] (-5,-2) -- (-3,-2);
    \draw [thick, ->] (-1,-2) -- (5,-2);

    \draw[thick] (-3,-2)--(-1,-2);

    \node [left] at (-5,-2) {\tiny $0$};
    
    \node [below] at (-3,-2.1) {\tiny $1$};
    \draw [dotted] (-3,-2) -- (-3,3.5);

    \node [below] at (-1.5,-2.1) {\tiny $\frac{2Q -1}{Q+1}$};
    \draw [dotted] (-1.5,-2) -- (-1.5,3.5);

    \node [below] at (1.5,-2.1) {\tiny $\frac{Q^2 - Q + 1}{Q+1}$};
    \draw [dotted] (1.5,-2) -- (1.5,3.5);

    \node [below] at (3,-2.1) {\tiny $Q-1$};
    \draw [dotted] (3,-2) -- (3,3.5);

    \node [left] at (-5,-3/2) {\tiny $\frac{1}{Q+1}$};
    \draw [dotted] (-5,-3/2) -- (5,-3/2);

    \node [left] at (-5,0) {\tiny $\frac{Q-1}{Q+1}$};
    \draw [dotted] (-5,0) -- (5,0);

    \node [left] at (-5,3) {\tiny $\frac{(Q-1)^2}{Q+1}$};
    \draw [dotted] (-5,3) -- (5,3);

    \fill [red] (-3,-3/2) circle[radius=0.1];

    \draw [thick, blue] (-3,-1.5) -- (-1.5,0);
    \draw [thick, blue] (-1.5,0) -- (3,0);
    \fill[red] (-1.5,0) circle[radius=0.1];

    \draw[thick, olive] (-3,0)--(-1.5,0);
    \fill[red] (-3,0) circle[radius=0.1];
    \draw[thick, olive] (-1.5,0) -- (1.5,3);
    \fill[red] (1.5,3) circle[radius=0.1];
    \draw[thick, olive] (1.5,3) -- (3,3);
    \fill[red] (3,3) circle[radius=0.1];

    \draw [thick, orange] (-3,-1.5)--(1.5,-1.5);
    \draw [thick, orange] (1.5,-1.5)--(3,0);

    \fill [red] (1.5,-1.5) circle[radius=0.1];

    \draw[thick, dotted, ->] (-5,-2) -- (5,0.5);

    \fill [red] (3,0) circle[radius = 0.1];
    \end{tikzpicture}
    \end{center}
    \caption{The orange, blue are olive graphs are called $P_1$, $P_2$, $P_3$ respectively.}
\end{figure}
Here is the precise definition of the function $P:[1,Q-1] \to \R^3$:
\begin{equation}\label{eq: I_0 defn}
    (P_1(q), P_2(q), P_3(q)) := \begin{cases}
			  \left(\frac{1}{Q+1},\ \frac{1}{Q+1} + q-1,\ \frac{Q-1}{Q+1} \right) & ; 1\leq q \leq \frac{2Q-1}{Q+1} \\ \medskip
     
           \left(\frac{1}{Q+1},\ \frac{Q-1}{Q+1},\ \frac{Q-1}{Q+1} + q - \frac{2Q-1}{Q+1} \right)   & ; \frac{2Q-1}{Q+1}  \leq q \leq \frac{Q^2 - Q + 1}{Q+1}  \\
              \smallskip 
              
            \left(\frac{1}{Q+1} + q - \frac{Q^2 - Q + 1}{Q+1},\ \frac{Q-1}{Q+1},\ \frac{(Q-1)^2}{Q+1}\right)     & ; \frac{Q^2 - Q + 1}{Q+1}  \leq q \leq Q-1
		  \end{cases}.
\end{equation}
We leave it to the reader to check that $P$ satisfies Definition \ref{defn: 3-system} on $[1,Q-1]$.
We now extend this function to a $3$-system on all of $[1,\infty)$;
We let $I_0 := [1,Q-1]$. For each $l \in \N$, we inductively define the interval $I_l$ as the closed interval of length $(Q-1)^l \times  \on{length}(I_{0})$ that is contiguous and to the right of $I_{l-1}$.
We let $f_l$ be the linear, orientation-preserving bijection from $I_l$ to $I_0$ and define
\begin{equation}\label{eq: P tilde}
    \widetilde{P}(q) := (Q-1)^l P(f_l(q)) \text{ for } q \in I_l.
\end{equation}
Note that $\widetilde{P}$ is well defined: For if we have $q \in I_{l-1}\cap I_l$,
\begin{equation*}
\begin{split}
    (Q-1)^lP(f_{l}(q)) &= (Q-1)^l P(1) \\
    &= (Q-1)^l\left( \frac{1}{Q+1}, \frac{1}{Q+1}, \frac{Q-1}{Q+1}\right) \\
    &= (Q-1)^{l-1}\left( \frac{Q-1}{Q+1}, \frac{Q-1}{Q+1}, \frac{(Q-1)^2}{Q+1}\right)\\
    &= (Q-1)^{l-1} P(Q-1) \\
    &= (Q-1)^{l-1} P(f_{l-1}(q)).
\end{split}
\end{equation*}
We have the following straightforward Claim which we write out completely. The reader might prefer to do it themselves.
\begin{claim}\label{claim: P inequalities}
    The function $\widetilde{P} = (\widetilde{P}_i)_{i=1,2,3}: [1,\infty) \to \R$ is a $3$-system with 
    \begin{enumerate}
        \item[(a)] $\widetilde{P}_1(q) \leq \frac{q}{Q+1}$ for all $q \geq 1$.
        \item[(b)] If $q$ is the right endpoint of $I_n$, then $\widetilde{P}_1(q) = \frac{q}{Q+1}$.
    \end{enumerate}
    \end{claim}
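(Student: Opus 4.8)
The plan is to reduce everything to a finite computation on $I_0=[1,Q-1]$ by exploiting the self-similar structure of $\widetilde P$. First I would make the construction explicit: since $\on{length}(I_0)=Q-2$, each $I_l$ has length $(Q-1)^l(Q-2)$ and abuts $I_{l-1}$ on the right, one gets $I_l=[(Q-1)^l,(Q-1)^{l+1}]$, the orientation-preserving linear bijection is $f_l(q)=q/(Q-1)^l$, and hence \eqref{eq: P tilde} reads $\widetilde P(q)=(Q-1)^l\,P\!\left(q/(Q-1)^l\right)$ for $q\in I_l$. The excerpt already verifies that $\widetilde P$ is well defined at the junctions $q=(Q-1)^l$, so it remains to (i) check that $P$ is a $3$-system on $I_0$, (ii) transfer this to $\widetilde P$ on $[1,\infty)$, and (iii) prove (a) and (b).

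For (i) I would read (S1)--(S3) straight off \eqref{eq: I_0 defn}. On each of the three pieces one has $P_1+P_2+P_3=q$ and $0\le P_1\le P_2\le P_3$ (these inequalities use $Q>2$, which gives $\tfrac{2Q-1}{Q+1}\ge 1$ and $\tfrac{Q^2-Q+1}{Q+1}\le Q-1$), and exactly one coordinate has slope $1$ with the others constant --- coordinate $2$ on the first piece, coordinate $3$ on the second, coordinate $1$ on the third --- giving (S1) and (S2). For (S3) there are two interior breakpoints: at $q=\tfrac{2Q-1}{Q+1}$ the slope-$1$ index passes from $2$ to $3$, and one checks $P_2=P_3=\tfrac{Q-1}{Q+1}$ there; at $q=\tfrac{Q^2-Q+1}{Q+1}$ it passes from $3$ to $1$, so the hypothesis $r<s$ of (S3) is vacuous.

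For (ii) everything transfers under the positive homothety $v\mapsto(Q-1)^l v$. Property (S1) for $\widetilde P$ is immediate, the ordering being preserved by a positive scalar and $\widetilde P_1(q)+\widetilde P_2(q)+\widetilde P_3(q)=(Q-1)^l f_l(q)=q$. For (S2), on the interior of $I_l$ the chain rule gives $\widetilde P_i'(q)=P_i'(f_l(q))$ --- the factors $(Q-1)^l$ and $1/(Q-1)^l$ cancel --- so the slope pattern of $\widetilde P$ at $q$ is exactly that of $P$ at $f_l(q)$. The breakpoints of $\widetilde P$ are the $f_l^{-1}$-images of those of $P$, which inherit (S3) from $P$, together with the junctions $q=(Q-1)^l$; at such a junction the left piece is the top of $I_{l-1}$ (where $P$ has slope-$1$ index $1$) and the right piece is the bottom of $I_l$ (where $P$ has slope-$1$ index $2$), and from $\widetilde P\big((Q-1)^l\big)=(Q-1)^{l-1}P(Q-1)=(Q-1)^{l-1}\big(\tfrac{Q-1}{Q+1},\tfrac{Q-1}{Q+1},\tfrac{(Q-1)^2}{Q+1}\big)$ one reads $\widetilde P_1=\widetilde P_2$ there, so (S3) holds with $r=1<2=s$.

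Finally, by $\widetilde P_1(q)=(Q-1)^l P_1(q/(Q-1)^l)$ it suffices for (a) and (b) to show $P_1(u)\le\tfrac{u}{Q+1}$ on $[1,Q-1]$, with equality at $u=Q-1$. On the first two pieces $P_1(u)=\tfrac1{Q+1}\le\tfrac{u}{Q+1}$ since $u\ge1$; on the third piece $P_1(u)=u-\tfrac{Q(Q-1)}{Q+1}$, and $P_1(u)\le\tfrac{u}{Q+1}$ rearranges to $u\le Q-1$, with equality exactly at the right endpoint, where $P_1(Q-1)=\tfrac{Q-1}{Q+1}$. Hence (a) follows by evaluating at $f_l(q)$, and for (b) the right endpoint of $I_l$ is $q=(Q-1)^{l+1}$ and $\widetilde P_1(q)=(Q-1)^l\cdot\tfrac{Q-1}{Q+1}=\tfrac{q}{Q+1}$. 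I do not anticipate a genuine obstacle; the only part needing care is the bookkeeping of which coordinate carries slope $1$ on each side of each breakpoint --- in particular at the junctions between consecutive $I_l$ --- so as to confirm (S3), and keeping track of the scaling factors $(Q-1)^l$ when passing from $P$ to $\widetilde P$.
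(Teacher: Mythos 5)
Your proposal is correct and follows essentially the same route as the paper: identify $I_l=[(Q-1)^l,(Q-1)^{l+1}]$, use the cancellation of the scaling factor $(Q-1)^l$ against the slope of $f_l$ to transfer the $3$-system axioms, and reduce (a) and (b) to the inequality $P_1(u)\le u/(Q+1)$ on $I_0$ with equality at $u=Q-1$. You in fact supply slightly more detail than the paper, which leaves the verification of (S1)--(S3) on $I_0$ and of (S3) at the junction points $(Q-1)^l$ to the reader.
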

    \begin{proof}
        $\widetilde{P}$ is continuous and piecewise linear since it is so on each interval $I_l$. Moreover, the maps $f_l: I_l \to I_0$ have slopes $(Q-1)^{-l}$ which cancels with the scaling factor $(Q-1)^l$ of formula  \eqref{eq: P tilde}. Thus, Definition \ref{defn: 3-system}(S1, S2) become clear.
        We leave it to the reader to check Definition \ref{defn: 3-system}(S3) at the end points of $I_l$ for each $l\in \N$. Thus $\widetilde{P}$ is a $3$-system.

        For parts (a) and (b) we first prove that for each $l \in \N \cup \{0\}$ we have
        \begin{equation*}
            I_l = \left[(Q-1)^l, (Q-1)^{l+1} \right].
        \end{equation*}
        This is true for $I_0$ by definition. We let $l \geq 1$ and write $I_l = [a_n,b_n]$. We compute, using induction, that
        \begin{equation*}
            a_l = a_{l-1} + (Q-1)^{l-1} \on{length}(I_0) = (Q-1)^{l-1} + (Q-1)^{l-1}(Q-2) =  (Q-1)^l.
        \end{equation*}
        Similarly, for $b_n$ we have
        \begin{equation*}
            b_l = b_{l-1} + (Q-1)^l (Q-2) = (Q-1)^l + (Q-1)^l(Q-2) = (Q-1)^{l+1}.
        \end{equation*}

        Part (a) for $q \in I_0$ is clear from formula \eqref{eq: I_0 defn}. For  $l \in \N$ and $q \in I_l = [a_l,b_l]$, we can then compute
        \begin{equation*}
        \begin{split}
            \widetilde{P}_1(q) &= (Q-1)^lP_1(f_l(q)) \\
            &\leq (Q-1)^l \frac{f_l(q)}{Q+1}.
        \end{split}
        \end{equation*}
        Now $q \mapsto (Q-1)^l f_l(q)$ is a linear function of slope $1$ passing through the point
        \begin{equation*}
            (a_l, (Q-1)^l) = ((Q-1)^l, (Q-1)^l).
        \end{equation*}
        Thus, $(Q-1)^l f_l(q) = q$ on the domain of $f_l$ and so we have $\widetilde{P}_1(q) \leq q (Q+1)^{-1}$.

        For part (b), we have that 
        \begin{equation*}
            \widetilde{P}_{1}(b_l) = (Q-1)^lP_1(f_l(b_l)) = (Q-1)^l \frac{Q-1}{Q+1} = \frac{b_l}{Q+1}.
        \end{equation*}
    \end{proof}
We now apply Theorem \ref{thm: Roy} to the $3$-system $\widetilde{P}$ and obtain a $\theta \in \R^2$ with a constant $B >0$ such that
\begin{equation*}
    |L_1(q) - \widetilde{P}_1(q)| \leq B \text{ for all } q \in [1,\infty).
\end{equation*}
Here $L_1$ is the first log-minima function associated to $\theta$ as in Definition \ref{defn: log minima}. 
Claim \ref{claim: P inequalities} then shows
\begin{equation*}
    L_1(q) \leq \frac{q}{Q+1} + B \text{ for all } q \in [1,\infty)
\end{equation*}
and that 
\begin{equation*}
    -B -1 + \frac{q}{Q+1} < L_1(q) \text{ for an unbounded set of } q \in [1,\infty).
\end{equation*}
Applying Propositions \ref{prop: L1 > unbounded} and \ref{prop: L1 leq all}, we see the existence of constants $C_1, C_2$ for which
\begin{equation*}
    \Psi(t) \leq C_1 t^{-Q} \text{ for all sufficiently large } t\geq 1 
\end{equation*}
and \begin{equation*}
    C_2 t^{-Q} \leq \Psi(t) \text{ for an unbounded set of } t \geq 1.
\end{equation*}
Here, $\Psi$ is the irrationality measure function associated to $\theta$.
\end{proof}

\subsection*{Acknowledgements}
This work has received funding from the European Research Council (ERC) under the European Union’s Horizon 2020 Research and Innovation Program, Grant agreement no. 754475.
An essential part of this work is done during the first and second author's stay in Israel Institute of Technology (Technion). They thank the people from Technion for the extremely friendly atmosphere and wonderful opportunities for work.

\def\cprime{$'$}

\end{document}